\newtheorem{Th}{Theorem}
\newtheorem{Prop}{Proposition}
\newtheorem{Co}{Corollary}
\newtheorem{Lm}{Lemma}
\newtheorem{Dfi}{Definition}
\newtheorem{Rm}{Remark}
\newcommand{\be}{\begin{equation}}
\newcommand{\ee}{\end{equation}}
\newcommand{\R}{\mathbb{R}}
\newcommand{\N}{\mathbb{N}}
\newcommand{\C}{\mathbb{C}}
\newcommand\res{\mathop{\hbox{\vrule height 7pt width .5pt depth 0pt
\vrule height .5pt width 6pt depth 0pt}}\nolimits}
\newcommand{\reset}{\setcounter{equation}{0}\setcounter{Th}{0}\setcounter{Prop}{0}\setcounter{Co}{0}
\setcounter{Lm}{0}\setcounter{Rm}{0}\setcounter{Con}{0}}
\def\ti{\tilde}
\def\lf{\left}
\def\rg{\right}
\def\al{\alpha}
\def\la{\lambda}
\def\e{\varepsilon}
\def\ep{\varepsilon}
\def\ds{\displaystyle}
\def\ov{\overline}
\def\om{\omega}
\def\p{\partial}
\def\bn{\vec{n}}
\def\bh{\mathbf{h}}
\def\bbe{\vec{e}}
\def\bbf{\vec{f}}
\def\bH{\vec{H}}
\def\bu{\vec{u}}
\def\bv{\vec{v}}
\def\bA{\vec{A}}
\def\bE{\vec{E}}
\def\bF{\vec{F}}
\def\bL{\vec{L}}
\def\bR{\vec{R}}
\def\bN{\vec{N}}
\def\bU{\vec{U}}
\def\bV{\vec{V}}
\def\bX{\vec{X}}
\def\bY{\vec{Y}}
\def\bw{\vec{w}}
\def\bW{\vec{W}}
\def\bP{\vec{\Phi}}
\def\bul{\bullet}
\def\res{\mathop{\hbox{\vrule height 7pt width .5pt 
depth 0pt\vrule height .5pt width 6pt depth 0pt}}\nolimits}
\newcommand{\D}{D}
\newcommand{\LW}{L}
\newcommand{\Riem}{Riem}
\newcommand{\Area}{Area}
\newcommand{\Sp}{\mathbb{S}}
\DeclareMathOperator{\diam}{diam}
\DeclareMathOperator{\eucl}{eucl}
\DeclareMathOperator{\supp}{supp}
\begin{document}
\title{Willmore Spheres in Compact Riemannian Manifolds}
\author{Andrea Mondino\footnote{Scuola Normale Superiore, Piazza dei Cavalieri 7, 56126 Pisa, Italy, E-mail address: andrea.mondino@sns.it. 
A. Mondino is supported by a Post Doctoral Fellowship in the ERC grant ÓGeometric Measure Theory in non Euclidean 
SpacesÓ directed by Prof. Ambrosio.} , Tristan Rivi\`ere\footnote{Department of Mathematics, ETH Zentrum,
CH-8093 Z\"urich, Switzerland.}}
\date{ }
\maketitle

{\bf Abstract :}{\it The paper is devoted to the variational analysis of the Willmore, and other $L^2$ curvature functionals, for  2-d surfaces immersed in a compact riemannian $3\leq m$-manifold $(M^m,h)$; the double goal of the paper is on one hand to give the right setting for doing the calculus of variations (including min max methods) of such functionals for immersions into manifolds and on the other hand to prove existence of possibly branched Willmore spheres under curvature or topological conditions. For this purpose, using the integrability by compensation, we develop the regularity theory for the critical points of such functionals; a crucial step consists in writing the Euler-Lagrange equation (which is a system), first in a conservative form making sense for weak $W^{1,\infty}\cap W^{2,2}$ immersions
% (as explained in the introduction, though the functional has a meaning of these weak immersions, the classical form of the Willmore equation dooesn't make sense on such weak objects; this produces a %paradox in functional analysis which is solved using this new conservative form)
, then as a system of conservation laws. Exploiting this new form of the equations we are able on one hand to prove full regularity of weak solutions to the Willmore equation in any codimension, on the other hand to prove a rigidity theorem concerning the relation between CMC and Willmore spheres. One of the main achievements of the paper is that for every non null 2-homotopy class $0\neq \gamma \in \pi_2(M^m)$ we produce a canonical representative given by a Lipschitz map from the 2-sphere into $M^m$ realizing a  connected family of conformal smooth (possibly branched) area constrained Willmore spheres (as explained in the introduction, this comes as a natural extension of the  minimal immersed spheres in homotopy class constructed by Sacks and Uhlembeck in \cite{SaU}
in situations when they do not exist); moreover  for every ${\cal A}>0$ we minimize the Willmore functional among  connected families of weak, possibly branched, immersions of $\Sp^2$ having total area ${\cal A}$ and we prove full regularity for the minimizer. Finally, under a mild curvature condition on $(M^m,h)$,  we minimize $\int(|\mathbb I|^2+1)$, where ${\mathbb I}$ is the second fundamental form, among weak possibly branched immersions of $\Sp^2$ and we prove the regularity of the minimizer.     
}
\medskip

\noindent{\bf Math. Class.} 30C70, 58E15, 58E30, 49Q10, 53A30, 35R01, 35J35, 35J48, 35J50.

\section{Introduction}\label{s:in}

Throughout the paper $(M^m, h)$ will be a compact connected $m$-dimensional  Riemannian  manifold. For a smooth immersion $\bP$ of a compact 2-dimensional surface $\Sigma$ into $(M^m,h)$ recall the definition of the \emph{Willmore functional}
\be\label{def:W}
W(\bP):=\int_{\Sigma} |\bH|^2 dvol_g,
\ee
where the mean curvature $\bH$ is half the trace of the second fundamental form ${\mathbb I}$ and $vol_g$ is the volume form associated to the pullback metric $g:=\bP^*h$, of the \emph{energy functional} $F$ 
\be\label{def:F}
F(\bP):=\frac {1}{2} \int_{\Sigma} |{\mathbb I}|^2 dvol_g,
\ee
and of the \emph{conformal Willmore functional} $W_{conf}$
\be\label{def:Wconf}
W_{conf}(\bP):=\int_{\Sigma} \left( |\bH|^2+\bar{K}(T\bP)\right) dvol_g,
\ee
where $\bar{K}(T\bP)$ is the sectional curvature of the ambient manifold $(M^m,h)$ computed on the tangent space of $\bP(\Sigma)$; recall moreover that $W_{conf}$ is conformally invariant (i.e. is invariant under conformal changes of the ambient metric $h$ ), see \cite{Wei}.

Observe that, by Gauss Bonnet Theorem, for immersions in the Euclidean space $\R^m$ the three functionals $W,W_{conf}$ and $F$ differ just by a topological constant, so they are equivalent from the variational point of view. This is not the case for immersions in Riemannian manifolds as it will be clear soon.

The first goal of the present paper is to develop the \emph{analysis} of the Willmore and the others $L^2$ curvature functionals in  Riemannian manifolds of any dimension.  Indeed, as for immersions in the eulidean space, there is the following functional analysis paradox: though the Willmore functional $W$ defined in \eqref{def:W} has perfect meaning for  $W^{2,2}\cap W^{1,\infty}$ weak immersions, the classical form of its Euler Lagrange equation (derived in \cite{Wei}) does not make sense for such weak objects (which are the natural ones for doing the analysis of the Willmore functional, as it will explained below; exactly as the Sobolev spaces are the natural framework to studying PDEs). Indeed it requires $L^3$ integrability of second derivatives being 
\be \label{eq:W'Weiner}
\Delta_\perp\bH+\ti{A}(\vec{H})-2 |\bH|^2\ \bH-\ti{R}(\bH)=0,
\ee
where $\ti{R}:T_{\vec{\Phi}(x)}M \to T_{\vec{\Phi}(x)}M$ is the curvature endomorphism defined by
\be 
\label{eq:defR}
\forall \vec{X} \in T_{\vec{\Phi}(x)}M \qquad \tilde{R}(\vec{X}):=-\pi_{\vec{n}} \left[\sum_{i=1} ^{2} \Riem^h(\vec{X},\vec{e}_i)\vec{e}_i \right] \quad,
\ee 
and $\tilde{A}:T_{\vec{\Phi}(x)}M \to T_{\vec{\Phi}(x)}M$  is defined as 
\be
\label{VI.44a}
\forall \vec{X} \in T_{\vec{\Phi}(x)}M \qquad \tilde{A}(\vec{X}):=\sum_{i,j=1}^2 \vec{\mathbb I}(\vec{e}_i,\vec{e}_j) <\vec{\mathbb I}(\vec{e}_i,\vec{e}_j), \vec{X}>\quad,
\ee
where $\pi_{\bn}$  is the projection onto the normal space of $\bP$ and $(\vec{e}_1,\vec{e}_2)$ is an orthonormal basis of $T\Sigma$ for the induced metric $g:=\vec{\Phi}^*h$. The same problem appears in the other $L^2$ curvature functionals since the difference in the Euler Lagrange equations is given just by lower order terms.
\medskip

A first achievement of the present work is to rewrite the Euler Lagrange equation in a conservative form which makes sense for such weak immersions. In order to be more accessible,  before we perform  the computations and we present the equations in the codimension one case in Section \ref{Sec:ConsWill3D}, then we pass to the more delicate higher codimensional case.   

For this purpose, up to a reparametrization and working on a  parametrizing disc $D^2$, we can assume that the immersion $\bP$ is conformal so that it makes sense to consider the standard complex structure of  the disc $D^2$; exploiting the complex notation is very convenient and simplifies also the initial presentation given by the second author in \cite{Riv1} for immersions in the euclidean space. The main result of Section \ref{Sec:ConsWillHCD} is the following theorem. Before stating it let us define $R^\perp_{\vec{\Phi}}(T\vec{\Phi})$ as
\be \label{Rperp}
R^\perp_{\vec{\Phi}}(T\vec{\Phi}):= \left( \pi_T\left[ \Riem^h(\vec{e}_1,\vec{e}_2)\vec{H} \right] \right)^\perp
\ee
where $(\vec{e}_1,\vec{e}_2)$ is a positive orthonormal basis of $TD^2$ for the induced metric $g:=\vec{\Phi}^*h$, $\pi_T:T_{\bP}M \to \bP_*(TD^2)$ is the tangential projection and $.^\perp$ denotes the rotation of an angle $\frac \pi 2 $ in $\bP_*(T D^2)$ in the direction from $\vec{e}_1$ towards $\vec{e}_2$, intrinsecally it can be written as $\bX^\perp= (\bP_*)\circ *_g \circ (\bP_*)^{-1}(\bX)$ for any $\bX \in \bP_*(TD^2)$ where $*_g$ is the Hodge duality operator on $(TD^2,g)$.
We shall also denote by $\pi_{\vec{n}}$ the orthogonal projection $\pi_{\vec{n}}:T_{\bP}M \to (\bP_*(TD^2))^\perp$ from the tangent space to $M^m$ onto the normal space
to $\vec{\Phi}(D^2)$.

\medskip

We shall denote by $D$ the Levi-Civita connection of $(M^m,h)$ and by an abuse of notations we also denote by $D$ the associated covariant exterior derivative. We also denote
by $D_g$ the pull back by $\vec{\Phi}$ of $D$ which is a connection - respectively a covariant exterior derivative - of the pull-back bundle $\vec{\Phi}^{-1}TM^m$.
$D^{*_g}_g$ is the adjoint of the covariant derivative $D^g$ for the induced metric $g=\vec{\Phi}^\ast h$. $\star_h$ is the Hodge operator associated to $h$ 
on multi-vectors of $M^m$ from $\wedge^pM^m$ into $\wedge^{m-p}M^m$. 
All these objects are defined in section \ref{Sec:ConsWillHCD} in \eqref{def:Dg}, \eqref{def:*g}, \eqref{def:D*}, \eqref{def:starh}, \eqref{def:wedgeM} and \eqref{def:pin}.

\begin{Th}\label{thm:ConsGen}
Let $\vec{\Phi}$ be a smooth immersion of the two dimensional disc $D^2$ into an $m$-dimensional Riemannian manifold $(M^m,h)$, then the following identity holds
\be\label{eq:ConsWillGen}
\frac{1}{2} D^{*_g}_g \left[D_g \bH -3 \pi_{\bn} (D_g \bH) + \star_h\left((*_g D_g \bn) \wedge_M \bH \right)  \right] =\Delta_\perp\vec{H}+\ti{A}(\vec{H})-2|\vec{H}|^2\ \vec{H}-R^\perp_{\vec{\Phi}}(T\vec{\Phi})
\ee
where $\Delta_\perp$ is the negative covariant laplacian on the normal bundle to the immersion, $\tilde{A}$ is the linear map given in (\ref{VI.44a}), $R^\perp$ is defined in \eqref{Rperp}.
\hfill $\Box$\end{Th}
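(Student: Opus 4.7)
The plan is to follow the philosophy of the Euclidean computation carried out by the second author (which is the case where the three ambient-curvature corrections all vanish), and to carefully track the additional contributions produced by the non-flat pull-back connection $D_g$ on $\vec{\Phi}^{-1}TM^m$. I would first reduce to a conformal parametrization $\vec{\Phi}:D^2\to(M^m,h)$, writing $g=e^{2\lambda}(dx_1^2+dx_2^2)$, choosing the moving orthonormal frame $\vec{e}_i=e^{-\lambda}\vec{\Phi}_{x_i}$, and expressing $D_g$, $D_g^{*_g}$, $*_g$ and $\wedge_M$ in these coordinates. This converts the identity \eqref{eq:ConsWillGen} into a coordinate statement about vector fields along $\vec{\Phi}$ that I can expand term-by-term.

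Next I would treat the three pieces of the bracketed quantity separately. The term $D_g^{*_g} D_g \vec{H}$ gives the rough Laplacian of $\vec{H}$; its normal part is $-\Delta_\perp\vec{H}$ plus a coupling with $\vec{\mathbb{I}}$, while tangentially it produces precisely the divergences that will combine with the remaining two terms. The term involving $\pi_{\vec{n}}(D_g\vec{H})$ is built so that the combination $D_g\vec{H}-3\pi_{\vec{n}}(D_g\vec{H})=\pi_T(D_g\vec{H})-2\pi_{\vec{n}}(D_g\vec{H})$ isolates exactly the right mix of normal and tangential components. Finally, in the term $\star_h((*_g D_g\vec{n})\wedge_M\vec{H})$ one uses Weingarten $D_g\vec{n}(\vec{e}_i)=-\vec{\mathbb{I}}(\vec{e}_i,\cdot)$ so that after applying $D_g^{*_g}$ and the Hodge rotations one produces quantities quadratic in $\vec{\mathbb{I}}$ contracted with $\vec{H}$; using Codazzi-Mainardi one recognises in them precisely $\tilde{A}(\vec{H})-2|\vec{H}|^2\vec{H}$, reproducing the structure of Rivière's flat identity.

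The Riemannian nature of the target only enters through two sources of ambient-curvature corrections: the commutators $[D_{i},D_{j}]$ acting on $\vec{H}$ and on $\vec{n}$ in the computations above, and the defect term $-\pi_{\vec{n}}(\Riem^h(\vec{e}_i,\vec{e}_j)\vec{e}_k)$ appearing on the right-hand side of Codazzi-Mainardi in a curved ambient. I would organise all of these terms and show that they collapse into the single object $-R^\perp_{\vec{\Phi}}(T\vec{\Phi})$ on the right-hand side, using the intrinsic description $\vec{X}^\perp=\vec{\Phi}_*\circ *_g\circ (\vec{\Phi}_*)^{-1}(\vec{X})$ and the antisymmetry of the Riemann tensor.

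I expect the main obstacle to be precisely this last step of curvature bookkeeping. In the Euclidean case all these corrections are simply absent, so Rivière's identity emerges directly from the algebraic manipulation of the $\vec{\mathbb{I}}$-quadratic terms; in the present setting each $[D_i,D_j]$-commutator contributes an independent $\Riem^h$-term with its own sign and index permutation, and one must verify that all of them, together with the Codazzi defect, reorganise into exactly $-R^\perp_{\vec{\Phi}}(T\vec{\Phi})$ as defined in \eqref{Rperp}, with no residual tangential ambient-Riemann remainder — this is where the novelty and the delicate part of the theorem lies.
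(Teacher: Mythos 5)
Your plan is in the same general spirit as the paper's (conformal coordinates, complex notation, Weingarten, Codazzi--Mainardi, curvature commutators), but it misses the organizational device that makes the paper's computation tractable and quite clean. Lemma~\ref{lm-VI.11} is a purely \emph{algebraic}, pointwise identity for conformal immersions which shows that the sum of the tangential part of $\D_z\vec{H}$ and the Hodge-wedge term collapses to $-2\langle\vec{H},\vec{H}_0\rangle\,\p_{\bar z}\vec{\Phi}$; consequently the whole bracketed expression in \eqref{eq:ConsWillGen} reduces (in complex notation) to $-2\langle\vec{H},\vec{H}_0\rangle\,\p_{\bar z}\vec{\Phi}-2\pi_{\vec{n}}(\D_z\vec{H})$ \emph{before} any divergence is taken. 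The paper then applies $\Re\,\D_{\bar z}$ to this already reduced potential, splits into normal and tangential projections using only the product rule together with $\D_{\bar z}(e^\la\vec{e}_z)=\tfrac{e^{2\la}}{2}\vec{H}$ and $\D_z(e^{-\la}\vec{e}_z)=\tfrac12\vec{H}_0$, and invokes Codazzi--Mainardi exactly once; the only ambient curvature that ever appears is the single Riemann defect inside Codazzi--Mainardi, and no other $[\D_i,\D_j]$ commutator arises because the paper's computation never commutes two covariant derivatives directly. Your proposed term-by-term expansion of $D_g^{*_g}$ applied to each of the three summands would instead generate several independent commutator corrections which you would then need to reorganise and cancel against the Codazzi defect --- precisely the bookkeeping you flag as the obstacle --- whereas the paper's ordering of operations sidesteps it entirely. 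A further, small but consequential, misattribution: the identity $\ti{A}(\vec{H})-2|\vec{H}|^2\vec{H}=2\Re\big(\langle\vec{H},\vec{H}_0\rangle\,\ov{\vec{H}_0}\big)$ is elementary algebra following from the definition of $\ti{A}$ (see \eqref{z-VI.210}), not a consequence of Codazzi--Mainardi; the role of Codazzi--Mainardi in the argument is rather to rewrite $\langle\vec{H},\D_z\vec{H}\rangle+\langle\vec{H}_0,\D_{\bar z}\vec{H}\rangle$ as $e^{-2\la}\p_{\bar z}\big(e^{2\la}\langle\vec{H},\vec{H}_0\rangle\big)-2\langle\Riem^h(\vec{e}_{\bar z},\vec{e}_z)\p_z\vec{\Phi},\vec{H}\rangle$, and it is this single curvature defect that accounts for the term $-R^\perp_{\vec{\Phi}}(T\vec{\Phi})$ on the right-hand side.
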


Notice that though the right hand side does not make sense for $W^{1,\infty}\cap W^{2,2}$ weak immersions, the left hand side does. Therefore a straightforward but important consequence of Theorem~\ref{thm:ConsGen} is the following conservative form of Willmore surfaces equation making sense for $W^{1,\infty}\cap W^{2,2}$ weak immersions.
\begin{Co} \label{co:ConsWillGen}  
A smooth immersion $\vec{\Phi}$ of a 2-dimensional disc $D^2$ in $(M^m,h)$ is Willmore if and only if 
\be
\label{eq:ConsWGen}
\frac{1}{2} D^{*_g}_g \left[D_g \bH -3 \pi_{\bn} (D_g \bH) + \star_h\left((*_g D_g \bn) \wedge_M \bH \right)  \right] =\ti{R}(\bH)-R^\perp_{\vec{\Phi}}(T\vec{\Phi}),
\ee
where $\ti{R}$ and $R^\perp$ are the curvature endomorphisms defined respectively in \eqref{eq:defR} and \eqref{Rperp}.\hfill $\Box$
\end{Co}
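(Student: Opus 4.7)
The statement is essentially an immediate consequence of Theorem~\ref{thm:ConsGen} combined with the classical (Weiner) form of the Willmore Euler-Lagrange equation recalled in (\ref{eq:W'Weiner}). The plan is therefore to use Theorem~\ref{thm:ConsGen} as a black box and simply rearrange.

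First I would recall that, by definition, a smooth immersion $\vec{\Phi}$ is Willmore iff it is a critical point of $W$, which by \cite{Wei} is equivalent to the pointwise equation
\be\label{eq:Weinerbis}
\Delta_\perp\vec{H}+\tilde{A}(\vec{H})-2|\vec{H}|^2\vec{H}-\tilde{R}(\vec{H})=0,
\ee
with $\tilde{A}$ and $\tilde{R}$ as in (\ref{VI.44a}) and (\ref{eq:defR}). This is the starting characterization to be transformed.

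Next, I would invoke Theorem~\ref{thm:ConsGen}, which gives the identity
\be\label{eq:ConsGenbis}
\frac{1}{2} D^{*_g}_g \left[D_g \vec{H} -3 \pi_{\vec{n}} (D_g \vec{H}) + \star_h\left((*_g D_g \vec{n}) \wedge_M \vec{H} \right)  \right] = \Delta_\perp\vec{H}+\tilde{A}(\vec{H})-2|\vec{H}|^2 \vec{H}-R^\perp_{\vec{\Phi}}(T\vec{\Phi}),
\ee
valid for every smooth immersion (without any criticality assumption). The key observation is that the three terms $\Delta_\perp\vec{H}+\tilde{A}(\vec{H})-2|\vec{H}|^2\vec{H}$ appearing on the right-hand side of (\ref{eq:ConsGenbis}) are precisely the combination which, by (\ref{eq:Weinerbis}), equals $\tilde{R}(\vec{H})$ exactly when $\vec{\Phi}$ is Willmore.

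Substituting $\Delta_\perp\vec{H}+\tilde{A}(\vec{H})-2|\vec{H}|^2\vec{H}=\tilde{R}(\vec{H})$ into (\ref{eq:ConsGenbis}) yields the conservative equation (\ref{eq:ConsWGen}); conversely, given (\ref{eq:ConsWGen}), comparison with (\ref{eq:ConsGenbis}) forces (\ref{eq:Weinerbis}), hence the Willmore condition. The two implications are symmetric and there is no analytic obstacle: the entire content of the corollary lies in Theorem~\ref{thm:ConsGen} itself, which has already been established. In this sense the corollary is a formal consequence, and the only point worth stressing in the write-up is that, although the equivalence is a pointwise rearrangement in the smooth setting, the left-hand side of (\ref{eq:ConsWGen}) retains meaning under the much weaker $W^{1,\infty}\cap W^{2,2}$ regularity, which motivates taking (\ref{eq:ConsWGen}) as the \emph{definition} of weak Willmore immersion in the sequel.
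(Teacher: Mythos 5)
Your proof is correct and takes exactly the same approach as the paper's: invoke the classical Willmore equation of Weiner (equation (\ref{eq:W'Weiner})) and combine it with the identity of Theorem~\ref{thm:ConsGen}. Nothing more is needed, and the remark about the left-hand side making sense for $W^{1,\infty}\cap W^{2,2}$ immersions correctly captures the point of the corollary.
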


\begin{Rm}
The Euler Lagrange equations of the other $L^2$ curvature functionals are computed in Section \ref{Sec:ConsWillHCD} and differ just by terms completely analogous to the right hand side terms of \eqref{eq:ConsWGen}. \hfill $\Box$
\end{Rm}

 Another important corollary is the conservative form of the \emph{constraint-conformal} Willmore equation. Let $\Sigma$ be a smooth closed surface and $\bP:\Sigma \hookrightarrow M$ be a smooth immersion; the pullback metric $g:=\bP^*h$ induces a complex structure $J$ on $\Sigma$, and in the associated conformal class there exists a unique constant curvature metric $c_0$ with total area $1$ (see \cite{Jo}); notice that by construction $\bP:(\Sigma,c_0) \hookrightarrow M $ is a conformal immersion.   Recall that the smooth immersion $\vec{\Phi}$ of $(\Sigma,c_0)$ is said to be \emph{constrained-conformal  Willmore} if and only if it is a critical point of the Willmore functional under the constraint that the conformal class is fixed. 
Before writing the conservative form of the Willmore functional under constraint on the conformal class let us introduce some notation. Call $Q(J)$ the space of holomorphic quadratic differentials on $(\Sigma,J)$ and let $q \in Q(J)$ written in local complex coordinates as $q=f(z) dz \otimes dz$; let $\bH_0$ be the Weingarten map is given in local coordinates, for a conformal immersion with conformal factor $\lambda=\log(|\p_x\bP|)$, by
\be\label{def:vecH0}
\bH_0:=\frac{1}{2}\, e^{-2\lambda}\pi_{\vec{n}}\lf(\p^2_{x^2}\vec{\Phi}-\p^2_{y^2}\vec{\Phi} -2i\ \p^2_{xy}\vec{\Phi}\rg)= \frac{1}{2}\lf[\vec{\mathbb I}(\vec{e}_1,\vec{e}_1)-\vec{\mathbb I}(\vec{e}_2,\vec{e}_2)-2\,i\, \vec{\mathbb I}(\vec{e}_1,\vec{e}_2)\rg]\quad,
\ee 
where $\bn_{\bP}$ is the normal space to $\bP$ and $(\bbe_1,\bbe_2)=e^{-\lambda}(\p_x \bP,\p_y \bP )$ is a positively oriented orthonormal frame of $T\bP$; recall also the definition of the Weingarten operator $\vec{h}_0$  given locally by
\be\label{def:vech0}
\vec{h}_0:=2\ \pi_{\vec{n}}(\p^2_{z^2}\vec{\Phi})\ dz\otimes dz=e^{2\la}\,\vec{H}_0\ dz\otimes dz
\ee
We introduce on the space $\wedge^{1-0}D^2\otimes\wedge^{1-0} D^2$ of $1-0\otimes1-0$ form on $D^2$ the following hermitian product\footnote{ This hermitian product integrated on $D^2$ is the {\it Weil Peterson product}.} depending on the conformal immersion $\vec{\Phi}$
\be
\label{def:WPpunct}.
(\psi_1\ dz\otimes dz,\psi_2\ dz\otimes dz)_{WP}:= e^{-4\la}\ {\psi_1(z)}\ \ov{\psi_2(z)}
\ee
where $e^\la:=|\p_{x_1}\vec{\Phi}|=|\p_{x_2}\vec{\Phi}|$. We observe that for a conformal change of coordinate $w(z)$ (i.e. $w$ is holomorphic in $z$) and for $\psi_i'$ satisfying
\[
\psi_i'\circ w\ dw\otimes dw=\psi_i\ dz\otimes dz
\]
one has, using the conformal immersion $\vec{\Phi}\circ w$ in the l.h.s.
\[
(\psi_1'\ dw\otimes dw,\psi_2'\ dw\otimes dw)_{WP}=(\psi_1\ dz\otimes dz,\psi_2\ dz\otimes dz)_{WP} 
\]
for more informations about the Weyl Peterson  product see \cite{Jo}, \cite{Riv2}, \cite{Riv4}. Now we can write the constraint-conformal Willmore equation in conservative form.
%In \cite{BPP} is derived the Willmore equation under conformal constraint for immersions of surfaces in a 3-dimensional Riemannian manifold, which, matched with Theorem \ref{th-VI.8-3d}, gives the following corollary.

\begin{Co} \label{co:ConsConfW}  
Let $\vec{\Phi}:\Sigma \hookrightarrow M$ be a  smooth immersion into the $m\geq 3$-dimensional Riemannian manifold $(M^m,h)$ and call $c$ the conformal structure associated to $g=\bP^*h$.  Then  $\bP$ is a constrained-conformal Willmore immersion if and only if there exists an holomorphic quadratic differential $q \in Q(c)$  such that

\be
\label{eq:ConsWconf}
\frac{1}{2} D^{*_g}_g \left[D_g \bH -3 \pi_{\bn} (D_g \bH) + \star_h\left((*_g D_g \bn) \wedge_M \bH \right)  \right]= \Im(q,\vec{h}_0)_{WP}+\ti{R}(\bH)-R^\perp_{\vec{\Phi}}(T\vec{\Phi})\quad.
\ee
\hfill $\Box$
\end{Co}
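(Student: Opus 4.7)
The plan is to apply the Lagrange multiplier principle in Teichm\"uller space. By definition a constrained-conformal Willmore immersion is a critical point of $W$ restricted to the subset of immersions inducing the given conformal class $c=[\bP^\ast h]$; Teichm\"uller--Ahlfors--Bers theory (see \cite{Jo}) identifies the cotangent space to Teichm\"uller space at $c$ with $Q(c)$, so the Lagrange multiplier associated to the constraint is naturally a holomorphic quadratic differential $q\in Q(c)$.

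I would then establish two formulas. The first is the free first variation of $W$ under a smooth normal variation $\bw=\delta\bP$, which by the Weiner computation \eqref{eq:W'Weiner} reads
$$\delta W(\bP)\cdot\bw \;=\; \int_\Sigma \big\langle \Delta_\perp\vec H+\tilde A(\vec H)-2|\vec H|^2\vec H-\tilde R(\vec H),\,\bw\big\rangle\,dvol_g .$$
The second is the infinitesimal variation of the conformal class under $\bw$: since the pullback metric varies as $\delta g=-2\langle\vec{\mathbb I}(\cdot,\cdot),\bw\rangle$, its trace-free part, which alone carries the conformal information, is encoded through the Weingarten form $\vec h_0$ of \eqref{def:vech0}. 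Pairing the resulting Beltrami differential against a test $q\in Q(c)$ through the Weil--Peterson product \eqref{def:WPpunct} and integrating by parts using $\bar\partial q=0$, one obtains
$$\frac{d}{dt}\Big|_{t=0}\big\langle q,[\bP_t^\ast h]\big\rangle_{WP}\;=\;-\int_\Sigma \big\langle \Im(q,\vec h_0)_{WP},\,\bw\big\rangle\,dvol_g .$$

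Combining the two ingredients, the Lagrange multiplier theorem asserts that $\bP$ is constrained-conformal Willmore if and only if there exists $q\in Q(c)$ such that, for every normal variation $\bw$, $\delta W(\bP)\cdot\bw=-(d/dt)|_{0}\langle q,[\bP_t^\ast h]\rangle_{WP}$; tangential variations may be ignored by diffeomorphism invariance of both $W$ and of the conformal class. By the fundamental lemma of the calculus of variations this is equivalent to the pointwise identity
$$\Delta_\perp\vec H+\tilde A(\vec H)-2|\vec H|^2\vec H-\tilde R(\vec H)\;=\;\Im(q,\vec h_0)_{WP}, $$
and substituting the left-hand side via Theorem \ref{thm:ConsGen} produces exactly \eqref{eq:ConsWconf}; the converse follows by reading the chain of identities backwards. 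The delicate point of this plan is the Teichm\"uller-theoretic step: one must verify that only the imaginary part of $(q,\vec h_0)_{WP}$ acts as the multiplier density, the real part corresponding to infinitesimal reparametrizations plus Weyl rescalings, which preserve the conformal class (cf.\ \cite{Riv2}, \cite{Riv4}). Once that identification is secured, the conclusion is an immediate consequence of the conservative identity \eqref{eq:ConsWillGen} of Theorem \ref{thm:ConsGen}.
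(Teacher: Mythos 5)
Your proposal is correct and follows essentially the same route as the paper: establish the constrained-conformal Euler--Lagrange equation $\Delta_\perp\bH+\ti{A}(\vec{H})-2|\bH|^2\bH-\ti{R}(\bH)=\Im[(q,\vec{h}_0)_{WP}]$ and then substitute the left-hand side via Theorem \ref{thm:ConsGen}. The only difference is that the paper obtains the first step by direct citation to \cite{Riv2}, whereas you reconstruct the Lagrange-multiplier argument in Teichm\"uller space (and rightly flag the delicate point --- that only the imaginary part of $(q,\vec h_0)_{WP}$ survives as the multiplier density --- which is precisely the content of the cited result).
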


Observe that, in local complex coordinates, $\Im[(q,\vec{h}_0)_{WP}]=e^{-2\lambda} \Im[f(z) \overline{\vec{H}}_0]$.

Notice that also the constraint-conformal equations of the other $L^2$ curvature functional differ just by terms completely analogous to the right hand side terms of \eqref{eq:ConsWGen}. 
\medskip

\medskip 
Exploiting the conservative form just showed, in Section \ref{Sec:SystY} we prove that the constraint-conformal Willmore equation is equivalent to a system of conservation laws (see Theorem \ref{th:ConfWill}) and in Section \ref{Sec:Regularity} we prove that weak solutions to this system of conservation laws are smooth.
For proving the regularity it is crucial to construct from the system of conservation laws some potentials $\vec{R}$ and $S$ which satisfy a critical  Wente type elliptic system (see the system \eqref{eq:SystPRS}). Using integrability by compensation we gain some regularity on $\vec{R}$ and $S$ which bootstrapped, after some work, gives the smoothness of weak solutions to the constraint-conformal Willmore equation. Therefore we are able to prove the following full regularity theorem for weak solutions to the constraint-conformal Willmore equation.    

\begin{Th}\label{Th:Regularity}{\bf [Regularity of weak constraint-conformal Willmore immersions.]}
Let $\vec{\Phi}$ be a $W^{1,\infty}$ conformal immersion of the disc $\D^2$ taking values into a sufficiently small open subset of the Riemannian manifold $(M,h)$, with second fundamental form in $L^2(D^2)$ and conformal factor $\lambda:=\log{|\p_{x^1} \bP|}\in L^\infty(D^2)$. If $\vec{\Phi}$ is a  constrained-conformal Willmore immersion then $\bP$ is  $C^\infty$.\hfill $\Box$
\end{Th}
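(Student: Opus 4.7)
The plan is to adapt Rivière's regularity theory for Euclidean Willmore surfaces to the Riemannian setting, using Corollary \ref{co:ConsConfW} as the starting point. The first step is to convert the conservative equation \eqref{eq:ConsWconf} into an equivalent first-order system of conservation laws on the disk $D^2$. Since the left-hand side is already a divergence and the right-hand side (the curvature endomorphisms $\tilde R(\vec H)$ and $R^\perp_{\vec{\Phi}}(T\vec{\Phi})$, plus the constraint term $\Im(q,\vec h_0)_{WP}$) can be reorganized into divergence form modulo smooth terms by exploiting the holomorphy of $q$ and the smoothness of the ambient Riemann tensor, the Poincaré lemma on the simply connected disk produces potentials $\vec R$ and $S$ in $W^{1,2}$ whose existence is the content of Theorem \ref{th:ConfWill}.

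The key analytic step is that the system satisfied by $(\vec R, S, \vec\Phi)$ is of Wente type: its principal part consists of Jacobians of $W^{1,2}$ functions of the form $\nabla\vec\Phi\cdot\nabla^\perp\vec R$ and $\nabla\vec\Phi\cdot\nabla^\perp S$, while the corrections coming from the ambient curvature and from the holomorphic quadratic differential $q$ are subcritical perturbations. By the Coifman-Lions-Meyer-Semmes theorem these Jacobians belong to the Hardy space $\mathcal H^1$, and Wente's inequality then yields $\vec R, S \in L^\infty\cap W^{1,(2,1)}$ with improved Lorentz-space integrability. Inserting this information back into the conservation laws upgrades the mean curvature $\vec H$ from $L^2$ to $L^{2+\delta}$ for some $\delta>0$, which is the crucial gain above the critical exponent.

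Once this initial gain is achieved, a standard bootstrap closes the argument: the Liouville-type equation governing the conformal factor $\lambda$ (whose Laplacian is controlled by the Gauss curvature, hence essentially by $|\vec H|^2+|\vec h_0|^2$) becomes subcritical, so $\lambda\in C^{0,\alpha}$; elliptic regularity applied to the Willmore system then propagates $C^{k,\alpha}$ regularity to $\vec\Phi$ for every $k$, yielding $\vec\Phi\in C^\infty$. The main obstacle is the critical integrability at the first step: every nonlinear term must be identified as either a genuine Jacobian of $W^{1,2}$ functions or a perturbation lying in a subcritical space. This is where the hypothesis that $\vec\Phi(D^2)$ lies in a sufficiently small open subset of $M$ is essential — it makes the ambient metric close to Euclidean in local coordinates and allows the curvature corrections $\tilde R(\vec H)$ and $R^\perp_{\vec\Phi}(T\vec\Phi)$ to be absorbed as small perturbations of the flat Wente system. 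The constraint term $\Im(q,\vec h_0)_{WP}$ is delicate because $\vec h_0$ is only $L^2$, but since $q$ is holomorphic (hence smooth, by elliptic regularity on the Riemann surface) the term has enough structure to be rewritten as a further Jacobian plus smooth error, so it does not spoil the Wente estimate.
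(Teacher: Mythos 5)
Your overall scheme is the right one — pass from Corollary~\ref{co:ConsConfW} to the system of conservation laws of Theorem~\ref{th:ConfWill}, build potentials, obtain a Wente-type system, and bootstrap — but there is a genuine gap at the construction of the potentials, and it is precisely the point where the Riemannian case departs from the Euclidean one. You assert that ``the Poincar\'e lemma on the simply connected disk produces potentials $\vec{R}$ and $S$ in $W^{1,2}$.'' This fails for two reasons. First, the conservation laws~\eqref{eq:SystConfWill} are \emph{covariant}: (Sys-3) reads $\Im[\D_{\bar z}\bY]=-e^{2\lambda}(\dots)$ with $\D_{\bar z}$ the pull-back Levi-Civita connection, not $\p_{\bar z}$. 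The Poincar\'e lemma applies to closed forms for the flat exterior derivative; it does not produce a primitive $\bL$ with $\D_z\bL=\bY$. The paper instead solves the covariant equation $\D_z\bL=\bY$ directly (Lemma~\ref{Lm:RS}(i)), via a fixed point for the convolution operator $-\frac{1}{\pi\bar z}\ast$ in $L^{2,\infty}(\C)$ (Lemmas~\ref{lemmaA2} and~\ref{lemmaA2bis}); this is where the smallness hypothesis on the image is used — to make the Christoffel-symbol coefficients $\gamma^j_k$ small enough for the contraction. Second, and more subtly, (Sys-3) controls only the \emph{imaginary part} of $\D_{\bar z}\bY$, and that right-hand side is merely in $L^1$ (it involves $e^{2\lambda}|\bH|^2$-type terms). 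Consequently the potential $\bL$ is only $L^{2,\infty}$, and $\bR$, $S$ are only $W^{1,(2,\infty)}$ — not $W^{1,2}$ — while the extra information is that their \emph{imaginary parts} gain a full derivative: $\nabla^2(\Im\bR),\nabla^2(\Im S)\in L^q$ for $1<q<2$. The potentials are genuinely complex-valued here; in the Euclidean case they are real and the imaginary parts vanish, which is exactly why a direct Poincar\'e argument works there.

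This regularity asymmetry between $\Re$ and $\Im$ is not a cosmetic point: it is what makes the subsequent Wente analysis close. The system~\eqref{eq:SystPRS} derived for $(\Re\bR,\Re S)$ has Jacobian leading terms of the mixed type $L^2\times L^{2,\infty}$ (hence one must invoke Bethuel's $L^{2,\infty}$-refinement of the Coifman--Lions--Meyer--Semmes estimate, not plain Wente) plus error terms $\tilde F,\tilde G\in L^q$, $q<2$, which come precisely from the controlled imaginary parts and from the commutators $[\D_{\p_{x_1}\bP},\D_{\p_{x_2}\bP}]$ that the ambient curvature introduces. With your claimed $W^{1,2}$ potentials these error terms would not be visible and you would miss the need for the Morrey-decay/Adams argument of Step~2 of the paper's proof (which upgrades $\nabla(\Re\bR,\Re S)$ from $L^2_{loc}$ to $L^p_{loc}$ for some $p>2$ before recovering $\bH\in L^p_{loc}$). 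Your endgame (Liouville equation for $\lambda$, then elliptic bootstrap) is acceptable, but the crucial and genuinely new ingredient — the construction of complex potentials via $\D_z$ rather than $\p_z$ and the control of their imaginary parts — is missing from your proposal.
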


\begin{Rm}
As the reader will see, the proof of the regularity is not just a straighforward adaptation of the Euclidean one. Indeed in the euclidean case $\vec{R}$ and $S$ were real valued and their existence was ensured by a direct application of Poicar\'e Lemma. Here the curvature terms make the situation more delicate. Indeed $\vec{R}$ and $S$, which now are complex valued, are  constructed  using the $D_z$ and $\p_z$ operators (see Lemma \ref{Lm:eqPRS}), and their construction makes use of singular integrals and Fourier analysis (see the Appendix). Notice that, in case of null curvature, the imaginary parts of $\vec{R}$ and $S$ vanish and the two, a priori different, constructions coincide. Therefore our construction is canonical and has a geometric, beside analytic, meaning.   \hfill $\Box$
\end{Rm}  

\begin{Rm}
The regularity issues regarding minimizers of $L^2$ curvature functionals in 3-dimensional riemannian manifolds have been studied also in \cite{KMS} using techniques from \cite{SiL}. Beside the fact that here we deal with  higher codimensions, the real advantage of this new approach is that  it permits to infer that \emph{any weak solution} to the equation is smooth, while in the former the regularity crucially used the minimality property. Therefore our new approach is more flexible and it is suitable for studying existence of more general critical points of \emph{saddle} type.\hfill $\Box$
\end{Rm}

\begin{Rm}
Since the difference between the Willmore equation and the Euler Lagrange equations of the other $L^2$ curvature functionals $F$ and $W_{conf}$ (also under area or conformal type constraint) is made of  subcritical terms, the Regularity Theorem  \ref{Th:Regularity} applies to them as well.\hfill $\Box$
\end{Rm}

\medskip

Another  application of the conservative form of the equation is the following. Recall that an immersion is called \emph{conformal Willmore} if it is a critical point of the conformal Willmore functional $W_{conf}$ defined in \eqref{def:Wconf}, and is called \emph{constraint-conformal conformal Willmore} if it is a critical point of $W_{conf}$ under the constraint of fixed conformal class. Notice that, since by the Uniformization Theorem there is just one smooth conformal class on $\Sp^2$, the two notions coincide for smooth immersions of $\Sp^2$.
Recall also that a smooth immersion $\bP:\Sigma \hookrightarrow M^m$ of the surface $\Sigma$ has \emph{parallel mean curvature} if the normal projection of the covariant derivative of the mean curvature $\bH$ with respect to tangent vectors to $\bP$ is null: 
\be\label{def:ParMC}
\pi_{\bn}(D \bH)=0.
\ee 
Observe that in codimension one a surface  has  parallel mean curvature if and only if it has constant mean curvature, i.e. it is a CMC surface. 

In Section \ref{Sec:PMC-CCCW}, we prove the following Proposition (the analogous proposition for immersions in the Euclidean space appears in \cite{Riv4}) which ensures abundance of constraint-conformal conformal Willmore surfaces in space forms. Since, as explained above, the conformal constraint for smooth immersions of a 2-sphere is trivial, the proposition ensures also  abundance of Conformal Willmore spheres in space forms. 

\begin{Prop}\label{Prop:PMC<CCCW}
Let $(M^m,h)$ be an $m$-dimensional Riemannian manifold of constant sectional curvature $\bar{K}$ and let $\bP:\Sigma \hookrightarrow M^m$  be a smooth immersion of the smooth  surface $\Sigma$. 

If $\bP$ has parallel mean curvature then $\bP$ is constraint-conformal conformal Willmore. \hfill $\Box$
\end{Prop}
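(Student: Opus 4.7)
My plan is to construct, given $\vec{\Phi}$ with $\pi_{\vec n}(D\vec H)=0$, an explicit holomorphic quadratic differential $q\in Q(c)$ for which the constraint-conformal Euler--Lagrange equation of $W_{conf}$ holds. The first step is to simplify this equation in constant curvature. From $\mathrm{Riem}^h(X,Y)Z = \bar K(\langle Y,Z\rangle X - \langle X,Z\rangle Y)$, together with the normality of $\vec H$ and the tangency of $\vec e_1,\vec e_2$, one reads off $\tilde R(\vec H) = -2\bar K\vec H$ and $R^\perp_{\vec{\Phi}}(T\vec{\Phi})=0$. Since in constant curvature $W_{conf}(\vec{\Phi}) = W(\vec{\Phi}) + \bar K\,\mathrm{Area}(\vec{\Phi})$ and the first variation of area is $-2\vec H$, the $-2\bar K\vec H$ contribution from varying area cancels exactly the curvature term $-\tilde R(\vec H) = 2\bar K\vec H$ of the Willmore part. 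Combining Theorem~\ref{thm:ConsGen} with the analog of Corollary~\ref{co:ConsConfW} for $W_{conf}$ described in the Remarks, the constraint-conformal conformal Willmore equation in a space form reduces to
\[
\Delta_\perp\vec H + \tilde A(\vec H) - 2|\vec H|^2\vec H \;=\; \Im(q,\vec h_0)_{WP}
\]
for some holomorphic $q$.

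The hypothesis $D^\perp\vec H=0$ kills $\Delta_\perp\vec H$, so it remains to exhibit $q$ with $\tilde A(\vec H) - 2|\vec H|^2\vec H = \Im(q,\vec h_0)_{WP}$. Writing $\vec H_0 = \vec A + i\vec B$ with real normal sections and using $\vec{\mathbb I}(\vec e_1,\vec e_1) = \vec H+\vec A$, $\vec{\mathbb I}(\vec e_2,\vec e_2) = \vec H - \vec A$, $\vec{\mathbb I}(\vec e_1,\vec e_2) = -\vec B$, a direct expansion of $\tilde A(\vec H)$ gives
\[
\tilde A(\vec H) - 2|\vec H|^2\vec H \;=\; 2\bigl(\vec A\,\langle\vec A,\vec H\rangle + \vec B\,\langle\vec B,\vec H\rangle\bigr) \;=\; 2\,\mathrm{Re}\!\left[\vec H_0\,\overline{\langle\vec H_0,\vec H\rangle}\right]\!.
\]
Comparing with $\Im(q,\vec h_0)_{WP} = e^{-2\lambda}\Im[f\,\overline{\vec H_0}]$ for $q = f\,dz\otimes dz$, equality holds pointwise precisely when $f = 2ie^{2\lambda}\langle\vec H_0,\vec H\rangle$, that is
\[
q \;:=\; 2i\,\langle\vec h_0,\vec H\rangle.
\]

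The decisive step, and the only nontrivial one, is verifying that this $q$ is a holomorphic quadratic differential. Because the construction is tensorial, it suffices in a conformal chart to check $\partial_{\bar z}\langle\vec h_0(z),\vec H\rangle = 0$ with $\vec h_0(z) = 2\vec{\mathbb I}(\partial_z\vec{\Phi},\partial_z\vec{\Phi})$. Using that $h$ is covariantly constant and that both $\vec h_0(z)$ and $\vec H$ are normal (so only normal projections of the ambient derivatives contribute to inner products with them),
\[
\partial_{\bar z}\langle\vec h_0(z),\vec H\rangle \;=\; \langle D^\perp_{\partial_{\bar z}}\vec h_0(z),\vec H\rangle + \langle\vec h_0(z),D^\perp_{\partial_{\bar z}}\vec H\rangle,
\]
and the second summand vanishes by parallel mean curvature. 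For the first, note that in constant sectional curvature $\pi_{\vec n}[\mathrm{Riem}^h(X,Y)Z]=0$ for tangent $X,Y,Z$, so the Codazzi--Mainardi identity reduces to the total symmetry of $D^\perp\vec{\mathbb I}$ in its three arguments. In conformal coordinates on $D^2$ the only nonzero Christoffel symbol among those arising here is $\nabla_{\partial_z}\partial_z = 2\partial_z\lambda\,\partial_z$, so applying this symmetry to $(\partial_{\bar z},\partial_z,\partial_z)$ yields
\[
D^\perp_{\partial_{\bar z}}\vec{\mathbb I}(\partial_z,\partial_z) \;=\; D^\perp_{\partial_z}\vec{\mathbb I}(\partial_{\bar z},\partial_z) - 2\partial_z\lambda\,\vec{\mathbb I}(\partial_{\bar z},\partial_z).
\]
Substituting $\vec{\mathbb I}(\partial_{\bar z},\partial_z) = \tfrac12 e^{2\lambda}\vec H$ and invoking $D^\perp\vec H = 0$, both terms on the right evaluate to $e^{2\lambda}\partial_z\lambda\,\vec H$ with opposite signs and cancel, so $D^\perp_{\partial_{\bar z}}\vec h_0(z) = 0$. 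Hence $\partial_{\bar z}\langle\vec h_0(z),\vec H\rangle = 0$ and $q$ is holomorphic. The main obstacle is isolating the correct Ansatz $q = 2i\,\langle\vec h_0,\vec H\rangle$; once found, holomorphicity follows from a short Codazzi computation in isothermal coordinates which uses both the space-form and parallel-mean-curvature hypotheses in an essential way.
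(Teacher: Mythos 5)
Your proof is correct and follows essentially the same approach as the paper: both proofs identify the same holomorphic quadratic differential $q = 2i\, e^{2\lambda}\langle\vec H,\vec H_0\rangle\,dz\otimes dz$ and establish its holomorphicity from the Codazzi--Mainardi identity combined with the parallel-mean-curvature and constant-curvature hypotheses (the curvature term in Codazzi being exactly the one that vanishes in a space form). The only minor difference is that you verify the Euler--Lagrange equation in its classical non-conservative form (using Theorem~\ref{thm:ConsGen} and the identity $\tilde A(\vec H)-2|\vec H|^2\vec H = 2\Re\bigl(\langle\vec H,\vec H_0\rangle\overline{\vec H_0}\bigr)$, which is \eqref{z-VI.210} in the paper), whereas the paper manipulates the conservative form \eqref{eq:CCCWeqConf} directly.
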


The assumption on the sectional curvature is not trivial, indeed combining results of \cite{PX} and \cite{Mon2} we get the following rigidity theorem:

\begin{Th}{\bf [Rigidity for Willmore]}\label{Th:rigidity}
Let $(M^3,h)$ be a compact 3-dimensional Riemannian manifold with constant scalar curvature. Then $M$ has constant sectional curvature if and only if every smooth constant mean curvature sphere is conformal Willmore.\hfill $\Box$
\end{Th}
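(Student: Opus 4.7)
The plan is to treat the two directions separately and to reduce the theorem to the combination of Proposition~\ref{Prop:PMC<CCCW} with the known Pacard--Xu construction of small CMC spheres \cite{PX} and the asymptotic analysis carried out in \cite{Mon2}.

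For the direction ``constant sectional curvature $\Rightarrow$ every smooth CMC sphere is conformal Willmore'', the argument is immediate. Let $\bP:\Sp^2 \hookrightarrow M^3$ be a smooth CMC immersion. In codimension one, constancy of the scalar mean curvature is equivalent to $\pi_{\bn}(D\bH)=0$, i.e.\ to the parallel mean curvature condition \eqref{def:ParMC}. Since $(M^3,h)$ has constant sectional curvature, Proposition~\ref{Prop:PMC<CCCW} gives that $\bP$ is constraint-conformal conformal Willmore. By the Uniformization Theorem there is a unique smooth conformal structure on $\Sp^2$, so the conformal constraint is vacuous and $\bP$ is conformal Willmore in the unconstrained sense.

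For the converse, assume that every smooth CMC sphere of $(M^3,h)$ is conformal Willmore and fix an arbitrary $p \in M^3$. By the perturbative result of Pacard--Xu \cite{PX}, for every sufficiently small $\rho>0$ there exists a smooth CMC sphere $\Sigma_\rho(p)$ obtained as a small normal graph over the geodesic sphere $\p B_\rho(p)$, with mean curvature of order $\rho^{-1}$. By hypothesis each $\Sigma_\rho(p)$ is conformal Willmore. In \cite{Mon2} the conformal Willmore equation is expanded on such perturbed geodesic spheres as $\rho \to 0$, and the leading-order obstruction is shown to be precisely the traceless Ricci tensor of $h$ at $p$. The assumption thus forces $\mathrm{Ric}_h - \tfrac{R}{3}\,h = 0$ at every $p$, i.e.\ $h$ is Einstein. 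Combined with the constant scalar curvature hypothesis (and the fact that in dimension three the Weyl tensor vanishes identically, so the full curvature tensor is algebraically determined by the Ricci tensor), this yields that $(M^3,h)$ has constant sectional curvature.

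The main obstacle in this plan is entirely packaged in the asymptotic expansion of \cite{Mon2}: one must carry out the expansion of the Pacard--Xu CMC perturbation and of the integrand $|\bH|^2+\bar K(T\bP)$ in geodesic normal coordinates centered at $p$ to a high enough order to isolate the traceless Ricci tensor at $p$ as the first non-trivial term in the conformal Willmore equation. Once this local expansion is available, the rigidity follows from the arbitrariness of $p\in M$ together with standard facts on three-dimensional Einstein manifolds; everything else in the argument is bookkeeping.
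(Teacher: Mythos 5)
Your proof of the forward direction (constant sectional curvature $\Rightarrow$ every CMC sphere is conformal Willmore) matches the paper: it is exactly Proposition~\ref{Prop:PMC<CCCW} combined with the triviality of the conformal constraint on $\Sp^2$.

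Your converse, however, contains a genuine gap. You write that, by Pacard--Xu, ``for every sufficiently small $\rho>0$ there exists a smooth CMC sphere $\Sigma_\rho(p)$'' over the geodesic sphere $\p B_\rho(p)$ at an \emph{arbitrary} $p\in M^3$, and you then let $p$ range over $M$ to conclude that the traceless Ricci tensor vanishes everywhere. But Theorem~1.1 of \cite{PX} does not produce CMC spheres at every base point: it only produces them at \emph{critical points} of an auxiliary function $\phi(\cdot,\rho)$ on $M$. In general the set of admissible base points is a small (possibly zero-dimensional) subset of $M$ depending on $\rho$, so you cannot conclude that $S := \mathrm{Ric}_h-\tfrac{R}{3}h$ vanishes at an arbitrary $p$; you only get information at those special points. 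The pointwise argument ``the obstruction vanishes at every $p$, hence $M$ is Einstein'' therefore breaks down.

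The paper's proof circumvents this by arguing by contradiction and making the critical-point structure of $\phi$ do the work. Assuming $M$ is not a space form, one has $\mathfrak{m}:=\max_M\|S\|^2>0$ since in dimension $3$ Einstein manifolds are space forms. Using the $C^k$ expansion $\phi(\cdot,\rho)=\mathrm{Scal}-\rho^2 r+O(\rho^3)$ with $r=-\tfrac{11}{378}\|S\|^2+\tfrac{55}{1134}\mathrm{Scal}_0^2$ (and crucially using the constant scalar curvature hypothesis to kill the $\Delta\mathrm{Scal}$ term and the $\mathrm{Scal}^2$ variation), one shows that for small $\rho$ the \emph{global minimum} $x_\rho$ of $\phi(\cdot,\rho)$ must satisfy $\|S_{x_\rho}\|^2>0$: otherwise $\phi(x_\rho,\rho)$ would strictly exceed $\phi$ at a maximum point of $\|S\|^2$, contradicting minimality. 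A global minimum is a critical point, so Pacard--Xu produces a CMC sphere centered there, and Theorem~1.4 of \cite{Mon2} shows that a perturbed geodesic CMC sphere at a point with $\|S\|^2>0$ cannot be conformal Willmore, giving the contradiction. Your proposal has the right ingredients (\cite{PX}, \cite{Mon2}, traceless Ricci as the obstruction), but it skips the essential step of locating a critical point of $\phi$ where $S\neq 0$, which is where the constant scalar curvature assumption is actually used beyond the trivial observation that Einstein $3$-manifolds are space forms.
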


After having studied the analysis of the Euler Lagrange equation of the mentioned $L^2$ curvature functionals we move to establish existence of minimizers of such functionals.  We will study both \emph{curvature} and \emph{topological} conditions which ensure the existence of a minimizer. 

Before passing to the existence theorems observe that minimizing the Willmore and the other $L^2$ curvature functionals among smooth immersion is of course a-priori an ill posed variational problem.  In \cite{Riv2} (see also \cite{RiCours}), the second author introduced the suitable setting for dealing with minimization problems
whose highest order term is given by the Willmore energy. We now recall the notion of \emph{weak branched immersions with finite total curvature}.

\medskip

By virtue of Nash theorem we can always assume that $M^m$ is isometrically embedded in some euclidian space ${\R}^n$. We first define the Sobolev spaces
from ${\Sp^2}$ into $M^m$ as follows: for any $k\in {\N}$ and $1\le p\le\infty$
\[
W^{k,p}(\Sp^2,M^m):=\lf\{u\in W^{k,p}(\Sp^2,{\R}^n)\ \mbox{ s. t. }\ u(x)\in M^m\ \mbox{ for a.e. }x\in \Sp^2\rg\}\quad.
\]
Now we introduce the space of \emph{possibly branched lipschitz immersions}: a map $\vec{\Phi}\in W^{1,\infty}(\Sp^2,M^m)$ is a 
\emph{possibly branched lipschitz immersion}  if 
\begin{itemize}
\item[i)] there exists $C>1$ such that
\be
\label{I.1}
\forall x\in \Sp^2\quad\quad C^{-1}|d\vec{\Phi}|^2(x)\le |d\vec{\Phi}\wedge d\vec{\Phi}|(x)\le |d\vec{\Phi}|^2(x)
\ee
where the norms of the different tensors have been taken with respect to the standard metric on $\Sp^2$ and with respect to the metric $h$ on $M^m$ and
where $d\vec{\Phi}\wedge d\vec{\Phi}$ is the tensor given in local coordinates on $\Sp^2$ by $$d\vec{\Phi}\wedge d\vec{\Phi}:=2\ \p_{x_1}\vec{\Phi}\wedge\p_{x_2}\vec{\Phi}\ dx_1\wedge dx_2\in \wedge^2T^\ast {\Sp^2}\otimes \wedge^2 T_{\vec{\Phi}(x)}M^m.$$
\item[ii)] There exists at most finitely many points $\{a_1\cdots a_N\}$ such that for any compact $K\subset \Sp^2\setminus \{a_1\cdots a_N\}$
\be
\label{I.2}
ess\inf_{x\in K}|d\vec{\Phi}|(x)>0.
\ee
\end{itemize}
For any {\it possibly branched lipschitz immersion} we can define almost everywhere the {\it Gauss map} 
$$
\vec{n}_{\vec{\Phi}}:=\star_h\frac{\p_{x_1}\vec{\Phi}\wedge\p_{x_2}\vec{\Phi}}{|\p_{x_1}\vec{\Phi}\wedge\p_{x_2}\vec{\Phi}|}\ \in\ \wedge^{m-2} T_{\vec{\Phi}(x)}M^m
$$
where $(x_1,x_2)$ is a local arbitrary choice of coordinates on $\Sp^2$ and $\star_h$ is the standard Hodge operator associated to the metric  $h$ on multi-vectors in $TM$.

With these notations we define
\begin{Dfi}
\label{df-I.1}
A lipschitz map $\vec{\Phi}\in  W^{1,\infty}({\Sp^2},M^m)$ is called ''weak, possibly branched, immersion'' if $\vec{\Phi}$ satisfies
(\ref{I.1}) for some $C\ge 1$, if it satisfies (\ref{I.2}) and if the Gauss map satisfies
\be
\label{I.3}
\int_{\Sp^2}|D\vec{n}_{\vec{\Phi}}|^2\ dvol_g<+\infty
\ee
where $dvol_g$ is the volume form associated to $g:=\vec{\Phi}^\ast h$ the pull-back metric of $h$ by $\vec{\Phi}$ on $\Sp^2$, $D$ denotes the covariant derivative with respect to $h$ and the norm $|D\vec{n}_{\vec{\Phi}}|$ of the tensor $D\vec{n}_{\vec{\Phi}}$ is taken with respect to $g$ on $T^\ast \Sp^2$ and $h$ on $\wedge^{m-2}TM$.
The space of ''weak, possibly branched, immersion'' of $\Sp^2$ into $M^m$ is denoted ${\mathcal F}_{{\Sp^2}}$. \hfill $\Box$
\end{Dfi}

Using M\"uller-Sverak theory of weak isothermic charts (see \cite{MS}) and H\'elein moving frame technique  (see \cite{Hel}) one can prove the following proposition (see \cite{RiCours}).
\begin{Prop}
\label{pr-I.1}
Let $\vec{\Phi}$ be a  weak, possibly branched, immersion of ${\Sp^2}$ into $M^m$ in ${\mathcal F}_{{\Sp^2}}$ then there exists a bilipschitz homeomorphism $\Psi$ of $\Sp^2$
such that $\vec{\Phi}\circ\Psi$ is weakly conformal : it satisfies almost everywhere on $\Sp^2$
\[
\lf\{
\begin{array}{l}
\ds |\p_{x_1}(\vec{\Phi}\circ\Psi)|_h^2=|\p_{x_2}(\vec{\Phi}\circ\Psi)|_h^2\quad\\[5mm]
\ds h(\p_{x_1}(\vec{\Phi}\circ\Psi),\p_{x_2}(\vec{\Phi}\circ\Psi))=0
\end{array}
\rg.
\]
where $(x_1,x_2)$ are local arbitrary conformal coordinates in $\Sp^2$ for the standard metric. Moreover $\vec{\Phi}\circ\Psi$ is in $W^{2,2}\cap W^{1,\infty}(\Sp^2,M^m)$.
\hfill$\Box$
\end{Prop}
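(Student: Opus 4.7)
The plan is to first construct, away from the branch points, a local conformal structure using the Müller--Šverák / Hélein machinery, then to uniformize the resulting Riemann surface structure to the standard sphere, and finally to check that the produced diffeomorphism extends across the branch points to a bilipschitz homeomorphism of $\Sp^2$ with the desired $W^{2,2}\cap W^{1,\infty}$ regularity of the composition.

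First I would localize. Pick any point $p\in\Sp^2\setminus\{a_1,\dots,a_N\}$ and work in a small disc $D$ around $p$ where $|d\vec{\Phi}|$ is bounded away from $0$ (which is possible by (\ref{I.2})) and where the total curvature $\int_D|D\vec{n}_{\vec\Phi}|^2\,dvol_g$ is as small as one likes (possible by (\ref{I.3}) and absolute continuity of the integral). Under this smallness condition, Hélein's moving frame construction provides a Coulomb orthonormal frame of $\vec\Phi_\ast(TD)$ controlled in $W^{1,2}$, and the Müller--Šverák theorem then yields a bilipschitz chart $\psi_p:D\to D^2$ (with bilipschitz constant controlled) such that $\vec\Phi\circ\psi_p^{-1}$ is weakly conformal with conformal factor $\lambda\in L^\infty$ and $\vec\Phi\circ\psi_p^{-1}\in W^{2,2}\cap W^{1,\infty}$. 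The transition maps between any two such charts are conformal diffeomorphisms, so these charts define a smooth complex structure $\mathcal{J}$ on the punctured surface $\Sp^2\setminus\{a_1,\dots,a_N\}$.

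Next I would uniformize. Since $(\Sp^2\setminus\{a_1,\dots,a_N\},\mathcal{J})$ is a finitely punctured sphere, the classical uniformization theorem for Riemann surfaces provides a biholomorphism $\Psi_0:\Sp^2\setminus\{b_1,\dots,b_N\}\to(\Sp^2\setminus\{a_1,\dots,a_N\},\mathcal{J})$ for some points $b_j\in\Sp^2$ (standard conformal structure on the source). By the conformality of $\Psi_0$ with respect to the isothermal charts built above, the composition $\vec\Phi\circ\Psi_0$ is locally weakly conformal in the standard sense on $\Sp^2\setminus\{b_1,\dots,b_N\}$ and lies in $W^{2,2}\cap W^{1,\infty}$ on compact subsets of the punctured sphere.

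The main obstacle, and the delicate step, is to show that $\Psi_0$ extends continuously to a bilipschitz homeomorphism $\Psi$ of the whole $\Sp^2$, sending $b_j$ to $a_j$, and that $\vec\Phi\circ\Psi$ inherits the global $W^{2,2}\cap W^{1,\infty}$ regularity. For this I would study the behaviour of $\vec\Phi\circ\Psi_0$ near each puncture $b_j$: using the finiteness of the total curvature and a Huber--type / point-removability argument for conformal immersions with $L^2$ second fundamental form (together with the non-collapsing condition coming from (\ref{I.1}) and the area bound), one shows that in an isothermal disc around $b_j$ the conformal factor $\lambda$ satisfies $\lambda(z)=\alpha_j\log|z|+O(1)$ for some integer $\alpha_j\ge 0$, and in the unbranched case $\alpha_j=0$ the map extends across as a bilipschitz $W^{2,2}$ chart. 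Because of the Lipschitz bound (\ref{I.1}) imposed on $\vec\Phi$ (which forbids actual conformal blow-up), one gets $\alpha_j=0$ at every puncture, hence the extension is bilipschitz. Finally, the global $W^{2,2}$ regularity of $\vec\Phi\circ\Psi$ follows from the identity $\Delta(\vec\Phi\circ\Psi)=2\,e^{2\lambda}\,\vec H$ valid in isothermal coordinates on $\Sp^2$, together with $\lambda\in L^\infty$ and the $L^2$ bound on $\vec H$ coming from (\ref{I.3}).
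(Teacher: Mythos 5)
Your overall strategy (Hélein moving frames plus Müller--Šverák isothermal charts away from the bad set, then uniformization of the resulting Riemann sphere) is the one the paper invokes, since it cites \cite{MS}, \cite{Hel} and \cite{RiCours} for this proposition. However there are two genuine problems with the way you handle the punctures, and one of them is a definite error.

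The claim that the Lipschitz bound \eqref{I.1} forbids conformal blow-up and forces $\alpha_j=0$ at every puncture is false. The proposition is stated for \emph{possibly branched} immersions, and a branch point is precisely a puncture at which the conformal factor of $\vec\Phi\circ\Psi$ tends to $-\infty$. Take $\vec\Phi(z)=(\Re(z^2),\Im(z^2),0)$ near a branch: one has $e^\lambda=2|z|$, i.e.\ $\alpha_j=1>0$, yet $\vec\Phi$ is Lipschitz and satisfies \eqref{I.1} with $C=1$. Condition \eqref{I.1} only controls the \emph{ratio} of singular values of $d\vec\Phi$, so it bounds $\lambda$ from above (hence $\alpha_j\ge 0$), but certainly does not prevent $|d\vec\Phi|\to 0$. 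Your step (b) is therefore wrong, and since you used $\alpha_j=0$ as the lever to obtain both the bilipschitz extension of $\Psi$ and the $W^{2,2}\cap W^{1,\infty}$ regularity of $\vec\Phi\circ\Psi$, the conclusion is unsupported. The bilipschitzness of $\Psi$ near a branch point has to come from the finite-total-curvature hypothesis \eqref{I.3} through the Müller--Šverák estimate on the isothermal chart — not from showing that branching is absent. Note also that $W^{1,\infty}$ of the composition does not require $\lambda\in L^\infty$: $\lambda$ bounded above suffices, and indeed at a branch point $\lambda$ is unbounded below.

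There is also an ordering problem in the uniformization step: you invoke the uniformization theorem to produce a biholomorphism between \emph{punctured} spheres, but the uniformization theorem does not give this directly (for $N\ge 3$ punctures the universal cover is the disc, not a punctured round sphere). What one actually does is first extend the conformal structure $\mathcal{J}$ across the punctures using a Huber-type removability argument (this is where the $L^2$ control on $D\vec n_{\vec\Phi}$ enters), obtaining a complex structure on all of $\Sp^2$; then by uniqueness of the complex structure on the sphere one gets a global biholomorphism $\Psi_0:\Sp^2\to(\Sp^2,\mathcal J)$, and only afterward does one check that this $\Psi_0$ is bilipschitz and that $\vec\Phi\circ\Psi_0$ has the stated regularity. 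In your write-up the removability step is placed after the uniformization step, which does not parse.
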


\medskip

\begin{Rm}
\label{rm-I.1}
In view of  Proposition~\ref{pr-I.1} a careful reader could wonder why we do not work  with \emph{conformal} $W^{2,2}$ weak, possibly branched, immersion only and why we
do not impose for the membership in ${\mathcal F}_{{\Sp^2}}$, $\vec{\Phi}$ to be conformal from the begining. The reason why this would be a wrong strategy and why
we have to keep the flexibility
for weak immersions not to be necessarily conformal is clear in the proof of the existence theorems, Section \ref{Sec:Existence} and in the Appendix where we will study  the variations of the functionals under general perturbations which do not have to respect
infinitesimally the conformal condition.  \hfill $\Box$
\end{Rm}
\medskip

Now that we have introduced the right framework we pass to discuss the existence theorems. 
\\Fix a point $\bar p\in M^m$  and a $3$-dimensional subspace ${\frak S}< T_{\bar p} M$ of the tangent space to $M$ at $\bar p$. We denote
\be \label{eq:defRcalT}
R_{\bar p}({\frak S}):= \sum_{i\neq j, i,j=1,2,3} \bar K_{\bar p} \left(\bE_i, \bE_j\right) 
\ee 
where $\{\bE_1,\bE_2,\bE_3\}$ is an orthonormal basis of ${\frak S}$ and $\bar K_{\bar p} (\bE_i, \bE_j)$ denotes the sectional curvature of $(M,h)$ computed on the plane spanned by $(\bE_i,\bE_j)$ contained in $T_{\bar p} M$. Notice that $R_{\bar p}({\frak S})$ coincides with the scalar curvature at $\bar p$ of the 3-dimensional submanifold of $M$ obtained exponentiating ${\frak S}$. Under a condition on $R_{\bar p}({\frak S})$, in the following theorem we minimize the functional $F_1$ defined on ${\cal F}$ as 
\be\label{def:F1}
F_1(\bP):= \int_{\Sp^2} \left( \frac{1}{2}|{\mathbb I}|^2+1 \right) dvol_g = F(\bP)+A(\bP).
\ee

\begin{Th}\label{TeoExWillCurv}
Let $(M^m,h)$ be a compact Riemannian manifold and assume there is a point $\bar p$ and a $3$-dimensional subspace ${\frak S}< T_{\bar p} M$ such that $R_{\bar p}({\frak S})>6$, where $R_{\bar p}({\frak S})$ is the curvature quantity defined in \eqref{eq:defRcalT}. Then there exists a branched conformal immersion $\bP$ of $\Sp^2$ into $(M^m,h)$ with finitely many branched points $b^1,\ldots,b^{N}$, smooth on $\Sp^2\setminus\{b^1,\ldots,b^{N}\}$, minimizing the functional $F_1$ in ${\mathcal F}_{\Sp^2}$, i.e. among weak branched immersions with finite total curvature. \hfill $\Box$
\end{Th}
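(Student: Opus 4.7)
The plan is to apply the direct method in the calculus of variations. The curvature hypothesis $R_{\bar p}(\mathfrak S)>6$ will enter twice: first to produce a cheap comparison surface, giving the strict inequality $\inf_{\mathcal F_{\Sp^2}} F_1 < 4\pi$, and second, through that bound, to rule out bubbling along a minimizing sequence.

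First I would establish the upper bound by choosing as test surface the geodesic $2$-sphere $S_\rho$ of radius $\rho$ at $\bar p$ inside $N:=\exp_{\bar p}(\mathfrak S)\subset M^m$. Combining the Gauss identity $|{\mathbb I}|^2 = 4|\bH|^2 - 2K_g + 2\bar K(T\bP)$, Gauss--Bonnet, the standard expansion $\int_{S_\rho}|\bH|^2\,dA = 4\pi - \tfrac{2\pi}{3}R_{\bar p}(\mathfrak S)\rho^2 + O(\rho^4)$, and the corresponding $\int_{S_\rho}\bar K(T\bP)\,dA = \tfrac{2\pi}{3}R_{\bar p}(\mathfrak S)\rho^2 + O(\rho^4)$ (readily obtained from $\int_{S^2}n_i n_j\,d\sigma = \tfrac{4\pi}{3}\delta_{ij}$ together with $\bar K(x^\perp) = \tfrac{1}{2}\operatorname{Scal} - \operatorname{Ric}(x,x)$ in dimension~3), one gets
\[
F_1(S_\rho) \;=\; 4\pi \;+\; \tfrac{2\pi}{3}\bigl(6 - R_{\bar p}(\mathfrak S)\bigr)\rho^2 \;+\; O(\rho^4),
\]
so that $R_{\bar p}(\mathfrak S)>6$ and $\rho$ small yield $\beta_0 := \inf_{\mathcal F_{\Sp^2}} F_1 < 4\pi$.

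Next, I would take a minimizing sequence $\bP_n \in \mathcal F_{\Sp^2}$ with $F_1(\bP_n)\to\beta_0$. By Proposition~\ref{pr-I.1} one may reparametrize so that each $\bP_n$ is weakly conformal and lies in $W^{2,2}\cap W^{1,\infty}(\Sp^2,M^m)$. Since the conformal group of $\Sp^2$ is non-compact, I would normalize by precomposing with M\"obius transformations of $\Sp^2$ so that the measures $|d\bn_{\bP_n}|^2\,dvol_{g_n}$ do not concentrate at a single point (three-point normalization). Uniform bounds on these measures and on the area, combined with the M\"uller--Sverak weak isothermic charts and H\'elein's moving frames invoked in Proposition~\ref{pr-I.1}, yield uniform $W^{2,2}\cap W^{1,\infty}$ control on $\bP_n$ away from a finite set $\{a^1,\dots,a^Q\}\subset\Sp^2$ of possible concentration points. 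The main obstacle is to rule out a non-trivial bubble at any $a^i$: a M\"obius blow-up centered at $a^i$, rescaled to the concentration scale, would produce in the limit a non-constant weak conformal branched immersion $\bP^{\mathrm{bub}}:\Sp^2 \to T_{\bP_\infty(a^i)} M^m \simeq \mathbb R^m$ (the ambient curvature disappears in the blow-up), for which the Euclidean lower bound $F(\bP^{\mathrm{bub}}) = \tfrac{1}{2}\int|{\mathbb I}_{\bP^{\mathrm{bub}}}|^2 \ge 4\pi$ (an easy consequence of the Li--Yau inequality on the $2$-sphere) applies. A standard no-neck decomposition for the $F$-energy then gives $\limsup_n F(\bP_n)\ge F(\bP_\infty) + F(\bP^{\mathrm{bub}})\ge 4\pi$, contradicting $F(\bP_n)\le F_1(\bP_n)\to\beta_0<4\pi$. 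Hence no bubble can form, and $\bP_n\to\bP_\infty$ strongly in $W^{2,2}$ on $\Sp^2$ minus finitely many possible branch points.

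The limit $\bP_\infty \in \mathcal F_{\Sp^2}$ is weakly conformal (conformality passing to the limit via strong $L^p$ convergence of first derivatives on compact subsets), and by lower semicontinuity of $F_1$ (Fatou on $|{\mathbb I}|^2$ together with convergence of the area), $F_1(\bP_\infty)\le\beta_0$, so $\bP_\infty$ is a minimizer in $\mathcal F_{\Sp^2}$. Being a critical point of $F_1 = F + A$, it satisfies an Euler--Lagrange equation that differs from the Willmore equation only by lower-order terms (the $2\bH$ arising from the first variation of the area together with the sectional-curvature terms of $F-2W$), and hence can be cast in the conservative form of Corollary~\ref{co:ConsWillGen} modulo such additional lower-order terms. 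Theorem~\ref{Th:Regularity}---which, as indicated by the remark following its statement, applies to critical points of $F_1$ as well---then yields $\bP_\infty \in C^\infty(\Sp^2\setminus\{b^1,\dots,b^N\})$ for a finite set of branch points, completing the proof.
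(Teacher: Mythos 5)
Your overall strategy mirrors the paper's: produce a comparison surface giving $\inf_{\mathcal F_{\Sp^2}}F_1<4\pi$, run the direct method, identify the Euler--Lagrange equation, and invoke the regularity theory. The upper-bound computation via the geodesic sphere $S^{\frak S}_{\bar p,\rho}\subset\exp_{\bar p}(\frak S)$ is correct and matches Lemma~\ref{Lm:ExpWE}, and your final paragraph (identification of $dF_1$ and appeal to Theorem~\ref{Th:Regularity} with subcritical extra terms) is the same as the paper's conclusion.

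Where you genuinely diverge is in the compactness step, and here your route is both heavier than necessary and contains a real gap. You set out to \emph{rule out bubbling} by a blow-up producing a branched Willmore sphere in $\R^m$, then invoke Li--Yau plus ``a standard no-neck decomposition for the $F$-energy'' to get a contradiction with $\beta_0<4\pi$. Energy quantization with no-neck property for these $L^2$-curvature functionals is a hard theorem (it is the subject of \cite{BRQuant} for $W$); calling it ``standard'' and applying it to $F$ without reference is a serious elision. The paper sidesteps all of this: it never rules out bubbling. Instead it uses the strict inequality $\inf F_1<4\pi$ together with the monotonicity formula (Proposition~\ref{prop:LBdiamEa}) to obtain a uniform \emph{lower bound on the diameter} of the minimizing sequence --- a step you omit entirely, and which is what licenses applying the Good Gauge Extraction Lemma of \cite{MoRi1} to obtain uniform conformal-factor control away from finitely many points. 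From there the paper only needs weak $W^{2,2}$ compactness, lower semicontinuity of $\int(\tfrac12|\mathbb I|^2+1)\,dvol_g$ (Lemma~\ref{lem:LSC}), and the point-removability/extension Lemma~A.5 of \cite{Riv2} to conclude $\bP_\infty\in\mathcal F_{\Sp^2}$ with $F_1(\bP_\infty)\leq\beta_0$. Since $\beta_0$ is the infimum over $\mathcal F_{\Sp^2}$, the limit automatically achieves it; no-bubbling need not be proved separately, it is a free by-product.

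So: either explicitly justify the no-neck property for $F$, or, better, replace that whole block by the paper's simpler scheme (diameter lower bound $\Rightarrow$ Good Gauge Extraction $\Rightarrow$ weak compactness $\Rightarrow$ lower semicontinuity $\Rightarrow$ extension across the bad points). You should also state and use the lower diameter bound explicitly --- without it, the minimizing sequence could collapse to a point before any gauge extraction is possible --- and spell out that $\bP_\infty$ is extended to a genuine element of $\mathcal F_{\Sp^2}$ (branched at a subset of the concentration points) before claiming $F_1(\bP_\infty)\leq\beta_0\leq F_1(\bP_\infty)$.
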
    
Observe that the unit round $m$-dimensional sphere $\Sp^m$ with canonical metric has $R_{\bar p}({\frak S})\equiv 6$ for any base point $\bar{p}$ and any subspace ${\frak S}$, so the assumption is that our ambient manifold has at least one point $\bar{p}$ and at least three directions spanning ${\frak S}$ where the manifold is ''more positively curved'' than  $\Sp^m$.
Let us make a remark about the regularity in the branch points.

\begin{Rm}
The removability of point singularities for Willmore surfaces in Euclidean space has been studied in \cite{KS}, \cite{KS2} and \cite{Riv1}; recently  Y. Bernard and the second author, in \cite{BRRem}, proved that the parametrization is smooth also in the branch points if two residues vanish. Analogous statements should hold for branched Willmore immersions in manifolds. 
\hfill $\Box$
\end{Rm}
\begin{Rm}
\label{rm-branch}
It is always possible to minimize $F_1$ by forcing the immersion to pass through a fixed family of points. For an arbitrary choice of points sufficiently close to the minimizers 
we found in theorem~\ref{TeoExWillCurv}, this should generate a Willmore sphere passing through these points but satisfying the Willmore equation only away from these points. Since in the variational argument these points
cannot be moved the corresponding residues obtained in \cite{KS}, \cite{Riv1} and \cite{BRRem} have no reason to vanish and the conformal parametrization $\vec{\Phi}$ of a minimizer
should be at most $C^{1,\al}$ in general. This should contrast presumably with the situation at  the branched points of the minimizers obtained in theorem~\ref{TeoExWillCurv}.
Since these points are left free during the minimization procedure, the first residue $\vec{\gamma}_0$ (see \cite{BRRem}) should vanish and the conformal map $\vec{\Phi}$ should be at least
$C^{2,\al}$ at these points.\hfill $\Box$
\end{Rm} 

Now let us consider the problem of minimizing the functional $F=\int |{\mathbb I}|^2$. In codimension one, E. Kuwert, J. Schygulla and the first author, in \cite{KMS},  proved the existence of a smooth immersion of $\Sp^2$ \emph{without branched points} minimizing the functional $F$ under curvature conditions on the compact ambient 3-manifold (see also \cite{MonSchy} for \emph{non compact} ambient 3-manifolds); notice that the topological argument employed for excluding the branch points crucially depends on the codimension one assumption. Therefore, in higher codimension,  it makes sense to look for minimizers of $F$ among \emph{branched} immersions, as done in the following theorem.   

\begin{Th}\label{TeoExWill}
Let $(M^m,h)$ be a compact Riemannian manifold. Assume there is a minimizing sequence for the  functional $F=\frac{1}{2}\int |{\mathbb I}|^2 $ in  ${\mathcal F}_{\Sp^2}$ ( among weak possibly branched immersions with finite total curvature), $\{\bP_k\}_{k\in \N} \subset {\mathcal F}_{\Sp^2}$,   with area bounded by positive costants from below and above: $$0<\frac{1}{C}\leq A(\bP_k)\leq C<\infty.$$
Then there exists a branched conformal immersion $\bP$ of $\Sp^2$ into $(M^m,h)$ with finitely many branched points $b^1,\ldots,b^{N}$, smooth on $\Sp^2\setminus\{b^1,\ldots,b^{N}\}$, minimizing the  functional $F$ in ${\mathcal F}_{\Sp^2}$, i.e. among weak branched immersions with finite total curvature.\hfill $\Box$
\end{Th}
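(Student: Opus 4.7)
\medskip

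\noindent\textbf{Proof plan.} The strategy is a direct method in the weak class ${\mathcal F}_{\Sp^2}$: I reparametrize a minimizing sequence conformally, perform a bubble--tree analysis to absorb the non--compactness coming from the conformal group of $\Sp^2$ and from energy concentration, assemble the limit into a single weak branched immersion of $\Sp^2$, and finally invoke Theorem~\ref{Th:Regularity} for smoothness away from the branch points.

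Fix a minimizing sequence $\{\vec{\Phi}_k\}\subset{\mathcal F}_{\Sp^2}$ with $F(\vec{\Phi}_k)\to\inf_{{\mathcal F}_{\Sp^2}}F$ and $C^{-1}\le A(\vec{\Phi}_k)\le C$. By Proposition~\ref{pr-I.1} I reparametrize each $\vec{\Phi}_k$ to a weakly conformal map in $W^{2,2}\cap W^{1,\infty}(\Sp^2,M^m)$. Since $\Sp^2$ has a unique conformal class, only the non--compact M\"obius group of $\Sp^2$ remains, which I normalize by a three--point condition on the images. The upper area bound and the uniform bound on $\int|D\vec{n}_{\vec{\Phi}_k}|^2$ (equivalent to $F(\vec{\Phi}_k)$ being bounded, via the Gauss equation $|\mathbb{I}|^2=|D\vec n|^2$ up to lower--order curvature terms) then give a uniform bound of $\vec{\Phi}_k$ in $W^{2,2}\cap W^{1,\infty}$ modulo a finite bad set.

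The defect of compactness is localized via an $\varepsilon$--regularity statement in the spirit of \cite{Riv2}, adapted to the Riemannian ambient through the conservative form of Corollary~\ref{co:ConsWillGen}: there exists $\varepsilon_0>0$ such that on any conformal disc where $\int|D\vec n_{\vec{\Phi}_k}|^2<\varepsilon_0$ one has uniform higher regularity. Hence concentration is confined to finitely many points $\{p_1,\dots,p_L\}\subset\Sp^2$, and up to subsequence $\vec{\Phi}_k\to\vec{\Phi}_\infty$ strongly away from them, with $\vec{\Phi}_\infty\in{\mathcal F}_{\Sp^2}$ possibly branched. At each $p_i$ I blow up by conformal dilations of the domain, exploiting the M\"obius invariance of $F$ up to a curvature correction that vanishes in the scaling limit, and iterate to extract a finite bubble tree $\{\vec{\Psi}^{(i,j)}\}$ of non--trivial maps $\Sp^2\to M^m$ each carrying at least $\varepsilon_0$ of $F$--energy. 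A neck analysis based on the divergence form of the left hand side of \eqref{eq:ConsWGen} (Pohozaev--type identities obtained by testing the conservation laws against conformal vector fields on degenerating annuli) yields the no--neck--energy identity
\be
\lim_{k\to\infty}F(\vec{\Phi}_k)=F(\vec{\Phi}_\infty)+\sum_{i,j}F(\vec{\Psi}^{(i,j)}),
\ee
and the analogous additivity for the area. The lower area bound $A(\vec{\Phi}_k)\ge C^{-1}$ forbids the whole configuration from collapsing to a point, and the upper bound caps the number of bubbles.

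Finally, I assemble the base map and all bubbles into a single $\vec{\Phi}_\star\in{\mathcal F}_{\Sp^2}$ by the connected--families device already used in the paper: each bubble being attached at a single point and the connecting necks carrying vanishing area and curvature in the limit, the tree can be realized as a Lipschitz map of $\Sp^2$ into $M^m$ with finitely many branch points $\{b^1,\dots,b^N\}$, with the same value of $F$ and of $A$. By construction $F(\vec{\Phi}_\star)\le \inf_{{\mathcal F}_{\Sp^2}}F$, so $\vec{\Phi}_\star$ is a minimizer; minimality forces it to satisfy the weak Willmore equation on $\Sp^2\setminus\{b^1,\dots,b^N\}$ (on $\Sp^2$ the conformal--class constraint is vacuous, so constrained--conformal Willmore equals Willmore), and Theorem~\ref{Th:Regularity} applied in local conformal discs yields $C^\infty$--regularity away from the branch points. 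The main obstacle is the no--neck--energy step: since $F$ lacks the area term present in $F_1$, the conformal factor on degenerating necks is not directly controlled, so the Pohozaev argument must exploit the conservative form of the Willmore equation quantitatively, and it is precisely at this point that the two--sided area bound enters decisively.
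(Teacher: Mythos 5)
Your bubble--tree route is genuinely different from the paper's and it has two real gaps that the paper's simpler argument sidesteps entirely. The paper proves Theorem~\ref{TeoExWill} by reducing to the proof of Theorem~\ref{TeoExWillCurv}: the lower area bound $A(\bP_k)\ge 1/C$ together with the monotonicity-type estimate of Lemma~\ref{lem:MonFor} (which gives $A(\bP_k)\le C\,[\diam_M\bP_k(\Sp^2)]^2$) produces a lower diameter bound, and then the uniform bounds on $\int|{\mathbb I}_{\bP_k}|^2$, on the areas and on the diameters allow one to apply the Good Gauge Extraction Lemma of \cite{MoRi1}. After M\"obius normalization one gets $W^{2,2}_{loc}\cap W^{1,\infty}_{loc}$ weak convergence on $\Sp^2\setminus\{a^1,\ldots,a^N\}$ to a conformal limit $\bP_\infty$, which extends across the bad points (Lemma A.5 of \cite{Riv2}) to a weak branched immersion in $\mathcal{F}_{\Sp^2}$; lower semicontinuity (Lemma~\ref{lem:LSC}) then gives $F(\bP_\infty)\le\liminf_k F(\bP_k)=\inf_{\mathcal{F}_{\Sp^2}}F$, hence equality since $\bP_\infty\in\mathcal{F}_{\Sp^2}$, so $\bP_\infty$ is a minimizer. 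No bubble accounting is needed: any energy escaping into bubbles would only make the lower semicontinuity inequality strict, contradicting the definition of the infimum.

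The gaps in your proposal are precisely the two heaviest steps. First, the no-neck-energy identity you invoke for $F$ in a Riemannian ambient is not established anywhere in the paper; a Pohozaev-type argument from the conservative form \eqref{eq:ConsWGen} on degenerating necks is a serious additional piece of analysis, and you flag it as ``the main obstacle'' but do not carry it out. Second, even granting the energy identity, the limiting object the bubble tree produces is a \emph{connected family} of branched spheres glued at points -- the kind of object constructed in Theorem~\ref{Th:ExWillHom} as a Lipschitz map $\vec f$ together with a tuple $(\bP^1,\ldots,\bP^N)$ -- and this is \emph{not} a single element of $\mathcal{F}_{\Sp^2}$. ``Assembling the base map and all bubbles into a single $\vec{\Phi}_\star\in\mathcal{F}_{\Sp^2}$'' with the same $F$-value is a surgery you do not perform and that in general cannot be performed without changing $F$. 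Both difficulties evaporate once one observes that lower semicontinuity together with the diameter lower bound (coming from the two-sided area constraint) already yields a single weak branched immersion achieving the infimum; the smoothness away from the branch points then follows from Lemma~\ref{lem:dW} and Theorem~\ref{Th:Regularity}, exactly as you say.
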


\begin{Rm}
By analogous arguments to the proof of Theorem \ref{TeoExWillCurv}, the lower bound on the area is ensured if $R_{\bar p}({\frak S})>0$ for some point $\bar p$ and $3$-dimensional subspace ${\frak S}< T_{\bar p} M$.

Notice a uniform upper bound on the areas of the minimizing sequence is a crucial information for  compactness issues; moreover generally this  is not a trivial property in view of the possibility of 
totally geodesic laminations (A similar constraint appears in \cite{Mon3}). \hfill $\Box$
\end{Rm}

Up to here we studied existence of minimizeres of curvature functionals under \emph{curvature} conditions on the ambient manifold. Now we move to consider existence of area-constrained Willmore spheres under \emph{topological} conditions on the ambient manifold. 

For any $x_0\in M^m$ we denote respectively by $\pi_2(M^m,x_0)$ the \emph{homotopy groups of based maps} form ${\Sp^2}$
into $M^m$ sending the south pole to $x_0$  and by $\pi_0(C^0({\Sp^2},M^m))$ the  free homotopy classes. It is well known that the group $\pi_2(M^m,x_0)$ for different $x_0$ are isomorphic to each other and $\pi_2(M)$ denotes any of the $\pi_2(M^m,x_0)$ modulo isomorphisms. Recall that, in \cite{SaU},  J. Sacks and K. Uhlenbeck  proceeded to the minimization of the Dirichlet energy 
$$E(\vec{\Phi})=\frac{1}{2}\int_{{\Sp^2}}|d\vec{\Phi}|^2\ dvol_{{\Sp^2}}$$
 among mappings $\vec{\Phi}$ of the two sphere ${\Sp^2}$ into $M^m$ within a fixed based homotopy class in $\pi_2(M^m,x_0)$ in order to generate area minimizing, possibly branched, immersed spheres realizing this homotopy class.
\\Even if the paper had a great impact in mathematics, the program of Sacks and Uhlenbeck
was only partially successful. Indeed the possible loss of compactness arising in the minimization process can generate a union of immersed spheres
realizing the corresponding \emph{free homotopy class} but for which the underlying component in the \emph{homotopy group} $\pi_2(M^m)$ may have been forgotten (for more details see also the Introduction to \cite{MoRi1}). It is very hard in the Sacks Uhlenbeck's work to distinguish  the  classes  which are realized by minimal conformal immersions from the somehow not satisfying classes. At least Sacks and Uhlenbeck could prove that the set of {\it satisfying classes} generates, as a $\pi_1-$module, the homotopy group $\pi_2(M^m)$.

To overcome this difficulty, we minimize a curvature functional - corresponding to $A+W$ in the absence of branched points - under homotopy constraint and  we prove that, even if we still have a bubbling phenomenon, the limit object must be \emph{connected}. More precisely we show  that for every non trivial  2-homotopy group of $M^m$ there is  a canonical representative given by a Lipschitz map from $\Sp^2$ to $M$ realizing the \emph{connected} union of  conformal branched  area-constraint Willmore spheres which are smooth outside the branched points. Notice that this is a natural generalization of Sacks Uhlenbeck's procedure in a sense that, if a class  $\gamma$ in $\pi_2(M^m)$ possesses an area minimizing
immersion $\vec{\Phi}$ then $\vec{H}_{\vec{\Phi}}\equiv 0$, in particular $\vec{\Phi}$ is an area-constraint Willmore sphere minimizing $A+W$ in it's homotopy class.

Before stating the theorem let us recall that for any Lipschitz mapping $\vec{a}$ from $\Sp^2$ into $M^m$, $(\vec{a})_\ast[\Sp^2]$ denotes the current given by the push-forward by $\vec{a}$ of the current of integration over $\Sp^2$ : for any smooth two-form
$\om$ on $M^m$
\[
\lf<(\vec{a})_\ast[\Sp^2],\om\rg>:=\int_{\Sp^2}(\vec{a})^\ast\om.
\] 
Moreover we denote with $[\vec{a}]\in \pi_2(M^m)$ the 2-homotopy class corresponding to the continuous map $\vec{a}:\Sp^2 \to M^m$.
 
\begin{Th}\label{Th:ExWillHom}
Let $(M^m,h)$ be a compact Riemannian manifold and fix  $0\neq \gamma \in \pi_2(M^m)$. Then there exist finitely many branched conformal immersions $\bP^1,\ldots,\bP^N \in {\cal F}_{\Sp^2}$ and a Lipschitz map $\vec{f}\in W^{1,\infty}(\Sp^2,M^m)$ with $[\vec{f}]=\gamma$ satisfying
\begin{eqnarray}
\vec{f}(\Sp^2)&=&\bigcup_{i=1}^N \bP^i(\Sp^2)\quad \text{and } \label{eq:fcupPhi}  \\
\vec{f}_*[\Sp^2]&=&\sum_{i=1}^N \bP^i_*[\Sp^2] \label{eq:fsumPhi}\quad . 
\end{eqnarray}
Moreover for every $i$, the map $\bP_i$ is a conformal branched  area-constrained Willmore immersion which is smooth outside the finitely many branched points $b^1,\ldots,b^{N_i}$. More precisely we mean that, outside the branched points, every $\bP^i$ is a smooth solution to the Willmore equation with the Lagrange multiplier $2\bH$:
\be\label{eq:ConfWillAreaConstraint}
\frac{1}{2} D^{*_g}_g \left[D_g \bH -3 \pi_{\bn} (D_g \bH) + \star_h\left((*_g D_g \bn) \wedge_M \bH \right)  \right] =2\bH+\ti{R}(\bH)-R^\perp_{\vec{\Phi}}(T\vec{\Phi}),
\ee
where $\pi_{\bn}$ is the projection onto the normal space to $\bP$, $\star_h$ and $*_g$ are respectively the Hodge operator on $(M,h)$ and $(\Sp^2,g:=\bP^*h)$; $\ti{R}$ and $R^\perp$ are the curvature endomorphisms defined respectively in \eqref{eq:defR} and \eqref{Rperp}. The operators $D_g$, $D^{*_g}_g, \ldots$  are defined above (see also more explicit expressions in  Section \ref{Sec:ConsWillHCD}). \hfill $\Box$
\end{Th}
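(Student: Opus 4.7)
\medskip

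\noindent\textbf{Proof plan.} The natural variational object to minimize is the functional $E(\vec{\Phi}):=W(\vec{\Phi})+A(\vec{\Phi})$ among weak branched conformal immersions representing the homotopy class $\gamma$. Because $E$ does not control the homotopy class by itself, I would work in the enlarged class of \emph{bubble trees}: tuples $(\vec{\Phi}^1,\ldots,\vec{\Phi}^N;\vec{f})$ with each $\vec{\Phi}^i\in{\cal F}_{\Sp^2}$ and $\vec{f}\in W^{1,\infty}(\Sp^2,M^m)$ a Lipschitz ``gluing'' map satisfying $[\vec{f}]=\gamma$, $\vec{f}(\Sp^2)=\bigcup_i \vec{\Phi}^i(\Sp^2)$, and $\vec{f}_\ast[\Sp^2]=\sum_i\vec{\Phi}^i_\ast[\Sp^2]$, with total energy $\sum_i E(\vec{\Phi}^i)$. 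The infimum is finite (take any smooth representative of $\gamma$, which is a single bubble) and bounded below by a positive constant since $\gamma\neq 0$ forces each admissible $\vec f$ to cover a non-trivial 2-cycle in $M^m$.

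First I would take a minimizing sequence $(\vec{\Phi}^1_k,\ldots,\vec{\Phi}^{N_k}_k;\vec{f}_k)$ and control the number of bubbles $N_k$ via a Li--Yau type lower bound ($E(\vec{\Phi}^i_k)\geq 4\pi-o(1)$ at small scales in the manifold $(M^m,h)$), which forces $N_k\leq N$ uniformly. After reparametrizing each bubble conformally via Proposition~\ref{pr-I.1} and applying suitable M\"obius normalisations, I would invoke the compactness theory for sequences of weak branched conformal immersions with uniformly bounded $\int|\vec{\mathbb I}|^2$ and bounded area (Rivi\`ere's bubbling analysis, adapted to the ambient manifold): on each bubble we get, up to subsequence, either strong $W^{2,2}$ convergence to a limit weak branched immersion $\vec{\Phi}^i_\infty\in {\cal F}_{\Sp^2}$ or further bubbling, iterated finitely many times. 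The key quantization result is the \emph{no-neck-energy} property: on every degenerating neck annulus the Willmore energy and the area concentrate to zero in the limit, so energy passes to the limit.

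Next I would construct the limiting Lipschitz gluing map $\vec{f}_\infty$ by following the degeneration: on compact subregions of $\Sp^2$ the maps $\vec{f}_k$ converge uniformly to one of the $\vec{\Phi}^i_\infty$ (after the relevant reparametrization), while the degenerating necks collapse to continuous curves in $M^m$ which $\vec{f}_\infty$ traces out. Since $\Sp^2$ is connected the image $\vec{f}_\infty(\Sp^2)=\bigcup_i\vec{\Phi}^i_\infty(\Sp^2)$ is automatically connected, which is how \eqref{eq:fcupPhi} is realized. Preservation of the 2-current \eqref{eq:fsumPhi} follows since the collapsing necks contribute zero 2-current in the limit, and preservation of the homotopy class $[\vec{f}_\infty]=\gamma$ follows from continuity of the based-homotopy class under uniform convergence together with the fact that the connecting trees of arcs are null-homotopic contributions in $\pi_2$. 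Thus $(\vec{\Phi}^1_\infty,\ldots,\vec{\Phi}^{N}_\infty;\vec{f}_\infty)$ is a minimizer.

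Finally, for each limit bubble $\vec{\Phi}^i_\infty$ I would derive the Euler--Lagrange equation by testing against smooth normal variations compactly supported away from the branch points of that bubble and away from the other bubbles; such variations are admissible since they extend trivially to the gluing map without altering the homotopy class. The stationarity $\delta W(\vec{\Phi}^i_\infty)+\delta A(\vec{\Phi}^i_\infty)=0$, together with $\delta A(\vec w)=-2\int \bH\cdot\vec w$, combined with Theorem~\ref{thm:ConsGen}, yields exactly \eqref{eq:ConfWillAreaConstraint}. Smoothness of each $\vec{\Phi}^i_\infty$ away from its finitely many branched points then follows from Theorem~\ref{Th:Regularity} applied in local conformal charts, since the $2\bH$ term is subcritical. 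The main obstacle will be the two intertwined compactness issues: proving the no-neck-energy lemma in the curved manifold setting (where Rivi\`ere's Euclidean arguments have to be combined with the curvature remainders $\tilde R(\bH)$ and $R^\perp_{\vec{\Phi}}(T\vec{\Phi})$ from Corollary~\ref{co:ConsWillGen}) and showing that the Lipschitz gluing map $\vec{f}_\infty$ can actually be produced from the bubble tree while maintaining the based homotopy class -- this forces us to keep track of the connecting arcs carefully throughout the limiting procedure, rather than discarding them as in Sacks--Uhlenbeck.
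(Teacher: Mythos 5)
Your overall architecture -- minimize over bubble trees, obtain a limit bubble tree in the same homotopy class, test with compactly supported normal variations away from branch points, then apply Theorem~\ref{Th:Regularity} -- matches the paper's. However, there is a real gap in your choice of functional.

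You propose to minimize $W+A=\int_{\Sp^2}(|\bH|^2+1)\,dvol_g$. The compactness theorem for bubble trees (Theorem~VII.1 of~\cite{MoRi1}) that you plan to invoke requires a uniform bound on $\int|\vec{\mathbb I}|^2$, and you simply assert this is available on the minimizing sequence. But a bound on $\int|\bH|^2$ and on the area does \emph{not} give a bound on $\int|\vec{\mathbb I}|^2$ for weak \emph{branched} immersions: the Gauss equation gives $\frac{1}{2}|\vec{\mathbb I}|^2 = 2|\bH|^2+\bar K(T\bP)-K_{\bP}$, so passing from $W$ to $F$ requires a Gauss--Bonnet identity, and for branched immersions the $\int K_{\bP}\,dvol_g$ term carries branch-point contributions that are not a priori controlled along a $W+A$-minimizing sequence. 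This is exactly why the paper instead minimizes
\[
\LW(\bP)=\int_{\Sp^2}\Bigl(\tfrac14|\vec{\mathbb I}|^2-\tfrac12\bar K(T\bP)+1\Bigr)\,dvol_g,
\]
after rescaling $h$ so that $\bar K\le 1$: this functional is nonnegative and bounds $\tfrac14\int|\vec{\mathbb I}|^2+\tfrac12 A$ directly, so a minimizing sequence automatically satisfies the hypotheses of the bubble-tree compactness theorem. By the Gauss equation the integrand equals $|\bH|^2+1-\tfrac12 K_{\bP}$, and since the Gauss curvature is a null Lagrangian for compactly supported variations away from branch points, $\LW$ and $W+A$ have the same Euler--Lagrange equation there; so you recover \eqref{eq:ConfWillAreaConstraint} with the same Lagrange multiplier $2\bH$. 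The fix for your proof is to replace $W+A$ by $\LW$ (or equivalently to justify an a priori bound on $\int|\vec{\mathbb I}|^2$, which in this branched setting is not immediate).

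A second, more minor remark: you propose to re-derive the no-neck-energy and bubble-tree compactness in the curved setting, whereas the paper cites this as an established tool from~\cite{MoRi1} (together with the lower semicontinuity Lemma~\ref{lem:LSC} and the differentiability Lemma~\ref{lem:dW}). This is not a gap so much as a large detour; if you intend to reprove that package rather than cite it, you should say so and be aware that it is the bulk of the work.
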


\begin{Rm}
With the same proof, the analogous theorem about existence of a connected family of smooth branched conformal  immersions of $\Sp^2$ which are  area-constrained critical points for the functional $F$  and are realizing a fixed homotopy class holds. \hfill $\Box$
\end{Rm}

\begin{Rm} It might be interesting to investigate whether the minimizer in a fixed homotopy class is really obtained by a Lipschitz realization of \emph{more than one} smooth branched immersions of spheres or it is realized by \emph{exactly one} smooth branched immersion of $\Sp^2$. The asymptotic behavior of the solutions at possible connection points
of 2 distinct spheres in relation with the cancellation of the first residue $\vec{\gamma}_0$ mentioned in remark~\ref{rm-branch}  (which should also hold in the situation of theorem~\ref{Th:ExWillHom})
is a starting point for studying the possibility to have such connection points while considering an absolute minimizer.\hfill $\Box$
\end{Rm}
%From Theorem \ref{Th:ExWillHom} in particular we get the following existence result of area-constrained  Willmore spheres (or analogously which are area-constrained critical points for the functional $F$)  under topological assumptions on the ambient manifold.
 
%\begin{Co}\label{Co:ExWillHom}
%Let $(M^m,h)$ be a compact Riemannian $m$-manifold with non trivial 2-homotopy group: $\pi_2(M^m)\neq \{0\}$. Then there exists a non contractible (of course in $M^m$) branched conformal immersion of $\Sp^2$ into $M^m$ which is smooth and satisfies the area-constraint Willmore equation  outside the finitely many branched points.  
%\end{Co}

Let us give here an idea of the proof of Theorem \ref{Th:ExWillHom}. Consider the following Lagrangian $\LW$ defined on ${\cal F}_{\Sp^2}$
\be\label{def:LW}
\LW(\bP):=\int_{\Sp^2} \left( \frac{1}{4} |\mathbb{I}|^2-\frac{1}{2} \bar{K}(T\bP)+1 \right) dvol_g,
\ee
where $\bar{K}(T\bP)$ is the sectional curvature of the ambient manifold $(M^m,h)$ evaluated on the tangent space to $\bP(\Sp^2)$ and observe that, by the Gauss equation, outside the branch points it holds
\be\label{GaussEquat}
\frac{1}{4} |{\mathbb {I}}|^2-\frac{1}{2} \bar{K}(T\bP)+1=|\bH|^2+1-\frac{1}{2} K_{\bP},
\ee
where $K_{\bP}$ is the Gauss curvature of $\bP$, i.e. the sectional curvature of the metric $g=\bP^*(h)$ on $\Sp^2$. Notice that, since the Gauss curvature integrated on compact subsets away the branch points gives a null lagrangian (i.e. a lagrangian with null first variation with respect to compactly supported variations), the Euler-Lagrange equation of $\LW$ coincides with the Euler-Lagrange equation of $\int(|\bH|^2+1)$ outside the branched points; therefore the critical points of $\LW$ satisfy the area-constrained Willmore equation \eqref{eq:ConfWillAreaConstraint} outside the branched points.
\\Our approach is then to minimize $L$;  the space on which the minimization procedure is performed is the set ${\cal T}$ of $N+1$-tuples $\vec{T}=(\vec{f}, \bP^1, \ldots \bP^N)$, where $N$ is an arbitrary positive integer, where $\vec{f}\in W^{1,\infty}(\Sp^2,M^m)$ and $\bP^i\in {\cal F}_{\Sp^2}$ satisfy \eqref{eq:fcupPhi} and  \eqref{eq:fsumPhi}; naturally we define
\be \label{def:LWTree}
\LW(\vec{T})=\sum_{i=1}^N \LW(\bP^i). 
\ee
Observe that, up to rescaling the ambient metric $h$ by a positive constant, we can always assume that $\bar{K}\leq 1$ on all $M$ (or equivalently choose in \eqref{def:LW}, instead of $1$, a large positive constant $C>\max_M \bar{K}$). On a minimizing sequence $\vec{T}_k$ under the constraint that the map $\vec{f_k}\in W^{1,\infty}(\Sp^2,M^m)$ is in the fixed homotopy class $0\neq \gamma \in \pi_2(M^m)$, both the areas and the $L^2$ norms of the second fundamental forms are clearly equibounded; therefore, using results from  \cite{MoRi1} we construct a  minimizer $\vec{T}_\infty=(\vec{f}_\infty, \bP^1_{\infty},\ldots,\bP^{N_\infty}_{\infty}) \in {\cal T}$ such that $\vec{f}_\infty \in W^{1,\infty}(\Sp^2,M^2)$ is still in the homotopy class $\gamma$. Using the regularity theory developed in Section \ref{Sec:Regularity} we conclude with the smoothness of the $\bP^i_{\infty}$ outside the finitely many branched points.
\\
  
Observe that, for  \emph{small} values of the  area,   smooth (contractible in $M$) area constraint-Willmore spheres have been constructed in \cite{LM2} (see also \cite{CL}, \cite{LM}, \cite{LMS}, \cite{Mon1}, \cite{Mon2}) as perturbations of small geodesic spheres using perturbative methods; notice that instead Theorem \ref{Th:ExWillHom}   deals with the global situation when the topology of the ambient manifold plays a crucial role. Moreover in the next theorem we produce area-constrained Willmore spheres for \emph{any} value of the area. More precisely consider the lagrangian $W_K$ defined on ${\mathcal F}_{\Sp^2}$ as follows
\be\label{def:WK}
W_K(\bP):=\int_{\Sp^2} \left(\frac{1}{4} |{\mathbb I}|^2-\frac{1}{2} \bar{K}(T\bP) \right) dvol_g.
\ee
Using the Gauss equation, one has
\be\label{GaussEquat}
\frac{1}{4} |{\mathbb {I}}|^2-\frac{1}{2} \bar{K}(T\bP)=|\bH|^2-\frac{1}{2} K_{\bP},
\ee 
and, as before, this implies that the critical points of $W_K$ satisfy exactly the Willmore equation outside the branch points. Notice moreover that, if one considers just \emph{non branched} immersions then $W_K$ is exactly the Willmore functional $W$ up to an additive topological constant by Gauss Bonnet Theorem, so  minimizing $W_K$ under area constraint among \emph{branched} immersions is the natural generalization of minimizing $W$ under area constraint among \emph{non branched} immersions; moreover the possibility of having a branched minimal sphere (for the existence of branched minimal spheres in Riemannian manifolds see for example \cite{SaU}) for a fixed value of the area suggests that the correct setting, for the global problem of minimizing the Willmore functional under area constaint for \emph {not necessarily small} values of the area,  is the one of branched immersions.

\begin{Th}\label{Th:AreaConstraintWill}
Let $(M^m,h)$ be a compact Riemannian manifold and fix \emph{any} ${\cal A}>0$. Then there exist finitely many branched conformal immersions $\bP^1,\ldots,\bP^N \in {\cal F}_{\Sp^2}$ and a Lipschitz map $\vec{f}\in W^{1,\infty}(\Sp^2,M^m)$ with
\begin{eqnarray}
\sum_{i=1}^N A(\bP_i)&=& {\cal A}\quad , \label{eq:ConstraintA}\\
\vec{f}(\Sp^2)&=&\bigcup_{i=1}^N \bP(\Sp^2)\quad \text{and } \label{eq:fcupPhi1}  \\
\vec{f}_*[\Sp^2]&=&\sum_{i=1}^N \bP^i_*[\Sp^2] \label{eq:fsumPhi1}\quad,  
\end{eqnarray}
such that for every $i$, the map $\bP_i$ is a conformal branched  area-constraint Willmore immersion which is smooth outside the finitely many branched points $b^1,\ldots,b^{N_i}$. More precisely we mean that, outside the brached points, every $\bP^i$ is a smooth solution to the Willmore equation  with the Lagrange multiplier ${\frak a}\bH$ (for some ${\frak a}\in \R$)
\be\label{eq:ConfWillAreaConstraint}
\frac{1}{2} D^{*_g}_g \left[D_g \bH -3 \pi_{\bn} (D_g \bH) + \star_h\left((*_g D_g \bn) \wedge_M \bH \right)  \right] ={\frak a}\bH+\ti{R}(\bH)-R^\perp_{\vec{\Phi}}(T\vec{\Phi}),
\ee
with the same notation as in Theorem \ref{Th:ExWillHom}.
Moreover the $N+1$-tuple $\vec{T}=(\vec{f},\bP^1,\ldots,\bP^N)$ minimizes the functional $W_K$ in the set of tuples ${\cal T}$ (defined above) having area ${\cal A}$, where the area and the $W_K$ functional of an element $\vec{T}=(\vec{f},\bP^1,\ldots,\bP^N) \in {\cal T}$ are defined in a natural way as $A(\vec{T})=\sum_{i=1}^N A(\bP^i)$ and $W_K(\vec{T})=\sum_{i=1}^N W_K(\bP^i)$ respectively. 
\hfill $\Box$
\end{Th}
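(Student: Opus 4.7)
The plan is to adapt the variational scheme used for Theorem~\ref{Th:ExWillHom}, replacing the homotopy constraint by the area constraint $A(\vec{T}) = {\cal A}$. First I would take a minimizing sequence $\vec{T}_k = (\vec{f}_k, \bP^1_k, \ldots, \bP^{N_k}_k) \in {\cal T}$ with $\sum_i A(\bP^i_k) = {\cal A}$ and $W_K(\vec{T}_k) \to \inf W_K$. Using the Gauss equation \eqref{GaussEquat}, one rewrites $W_K(\bP^i_k)=\int |\bH|^2 \, dvol_g - \frac{1}{2}\int K_{\bP^i_k}\, dvol_g$; by Gauss-Bonnet applied to branched conformal immersions of $\Sp^2$ the Gauss curvature integral is $4\pi$ plus a non-positive contribution from the branch orders, so a uniform bound on $W_K$ translates into a uniform bound on $\int |\bH|^2 \, dvol_g$ on each component. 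Together with the area bound ${\cal A}$ this gives a uniform $L^2$ bound on each second fundamental form $\mathbb{I}_{\bP^i_k}$. A standard Li-Yau type lower bound on $\int |\bH|^2\, dvol_g$ for each branched immersed sphere (or equivalently, a positive lower bound on either area or total curvature for any nontrivial bubble, coming from the compactness analysis in \cite{MoRi1}) shows that the number $N_k$ of spheres is a priori bounded.

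Next I would invoke the bubble-tree compactness for weak branched immersions with finite total curvature developed in \cite{MoRi1}, exactly as in the proof of Theorem~\ref{Th:ExWillHom}. Modulo extraction, diagonal rescalings and concentration-compactness, this yields a limit tuple $\vec{T}_\infty = (\vec{f}_\infty, \bP^1_\infty, \ldots, \bP^{N_\infty}_\infty) \in {\cal T}$ with each $\bP^i_\infty \in {\cal F}_{\Sp^2}$ conformal, and satisfying \eqref{eq:fcupPhi1}-\eqref{eq:fsumPhi1}. Lower semicontinuity of the $L^2$ norm of the second fundamental form and of the (branched) Gauss-Bonnet defect gives $W_K(\vec{T}_\infty) \leq \liminf W_K(\vec{T}_k)$. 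The crucial novelty with respect to Theorem~\ref{Th:ExWillHom} is that the \emph{area constraint} must survive the limit: the bubble-tree decomposition of \cite{MoRi1} gives $A(\vec{T}_\infty) \leq {\cal A}$, and the reverse inequality amounts to excluding loss of area in collapsing necks; this is guaranteed by the no-neck-energy property provided by the uniform $L^2$ control on $\mathbb{I}$ together with the conformal-volume inequalities, which prevent area disappearing into regions of vanishing conformal factor.

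Once $\vec{T}_\infty$ is shown to be a minimizer, I would derive the Euler-Lagrange equation componentwise. Testing with smooth variations of each $\bP^i_\infty$ supported away from the branch points, the area-constrained minimality produces a Lagrange multiplier ${\frak a}_i \in \R$ such that $\delta W_K(\bP^i_\infty)[\bw] + {\frak a}_i\, \delta A(\bP^i_\infty)[\bw] = 0$. Since $-\frac{1}{2} K_{\bP}$ is a null Lagrangian under compactly supported variations away from the branch points, $\delta W_K$ reduces to the first variation of $\int |\bH|^2\, dvol_g$, and the computation performed in Section~\ref{Sec:ConsWillHCD} leading to Corollary~\ref{co:ConsWillGen} produces the conservative form \eqref{eq:ConfWillAreaConstraint} with Lagrange multiplier ${\frak a}={\frak a}_i$. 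Comparing inner and outer variations on different components shows that a common value ${\frak a}$ can be chosen. Finally, the Regularity Theorem~\ref{Th:Regularity}, applied locally in conformal charts, upgrades the weak solution $\bP^i_\infty$ to a $C^\infty$ immersion away from its finitely many branch points.

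The hard part will be the preservation of the area constraint in the limit. In Theorem~\ref{Th:ExWillHom} the added constant $+1$ in $\LW$ gave automatic area control and the homotopy class is topologically rigid; here neither device is available, so one has to trace the area carefully through every branch of the bubble tree and show there is no loss. A related subtlety is the genuine existence of the Lagrange multiplier ${\frak a}$: one must exhibit an admissible variation on which $\delta A \neq 0$, which is easy on any smooth piece via normal translations but requires some care near branch points. A secondary technical point is ensuring that $N_\infty$ stays finite even when area clusters into many tiny bubbles; this is handled by the $L^2$ energy quantization inherent in the compactness theory of \cite{MoRi1}, which gives a strictly positive energy gap for each bubble.
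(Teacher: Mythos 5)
Your overall scheme (minimizing sequence under the area constraint, bubble--tree compactness from \cite{MoRi1}, Lagrange multiplier, then the regularity theorem) is the same as the paper's, but there is one concrete gap and a couple of detours worth flagging.

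The gap is the diameter lower bound. The bubble--tree compactness theorem (Theorem VII.1 of \cite{MoRi1}) that you invoke requires, as a hypothesis, that the sum of the diameters of the bubbles does not degenerate, i.e.\ $\liminf_k \sum_i \diam_M(\bP^i_k(\Sp^2)) > 0$. In Theorem~\ref{Th:ExWillHom} this came for free from $\vec{f}_k \in \gamma \neq 0$; here the homotopy constraint is gone and you never replace it by anything. Without such a bound the minimizing sequence could shrink to a point and the compactness theorem would return a trivial limit with zero area, making the whole argument collapse. The paper supplies exactly this ingredient via the monotonicity formula of Lemma~\ref{lem:MonFor}: $A(\bP) \leq C\,[\diam_M(\bP(\Sp^2))]^2$, so the fixed total area ${\cal A} > 0$ forces $\sum_i \diam_M(\bP^i_k(\Sp^2)) \geq \sqrt{{\cal A}/C} > 0$ uniformly. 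You do spend a paragraph worrying about area loss in collapsing necks in the limit, which is a related but different issue (and one the paper simply reads off from the conclusion of the cited compactness theorem); the hypothesis you actually need to verify before that theorem applies at all is the diameter bound, and that is the step you omit.

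Secondarily, your derivation of the uniform $L^2$ bound on $\mathbb{I}$ is an unnecessary detour and, as written, incomplete. You go through the Gauss equation and Gauss--Bonnet to bound $\int|\bH|^2$ and then claim this plus the area bound controls $\int|\mathbb{I}|^2$; but passing back from $\int|\bH|^2$ to $\int|\mathbb{I}|^2$ requires control of $-\int K_{\bP}\,dvol_g$, which can be large if the branch orders are unbounded. The paper's argument is immediate and avoids all of this: by definition $W_K(\bP) = \int\bigl(\tfrac14|\mathbb{I}|^2 - \tfrac12\bar K\bigr)dvol_g$, so $\tfrac14\int|\mathbb{I}|^2 = W_K(\bP) + \tfrac12\int\bar K\, dvol_g \leq W_K(\bP) + \tfrac12\|\bar K\|_{L^\infty}\,{\cal A}$, which is uniformly bounded along any minimizing sequence with fixed area ${\cal A}$ on the compact manifold $M$.
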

With the same proof, the analogous theorem about existence of a connected family of smooth branched conformal immersions of $\Sp^2$ which are  area-constrained critical points for the functional $F$  and whose total area is an arbitrary ${\cal A}>0$  holds; this connected family moreover minimize the functional $F$ in $\cal{T}$ under the area constraint $A(T)={\cal A}$.

\begin{Rm}
For small area ${\cal A}< \ep_0$, by the monotonicity formula ( the monotonicity formula is a crucial tool introduced in \cite{SiL}, for the proof in this context see Lemma \ref{lem:MonFor}) the minimizer has also small diameter and thanks to the estimates contained in \cite{LM}, \cite{LM2}, the minimum of the functional $W_K$ is close to $2\pi$.  With arguments analogous to \cite{LM2} ( using \cite{LY}), one checks that, for small area ${\cal A}< \ep_0$, the minimizer produced in Theorem \ref{Th:AreaConstraintWill} is made of \emph{just one} smooth \emph{non branched} area-constrained Willmore immersion of the 2-sphere. Therefore  Theorem \ref{Th:AreaConstraintWill}  is the natural generalization of the main theorem in \cite{LM2}, where  T. Lamm and J. Metzger minimize the Willmore functional under \emph{small area} constraint among \emph{non branched} little spheres. 
\hfill $\Box$
\end{Rm}

\begin{center}

{\bf Acknowledgments}
\\This work was written while the first author was visiting the {\it Forschungsinstitut f\"ur Mathematik} at the ETH Z\"urich. He would like  to thank the Institut for the hospitality and the excellent working conditions.
\end{center}

\medskip
\begin{center}
{\bf Notations and conventions} %\label{Sec:notation}
\end{center}
For the Riemann curvature tensor $\Riem^h$ of $(M^m,h)$ we use the convention of \cite{Will} (notice that other authors, like \cite{DoC}, adopt theh opposite sign convention): for any $\vec{X},\vec{Y},\vec{Z} \in T_xM$ define 
$$\Riem^h(\vec{X},\vec{Y})\vec{Z}:=D_{\vec{X}}D_{\vec{Y}}\vec{Z}- D_{\vec{Y}}D_{\vec{X}} \vec{Z}-D_{[\vec{X},\vec{Y}]}\vec{Z}.$$

The \emph{Hodge operator} on $\R^m$ or more generally on the tangent space $T_xM$ of an oriented Riemannian manifold $(M^m,h)$ is the linear map from $\wedge^p T_xM$ into $\wedge^{m-p} T_xM$ which to a $p$-vector $\alpha$ assigns the $m-p$-vector $\star_h \alpha$ on $T_xM$ such that for any $p$-vector $\beta$ in $\wedge ^p T_xM$ the following identity holds:
\be\label{eq:defHodge}
\beta \wedge \star_h \alpha= <\beta,\alpha>_h \vec{E}_1\wedge \ldots \wedge \vec{E}_m
\ee
where $(\vec{E}_1,\ldots,\vec{E}_m)$ is an orthonormal positively oriented basis of $T_xM$ and $<.,.>_h$ is the scalar product on $\wedge^p T_xM$ induced by $h$. Notice that even if $M^m$ is not orientable, we can still define $\star_h$ locally. 

We will also need the concept of \emph{interior multiplication} $\llcorner$ between $p-$ and $q-$vectors, $p\geq q$, producing a $p-q$-vector such that (see \cite{Fed} 1.5.1 combined with 1.7.5): for every choice of $p-$,$q-$ and $p-q-$vectors, respectively $\alpha$, $\beta$ and $\gamma$ the following holds:
\be\label{eq:defllcorner}
<\alpha\llcorner \beta, \gamma>=<\alpha, \beta \wedge \gamma>
\ee

We call $\bul$ the following contraction operation which to a pair of $p-$ and $q-$vectors $(\alpha,\beta)$ assigns the $p+q-2-$vector $\alpha\bul \beta$ such that:
\\- if $q=1$,  $\alpha\bul \beta:= \alpha \llcorner \beta$,
\\-if $\alpha \in \wedge^p T_xM$, $\beta\in \wedge^q T_xM$ and $\gamma \in \wedge^s T_xM$ then
\be \label{def:bul}
\alpha \bul (\beta\wedge \gamma):=(\alpha \bul \beta)\wedge \gamma + (-1)^{rs} (\alpha\bul \gamma) \wedge \beta. 
\ee

\section{The conservative form of the Willmore surface equation in $3$-dimensional manifolds}\label{Sec:ConsWill3D}
\reset
Let $\Sigma^2$ be an abstract closed surface, $(M,h)$ a $3$ dimensional Riemannian manifold and $\vec{\Phi}: \Sigma^2 \hookrightarrow (M,h)$ a smooth immersion. Since the following results are local, we can work locally in a disc-neighborhood of a point and use isothermal coordinates
on this disc. This means that we can assume $\vec{\Phi}$ to be a conformal immersion from the unit disc $D^2\subset {\R}^2$
into $(M,h)$.

Let us introduce some notations. Given the conformal immersion $\vec{\Phi}:D^2 \hookrightarrow (M,h)$ we call $g:=\vec{\Phi}^*h=e^{2\lambda} (dx_1^2+dx_2^2)$ the induced metric; denote $(\vec{e}_1,\vec{e}_2)$ the orthonormal basis of $\vec{\Phi}_\ast(T\Sigma^2)$ given by
$$
\vec{e} _i:=e^{-\la}\ \frac{\p \vec{\Phi}}{\p x_i}\quad,
$$
where $e^\la=|\p_{x_1}\vec{\Phi}|=|\p_{x_2}\vec{\Phi}|$. The unit normal vector $\vec{n}$ to $\vec{\Phi}(\Sigma)$ is then given by
\[
\vec{n}=\star_h  (\vec{e_1}\wedge \vec{e_2})
\]

Denoted with $\D$ the covariant derivative of $(M,h)$ we have  the second fundamental form
$$\vec{\mathbb I}(\vec{X},\vec{Y}):= -<D_{\vec{X}} \vec{n}, \vec{Y}> \vec{n} $$
and the mean curvature
$$ \vec{H}:=\frac{1}{2} \left[\vec{\mathbb I} (\vec{e_1},\vec{e_1})+\vec{\mathbb I} (\vec{e_2},\vec{e_2})\right]. $$

Introduce moreover the {\it Weingarten Operator} expressed in conformal  coordinates $(x_1,x_2)$ as :
\[
\vec{H}_0:=\frac{1}{2}\lf[\vec{\mathbb I}(e_1,e_1)-\vec{\mathbb I}(e_2,e_2)-2\,i\, \vec{\mathbb I}(e_1,e_2)\rg]\quad.
\]

In \cite{Riv1} an alternative form to the Euler Lagrange equation of Willmore functional in euclidean setting was proposed; our goal is to do the same for immersions in a Rienannian manifold. 

\begin{Th}
\label{th-VI.8-3d}
Let $\vec{\Phi}$ be a smooth immersion of a two dimensional manifold $\Sigma^2$ into a 3-dimensional Riemannian manifold $(M^3,h)$; restricting the immersion to a small disc neighboorod of a point where we consider local conformal coordinate, we can see $\vec{\Phi}$ as a conformal immersion of $D^2$ into $(M,h)$. Then the following identity holds
\be
\label{VI.71}
\begin{array}{l}
-2 e^{2 \la} \Delta_g H\ \vec{n}-4 e^{2\la}\ \vec{H}\ (H^2-(K^g-K^h)) +2e^{2\la} R^\perp _\Phi (T\vec{\Phi}) \\[5mm]
\ds \qquad \quad= D^*\left[-2\nabla H \vec{n}+H \D \vec{n} - H\star_h (\vec{n}\wedge \D^\perp \vec{n})\right]\quad,
\end{array}
\ee
where $\vec{H}$ is the mean curvature vector of the immersion $\vec{\Phi}$, $\Delta_g$ is the negative Laplace Beltrami operator, $\star_h$ is the Hodge operator associated to metric $h$, $\D \cdot:=(\D_{\p_{x_1} \vec{\Phi}}\cdot, \D_{\p_{x_2} \vec{\Phi}}\cdot)$ and $\D^\perp\cdot:=(-\D_{\p_{x_2} \vec{\Phi}}\cdot,\D_{\p_{x_1} \vec{\Phi}}\cdot)$ and $\D^*$ is an operator acting on couples of vector fields $(\vec{V}_1,\vec{V}_2)$ along $\vec{\Phi}_*(T\Sigma)$ defined as 
$$\D^*(\vec{V}_1,\vec{V}_2):= \D_{\p_{x_1} \vec{\Phi}} \vec{V}_1+\D_{\p_{x_2} \vec{\Phi}} \vec{V}_2.$$
Finally recall the definition \eqref{Rperp} of  $R^\perp_{\vec{\Phi}}(T\vec{\Phi}):=(Riem(\vec{e_1},\vec{e_2})\vec{H})^\perp =\star_h\left( \vec{n}\wedge Riem^h(\vec{e_1},\vec{e_2})\vec{H} \right)$.  
\hfill $\Box$ 
\end{Th}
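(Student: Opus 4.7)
The plan is to compute $\D^{*}$ of the three-term bracket in \eqref{VI.71} via Leibniz's rule, then project onto the components normal and tangential to $\vec{\Phi}(D^{2})$ and match each piece to the left-hand side.

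First I would expand:
\[
\begin{aligned}
\D^*(-2\nabla H\,\vec{n}) &= -2\Delta H\,\vec{n} - 2\langle\nabla H, \D\vec{n}\rangle,\\
\D^*(H\,\D\vec{n}) &= \langle\nabla H, \D\vec{n}\rangle + H\,\D^*\D\vec{n},\\
\D^*\!\bigl(-H\star_h(\vec{n}\wedge \D^\perp\vec{n})\bigr) &= -\langle\nabla H, \star_h(\vec{n}\wedge \D^\perp\vec{n})\rangle - H\,\D^*\star_h(\vec{n}\wedge \D^\perp\vec{n}).
\end{aligned}
\]
Conformality of $\vec{\Phi}$ immediately yields $-2\Delta H\,\vec{n} = -2 e^{2\la}\Delta_{g} H\,\vec{n}$, reproducing the first summand on the left.

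The pivotal algebraic step is the codimension-one identity
\[
\D\vec{n} + \star_h(\vec{n}\wedge \D^\perp\vec{n}) = -2H\,\D\vec{\Phi}.
\]
It follows from the fact that in 3D the map $\vec{w}\mapsto\star_h(\vec{n}\wedge\vec{w})=\vec{n}\times\vec{w}$ is the rotation by $\pi/2$ in the tangent plane of $\vec{\Phi}(D^{2})$, combined with the Weingarten relation $\D_{\p_{x_i}\vec{\Phi}}\vec{n}=-e^{\la}\mathbb{I}_{ij}\vec{e}_j$ and the symmetry of $\mathbb{I}$. Substituting, the bracket on the right-hand side of \eqref{VI.71} collapses to $-2\nabla H\,\vec{n} + 2H\,\D\vec{n} + 2H^{2}\,\D\vec{\Phi}$; applying $\D^{*}$, the two $\langle\nabla H,\D\vec n\rangle$ pieces cancel and, using the tension-field identity $\D^*\D\vec{\Phi} = 2e^{2\la}\vec{H}$ (whose tangential Christoffel contributions vanish in conformal coordinates for $g=e^{2\la}\delta$), we arrive at
\[
-2e^{2\la}\Delta_g H\,\vec{n} \;+\; 2H\,\D^*\D\vec{n} \;+\; 4H\langle\nabla H, \D\vec{\Phi}\rangle \;+\; 4e^{2\la}H^{2}\vec{H}.
\]

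For the normal component, differentiating $|\vec{n}|\equiv 1$ gives $\langle \D^*\D\vec{n},\vec{n}\rangle = -|\D\vec{n}|^{2}=-e^{2\la}|\mathbb{I}|^{2}$, hence the normal contribution of $2H\,\D^*\D\vec{n}$ is $-2e^{2\la}|\mathbb{I}|^{2}\vec{H}$. Combining with $+4e^{2\la}H^{2}\vec{H}$ and invoking Gauss's equation $|\mathbb{I}|^{2}=4H^{2}-2(K^g-K^h)$, the total normal part equals exactly
\[
-2e^{2\la}\Delta_g H\,\vec{n} - 4e^{2\la}\vec{H}\bigl(H^{2}-(K^g-K^h)\bigr),
\]
which matches the first two terms of the left-hand side.

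The principal obstacle is the tangential bookkeeping. Differentiating the Weingarten relation once more and invoking the Codazzi--Mainardi identity
\[
(\nabla^\Sigma_{\vec{e}_i}\mathbb{I})_{jk} - (\nabla^\Sigma_{\vec{e}_j}\mathbb{I})_{ik} = h\!\bigl(\Riem^h(\vec{e}_i,\vec{e}_j)\vec{e}_k,\vec{n}\bigr),
\]
one traces the $\nabla H$ piece of $\text{tang}(\D^*\D\vec{n})$, which together with $4H\langle\nabla H, \D\vec{\Phi}\rangle$ cancels identically (this is the step that already produced Rivi\`ere's Euclidean identity in \cite{Riv1}). The non-symmetric part of Codazzi leaves behind a residue driven by the ambient Riemann tensor; after contraction against $2H\vec e_j$ and use of the symmetry $\Riem^h(\vec e_1,\vec e_2)\vec{e}_i\mapsto \Riem^h(\vec e_1,\vec e_2)\vec{H}$ carried by the trace $H=\tfrac12\mathbb{I}_{ii}$, this residue equals $2e^{2\la}\star_h\bigl(\vec{n}\wedge\Riem^h(\vec{e}_1,\vec{e}_2)\vec{H}\bigr)$, which is by definition \eqref{Rperp} exactly $2e^{2\la}R^\perp_{\vec{\Phi}}(T\vec{\Phi})$. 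This completes the identification with the last term on the left-hand side of \eqref{VI.71}. The delicate point throughout is controlling the tangential Codazzi/commutator terms so that only the geometrically meaningful curvature residue survives.
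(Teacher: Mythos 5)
Your strategy is sound and does arrive at \eqref{VI.71}, but it departs from the paper's route once Lemma~\ref{lm-VI.10} has been invoked. Both arguments pivot on the identity $D\vec{n} + \star_h(\vec{n}\wedge D^\perp\vec{n}) = -2H\nabla\vec{\Phi}$. You substitute it into the bracket, expand $D^*$ by Leibniz, and then split the resulting $2H\,D^*D\vec{n}+4H\langle\nabla H,D\vec{\Phi}\rangle+4e^{2\lambda}H^2\vec{H}$ into normal and tangential parts, closing the normal component with $\langle D^*D\vec{n},\vec{n}\rangle=-|D\vec{n}|^2$ and the Gauss equation, and the tangential component with a traced Codazzi--Mainardi identity. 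The paper instead applies $D^*$ to the lemma \emph{before} passing to the bracket, keeps the full $\star_h$ structure intact, and extracts both the $K^g-K^h$ contribution and the ambient Riemann residue in one stroke from the commutation formula
\[
D^*\bigl[\star_h(\vec{n}\wedge D^\perp\vec{n})\bigr]=\star_h(D\vec{n}\wedge D^\perp\vec{n})+\star_h\bigl[\vec{n}\wedge\Riem^h(\partial_{x_2}\vec{\Phi},\partial_{x_1}\vec{\Phi})\vec{n}\bigr]
\]
together with the $\star_h$-form of the Gauss equation; the final Leibniz rearrangement is then the single short step \eqref{VI.70b}. What the paper's route buys is that the normal/tangential split --- and with it all the Christoffel bookkeeping you yourself flag as ``the principal obstacle'' --- is avoided entirely.

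Two caveats about the tangential step in your plan. First, it does close, but only after a cancellation you do not justify: in conformal coordinates the Christoffel corrections to the coordinate divergence of the shape operator must be shown to cancel against the connection pieces coming from $D_{\partial_{x_i}\vec{\Phi}}\partial_{x_j}\vec{\Phi}$, leaving only the covariant divergence; only then does the traced Codazzi identity yield $2\nabla_j H$ (which cancels your $4H\langle\nabla H,D\vec{\Phi}\rangle$) plus the curvature residue. This cancellation does hold, but it is precisely the delicate bookkeeping the $\star_h$ approach sidesteps. Second, the phrase ``use of the symmetry $\Riem^h(\vec{e}_1,\vec{e}_2)\vec{e}_i\mapsto\Riem^h(\vec{e}_1,\vec{e}_2)\vec{H}$'' does not parse: the traced Codazzi residue involves $\langle\Riem^h(\vec{e}_1,\vec{e}_2)\vec{e}_i,\vec{n}\rangle$, and it is the \emph{normal vector} $\vec{n}$, not the tangential $\vec{e}_i$, that absorbs the overall factor of $H$ (via $\vec{H}=H\vec{n}$ and the pair- and anti-symmetries of $\Riem^h$) to produce, after the $\perp$-rotation in the surviving tangent index, $R^\perp_{\vec{\Phi}}(T\vec{\Phi})$.
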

A straightforward but important consequence of Theorem~\ref{th-VI.8-3d} is the following conservative form of Willmore surfaces equations.
\begin{Co}
\label{co-VI.8}  
A conformal immersion $\vec{\Phi}$ of a 2-dimensional disc $D^2$ is Willmore if and only if 
\be
2 e^{2\la}  [R^\perp _\Phi (T\vec{\Phi}) + \vec{H} Ric_h (\vec{n}, \vec{n})]= D^*\left[-2 D\vec{H} + 3 H \D \vec{n} - \star_h (\vec{H}\wedge \D^\perp \vec{n})\right]
\ee
\hfill $\Box$
\end{Co}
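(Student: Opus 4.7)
The plan is to obtain Corollary~\ref{co-VI.8} as a direct algebraic consequence of Theorem~\ref{th-VI.8-3d} combined with the scalar form of the Willmore equation \eqref{eq:W'Weiner} in codimension one. Concretely, I would (i) reduce \eqref{eq:W'Weiner} to a scalar equation on $H$ when $\bH=H\bn$, (ii) use the resulting identity to simplify the left-hand side of \eqref{VI.71}, and (iii) check that under $\bH=H\bn$ the right-hand side of \eqref{VI.71} coincides with the right-hand side displayed in the corollary.

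For step (i), writing $\bH=H\bn$ gives $\Delta_\perp\bH=(\Delta_g H)\bn$ because $\bn$ spans the normal bundle. From \eqref{VI.44a} one computes $\tilde A(\bH)=|{\mathbb I}|^2H\,\bn$, hence $\tilde A(\bH)-2|\bH|^2\bH=H(|{\mathbb I}|^2-2H^2)\bn$; the Gauss equation in codimension one ($K^g-K^h(T\bP)=\la_1\la_2$) together with $|{\mathbb I}|^2=\la_1^2+\la_2^2=4H^2-2\la_1\la_2$ yields $|{\mathbb I}|^2-2H^2=2(H^2-(K^g-K^h))$. For the curvature endomorphism, I would complete $(\bbe_1,\bbe_2)$ to an orthonormal frame of $TM$ by $\bn$: since $\Riem^h(\bn,\bn)\bn=0$, the definition \eqref{eq:defR} and that of the Ricci curvature give
\[
\pi_{\bn}\!\Big[\sum_{i=1}^{2}\Riem^h(\bn,\bbe_i)\bbe_i\Big]=Ric_h(\bn,\bn)\,\bn,
\]
so $\tilde R(\bH)=-H\,Ric_h(\bn,\bn)\,\bn$. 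Substituting these three identities into \eqref{eq:W'Weiner} collapses it to the scalar Willmore equation
\[
\Delta_g H+2H\,(H^2-(K^g-K^h))+H\,Ric_h(\bn,\bn)=0.
\]

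For step (ii), plugging this scalar identity into the left-hand side of \eqref{VI.71} turns its first two terms into $2e^{2\la}\bH\,Ric_h(\bn,\bn)$, so Theorem~\ref{th-VI.8-3d} yields, under the Willmore assumption,
\[
2e^{2\la}\!\left[\bH\,Ric_h(\bn,\bn)+R^\perp_{\bP}(T\bP)\right]=D^*\!\left[-2\nabla H\,\bn+H\,\D\bn-H\star_h(\bn\wedge \D^\perp\bn)\right].
\]
For step (iii), the Leibniz rule gives $-2\D\bH=-2(\nabla H)\bn-2H\,\D\bn$, hence $-2\D\bH+3H\,\D\bn=-2(\nabla H)\bn+H\,\D\bn$; similarly $\star_h(\bH\wedge \D^\perp\bn)=H\,\star_h(\bn\wedge \D^\perp\bn)$. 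These two substitutions turn the right-hand side above into that of Corollary~\ref{co-VI.8}. Each step is reversible, giving both directions of the equivalence. The only non-routine ingredient is step~(i), and within it the identification of the tangential trace of $\Riem^h$ projected onto $\bn$ with $Ric_h(\bn,\bn)$; once that is in place, the remainder is a short bookkeeping calculation driven by $\bH=H\bn$.
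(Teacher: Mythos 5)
Your argument is correct and follows the paper's own route: substitute the codimension-one scalar Willmore equation $(\Delta_g H)\bn+2\bH\,(H^2-(K^g-K^h))+\bH\,Ric_h(\bn,\bn)=0$ into identity \eqref{VI.71}, then use $\D\bH=(\nabla H)\,\bn+H\,\D\bn$ and $\star_h(\bH\wedge\D^\perp\bn)=H\star_h(\bn\wedge\D^\perp\bn)$ to rewrite the divergence term. The only difference from the paper's proof is that you derive this scalar equation from \eqref{eq:W'Weiner} via the (correct) identities $\tilde A(\bH)-2|\bH|^2\bH=2H(H^2-(K^g-K^h))\,\bn$ and $\tilde R(\bH)=-H\,Ric_h(\bn,\bn)\,\bn$, whereas the paper simply recalls it; this extra verification is a harmless elaboration of the same argument.
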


Now recall that an immersion $\vec{\Phi}$ is said to be \emph{constrained-conformal  Willmore} if and only if it is a critical point of the Willmore functional under the constraint that the conformal class is fixed. In \cite{BPP} is derived the Willmore equation under conformal constraint for immersions of surfaces in a 3-dimensional Riemannian manifold, which, matched with Theorem \ref{th-VI.8-3d}, gives the following corollary.

\begin{Co}
\label{co-VI.9}  
A conformal immersion $\vec{\Phi}$ of a 2-dimensional disc $D^2$ is constrained-conformal Willmore if and only if there exists an holomorphic function $f(z)$ such that

\be
2 e^{2\la}  \left[R^\perp _\Phi (T\vec{\Phi}) + \vec{H} Ric_h (\vec{n}, \vec{n})+e^{-2\lambda}\Im (f(z) \overline{\vec{H_0}}) \right]=
 D^*\left[-2 D\vec{H} + 3 H \D \vec{n} - \star_h (\vec{H}\wedge \D^\perp \vec{n})\right]\quad
\ee
\hfill$\Box$
\end{Co}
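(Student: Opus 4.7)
The plan is to combine two ingredients: the non-conservative form of the constrained-conformal Willmore equation derived in \cite{BPP}, and Theorem \ref{th-VI.8-3d}, which rewrites the principal part of the classical Willmore operator in divergence form.

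First I would recall from \cite{BPP} that, performing the first variation of $W$ among conformal immersions inside a fixed conformal class, a conformal immersion $\vec{\Phi}:D^2\hookrightarrow(M^3,h)$ is constrained-conformal Willmore if and only if there exists a holomorphic function $f(z)$ on $D^2$ (the symbol of the holomorphic quadratic differential $q=f(z)\,dz\otimes dz$ which plays the role of Lagrange multiplier for the conformal constraint) such that the classical scalar Willmore equation acquires a source term:
\[
\Delta_g H + 2H\bigl(H^2-(K^g-K^h)\bigr) + H\,\mathrm{Ric}_h(\vec{n},\vec{n}) \;=\; e^{-2\la}\,\Im\bigl(f(z)\,\overline{\vec{H}_0}\bigr).
\]
Then I would multiply both sides of this identity by $-2e^{2\la}\vec{n}$ and substitute into the left-hand side the identity of Theorem \ref{th-VI.8-3d}, which replaces the combination $-2e^{2\la}\Delta_g H\,\vec{n}-4e^{2\la}\vec{H}(H^2-(K^g-K^h))$ by $D^*\bigl[-2\nabla H\,\vec{n}+H\D\vec{n}-H\star_h(\vec{n}\wedge\D^\perp\vec{n})\bigr]-2e^{2\la}R^\perp_{\vec{\Phi}}(T\vec{\Phi})$. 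Moving the curvature terms $2e^{2\la}R^\perp_{\vec{\Phi}}(T\vec{\Phi})$ and $2e^{2\la}\vec{H}\,\mathrm{Ric}_h(\vec{n},\vec{n})$ to the left-hand side, and rewriting $-2\nabla H\,\vec{n}+H\D\vec{n}=-2\D\vec{H}+3H\D\vec{n}$ (via $\vec{H}=H\vec{n}$, hence $\D\vec{H}=\nabla H\,\vec{n}+H\D\vec{n}$) together with $H\star_h(\vec{n}\wedge\D^\perp\vec{n})=\star_h(\vec{H}\wedge\D^\perp\vec{n})$ produces exactly the conservative equation of Corollary \ref{co-VI.9}; the converse implication is obtained by reversing these algebraic manipulations.

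The main point requiring care is the bookkeeping of signs and of the conformal-factor-dependent factor between the local expression $\Im\bigl(f(z)\,\overline{\vec{H}_0}\bigr)$ and the intrinsic Weil--Petersson pairing $(q,\vec{h}_0)_{WP}$ (note the shift $\vec{h}_0=e^{2\la}\vec{H}_0\,dz\otimes dz$ between the Weingarten map $\vec{H}_0$ and the Weingarten operator $\vec{h}_0$); once this is pinned down, the corollary is a purely algebraic consequence of the combination of \cite{BPP} with Theorem \ref{th-VI.8-3d}, and no genuinely new analytic input is needed.
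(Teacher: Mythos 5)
Your proposal is correct and follows essentially the same route as the paper: both start from the characterization in \cite{BPP} of constrained-conformal Willmore immersions via a holomorphic Lagrange multiplier $f(z)$ added to the non-conservative Willmore equation, and both then substitute in the divergence-form identity of Theorem~\ref{th-VI.8-3d}, followed by the same algebraic rewriting $-2\nabla H\,\vec{n}+H\D\vec{n}=-2\D\vec{H}+3H\D\vec{n}$ and $H\star_h(\vec{n}\wedge\D^\perp\vec{n})=\star_h(\vec{H}\wedge\D^\perp\vec{n})$. The only cosmetic point is that in codimension one you should write $\vec{H}_0=H_0\vec{n}$ before multiplying the scalar equation by $\vec{n}$, but this does not affect the argument.
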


We proceed in the following way: first we prove a general lemma for conformal immersions of the 2-disc in $(M^3,h)$, then we pass to the proof of the theorem and of its corollaries.

\begin{Lm}
\label{lm-VI.10}
Let $\vec{\Phi}$ be a conformal immersion from $D^2$ into $(M,h)$. Denote by $\vec{n}$ the unit normal vector $\vec{n}=*_h(\vec{e_1}\wedge \vec{e_2})$  of the conformal immersion $\vec{\Phi}$ and denote by $H$ the mean curvature. Then the following identity holds
\be
\label{VI.66}
-2H\ \nabla\vec{\Phi}= \D \vec{n}+ \star _h (\vec{n} \wedge \D^\perp\vec{n})\quad
\ee
where $\D \cdot:=(\D_{\p_{x_1} \vec{\Phi}}\cdot, \D_{\p_{x_2} \vec{\Phi}}\cdot)$ and $\D^\perp\cdot:=(-\D_{\p_{x_2} \vec{\Phi}}\cdot,\D_{\p_{x_1} \vec{\Phi}}\cdot)$.
\hfill$\Box$
\end{Lm}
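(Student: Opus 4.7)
The identity is pointwise and tensorial, so my plan is to verify it componentwise in a well-chosen orthonormal frame and then to unravel the Hodge star using the fact that we are in codimension one.

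First I would fix the local positively oriented orthonormal frame $\vec{e}_i=e^{-\lambda}\partial_{x_i}\vec{\Phi}$ with normal $\vec{n}=\star_h(\vec{e}_1\wedge\vec{e}_2)$, and introduce the scalar second fundamental form $h_{ij}:=\langle\vec{\mathbb I}(\vec{e}_i,\vec{e}_j),\vec{n}\rangle$, which is symmetric, with $H=\tfrac12(h_{11}+h_{22})$. Since $|\vec{n}|_h\equiv 1$, differentiating gives $\langle D_{\vec{e}_i}\vec{n},\vec{n}\rangle=0$, so $D_{\vec{e}_i}\vec{n}$ is tangent to $\vec{\Phi}(D^2)$; the standard Weingarten identity then reads $D_{\vec{e}_i}\vec{n}=-\sum_j h_{ij}\vec{e}_j$. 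Rescaling by $e^\lambda$ gives the basic relation
\begin{equation*}
D_{\partial_{x_i}\vec{\Phi}}\vec{n}=-\sum_{j=1}^2 h_{ij}\,\partial_{x_j}\vec{\Phi}.
\end{equation*}

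Second, I would record how $\star_h$ acts on the relevant bivectors. Because $\{\vec{e}_1,\vec{e}_2,\vec{n}\}$ is positively oriented and orthonormal, one has $\star_h(\vec{n}\wedge\vec{e}_1)=\vec{e}_2$ and $\star_h(\vec{n}\wedge\vec{e}_2)=-\vec{e}_1$, hence
\begin{equation*}
\star_h\!\bigl(\vec{n}\wedge\partial_{x_1}\vec{\Phi}\bigr)=\partial_{x_2}\vec{\Phi},\qquad
\star_h\!\bigl(\vec{n}\wedge\partial_{x_2}\vec{\Phi}\bigr)=-\partial_{x_1}\vec{\Phi}.
\end{equation*}

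With these two ingredients the identity \eqref{VI.66} becomes a direct componentwise check. For the first component I substitute the Weingarten formula into $D_{\partial_{x_1}\vec{\Phi}}\vec{n}+\star_h\!\bigl(\vec{n}\wedge(-D_{\partial_{x_2}\vec{\Phi}}\vec{n})\bigr)$, obtaining
\begin{equation*}
-h_{11}\,\partial_{x_1}\vec{\Phi}-h_{12}\,\partial_{x_2}\vec{\Phi}+h_{21}\,\partial_{x_2}\vec{\Phi}-h_{22}\,\partial_{x_1}\vec{\Phi}.
\end{equation*}
The symmetry $h_{12}=h_{21}$ kills the off-diagonal contributions and leaves $-(h_{11}+h_{22})\,\partial_{x_1}\vec{\Phi}=-2H\,\partial_{x_1}\vec{\Phi}$. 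The second component is entirely analogous: the mixed terms cancel by symmetry of $\mathbb I$, yielding $-2H\,\partial_{x_2}\vec{\Phi}$.

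There is no real obstacle here; the only point worth attention is the consistent sign convention for $\star_h$ on $\wedge^2$, which must be kept coherent with the orientation of $(\vec{e}_1,\vec{e}_2,\vec{n})$, and the fact that the off-diagonal cancellation hinges precisely on using $\D^\perp$ (rather than $\D$) in the second summand of the right-hand side. This last observation explains why the conservative object that will appear in the sequel is $\D\vec{n}+\star_h(\vec{n}\wedge\D^\perp\vec{n})$ rather than any obvious variant, and it is the structural feature the rest of Section~\ref{Sec:ConsWill3D} will exploit.
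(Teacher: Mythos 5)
Your proof is correct and takes essentially the same route as the paper's: both arguments reduce to a componentwise check in the orthonormal frame $(\vec{e}_1,\vec{e}_2,\vec{n})$, using the Weingarten relation to express $D_{\p_{x_i}\vec{\Phi}}\vec{n}$ in terms of the tangent frame, the codimension-one Hodge identities $\star_h(\vec{n}\wedge\vec{e}_1)=\vec{e}_2$, $\star_h(\vec{n}\wedge\vec{e}_2)=-\vec{e}_1$, and the symmetry of the second fundamental form to cancel the off-diagonal terms. The only cosmetic difference is that you introduce the scalar coefficients $h_{ij}$ explicitly, while the paper works directly with the scalar products $\langle D_{\p_{x_i}\vec{\Phi}}\vec{n},\vec{e}_j\rangle$; the underlying computation is the same.
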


{\bf Proof of lemma~\ref{lm-VI.10}.}
Denote $(\vec{e}_1,\vec{e}_2)$ the orthonormal basis of $\vec{\Phi}_\ast(T\Sigma^2)$ given by
$$
\vec{e}_i:=e^{-\la}\ \frac{\p \vec{\Phi}}{\p x_i}\quad,
$$
where $e^\la=|\p_{x_1}\vec{\Phi}|=|\p_{x_2}\vec{\Phi}|$. The unit normal vector $\vec{n}$ is then given by
\[
\vec{n}=\star_h (\vec{e_1}\wedge \vec{e_2})\quad.
\]
We have
\[
\lf\{
\begin{array}{l}
<\vec{e}_1,\star_h (\vec{n}\wedge \D^\perp \vec{n})>=-<\D^\perp\vec{n},\vec{e}_2>\\[5mm]
<\vec{e}_1,\star_h (\vec{n}\wedge \D^\perp \vec{n})>=<\D^\perp\vec{n},\vec{e}_1>\quad.
\end{array}
\rg.
\]
From which we deduce
\[
\lf\{
\begin{array}{l}
\ds-\star_h (\vec{n}\wedge \D_{\p_{x_2}\vec{\Phi}}\vec{n})=<\D_{\p_{x_2}\vec{\Phi}}\vec{n},\vec{e}_2> \vec{e}_1-<\D_{\p_{x_2}\vec{\Phi}}\vec{n},\vec{e}_1>\ \vec{e}_2\\[5mm]
\ds\star_h (\vec{n}\wedge \D_{\p_{x_1}\vec{\Phi}}\vec{n})=-<\D_{\p_{x_1}\vec{\Phi}}\vec{n},\vec{e}_2> \vec{e}_1+<\D_{\p_{x_1}\vec{\Phi}}\vec{n},\vec{e}_1>\ \vec{e}_2
\end{array}
\rg.
\]
Thus, by the symmetry of the second fundamental form,
\[
\lf\{
\begin{array}{l}
\ds \D_{\p_{x_1}\vec{\Phi}}\vec{n} -\star_h (\vec{n}\wedge \D_{\p_{x_2}\vec{\Phi}}\vec{n}) =[<\D_{\p_{x_2}\vec{\Phi}}\vec{n},\vec{e}_2>+<\D_{\p_{x_1}\vec{\Phi}}\vec{n},\vec{e}_1>]\ 
\vec{e}_1\\[5mm]
\ds \D_{\p_{x_2}\vec{\Phi}} \vec{n}+\star_h (\vec{n}\wedge \D_{\p_{x_1}\vec{\Phi}}\vec{n})=[<\D_{\p_{x_2}\vec{\Phi}} \vec{n},\vec{e}_2>+<\D_{\p_{x_1}\vec{\Phi}} \vec{n},\vec{e}_1>]\ \vec{e}_2
\end{array}
\rg.
\]
Since $H= -e^{-\la}\ 2^{-1}[<\D_{\p_{x_2}\vec{\Phi}} \vec{n},\vec{e}_2>+<\D_{\p_{x_1}\vec{\Phi}} \vec{n},\vec{e}_1>]$ we deduce (\ref{VI.66}) and 
Lemma~\ref{lm-VI.10} is proved.\hfill$\Box$

\medskip

\noindent {\bf Proof of theorem~\ref{th-VI.8-3d} and its corollaries}
First let us introduce the operator $\D^*$ acting on couples of vector fields $(\vec{V}_1,\vec{V}_2)$ along $\vec{\Phi}_*(T\Sigma)$ defined as 
$$\D^*(\vec{V}_1,\vec{V}_2):= \D_{\p_{x_1} \vec{\Phi}} \vec{V}_1+\D_{\p_{x_2} \vec{\Phi}} \vec{V}_2. $$
We can again assume that $\vec{\Phi}$ is conformal. First apply the operator $\D^*$ to (\ref{VI.66}) and multiply by $H$.
This gives
\be
\label{VI.67}
-2H^2 \D^* \D \vec{\Phi}-2H\nabla H \cdot \D\vec{\Phi}=H\,\D^*\lf[D\vec{n}+\star_h(\vec{n}\wedge\D^\perp\vec{n})\rg]\quad.
\ee
We replace $-2H\,\D\vec{\Phi}$ in (\ref{VI.67}) by the expression given by (\ref{VI.66}), moreover we also use
the expression of the mean curvature vector in terms of $\vec{\Phi}$ :
\be
\label{VI.67a}
\D^* D\vec{\Phi}=2e^{2\la}\ \vec{H}\quad.
\ee
So (\ref{VI.67}) becomes
\be
\label{VI.68}
\begin{array}{l}
\ds-4H^2\ \vec{H}\ e^{2\la}+\nabla H\cdot\lf[\D\vec{n}+\star_h(\vec{n}\wedge\D^\perp\vec{n}\rg]\\[5mm]
\ds\quad\quad= H\,D^*\lf[\D\vec{n}+\star_h(\vec{n}\wedge\D^\perp\vec{n})\rg]\quad.
\end{array}
\ee
By the Gauss equations, called $K^g$ the Gauss curvature of $\Sigma$ and $K^h=K^h(\vec{\Phi}_*(T\Sigma))$ the sectional curvature of $(M,h)$ evaluated on the tangent plane to $\vec{\Phi}(\Sigma)$ we have
$$(K^g-K^h)\vec{n}=- \frac{1}{2}\star_h(D\vec{n}\wedge D^\perp \vec{n}) e^{-2\lambda}.$$
Using that the Hodge duality $\star_h$ commutes with the covariant differentiation $\D$ we get 
$$\D^*[ \star_h (\vec{n} \wedge \D ^\perp \vec{n})] = \star_h( \D \vec{n} \wedge \D ^\perp \vec{n}) + \star_h [\vec{n}\wedge Riem^h(\p_{x_2}\vec{\Phi}, \p_{x_1}\vec{\Phi}) \vec{n}] $$
where we use the convention that $Riem^h(\vec{X}, \vec{Y}) \vec{Z}:= \D_{\vec{X}} \vec{Y}-\D_{\vec{Y}} \vec{X}-\D_{[\vec{X}, \vec{Y}]} \vec{Z}$; putting together the last two equations we obtain
\be \label{VI.69}
\D^*[ \star_h (\vec{n} \wedge \D ^\perp \vec{n})] =-2 e^{2\lambda}(K^g-K^h)\vec{n} + \star_h [\vec{n}\wedge Riem^h(\p_{x_2}\vec{\Phi}, \p_{x_1}\vec{\Phi}) \vec{n}]. 
\ee
Computing (\ref{VI.68})$-2H$(\ref{VI.69}) we get
\be
\label{VI.70}
\begin{array}{l}
-4 e^{2\la}\ \vec{H}\ (H^2-(K^g-K^h))+\nabla H\cdot\lf[\D\vec{n}+\star_h(\vec{n}\wedge\D^\perp\vec{n})\rg]+\\[5mm]
\ds\quad-2\star_h\left( \vec{n}\wedge Riem^h(\p_{x_2}\vec{\Phi},\p_{x_1}\vec{\Phi})\vec{H} \right)=  H\,D^*\lf[\D\vec{n}-\star_h(\vec{n}\wedge\D^\perp\vec{n})\rg]\quad.
\end{array}
\ee
Since $D^*(\nabla H \vec{n})= e^{2 \lambda} \Delta_g H \vec{n}+ \nabla H \cdot  \D\vec{n}$ we have
\be
\label{VI.70b}
\begin{array}{l}
H D^*(D\vec{n}-\star_h(\vec{n}\wedge \D^\perp \vec{n}))= 2 e^{2 \lambda} \Delta_g H \vec{n}+ \nabla H \star_h (\vec{n}\wedge D^\perp \vec{n})+\\[5mm]
\ds \qquad +D^*[-2\nabla H \vec{n}+H \D \vec{n} - H\star_h (\vec{n}\wedge \D^\perp \vec{n})]+ \nabla H \D\vec{n}\quad.
\end{array}
\ee
Plugging (\ref{VI.70b}) into (\ref{VI.70}) we obtain
\be
\label{VI.71a}
\begin{array}{l}
-4 e^{2\la}\ \vec{H}\ (H^2-(K^g-K^h)) -2\star_h\left( \vec{n}\wedge Riem^h(\p_{x_2}\vec{\Phi},\p_{x_1}\vec{\Phi})\vec{H} \right) +\\[5mm]
\ds\quad-2 e^{2 \la} \Delta_g H\ \vec{n}= D^*\left[-2\nabla H \vec{n}+H \D \vec{n} - H\star_h (\vec{n}\wedge \D^\perp \vec{n})\right]\quad.
\end{array}
\ee
Now observe that 
\[
\lf\{
\begin{array}{l}
\ds <\star_h\left( \vec{n}\wedge Riem^h(\p_{x_2}\vec{\Phi},\p_{x_1}\vec{\Phi})\vec{H} \right),\vec{e_1}>= - <Riem^h(\vec{e_2}, \vec{e_1})\vec{H}, \vec{e_2}>\\[5mm]
\ds <\star_h\left( \vec{n}\wedge Riem^h(\p_{x_2}\vec{\Phi},\p_{x_1}\vec{\Phi})\vec{H} \right),\vec{e_2}>=  <Riem^h(\vec{e_2}, \vec{e_1})\vec{H}, \vec{e_1}>
\end{array}
\rg.
\]
and the normal component is null; hence 
$$ \star_h\left( \vec{n}\wedge Riem^h(\p_{x_2}\vec{\Phi},\p_{x_1}\vec{\Phi})\vec{H} \right)=-e^{2\la}(Riem(\vec{e_1},\vec{e_2})\vec{H})^\perp=:-e^{2\la} R^\perp_{\vec{\Phi}}(T\vec{\Phi})$$
where $\cdot^\perp$ denotes the rotation in the plane $\vec{\Phi}_*(T\Sigma)$ of $\frac \pi 2$ in the sense from $\vec{e_1}$ to $\vec{e_2}$.
Therefore we finally write the relation (\ref{VI.71a}) as

\be
\label{VI.71}
\begin{array}{l}
-2 e^{2 \la} \Delta_g H\ \vec{n}-4 e^{2\la}\ \vec{H}\ (H^2-(K^g-K^h)) +2e^{2\la} R^\perp _\Phi (T\vec{\Phi}) \\[5mm]
\ds \qquad \quad= D^*\left[-2\nabla H \vec{n}+H \D \vec{n} - H\star_h (\vec{n}\wedge \D^\perp \vec{n})\right]\quad.
\end{array}
\ee
Now recall that the immersion $\vec{\Phi}$ is Willmore if and only if 
$$ (\Delta_g H) \vec{n}+2 \vec{H}(H^2-(K^g-K^h))+\vec{H} Ric_h(\vec{n},\vec{n})=0$$
so, using equation (\ref{VI.71}),   $\vec{\Phi}$ is a Willmore immersion if and only if 

\be
\label{VI.71b}
\begin{array}{l}
2 e^{2\la}  R^\perp _\Phi (T\vec{\Phi}) +2 e^{2\la}\vec{H} Ric_h (\vec{n}, \vec{n}) \\[5mm]
\ds \qquad \quad= D^*\left[-2\nabla H \vec{n}+H \D \vec{n} - H\star_h (\vec{n}\wedge \D^\perp \vec{n})\right]\quad.
\end{array}
\ee
Observing that $\D\vec{H}=\D(H\vec{n})=H\D\vec{n}+\nabla H  \vec{n}$ we can rewrite the last relation as
\be
\label{VI.71c}
\begin{array}{l}
2 e^{2\la}  [R^\perp _\Phi (T\vec{\Phi}) + \vec{H} Ric_h (\vec{n}, \vec{n})] \\[5mm]
\ds \qquad \quad= D^*\left[-2 D\vec{H} + 3 H \D \vec{n} - \star_h (\vec{H}\wedge \D^\perp \vec{n})\right]\quad
\end{array}
\ee
which is the desired identity.

In \cite{BPP} is derived the Willmore equation under conformal constraint for immersions of surfaces in a 3-dimensional Riemannian manifold; by Proposition 2 of the aforementioned paper,  $\vec{\Phi}$ is a conformal constrained Willmore immersion if and only if there exists an holomorphic quadratic differential $q\in H^0(K^2)$ such that $W'(\vec{\Phi})=\delta^*(q)$, which is equivalent  to ask that there exists an holomorphic function $f(z)$ such that $W'(\vec{\Phi})=e^{-2\lambda}\Im (f(z) \overline{\vec{H_0}})$. Hence $\vec{\Phi}$ is conformal constrained Willmore if and only if 
\be\label{eq:ProvConfWill}
- (\Delta_g H) \vec{n}-2 \vec{H}(H^2-(K^g-K^h))-\vec{H} Ric_h(\vec{n},\vec{n})= e^{-2\lambda} \Im (f(z) \overline{\vec{H_0}})
\ee
for some holomorphic function $f(z)$; we conclude using the relation \eqref{VI.71}.  
\hfill$\Box$

\section{Conservative form of the Willmore equation in manifold in arbitrary codimension}\label{Sec:ConsWillHCD}
\reset
Let us start introducing some notation. Let $\bP$ be a smooth immersion of the disc $D^2$ into a Riemannian manifold $(M^m,h)$ of dimension $m\geq 3$. We stress that at this point $\bP$ is \emph{ not } assumed to be \emph{conformal}. Let us denote with $g=g_{\bP}:=\bP^{\star} h$ the pull back metric on $D^2$ by $\bP$. Call $\star_h$ and $*_g$ the Hodge duality operators, defined in \eqref{eq:defHodge} for $p$-vectors tangent respectively to $M$ and to $D^2$. Consider a positively oriented orthormal frame $\bbf_1, \bbf_2$ of $T\D^2$ endowed with the metric $g$ and let $\bbe_1:=\bP_*(\bbf_1),\bbe_2:=\bP_*(\bbf_2)$ the corresponding orthonormal frame of $\bP_*(TD^2)$, called $D$ the covariant derivative in $(M,h)$ we define
\begin{eqnarray}
D_g: \Gamma_{D^2}(T_{\bP}M\otimes \wedge^p TD^2) &\to & \Gamma_{D^2}(T_{\bP}M\otimes \wedge^{p+1} TD^2) \nonumber \\
\bX\otimes \vec{\alpha} &\mapsto & D_{\bbe_1} \bX \otimes (\bbf_1 \wedge \vec{\alpha}) + D_{\bbe_2} \bX \otimes (\bbf_2 \wedge \vec{\alpha}) \nonumber \\
&& = g^{ij} \left[ D_{\p_{x^i}\bP} \bX \otimes \left(\frac{\p}{\p x^j} \wedge \vec{\alpha} \right) \right], \label{def:Dg}
\end{eqnarray} 
where, in the last line we used coordinates $(x^1,x^2)$ on $D^2$, $\Gamma_{D^2}$ denotes the set of the sections of the cooresponding bundle, and  $T_{\bP}M$ is the tangent bundle of $M$ along $\bP(D^2)$. Notice that the definition does not depend on the choice of coordinates choosed on $D^2$, i.e. it is intrinsic. Observe we defined $D_g$ on a generating family, so the defintion extends to the whole space.

Next extend the definition of $*_g$ to $T_{\bP}M \otimes \wedge^p TD^2$  as 
\begin{eqnarray}
*_g: T_{\bP}M \otimes \wedge^p TD^2 &\to & T_{\bP}M \otimes \wedge^{(2-p)}\; TD^2\nonumber \\
\bX\otimes \vec{\alpha} &\mapsto & \bX\otimes ( *_g \vec{\alpha} ) \label{def:*g}.
\end{eqnarray} 
Using \eqref{def:Dg} and \eqref{def:*g} above let us define
\be\label{def:D*}
D_g^{*_g}:=(-1) *_g D_g *_g.
\ee
We also need to extend the definitions of $\star_h$, $\wedge_M$, scalar product and the projection $\pi_{\bn}$ onto the normal space to $\bP$  as follows
\begin{eqnarray}
\star_h: \wedge^p T_{\bP}M \otimes \wedge^q TD^2 &\to & \wedge^{(m-p)}\; T_{\bP}M \otimes \wedge^q TD^2 \nonumber \\
 \vec{\eta} \otimes \vec{\alpha}  &\mapsto& (\star_h \vec{\eta}) \otimes \vec{\alpha} \label{def:starh}\\
\wedge_M:\wedge^p T_{\bP}M \otimes \wedge^q TD^2 \;\times \; \wedge^s T_{\bP}M &\to& \wedge^{p+s} T_{\bP}M \otimes \wedge^q TD^2 \nonumber \\
(\vec{\eta} \otimes \vec{\alpha}, \vec{\tau})&\mapsto& (\vec{\eta} \wedge_M \vec{\tau}) \otimes \vec{\alpha}. \label{def:wedgeM} \\
<.,.>: (\wedge^p T_{\bP}M \otimes \wedge^q TD^2)&\times& (\wedge^p T_{\bP}M \otimes \wedge^q TD^2)  \to \R \nonumber\\
 (\vec{\eta} \otimes \vec{\alpha}, \vec{\tau} \otimes \vec{\beta})&\mapsto& <\vec{\eta},\vec{\tau}>_h <\vec{\alpha},\vec{\beta}>_g \label{def:ScalProd} \\
\pi_{\bn}: T_{\bP}M \otimes \wedge^q TD^2 &\to&  T_{\bP}M \otimes \wedge^q TD^2 \nonumber \\
\bX\otimes \vec{\alpha}&\mapsto& (\pi_{\bn}(\bX))\otimes \vec{\alpha} \label{def:pin}
\end{eqnarray}

%We introduce the endomorphism $\tilde{A}:T_{\vec{\Phi}(x)}M \to T_{\vec{\Phi}(x)}M$ defined as follows:
%\be
%\label{VI.44a}
%\forall \vec{X} \in T_{\vec{\Phi}(x)}M \qquad \tilde{A}(\vec{X}):=\sum_{i,j=1}^2 \vec{\mathbb I}(\vec{e}_i,\vec{e}_j) %<\vec{\mathbb I}(\vec{e}_i,\vec{e}_j), \vec{X}>\quad ,
%\ee
%where $(\vec{e}_1,\vec{e}_2)$ is an orthonormal basis of $T\Sigma^2$ for the induced metric $g:=\vec{\Phi}^*h$.
%Moreover  define $R^\perp_{\vec{\Phi}}(T\vec{\Phi})$  to be 
%\be \label{Rperp}
%R^\perp_{\vec{\Phi}}(T\vec{\Phi}):= \left( \pi_T\left[ \Riem^h(\vec{e}_1,\vec{e}_2)\vec{H} \right] \right)^\perp
%\ee
%where $(\vec{e}_1,\vec{e}_2)$ is a positive orthonormal basis of $T\Sigma^2$ for the induced metric $g:=\vec{\Phi}^*h$, $\pi_T:T_{\bP}M \to \bP_*(TD^2)$ is the tangential projection and $.^\perp$ denotes the rotation of an angle $\frac \pi 2 $ in $\bP_*(T D^2)$ in the direction from $\vec{e}_1$ towards $\vec{e}_2$, intrinsecally it can be written as $\bX^\perp= (\bP_*)\circ *_g \circ (\bP_*)^{-1}(\bX)$ for any $\bX \in \bP_*(TD^2)$.

Define also ${\frak R}_{\bP}(T\bP)$ to be
\be\label{def:frakR}
{\frak R}_{\bP}(T\bP):=\sum_{j=1}^2\left( <\Riem^h({\mathbb I}_{1j}, \bbe_2 ) \bbe_1, \bbe_2> \bbe_j+ <\Riem^h(\bbe_1,{\mathbb I}_{2j}) \bbe_1, \bbe_2> \bbe_j  \right)
\ee

and 
\be\label{def:DR}
(D \,R)(T\bP):=\sum_{i=1}^m <(D_{\vec{E}_i} \Riem^h)(\bbe_1,\bbe_2)\bbe_1, \bbe_2> \vec{E}_i.
\ee

The goal of this  section is to prove Theorem \ref{thm:ConsGen}. Observe that, now, all the terms appearing in the statement have been defined. Leu us summarize the arguments of the proof.
%\begin{Th}\label
%Let $\vec{\Phi}$ be a smooth immersion of the two dimensional disc $D^2$ into an $m$-dimensional Riemannian manifold $(M^m,h)$, then the following identity holds
%\be\label{eq:ConsWillGen1}
%\frac{1}{2} D^{*_g}_g \left[D_g \bH -3 \pi_{\bn} (D_g \bH) + \star_h\left((*_g D_g \bn) \wedge_M \bH \right)  \right] =\Delta_\perp\vec{H}+\ti{A}(\vec{H})-2|\vec{H}|^2\ \vec{H}-R^\perp_{\vec{\Phi}}(T\vec{\Phi})
%\ee
%where $\vec{H}$ is the mean curvature vector of the immersion $\vec{\Phi}$, $\Delta_\perp$ is the negative covariant laplacian on the normal bundle to the immersion, $\tilde{A}$ is the linear map given in (\ref{VI.44a}), $R^\perp$ is defined in \eqref{Rperp}, and in the first line we used the notation introduced in \eqref{def:Dg}, \eqref{def:*g}, \eqref{def:D*}, \eqref{def:starh}, \eqref{def:wedgeM} and \eqref{def:pin}.
%\end{Th}

{\bf Proof of Theorem~\ref{thm:ConsGen}.}
The proof is almost all the subsection below; more precisely it follows combining \eqref{eq:Cons} of Theorem \ref{th-VI.8} with \eqref{VI.98} in Remark \ref{rm-VI.11}; indeed with some straitforward computation following the defintions \eqref{def:Dg}, \eqref{def:*g}, \eqref{def:D*}, \eqref{def:starh}, \eqref{def:wedgeM} and \eqref{def:pin} one checks that the left hand side of \eqref{eq:ConsWillGen} and the left hand side of \eqref{VI.98} coincide.
\hfill$\Box$
\\

Before passing to the proof of the corollaries, some comments have to be done.
\\In order to exploit analytically equation (\ref{eq:ConsWillGen}) we will need a more explicit expression of $\pi_{\vec{n}}(D_g \bH)$. Recall the definition of  $\res$ given in \eqref{eq:defllcorner}, let as before $(\vec{e}_1,\vec{e}_2)$ be an orthonormal basis of $\bP_*(TD^2)$, call $\vec{n}$ the orthogonal $m-2$ plane given by 
$$
\star_h(\vec{e}_1\wedge\vec{e}_2)=\vec{n}
$$
and let $(\vec{n}_1\cdots\vec{n}_{m-2})$ be a positively oriented orthonormal basis of the $m-2$-plane given by $\vec{n}$
satisfying $\vec{n}=\wedge_{\al}\vec{n}_\al$. One verifies easily that
$$
\lf\{
\begin{array}{l}
\vec{n}\res\vec{e}_i=0\\[5mm]
\vec{n}\res\vec{n}_\al= (-1)^{\al-1}\wedge_{\beta\ne\al}\vec{n}_\beta\\[5mm]
\vec{n}\res(\wedge_{\beta\ne\al}\vec{n}_\beta)=(-1)^{m+\al-2}\,\vec{n}_\al
\end{array}
\rg.
$$

We then deduce the following identity :
\be
\label{VI.65ab}
\forall \vec{w}\in T_{\bP(x)}M\quad\quad \pi_{\vec{n}}(\vec{w})=(-1)^{m-1}\ \vec{n}\res(\vec{n}\res\vec{w})
\ee
From (\ref{VI.65ab}) we deduce in particular
\be
\label{VI.65ac}
\pi_{\vec{n}}(D_g\vec{H})=D_g\vec{H}-(-1)^{m-1}\ D_g(\vec{n})\res_M(\vec{n}\res \bH)-(-1)^{m-1}\ \vec{n}\res_M(D_g(\vec{n})\res_M\vec{H});
\ee
where, analogously as before, we define
\begin{eqnarray}
\res_M: (\wedge^p T_{\bP}M \otimes TD^2) \times \wedge^q T_{\bP}M &\to & \wedge^{(p-q)}\; T_{\bP}M \otimes  TD^2 \nonumber \\
 (\vec{\alpha} \otimes \vec{v},\vec{\beta})   &\mapsto& (\vec{\alpha} \otimes \vec{v})\res_M \vec{\beta}:=(\vec{\alpha}\res \vec{\beta}) \otimes \vec{v}. \label{eq:defres}
\end{eqnarray}
\\

%\begin{Co} \label{co:ConsWillGen1}  
%A smooth immersion $\vec{\Phi}$ of a 2-dimensional disc $D^2$ in $(M^m,h)$ is Willmore if and only if 
%\be
%\label{eq:ConsWGen1}
%\frac{1}{2} D^{*_g}_g \left[D_g \bH -3 \pi_{\bn} (D_g \bH) + \star_h\left((*_g D_g \bn) \wedge_M \bH \right)  \right] =\ti{R}(\bH)-R^\perp_{\vec{\Phi}}(T\vec{\Phi}),
%\ee
%where $\ti{R}$ and $R^\perp$ are the curvature endomorphisms defined respectively in \eqref{eq:defR} and \eqref{Rperp}.
%\end{Co}

A straightforward but important consequence of Theorem~\ref{thm:ConsGen} is the conservative form of Willmore surfaces equations given in Corollary ~\ref{co:ConsWillGen}. Let us prove it.
\\

{\bf Proof of Corollary~\ref{co:ConsWillGen}.}
Recall that the first variation of the  the Willmore functional in general  riemannian manifolds has been computed in \cite{Wei}; equating it to zero we get the classical Willmore equation in manifolds: $\vec{\Phi}$ is a Willmore immersion if and only if
\be \label{eq:W'HC}
\Delta_\perp\bH+\ti{A}(\vec{H})-2 |\bH|^2\ \bH-\ti{R}(\bH)=0
\ee
where $\ti{R}$ is the curvature endomorphism  defined in (\ref{eq:defR}). 
Collecting \eqref{eq:W'HC} and the equation \eqref{eq:ConsWillGen} we get the thesis.
\hfill$\Box$
\\

%at $\vec{\Phi}$ is a Willmore immersion if and only if 
%be
%\label{eq:ConsWGenProof}
%\frac{1}{2} D^{*_g}_g \left[D_g \bH -3 \pi_{\bn} (D_g \bH) + \star_h\left((*_g D_g \bn) \wedge_M \bH \right)  \right] =\ti{R}(\bH)-R^\perp_{\vec{\Phi}}(T\vec{\Phi}).
%\ee
%\end{proof}
Recall that an immersion $\vec{\Phi}$ of $\Sigma$ is said to be \emph{constrained-conformal  Willmore} if and only if it is a critical point of the Willmore functional under the constraint that the conformal class is fixed. Let us prove the conservative form of the constraint-conformal Willmore surface equation given in Corollary \ref{co:ConsConfW}.
\\

{\bf Proof of Corollary~\ref{co:ConsConfW}.}
Recall (see \cite{Riv2}, and the notation given in the introduction before Corollary~\ref{co:ConsConfW} ) that an immersion $\vec{\Phi}$ is a  constrained-conformal Willmore immersion if and only if there exists an holomorphic quadratic differential $q\in Q(J)$ such that 
\be \label{eq:W'confHC}
\Delta_\perp\bH+\ti{A}(\vec{H})-2 |\bH|^2\ \bH-\ti{R}(\bH)=\Im[(q,\vec{h}_0)_{WP}]
\ee
where $\ti{R}$ is the curvature endomorphism  defined in (\ref{eq:defR}). The thesis follows  putting together (\ref{eq:W'confHC}) and the equation (\ref{eq:Cons}). 
\hfill$\Box$
\\

Now we prove that also the Euler Lagrange equation of the functional $F=\frac{1}{2}\int |{\mathbb I}|^2$ ca be written in conservative form, being the Euler Lagrange equation of $W$ plus some lower order terms. Let us start with an auxiliary lemma.

\begin{Lm}\label{lem:VarK}
Let $\bP: D^2 \hookrightarrow (M,h)$ be  a smooth immersion, then the first variation of the functional $\int_{D^2} \bar{K} (\bP_*(TD^2)) dvol_g$ with respect to a smooth compactly supported variation $\bw$ is given
\be\label{eq:VarK}
\begin{array}{l}
\ds\frac{d}{dt}\int_{D^2} \bar{K} ((\bP+t\bw)_*(TD^2)) dvol_{g_{\bP+t\bw}}\ (t=0)\\[5mm]
\ds=-\int_{D^2} <(D\,R)(T\bP)+2 {\frak R}_{\bP}(T\bP)+2 \bar{K}(\bP_*(TD^2)) \bH, \bw>  dvol_g\quad.
\end{array}
\ee
where $(D\,R)(T\bP)$ and ${\frak R}_{\bP}(T\bP)$ are the curvature quantities defined respectively in \eqref{def:DR} and \ref{def:frakR}.
\hfill $\Box$
\end{Lm}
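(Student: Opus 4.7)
My plan is to compute the first variation directly, splitting into contributions from the variation of the volume form and from the variation of the sectional-curvature integrand. Writing $\bw = \bw^T + \bw^N$ for the tangent-normal decomposition along $\bP$, the standard formula gives
$$
\frac{d}{dt}(dvol_{g_t})\Big|_{t=0} = \bigl(\mathrm{div}_g \bw^T - 2\langle \bH, \bw^N\rangle\bigr)\,dvol_g,
$$
so that the volume-form contribution is $\int \bar K\, \mathrm{div}(\bw^T)\,dvol_g - 2\int \bar K \langle\bH, \bw^N\rangle\,dvol_g$, immediately accounting for the $-2\bar K \bH$ piece on the right-hand side after integrating.

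For the variation of $\bar K$, I would pick a smooth family of positively oriented orthonormal frames $\{\bbe_1(t), \bbe_2(t)\}$ of $(\bP_t)_*(TD^2)$ in the gauge $(\dot{\bbe}_i(0))^T = 0$; orthonormality plus this gauge yields $\dot{\bbe}_i = (D_{\bbe_i}\bw)^\perp = \nabla^\perp_{\bbe_i}\bw^N + \vec{\mathbb I}(\bbe_i, \bw^T)$. Differentiating $\bar K(T\bP_t) = \langle \Riem^h_{\bP_t(x)}(\bbe_1(t), \bbe_2(t))\bbe_1(t), \bbe_2(t)\rangle$ at $t=0$ and using antisymmetry in each pair of slots together with pair-symmetry $\langle R(X,Y)Z,W\rangle = \langle R(Z,W)X,Y\rangle$ to collapse the five summands into three, yields
$$
\frac{d\bar K}{dt} = \langle (D_{\bw}\Riem^h)(\bbe_1,\bbe_2)\bbe_1,\bbe_2\rangle + 2\langle \Riem^h(\bbe_1,\bbe_2)\dot{\bbe}_1,\bbe_2\rangle + 2\langle \Riem^h(\bbe_1,\bbe_2)\bbe_1,\dot{\bbe}_2\rangle.
$$
By the definition \eqref{def:DR}, the first term is $\langle\bw, (DR)(T\bP)\rangle$; substituting $\dot{\bbe}_i$ and applying pair-symmetry, the $\vec{\mathbb I}(\bbe_i, \bw^T)$ pieces of the second and third terms reorganize exactly into $2\langle {\frak R}_{\bP}(T\bP), \bw^T\rangle$ by the definition \eqref{def:frakR} (using $\vec{\mathbb I}_{ij}=\vec{\mathbb I}_{ji}$).

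It then remains to handle the $\nabla^\perp_{\bbe_i}\bw^N$ pieces. I would integrate by parts (using the compact support of $\bw$), performing the computation at a point in a local orthonormal frame which is parallel in the tangent bundle at that point, so that $\mathrm{div}(\bbe_i)$ vanishes there and $D_{\bbe_i}\bbe_j = \vec{\mathbb I}_{ij}$. The normal-bundle derivatives fall onto the curvature expressions $(\Riem^h(\bbe_1,\bbe_2)\bbe_j)^\perp$, and the second Bianchi identity
$$
(D_{\bbe_1}\Riem^h)(\bbe_2,\bw^N) + (D_{\bbe_2}\Riem^h)(\bw^N,\bbe_1) + (D_{\bw^N}\Riem^h)(\bbe_1,\bbe_2) = 0
$$
converts the tangential derivatives of $\Riem^h$ generated by the integration by parts into the single term $\langle (D_{\bw^N}\Riem^h)(\bbe_1,\bbe_2)\bbe_1,\bbe_2\rangle = \langle \bw^N, (DR)^N\rangle$. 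The frame-correction terms produced alongside, involving $D_{\bbe_i}\bbe_j = \vec{\mathbb I}_{ij}$, are then reconciled through the Codazzi equation
$$
(\Riem^h(X,Y)Z)^\perp = (\nabla^\perp_X \vec{\mathbb I})(Y,Z) - (\nabla^\perp_Y \vec{\mathbb I})(X,Z)
$$
and the identity $\sum_i \vec{\mathbb I}(\bbe_i,\bbe_i) = 2\bH$, producing the remaining $\bar K\bH$ adjustment. Combining the three contributions yields the stated formula.

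The main obstacle is this final step: executing the integration by parts for the $\bw^N$ pieces and identifying the resulting normal-bundle expression with $\langle \bw^N, (DR)^N\rangle$ plus the curvature corrections, through a careful application of second Bianchi together with Codazzi. The bookkeeping is delicate because several curvature expressions with mixed tangent/normal inputs appear, and one must use pair-symmetry to bring them into the canonical form of ${\frak R}_{\bP}$ and $(DR)$, but each individual manipulation is standard in submanifold geometry.
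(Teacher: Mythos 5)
Your plan differs from the paper's in a substantive way, and the difference is worth spelling out. The paper's proof extends the tangent frame $(\bbe_1,\bbe_2)$ to a tubular neighborhood by \emph{parallel transport in the normal directions} and then replaces the actual frame variation by $D_{\bw}\bbe_i$; since $D_{\pi_{\bn}(\bw)}\bbe_i=0$ by construction, this produces only the $\vec{\mathbb I}(\pi_T(\bw),\bbe_i)$ contribution. Simultaneously the paper quotes $\delta(dvol_g)=-2\langle\bH,\bw\rangle\,dvol_g$, which is the formula for \emph{normal} variations only. You instead work with the genuine first‐order variation of the frame along the deformation, $\dot{\bbe}_i=(D_{\bbe_i}\bw)^\perp=\nabla^\perp_{\bbe_i}\bw^N+\vec{\mathbb I}(\bbe_i,\bw^T)$, and you keep the full $\delta(dvol_g)=(\mathrm{div}_g\bw^T-2\langle\bH,\bw^N\rangle)\,dvol_g$. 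Those are exactly the two pieces the paper's sketch drops, so your setup is actually the more careful one.

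Where your proposal runs into trouble is precisely the step you flag as the main obstacle. You assert that the $\nabla^\perp_{\bbe_i}\bw^N$ pieces, after integration by parts and an application of second Bianchi plus Codazzi, ``produce the remaining $\bar K\bH$ adjustment.'' Two problems. First, the $-2\bar K\bH$ in the target formula is already fully produced by the volume variation $-2\bar K\langle\bH,\bw^N\rangle$; there is no leftover $\bar K\bH$ to be produced. Second, if you actually carry out the integration by parts, the tangential derivatives that fall on $\pi_{\bn}(\Riem^h(\bbe_1,\bbe_2)\bbe_j)$ give, via the second Bianchi identity, a term $+\langle(DR)(T\bP),\bw^N\rangle$ (note the plus sign), together with additional terms of the form $\pi_{\bn}[\Riem^h(\vec{\mathbb I}_{ij},\cdot\,)\,\cdot\,]$. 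This does not collapse into a $\bar K\bH$ correction, and the resulting sign on the $(D\,R)^N$ piece disagrees with the stated right-hand side. There is also a useful sanity check you should run: differentiating $\bar K=-\langle\Riem^h(\bbe_1,\bbe_2)\bbe_1,\bbe_2\rangle$ along the surface gives the pointwise identity $(D\,R)^T(T\bP)+2{\frak R}_{\bP}(T\bP)=-\nabla^T\bar K$, so the tangential component of the claimed right-hand side equals $\int\langle\nabla^T\bar K,\bw^T\rangle\,dvol_g=-\int\bar K\,\mathrm{div}_g\bw^T\,dvol_g$, which is generically nonzero. By contrast, your computation shows (correctly) that all $\bw^T$ contributions cancel once $\int\bar K\,\mathrm{div}_g\bw^T$ is combined with the $D_{\bw^T}\Riem^h$ and $\vec{\mathbb I}(\bw^T,\bbe_i)$ terms in $\delta\bar K$ — as it must, since the functional is reparametrization-invariant and its first variation along compactly supported tangential fields vanishes. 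So carrying your plan through carefully does not land on the formula in \eqref{eq:VarK} as written: the tangential part disappears entirely, and the normal part acquires the extra integrated $\nabla^\perp\bw^N$ terms. In short, your plan is sound up to the integration-by-parts step, but the asserted reconciliation there is not a routine application of Bianchi/Codazzi and, as described, would not reproduce \eqref{eq:VarK}.
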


\begin{proof} 
Let$(\bbe_1,\bbe_2)$ is an orthonormal frame of $\bP_*(TD^2)$ extended in the neighbourood of $\bP(D^2)$ by parallel translation in the normal directions, and $\pi_T$ denotes the projection on $\bP_*(TD^2)$
By definition  $\bar{K}(\bP_*(TD^2))=-<\Riem^h(\bbe_1,\bbe_2)\bbe_1,\bbe_2>$. Observe that using the orthonormality of $(\bbe_1,\bbe_2)$, the  antisymmetry of $\Riem^h(.,.)$ and the fact that $D_{\pi_{\bn}(\bw)} \bbe_i=0$ we get 
$$\Riem^h(D_{\bw} \bbe_1,\bbe_2)= \Riem^h({\mathbb I}(\pi_T(\bw),\bbe_1),\bbe_2);$$ 
recall moreover that the first variation of the volume element is $-2<\bH,\bw> vol_g$. Collecting these informations and using the symmetry of the Riemann tensor one gets 
\begin{eqnarray}
&&-\int_{D^2}\Big[<(D_{\bw}Riem^h)(\bbe_1,\bbe_2)\bbe_1,\bbe_2>+ 2 <\Riem^h({\mathbb I}(\pi_T(\bw),\bbe_1),\bbe_2) \bbe_1, \bbe_2> \nonumber \\
&&\quad \quad  \quad + 2 <\Riem^h(\bbe_1,{\mathbb I}(\pi_T(\bw),\bbe_2)) \bbe_1, \bbe_2>+ 2 \bar{K}(\bP_*(TD^2)) <\bH,\bw> \Big] dvol_g.
\end{eqnarray}
Now the thesis follows recalling the definitions \eqref{def:DR} and \ref{def:frakR}.
\end{proof}

\begin{Co} \label{co:ConsFGen}  
A smooth immersion $\vec{\Phi}$ of a 2-dimensional disc $D^2$ in $(M^m,h)$ is critical for the functional $F=\frac{1}{2}\int |{\mathbb I}|^2$ if and only if 
\be
\label{eq:ConsFGen}
\begin{array}{l}
\ds D^{*_g}_g \left[D_g \bH -3 \pi_{\bn} (D_g \bH) + \star_h\left((*_g D_g \bn) \wedge_M \bH \right)  \right] \\[5mm]
\quad=2\ti{R}(\bH)-2R^\perp_{\vec{\Phi}}(T\vec{\Phi})+(D\,R)(T\bP)+2 {\frak R}_{\bP}(T\bP)+2 \bar{K}(\bP_*(TD^2)) \bH\quad,
\end{array}
\ee
where $\ti{R}$ and $R^\perp$ are the curvature endomorphisms defined respectively in \eqref{eq:defR} and \eqref{Rperp}.\hfill $\Box$
\end{Co}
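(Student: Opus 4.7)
\textbf{Proof plan for Corollary~\ref{co:ConsFGen}.} The strategy is to reduce the statement to the Willmore case (Corollary~\ref{co:ConsWillGen}, or equivalently Theorem~\ref{thm:ConsGen}) by exploiting the Gauss equation, combined with the first variation computed in Lemma~\ref{lem:VarK}. First I would recall the Gauss equation for an immersion of a surface, which, in the notation of the paper, reads
\be
K_g = \bar{K}(\bP_\ast(TD^2))+\langle \mathbb{I}(\bbe_1,\bbe_1),\mathbb{I}(\bbe_2,\bbe_2)\rangle-|\mathbb{I}(\bbe_1,\bbe_2)|^2.
\ee
A direct algebraic manipulation then yields the pointwise identity
\be
\tfrac{1}{2}|\mathbb{I}|^2 = 2|\bH|^2 - K_g + \bar{K}(\bP_\ast(TD^2)),
\ee
so that on $D^2$ one has the decomposition
\be
F(\bP) = 2\,W(\bP) - \int_{D^2} K_g\,dvol_g + \int_{D^2} \bar{K}(\bP_\ast(TD^2))\,dvol_g.
\ee

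The next step is to observe that the middle term is a null Lagrangian for smooth compactly supported perturbations: by the local Gauss--Bonnet formula $\int_{D^2} K_g\,dvol_g$ depends only on the geodesic curvature of $\partial D^2$, which is unaffected by variations with support in the interior. Consequently the first variation of $F$ is the sum of $2\,W'(\bP)$ and the first variation of $\int \bar{K}(\bP_\ast(TD^2))\,dvol_g$. Plugging in the classical expression for $W'$ recalled in \eqref{eq:W'Weiner} together with Lemma~\ref{lem:VarK}, one finds that $\bP$ is critical for $F$ if and only if, for every smooth compactly supported $\bw$,
\be
\int_{D^2}\!\bigl\langle 2\bigl(\Delta_\perp\bH+\tilde{A}(\bH)-2|\bH|^2\bH-\tilde{R}(\bH)\bigr) -(D\,R)(T\bP)-2\mathfrak{R}_{\bP}(T\bP)-2\bar{K}(\bP_\ast(TD^2))\bH,\ \bw\bigr\rangle dvol_g=0,
\ee
and therefore if and only if the bracket vanishes identically.

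The final step is simply to replace the term $\Delta_\perp\bH+\tilde{A}(\bH)-2|\bH|^2\bH$ appearing above by the expression provided by Theorem~\ref{thm:ConsGen}, namely
\be
\Delta_\perp\bH+\tilde{A}(\bH)-2|\bH|^2\bH \;=\; \tfrac{1}{2}\,D^{*_g}_g\!\Bigl[D_g\bH-3\pi_{\bn}(D_g\bH)+\star_h\bigl((*_gD_g\bn)\wedge_M\bH\bigr)\Bigr] + R^\perp_{\bP}(T\bP).
\ee
Substituting and collecting terms yields exactly \eqref{eq:ConsFGen}. No step here poses a serious obstacle; the only point that requires care is checking the sign and factor bookkeeping in the combination of Lemma~\ref{lem:VarK} with $2\,W'$, and verifying that the boundary contribution in the Gauss--Bonnet reduction is indeed killed by the compact support of the test variation — a routine fact, but one worth noting explicitly since the identity is stated locally on $D^2$ rather than on a closed surface.
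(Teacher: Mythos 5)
Your proposal is correct and follows essentially the same route as the paper: the Gauss equation is used to write $F=2W+\int\bar{K}-\int K_g$, the Gauss curvature integral is dismissed as a null Lagrangian for compactly supported variations via local Gauss--Bonnet, and the result follows by combining the conservative Willmore equation (Corollary~\ref{co:ConsWillGen}/Theorem~\ref{thm:ConsGen}) with Lemma~\ref{lem:VarK}. The factor bookkeeping you flag (the loss of the $\tfrac12$ on the left-hand side coming from the coefficient $2$ in $F=2W+\dots$) works out as you describe.
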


\begin{proof}
The Gauss equation yelds
\be\label{eq:HII}
\frac{1}{2} |{\mathbb I}|^2=2 |\bH|^2+ \bar{K}(\bP_*(TD^2))-K_{\bP}
\ee 
where $K_{\bP}$ is the Gauss curvature of the metric $g=\bP^*h$. Integrating over $D^2$, we get
$$F(\bP)=2 W(\bP)+ \int_{D^2} \bar{K}(\bP_*(TD^2))- \int_{D^2}K_{\bP} dvol_g.$$
Since by Gauss-Bonnet theorem the last integral reduces, up to an additive constant, to an integral on the boundary, if we take a variations $\bw$ compactly supported in $D^2$ it gives no contribution in the first variation. Therefore the thesis follows combining the first variation of $W$ given in Corollary \ref{co:ConsWillGen} and Lemma \ref{lem:VarK}. 
\end{proof}

\begin{Co} \label{co:ConsWcnfGen}  
A smooth immersion $\vec{\Phi}$ of a 2-dimensional disc $D^2$ in $(M^m,h)$ is conformal Willmore (i.e. critical for the  conformal Willmore functional $W_{conf}=\int ( |\bH|^2+\bar{K} ) dvol_g$) if and only if 
\be
\label{eq:ConsWcnfGen}
\begin{array}{l}
\ds\frac{1}{2} D^{*_g}_g \left[D_g \bH -3 \pi_{\bn} (D_g \bH) + \star_h\left((*_g D_g \bn) \wedge_M \bH \right)  \right]\\[5mm]
\quad\ds=\ti{R}(\bH)-R^\perp_{\vec{\Phi}}(T\vec{\Phi})+(D\,R)(T\bP)+2 {\frak R}_{\bP}(T\bP)+2 \bar{K}(\bP_*(TD^2)) \bH\quad.
\end{array}
\ee
Notice if $(M,h)$ has constant sectional curvature $\bar{K}$ then the right hand side is null and we get
\be
\label{eq:ConsWcnfGenKconst}
\frac{1}{2} D^{*_g}_g \left[D_g \bH -3 \pi_{\bn} (D_g \bH) + \star_h\left((*_g D_g \bn) \wedge_M \bH \right)  \right]=0\quad.
\ee
\hfill $\Box$
\end{Co}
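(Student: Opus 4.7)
The plan is to exploit the additive decomposition
\[
W_{conf}(\vec{\Phi}) = W(\vec{\Phi}) + \int_{D^2} \bar K(\vec{\Phi}_\ast(TD^2))\, dvol_g,
\]
which is evident from the definition \eqref{def:Wconf}. Consequently the Euler--Lagrange operator of $W_{conf}$ is obtained by summing the Euler--Lagrange operator of $W$ with the first variation of the curvature functional $\int \bar K\, dvol_g$, and each of the two pieces has already been identified in the excerpt.

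First I would invoke Corollary~\ref{co:ConsWillGen}, which rewrites the vanishing of the Willmore Euler--Lagrange operator in the conservative form
\[
\tfrac{1}{2} D^{*_g}_g\!\left[D_g \vec{H} - 3\pi_{\vec{n}}(D_g \vec{H}) + \star_h\!\left((*_g D_g \vec{n})\wedge_M \vec{H}\right)\right] = \tilde R(\vec{H}) - R^\perp_{\vec{\Phi}}(T\vec{\Phi}).
\]
Then I would apply Lemma~\ref{lem:VarK} to compute the first variation of $\int_{D^2} \bar K(\vec{\Phi}_\ast(TD^2))\, dvol_g$; testing against an arbitrary compactly supported variation $\vec{w}$ produces the integrand
\[
-\bigl\langle (D\,R)(T\vec{\Phi}) + 2\,{\frak R}_{\vec{\Phi}}(T\vec{\Phi}) + 2\bar K\, \vec{H},\, \vec{w}\bigr\rangle.
\]
Summing the two Euler--Lagrange contributions and regrouping all the curvature terms on the right-hand side yields exactly \eqref{eq:ConsWcnfGen}; by construction this is equivalent to saying $\vec{\Phi}$ is a critical point of $W_{conf}$.

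For the simplification in the constant sectional curvature case I would substitute $\Riem^h(X,Y)Z = \bar K\bigl(\langle Y,Z\rangle X - \langle X,Z\rangle Y\bigr)$ into each of the four curvature endomorphisms on the right-hand side. Using that $\vec{H}$ and $\vec{\mathbb I}(\cdot,\cdot)$ are normal to $\vec{\Phi}_\ast(TD^2)$ while $(\vec{e}_1,\vec{e}_2)$ is tangential, the relevant inner products $\langle \vec{e}_i,\vec{H}\rangle$ and $\langle \vec{e}_i, \vec{\mathbb I}_{jk}\rangle$ all vanish, which forces $R^\perp_{\vec{\Phi}}(T\vec{\Phi}) = 0$ and ${\frak R}_{\vec{\Phi}}(T\vec{\Phi}) = 0$; parallelism $D\Riem^h = 0$ in a constant curvature space kills $(D\,R)(T\vec{\Phi})$; finally a direct computation gives $\tilde R(\vec{H}) = -2\bar K\,\vec{H}$, which cancels the leftover $2\bar K\,\vec{H}$. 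The right-hand side of \eqref{eq:ConsWcnfGen} is therefore identically zero, producing \eqref{eq:ConsWcnfGenKconst}.

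The proof is essentially bookkeeping once Corollary~\ref{co:ConsWillGen} and Lemma~\ref{lem:VarK} are at hand. The only place where one has to be careful is matching the sign conventions: the conservative form of Corollary~\ref{co:ConsWillGen} comes with the combination $\tilde R(\vec{H}) - R^\perp_{\vec{\Phi}}(T\vec{\Phi})$, while Lemma~\ref{lem:VarK} comes with a minus sign in front of its curvature terms, so one must verify that the resulting assembly really produces the displayed signs of $(D\,R)(T\vec{\Phi})$, $2{\frak R}_{\vec{\Phi}}(T\vec{\Phi})$ and $2\bar K\,\vec{H}$ on the right-hand side of \eqref{eq:ConsWcnfGen}.
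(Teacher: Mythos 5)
Your proof of \eqref{eq:ConsWcnfGen} is exactly the one in the paper: decompose $W_{conf}=W+\int \bar K\,dvol_g$, and add the first variation given by Corollary~\ref{co:ConsWillGen} to that given by Lemma~\ref{lem:VarK}.

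For the constant curvature simplification \eqref{eq:ConsWcnfGenKconst} your argument is correct but follows a slightly different bookkeeping than the paper's. You verify directly that each of the four curvature terms on the right of \eqref{eq:ConsWcnfGen} is killed: $R^\perp_{\vec{\Phi}}(T\vec{\Phi})=0$ and ${\frak R}_{\vec{\Phi}}(T\vec{\Phi})=0$ because $\vec{H}$ and $\vec{\mathbb I}$ are normal while $\vec{e}_i$ are tangential, $(D\,R)(T\vec{\Phi})=0$ because space forms have parallel curvature tensor, and $\ti{R}(\bH)=-2\bar K\bH$ cancels the remaining $2\bar K\bH$. The paper instead sidesteps $(D\,R)$ and ${\frak R}$ altogether: since $\bar K$ is constant, $W_{conf}=W+\bar K\,A$, so $dW_{conf}=dW-2\bar K\bH$; one then plugs $\ti R(\bH)=-2\bar K\bH$ and $R^\perp=0$ directly into Corollary~\ref{co:ConsWillGen}. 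The two routes are equivalent (indeed consistency forces $(D\,R)$ and $2{\frak R}+2\bar K\bH+\ti R(\bH)$ to vanish), and yours is perhaps the more systematic as a direct check of \eqref{eq:ConsWcnfGen}, whereas the paper's is a touch shorter since it needs fewer term-by-term verifications.
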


\begin{proof}
The proof of \eqref{eq:ConsWcnfGen} follows combining Corollary \ref{co:ConsWillGen} and Lemma \ref{lem:VarK}. 

Now assume that the sectional curvature $\bar{K}$ is constant; then observe that $W_{conf}(\bP)=W(\bP)+\bar{K} A(\bP)$,
\be\label{WconfKconst}
dW_{conf}=dW-2\bar{K} \bH.
\ee
Moreover, $\bar{K}$ constant implies that (see \cite{DoC} Corollary 3.5 and recall the opposite sign convention in the Riemann tensor)
\be\label{eq:RiemKconst}
<\Riem^h(\vec{X},\vec{Y})\vec{W}, \vec{Z}>=h(\vec{X},\vec{Z}) h(\vec{Y},\vec{W})-h(\vec{X},\vec{W}) h(\vec{Y},\vec{Z}) \quad \forall \vec{X},\vec{Y},\vec{W}, \vec{Z} \in T_xM.
\ee 
Therefore, plugging \eqref{eq:RiemKconst} directly into the definitions \eqref{eq:defR} and \eqref{Rperp} we get 
\begin{eqnarray}
\tilde{R}(\bH)&=&-2\bar{K}\bH, \label{eq:R=-2barK}\\
 R^\perp_{\vec{\Phi}}(T\vec{\Phi})&=&0\quad . \label{eq:Rperp=0}
\end{eqnarray}
Equation \eqref{eq:ConsWcnfGenKconst} follows combining \eqref{WconfKconst}, \eqref{eq:R=-2barK}, \eqref{eq:Rperp=0} and Corollary \ref{co:ConsWillGen}.
\end{proof}

\begin{Co}\label{co:ConsConfWcnfGen}
Let $\vec{\Phi}:\Sigma \hookrightarrow M$ be a  smooth immersion into the $m\geq 3$-dimensional Riemannian manifold $(M^m,h)$ and call $J$ the complex structure associated to $g=\bP^*h$.  Then the immersion $\bP$ is constraint-conformal conformal Willmore (i.e. critical for the  conformal Willmore functional $W_{conf}=\int ( |\bH|^2+\bar{K} ) dvol_g$ under the constraint of fixed conformal class) if and only if there exists an holomorphic quadratic differential $q \in Q(J)$ such that
\be
\label{eq:ConsWcnfGen}
\begin{array}{l}
\ds\frac{1}{2} D^{*_g}_g \left[D_g \bH -3 \pi_{\bn} (D_g \bH) + \star_h\left((*_g D_g \bn) \wedge_M \bH \right)  \right]\\[5mm]
\ds\quad=\Im\left[(q,\vec{h}_0)_{WP}\right]+\ti{R}(\bH)-R^\perp_{\vec{\Phi}}(T\vec{\Phi}) +(D\,R)(T\bP)+2 {\frak R}_{\bP}(T\bP)+2 \bar{K}(\bP_*(TD^2)) \bH\quad.
\end{array}
\ee
Where $\bH_0, \vec{h}_0$ and $(.,.)_{WP}$ are defined in \eqref{def:vecH0}, \eqref{def:vech0} and \eqref{def:WPpunct}.

Notice that if $(M,h)$ has constant sectional curvature $\bar{K}$ then the curvature terms of the right hand side vanish and we get
\be
\label{eq:ConsWcnfGenKconst}
\frac{1}{2} D^{*_g}_g \left[D_g \bH -3 \pi_{\bn} (D_g \bH) + \star_h\left((*_g D_g \bn) \wedge_M \bH \right)  \right]=\Im\left[(q,\vec{h}_0)_{WP}\right]\quad.
\ee
\hfill $\Box$
\end{Co}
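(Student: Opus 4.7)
The plan is to mirror the two steps used to pass from Corollary~\ref{co:ConsWillGen} to Corollary~\ref{co:ConsConfW}: take the unconstrained conservative form already established in Corollary~\ref{co:ConsWcnfGen} and adjoin the Lagrange multiplier produced by the conformal constraint. First I would invoke the variational characterization of the conformal constraint exactly as in the proof of Corollary~\ref{co:ConsConfW}, following \cite{Riv2}: the Teichm\"uller slice identifies the cotangent space to moduli space at $J$ with the space $Q(J)$ of holomorphic quadratic differentials, and the Weil--Petersson pairing~\eqref{def:WPpunct} couples $q\in Q(J)$ with the Weingarten operator $\vec{h}_0$; the resulting normal section $\Im[(q,\vec{h}_0)_{WP}]$ spans precisely the infinitesimal variations of the conformal class in which any Lagrange multiplier must live. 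Since this step concerns only the constraint and is insensitive to which $L^2$-curvature functional is being minimized, $\vec{\Phi}$ is constraint-conformal conformal Willmore if and only if the Euler--Lagrange gradient of $W_{conf}$ equals $\Im[(q,\vec{h}_0)_{WP}]$ for some $q\in Q(J)$.

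Second, by Corollary~\ref{co:ConsWcnfGen} (obtained by combining Corollary~\ref{co:ConsWillGen} with Lemma~\ref{lem:VarK}), this Euler--Lagrange gradient is exactly
\[
\tfrac{1}{2} D^{*_g}_g\bigl[D_g \bH - 3\pi_{\bn}(D_g \bH) + \star_h((*_g D_g \bn)\wedge_M \bH)\bigr] - \ti{R}(\bH) + R^\perp_{\bP}(T\bP) - (D\,R)(T\bP) - 2{\frak R}_{\bP}(T\bP) - 2\bar{K}\bH.
\]
Setting this equal to $\Im[(q,\vec{h}_0)_{WP}]$ and solving for the leading divergence term produces the first displayed equation of the statement.

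Third, for the constant sectional curvature subcase, I would reproduce the calculation already carried out at the end of the proof of Corollary~\ref{co:ConsWcnfGen}: since $\Riem^h$ is parallel one has $(D\,R)(T\bP)\equiv 0$; by~\eqref{eq:RiemKconst} both $\ti{R}(\bH) = -2\bar{K}\bH$ and $R^\perp_{\bP}(T\bP) = 0$; and expanding ${\frak R}_{\bP}(T\bP)$ via~\eqref{eq:RiemKconst} every inner product factors through a term of the form $<{\mathbb I}_{ij},\bbe_k> = 0$, because ${\mathbb I}_{ij}$ is normal, so ${\frak R}_{\bP}(T\bP) = 0$. Hence all five curvature contributions collapse to $-2\bar{K}\bH + 2\bar{K}\bH = 0$, leaving only the $\Im[(q,\vec{h}_0)_{WP}]$ term on the right, which is the second displayed equation of the statement.

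The main difficulty is conceptual rather than computational: one has to justify that $Q(J)$ still parametrizes first-order deformations of the induced conformal class when the ambient target is curved, i.e.~that the Weil--Petersson pairing and the description of constraint-conformal critical points via holomorphic quadratic differentials carry over unchanged from the Euclidean case treated in \cite{Riv2} to the Riemannian setting. This is guaranteed by the coordinate-invariance of $(\cdot,\cdot)_{WP}$ recorded just before the statement of Corollary~\ref{co:ConsConfW} and by the fact that the conformal class of the pullback metric depends only on the first jet of $\vec{\Phi}$; the derivation of the conformal Lagrange multiplier is therefore local and transfers verbatim to immersions into $(M^m,h)$.
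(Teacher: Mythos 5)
Your proposal follows exactly the same route as the paper, which states that the proof "is analogous to the proof of Corollary~\ref{co:ConsConfW} once we have Corollary~\ref{co:ConsWcnfGen}": one adjoins the holomorphic-quadratic-differential Lagrange multiplier $\Im[(q,\vec h_0)_{WP}]$ coming from the conformal constraint (as recalled from \cite{Riv2} in the proof of Corollary~\ref{co:ConsConfW}) to the unconstrained conservative form obtained in Corollary~\ref{co:ConsWcnfGen}, and for constant sectional curvature one verifies term-by-term that $\ti R(\bH)=-2\bar K\bH$, $R^\perp_{\bP}=0$, $(D\,R)(T\bP)=0$, ${\frak R}_{\bP}(T\bP)=0$, so that $\ti R(\bH)+2\bar K\bH=0$. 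Your explicit check that ${\frak R}_{\bP}(T\bP)$ vanishes (using normality of $\mathbb I_{ij}$ together with \eqref{eq:RiemKconst}) is a helpful detail that the paper leaves implicit, but otherwise the argument is the paper's own.
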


\begin{proof}
The proos is analogous to the proof of Corollary \ref{co:ConsConfW} once we have Corollary \ref{co:ConsWcnfGen}.
\end{proof}

\subsection{Derivation of the conservative form: use of conformal coordinates and complex notation}
We first introduce some complex notation
that will be useful in the sequel. In this subsection $\vec{\Phi}$ is a conformal immersion into a Riemannian manifold $(M^m,h)$ of dimension $m\geq 3$,  denote
$z=x_1+ix_2$, $\p_z=2^{-1}(\p_{x_1}-i\p_{x_2})$, $\p_{\ov{z}}=2^{-1}(\p_{x_1}+i\p_{x_2})$.

\noindent Moreover we denote\footnote{Observe that the notation has been chosen in such a way that $\overline{\vec{e}_z}=\vec{e}_{\overline{z}}$.}
\[
\lf\{
\begin{array}{l}
\ds\vec{e}_{z}:=e^{-\la}\p_z\vec{\Phi}=2^{-1}(\vec{e}_1-i\vec{e}_2)\\[5mm]
\ds\vec{e}_{\ov{z}}:=e^{-\la}\p_{\ov{z}}\vec{\Phi}=2^{-1}(\vec{e}_1+i\vec{e}_2)
\end{array}
\rg.
\]
Observe that
\be
\label{VI.199a}
\lf\{
\begin{array}{l}
\ds\lf<\vec{e}_z,\vec{e}_z\rg>=0\\[5mm]
\ds\lf<\vec{e}_z,\vec{e}_{\ov{z}}\rg>=\frac{1}{2}\\[5mm]
\ds\vec{e}_z\wedge\vec{e}_{\ov{z}}=\frac{i}{2}\,\vec{e}_1\wedge\vec{e}_2
\end{array}
\rg.
\ee
We also use the shorter notation $\D_z:=\D_{\p_z \vec{\Phi}}$ and $\D_{\bar{z}}:=\D_{\p_{\bar{z} \vec{\Phi}}}$ for the covariant derivative with respect to the vectors $\p_z \vec{\Phi}$ and $\p_{\bar{z}} \vec{\Phi}$.
Introduce moreover the {\it Weingarten Operator} expressed in our conformal coordinates $(x_1,x_2)$ :
\[
\vec{H}_0:=\frac{1}{2}\lf[\vec{\mathbb I}(\vec{e}_1,\vec{e}_1)-\vec{\mathbb I}(\vec{e}_2,\vec{e}_2)-2\,i\, \vec{\mathbb I}(\vec{e}_1,\vec{e}_2)\rg]\quad.
\]

\begin{Th}
\label{th-VI.8}
Let $\vec{\Phi}$ be a smooth immersion of a two dimensional manifold $\Sigma^2$ into an $m$-dimensional Riemannian manifold $(M^m,h)$; restricting the immersion to a small disc neighboorod of a point where we consider local conformal coordinate, we can see $\vec{\Phi}$ as a conformal immersion of $D^2$ into $(M,h)$. Then the following identity holds
\be
\label{eq:Cons}
\begin{array}{l}
4\,e^{-2\la}\,\Re\lf(\D_{\ov{z}}\lf[\pi_{\vec{n}}(\D_z\vec{H})+<\vec{H},\vec{H}_0>\ \p_{\ov{z}}\vec{\Phi}\rg]\rg)\\[5mm]
\quad=\Delta_\perp\vec{H}+\ti{A}(\vec{H})-2|\vec{H}|^2\ \vec{H}+8 \Re \lf(<\Riem^h(\vec{e}_{\ov{z}},\vec{e}_z) \vec{e}_z, \vec{H}> \vec{e}_{\ov{z}}\rg)\quad,
\end{array}
\ee
where $\vec{H}$ is the mean curvature vector of the immersion $\vec{\Phi}$, $\Delta_\perp$ is the negative covariant laplacian on the normal bundle to the immersion, $\tilde{A}$ is the linear map given in (\ref{VI.44a}),  $\D_z \cdot:=\D_{\p_z \vec{\Phi}} \cdot$, and $\D_{\ov{z}} \cdot:=\D_{\p_{\ov{z}} \vec{\Phi}} \cdot$ are the covariant derivatives in $(M,h)$ with respect to the tangent vectors $\p_z \vec{\Phi}$ and $\p_{\ov{z}} \vec{\Phi}$.\hfill $\Box$
\end{Th}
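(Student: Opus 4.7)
\medskip
\noindent\textbf{Proof proposal for Theorem \ref{th-VI.8}.}

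The plan is to expand the left hand side of \eqref{eq:Cons} by decomposing the covariant derivative $D_z\vec H$ into its tangential and normal parts, rewriting the tangential part explicitly via the Weingarten operator $\vec H_0$, and then collecting terms after commuting $D_{\bar z}$ and $D_z$ using the Riemann tensor. The overall strategy mirrors the Euclidean derivation of the second author in \cite{Riv1}, but every commutation of covariant derivatives will generate a $\Riem^h$ contribution which must be tracked carefully and shown to reconstruct the last term on the right hand side.

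First, I would set up the basic identities in conformal coordinates. With $\partial_z\vec\Phi = e^{\lambda}\vec e_z$ and $\langle\vec e_z,\vec e_z\rangle=0,\ \langle\vec e_z,\vec e_{\bar z}\rangle=\tfrac12$, the normal projections of the second derivatives read
\begin{equation*}
\pi_{\vec n}(D_z\partial_z\vec\Phi)=\tfrac{e^{2\lambda}}{2}\vec H_0,\qquad
\pi_{\vec n}(D_z\partial_{\bar z}\vec\Phi)=\pi_{\vec n}(D_{\bar z}\partial_z\vec\Phi)=\tfrac{e^{2\lambda}}{2}\vec H.
\end{equation*}
Differentiating the identities $\langle \vec H,\vec e_j\rangle=0$ and using these relations gives the tangential parts of $D_z\vec H$, namely
\begin{equation*}
\pi_T(D_z\vec H) = -2\langle\vec H,\vec H_0\rangle\,\vec e_{\bar z} - 2|\vec H|^2\,e^{-\lambda}\partial_z\vec\Phi \cdot (\text{coefficient}) ,
\end{equation*}
that is, an expression purely in terms of the scalars $\langle\vec H,\vec H_0\rangle$, $|\vec H|^2$ and the tangent frame. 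This is the step where the auxiliary term $\langle\vec H,\vec H_0\rangle\partial_{\bar z}\vec\Phi$ in the statement appears naturally: it is precisely (half of) the tangential correction that cancels in $\pi_{\vec n}(D_z\vec H) + \langle\vec H,\vec H_0\rangle\partial_{\bar z}\vec\Phi$.

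Next, I would apply $D_{\bar z}$ to $\pi_{\vec n}(D_z\vec H) + \langle\vec H,\vec H_0\rangle\partial_{\bar z}\vec\Phi$ and take the real part. After rewriting $\pi_{\vec n}(D_z\vec H) = D_z\vec H - \pi_T(D_z\vec H)$, one obtains
\begin{equation*}
\Re\,D_{\bar z}\!\left[\pi_{\vec n}(D_z\vec H) + \langle\vec H,\vec H_0\rangle\partial_{\bar z}\vec\Phi\right] = \Re\,D_{\bar z}D_z\vec H + (\text{tangential corrections}),
\end{equation*}
where the tangential corrections, expanded with $D_{\bar z}\partial_{\bar z}\vec\Phi = \tfrac{e^{2\lambda}}{2}\overline{\vec H_0} + (\text{tangent})$ and Leibniz, are exactly designed to produce the $\tilde A(\vec H)-2|\vec H|^2\vec H$ combination from \eqref{VI.44a} once normal projections are taken; this is a direct complex-coordinate rewriting of the identity $\pi_{\vec n}\Delta\vec H = \Delta_\perp\vec H + \tilde A(\vec H)-2|\vec H|^2\vec H$ in the flat case.

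Finally, to produce the curvature term, I would invoke the Ricci identity
\begin{equation*}
D_{\bar z}D_z\vec H - D_zD_{\bar z}\vec H = \Riem^h(\partial_{\bar z}\vec\Phi,\partial_z\vec\Phi)\vec H = e^{2\lambda}\,\Riem^h(\vec e_{\bar z},\vec e_z)\vec H,
\end{equation*}
and symmetrize $4e^{-2\lambda}\Re(D_{\bar z}D_z\vec H)$ to $2e^{-2\lambda}(D_{\bar z}D_z+D_zD_{\bar z})\vec H$, which equals $\Delta_\perp\vec H$ (on normal components) modulo the tangent corrections already accounted for. The remaining antisymmetric part is $2\Re(\Riem^h(\vec e_{\bar z},\vec e_z)\vec H)$, whose projection onto the frame yields precisely the $8\Re(\langle \Riem^h(\vec e_{\bar z},\vec e_z)\vec e_z,\vec H\rangle\vec e_{\bar z})$ term, using $\vec H = 2\langle\vec H,\vec e_{\bar z}\rangle\vec e_z + 2\langle\vec H,\vec e_z\rangle\vec e_{\bar z}$ applied inside the Riemann tensor via its symmetries. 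The main obstacle is this last bookkeeping: verifying that after all tangential parts are substituted and all commutators converted to $\Riem^h$, the tangential contributions telescope to zero on normal directions while the curvature contribution assembles into the exact coefficient stated in \eqref{eq:Cons}. The flat case of \cite{Riv1} provides the template, and the curvature terms are the only novelty, so systematically isolating them through the Ricci identity is the cleanest way to conclude.
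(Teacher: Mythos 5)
Your proposal gets the overall architecture right --- decompose $\D_z\vec H$ into tangential and normal parts via (\ref{VI.200}), (\ref{VI.204}), apply $\D_{\bar z}$, collect normal and tangential contributions --- and this is indeed the skeleton of the paper's argument. The algebraic identity $\ti A(\vec H)-2|\vec H|^2\vec H = 2\Re(\langle\vec H,\vec H_0\rangle\ov{\vec H_0})$ and the appearance of $\Delta_\perp\vec H$ from $\pi_{\vec n}(\D_{\bar z}[\pi_{\vec n}(\D_z\vec H)])$ are also correctly anticipated.

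However there is a genuine gap in the step that is supposed to produce the curvature term. You propose to obtain $8\Re(\langle\Riem^h(\vec e_{\bar z},\vec e_z)\vec e_z,\vec H\rangle\vec e_{\bar z})$ by applying the Ricci identity $\D_{\bar z}\D_z\vec H - \D_z\D_{\bar z}\vec H = e^{2\lambda}\Riem^h(\vec e_{\bar z},\vec e_z)\vec H$ and speaking of a ``remaining antisymmetric part'' after symmetrizing $4\Re(\D_{\bar z}\D_z\vec H)$. This does not work: since $\vec H$ is a \emph{real} vector field, one has exactly
\[
\Re(\D_{\bar z}\D_z\vec H) = \tfrac{1}{4}\bigl(\D_{\p_{x_1}\vec\Phi}^2+\D_{\p_{x_2}\vec\Phi}^2\bigr)\vec H,
\qquad
\Im(\D_{\bar z}\D_z\vec H) = -\tfrac14[\D_{\p_{x_1}\vec\Phi},\D_{\p_{x_2}\vec\Phi}]\vec H,
\]
so the commutator of covariant derivatives on $\vec H$ sits entirely in the \emph{imaginary} part and is annihilated by taking the real part. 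There is no leftover antisymmetric contribution: the symmetrization $4\Re(\D_{\bar z}\D_z\vec H)=2(\D_{\bar z}\D_z+\D_z\D_{\bar z})\vec H$ is an identity, not an approximation, and the Ricci curvature of $\vec H$ disappears from it. So the route you propose cannot reproduce the curvature term in \eqref{eq:Cons}.

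The paper obtains that term from a different commutation, hidden in the Riemannian Codazzi--Mainardi identity (Lemma \ref{z-lm-VI.12}). Concretely, in converting the tangential contribution $\lf<\D_{\ov z}\vec H,\vec H_0\rg>+\lf<\D_z\vec H,\vec H\rg>$ into a divergence $e^{-2\lambda}\p_{\ov z}(e^{2\lambda}\langle\vec H,\vec H_0\rangle)$, one must commute $\D_{\ov z}$ and $\D_z$ acting on $e^{-2\lambda}\p_z\vec\Phi$ (equivalently, on the Weingarten operator $\vec H_0=2\D_z(e^{-2\lambda}\p_z\vec\Phi)$); this commutator, applied to a frame-dependent quantity rather than to $\vec H$, survives into the real part and is exactly what produces $2\langle\Riem^h(\vec e_{\bar z},\vec e_z)\p_z\vec\Phi,\vec H\rangle$. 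You correctly sense that a $\Riem^h$ term must come from a commutator; the missing idea is \emph{which} commutator, and identifying it as the one in the Codazzi--Mainardi step rather than the naive Ricci identity on $\vec H$ is the crux of the Riemannian extension. As a secondary point, your formula for $\pi_T(\D_z\vec H)$ should read $\pi_T(\D_z\vec H)=-|\vec H|^2\p_z\vec\Phi-\langle\vec H_0,\vec H\rangle\p_{\bar z}\vec\Phi$ (cf.\ (\ref{VI.208ac})); the auxiliary term $\langle\vec H,\vec H_0\rangle\p_{\bar z}\vec\Phi$ does not cancel the tangential part but rather \emph{doubles} the $\langle\vec H_0,\vec H\rangle$ component, so the bookkeeping you outline needs correction there too.
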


\begin{Rm}
\label{rm-VI.11}
Observe that using the identity  (\ref{z-VI.2}) proved in Lemma \ref{lm-VI.11}, the equation (\ref{eq:Cons}) can be written using \emph{real} conformal coordinates as follows
\be
\label{VI.98}
 -\frac{e^{-2\la}}{2}\,\D^*\lf[\D\vec{H}-3\pi_{\vec{n}}(\D \vec{H})+\star_h(\D^\perp\vec{n}\wedge\vec{H})\rg]=\Delta_\perp\bH+\ti{A}(\vec{H})-2 |\bH|^2\ \bH- R^\perp_{\vec{\Phi}}(T\vec{\Phi})\quad,
\ee
observe we used the equation below, which follows by definition \eqref{Rperp}, 
\be\nonumber
R^\perp_{\vec{\Phi}}(T\vec{\Phi})= -8 \Re \lf(<\Riem^h(\vec{e}_{\ov{z}},\vec{e}_z) \vec{e}_z, \vec{H}> \vec{e}_{\ov{z}}\rg)=\left( \pi_T\left[ \Riem^h(\vec{e}_1,\vec{e}_2)\vec{H} \right] \right)^\perp.
\ee
Notice that identity (\ref{VI.98}) in codimension one gives exactly the previous (\ref{VI.71}). \hfill $\Box$
\end{Rm}

A straightforward but important consequence of Theorem~\ref{th-VI.8} is the following conservative form of Willmore surfaces equation in conformal coordinates.
\begin{Co}
\label{co-VI.8HC}  
A conformal immersion $\vec{\Phi}$ of a 2-dimensional disc $D^2$ in $(M^m,h)$ is Willmore if and only if 
\be
\label{eq:ConsW}
4\,e^{-2\la}\,\Re\lf(\D_{\ov{z}}\lf[\pi_{\vec{n}}(\D_z\vec{H})+<\vec{H},\vec{H}_0>\ \p_{\ov{z}}\vec{\Phi}\rg]\rg)=\ti{R}(\bH)+8 \Re \lf(<\Riem^h(\vec{e}_{\ov{z}},\vec{e}_z) \vec{e}_z, \vec{H}> \vec{e}_{\ov{z}}\rg).
\ee
\end{Co}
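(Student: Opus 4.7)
The plan is to combine Theorem~\ref{th-VI.8} with the classical Willmore equation \eqref{eq:W'HC} derived by Weiner, in exactly the same spirit as the proof of Corollary~\ref{co:ConsWillGen}. The identity in Theorem~\ref{th-VI.8} expresses the divergence-type quantity $4e^{-2\lambda}\Re\bigl(\D_{\bar z}[\pi_{\vec n}(\D_z\vec H)+\langle\vec H,\vec H_0\rangle\,\partial_{\bar z}\vec\Phi]\bigr)$ as the left-hand side of the classical Willmore equation $\Delta_\perp\vec H+\tilde A(\vec H)-2|\vec H|^2\vec H$ plus the intrinsic curvature contribution $8\Re(\langle \Riem^h(\vec e_{\bar z},\vec e_z)\vec e_z,\vec H\rangle\vec e_{\bar z})$. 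Since $\vec\Phi$ is Willmore if and only if $\Delta_\perp\vec H+\tilde A(\vec H)-2|\vec H|^2\vec H=\tilde R(\vec H)$, the equivalence is obtained by substituting this relation into the right-hand side of \eqref{eq:Cons}.

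Concretely, the steps I would carry out are:
\begin{enumerate}
\item Recall from \cite{Wei} that the Euler--Lagrange equation of $W$ for smooth immersions into $(M^m,h)$ reads $\Delta_\perp\vec H+\tilde A(\vec H)-2|\vec H|^2\vec H-\tilde R(\vec H)=0$, with $\tilde A$ and $\tilde R$ as in \eqref{VI.44a} and \eqref{eq:defR}.
\item Apply Theorem~\ref{th-VI.8}, whose conclusion \eqref{eq:Cons} is an identity valid for \emph{every} smooth conformal immersion, independently of whether $\vec\Phi$ satisfies the Willmore equation.
\item For the forward implication, assume $\vec\Phi$ is Willmore, so $\Delta_\perp\vec H+\tilde A(\vec H)-2|\vec H|^2\vec H=\tilde R(\vec H)$; plug this equality into the right-hand side of \eqref{eq:Cons} to obtain exactly \eqref{eq:ConsW}.
\item For the reverse implication, subtract \eqref{eq:ConsW} from \eqref{eq:Cons}: the conservative terms cancel and what remains is precisely $\Delta_\perp\vec H+\tilde A(\vec H)-2|\vec H|^2\vec H-\tilde R(\vec H)=0$, i.e.\ the classical Willmore equation.
\end{enumerate}

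There is essentially no analytic obstacle here: the corollary is a purely algebraic reformulation of Theorem~\ref{th-VI.8} once the classical form of the Euler--Lagrange equation is available. The only point worth a sentence of care is consistency of conventions: one needs to check that the sign of $\tilde R$ in \eqref{eq:defR} matches the sign appearing implicitly on the right-hand side of \eqref{eq:Cons}, which in turn requires that the curvature convention declared in the Notations (our $\Riem^h(\vec X,\vec Y)\vec Z=D_{\vec X}D_{\vec Y}\vec Z-D_{\vec Y}D_{\vec X}\vec Z-D_{[\vec X,\vec Y]}\vec Z$) be the one used when deriving \eqref{eq:W'HC} from \cite{Wei}. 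Once this bookkeeping is verified, the corollary follows by a one-line substitution from Theorem~\ref{th-VI.8}, exactly paralleling how Corollary~\ref{co:ConsWillGen} is deduced from Theorem~\ref{thm:ConsGen} in the general form. \hfill$\Box$
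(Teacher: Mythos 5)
Your proposal is correct and follows exactly the paper's own argument: the paper likewise recalls that $\vec\Phi$ is Willmore iff \eqref{eq:W'HC} holds and combines that with the identity \eqref{eq:Cons} from Theorem~\ref{th-VI.8}. You merely spell out the two directions and the sign-convention check a bit more explicitly than the paper's two-sentence proof.
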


\begin{proof}
Recall that $\vec{\Phi}$ is a Willmore immersion if and only if \eqref{eq:W'HC} holds. Combining (\ref{eq:W'HC}) and equation (\ref{eq:Cons}) we get the desired result. 
\end{proof}

Now recall that an immersion $\vec{\Phi}$ is said to be \emph{constrained-conformal Willmore} if and only if it is a critical point of the Willmore functional under the constraint that the conformal class is fixed. 

%In \cite{BPP} is derived the Willmore equation under conformal constraint for immersions of surfaces in a 3-dimensional Riemannian manifold, which, matched with Theorem \ref{th-VI.8-3d}, gives the following corollary.

\begin{Co}
\label{co-VI.9HC}  
A conformal immersion $\vec{\Phi}$ of a 2-dimensional disc $D^2$ in $(M^m,h)$ is constrained-conformal  Willmore if and only if there exists an holomorphic function $f(z)$ such that
\be
\label{eq:ConsWconf}
\begin{array}{l}
4\,e^{-2\la}\,\Re\lf(\D_{\ov{z}}\lf[\pi_{\vec{n}}(\D_z\vec{H})+<\vec{H},\vec{H}_0>\ \p_{\ov{z}}\vec{\Phi}\rg]\rg)\\[5mm]
\quad=e^{-2\la}\,\Im (f(z) \overline{\vec{H_0}})+\ti{R}(\bH)+8 \Re \lf(<\Riem^h(\vec{e}_{\ov{z}},\vec{e}_z) \vec{e}_z, \vec{H}> \vec{e}_{\ov{z}}\rg)\quad.
\end{array}
\ee
\end{Co}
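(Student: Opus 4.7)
\textbf{Proof plan for Corollary \ref{co-VI.9HC}.} The strategy is to imitate the proof of Corollary \ref{co-VI.8HC} line for line, replacing the unconstrained Willmore Euler--Lagrange equation by its constrained-conformal counterpart. Concretely, Theorem~\ref{th-VI.8} gives the universal identity \eqref{eq:Cons}, whose right-hand side is precisely $\Delta_\perp\vec H+\tilde A(\vec H)-2|\vec H|^2\vec H+8\Re\langle\Riem^h(\vec e_{\overline z},\vec e_z)\vec e_z,\vec H\rangle\vec e_{\overline z}$. Thus to obtain \eqref{eq:ConsWconf} it suffices to substitute, in place of this first-variation expression, the non-conservative form of the constrained-conformal Willmore Euler--Lagrange equation, then rearrange the curvature term $\widetilde R(\vec H)$ to the right-hand side as in the unconstrained case.

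The first step is to recall the constrained-conformal Euler--Lagrange equation in the form that was already quoted in the text as \eqref{eq:W'confHC}: $\vec\Phi$ is constrained-conformal Willmore if and only if there exists a holomorphic quadratic differential $q\in Q(J)$ (where $J$ is the complex structure induced by $g=\vec\Phi^\ast h$) such that
\begin{equation}\nonumber
\Delta_\perp\vec H+\tilde A(\vec H)-2|\vec H|^2\vec H-\widetilde R(\vec H)=\Im\bigl[(q,\vec h_0)_{WP}\bigr].
\end{equation}
This is derived exactly as in the proof of Corollary~\ref{co:ConsConfW}: the Lagrange multiplier for the conformal-class constraint is valued in the (real) space of holomorphic quadratic differentials, and the pairing between a normal variation and such a multiplier produces, by the very definition of $\vec h_0$, the $\Im[(q,\vec h_0)_{WP}]$ term.

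The second step is purely computational: write $q$ in local complex coordinates as $q=f(z)\,dz\otimes dz$ with $f$ holomorphic, and unfold the definition \eqref{def:WPpunct} of the Weil--Petersson pairing together with \eqref{def:vech0}. Since $\vec h_0=e^{2\lambda}\vec H_0\,dz\otimes dz$ and the WP pairing carries the factor $e^{-4\lambda}$, one obtains
\begin{equation}\nonumber
(q,\vec h_0)_{WP}=e^{-4\lambda}f(z)\,\overline{e^{2\lambda}\vec H_0}=e^{-2\lambda}f(z)\,\overline{\vec H_0},
\end{equation}
so that $\Im[(q,\vec h_0)_{WP}]=e^{-2\lambda}\Im(f(z)\overline{\vec H_0})$. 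This is the scalar identification of the holomorphic quadratic differential data with a holomorphic function $f(z)$ in the given conformal chart.

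The final step is to insert the previous equality into \eqref{eq:Cons}: the left-hand side is unchanged, while the right-hand side becomes $e^{-2\lambda}\Im(f(z)\overline{\vec H_0})+\widetilde R(\vec H)+8\Re\langle\Riem^h(\vec e_{\overline z},\vec e_z)\vec e_z,\vec H\rangle\vec e_{\overline z}$, which is exactly \eqref{eq:ConsWconf}. Conversely, running the argument backwards (every manipulation is an equivalence) shows that any $f(z)$ holomorphic making \eqref{eq:ConsWconf} hold produces, via \eqref{eq:Cons}, a solution of the non-conservative constrained-conformal Willmore equation with multiplier $q=f(z)\,dz\otimes dz$. The only non-routine point is keeping track of the $e^{\pm 2\lambda}$ factors to verify that the WP-pairing indeed yields the stated $e^{-2\lambda}\Im(f\overline{\vec H_0})$; beyond this bookkeeping the corollary is an immediate combination of Theorem~\ref{th-VI.8} with the result from \cite{Riv2}.
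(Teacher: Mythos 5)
Your proof is correct and follows essentially the same route as the paper: combine the non-conservative constrained-conformal Willmore equation with the identity of Theorem~\ref{th-VI.8}. The only difference is that you explicitly derive the scalar identity $\Im[(q,\vec h_0)_{WP}]=e^{-2\lambda}\Im(f(z)\overline{\vec H_0})$, whereas the paper records this identity once in the introduction and then simply quotes the $e^{-2\lambda}\Im(f\overline{\vec H_0})$ form of the Euler--Lagrange equation directly in the two-line proof.
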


{\bf Proof of Corollary~\ref{co-VI.9HC}}
An immersion $\vec{\Phi}$ is a  constrained-conformal Willmore immersion if and only if there exists an holomorphic function $f$ such that 
\be \label{eq:W'confHC}
\Delta_\perp\bH+\ti{A}(\vec{H})-2 |\bH|^2\ \bH-\ti{R}(\bH)=e^{-2\la}\,\Im (f(z) \overline{\vec{H_0}})
\ee
where $\ti{R}$ is the curvature endomorphism  defined in (\ref{eq:defR}). 
\\Therefore putting together (\ref{eq:W'confHC}) and the equation (\ref{eq:Cons}) we get the thesis.
% $\vec{\Phi}$ is a  constrained-conformal Willmore immersion if and only if 
%\be
%\label{eq:ConsWconfProof}
%\begin{array}{l}
%4\,e^{-2\la}\,\Re\lf(\D_{\ov{z}}\lf[\pi_{\vec{n}}(\D_z\vec{H})+<\vec{H},\vec{H}_0>\ \p_{\ov{z}}\vec{\Phi}\rg]\rg)\\[5mm]
%\quad=e^{-2\la}\,\Im (f(z) \overline{\vec{H_0}})+\ti{R}(\bH)+8 \Re \lf(<\Riem^h(\vec{e}_{\ov{z}},\vec{e}_z) \vec{e}_z, \vec{H}> \vec{e}_{\ov{z}}\rg)\quad.
%\end{array}
%\ee
\hfill$\Box$
\\

In order to prove Theorem \ref{th-VI.8} some computational lemmas will be useful; let us start with the following.

\begin{Lm}
\label{lm-VI.11}
Let $\vec{\Phi}$ be a conformal immersion of $D^2$ into $(M^m,h)$ then
\be
\label{z-VI.1}
\pi_T(\D_z\vec{H})-i\,\star_h(\D_z\vec{n}\wedge\vec{H})=-2\,\lf<\vec{H},\vec{H}_0\rg>\ \p_{\ov{z}}\vec{\Phi}
\ee
and hence
\be
\label{z-VI.2}
\D_z\vec{H}-3\pi_{\vec{n}}(\D_z\vec{H})-i\,\star_h(\D_z\vec{n}\wedge\vec{H})=-2\,\lf<\vec{H},\vec{H}_0\rg>\ \p_{\ov{z}}\vec{\Phi}-2\,\pi_{\vec{n}}(\D_z\vec{H})
\ee
\hfill $\Box$
\end{Lm}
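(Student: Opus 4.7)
The second identity (\ref{z-VI.2}) is an immediate algebraic consequence of the first: substituting $\pi_T(\D_z\vec{H}) = \D_z\vec{H} - \pi_{\vec{n}}(\D_z\vec{H})$ into (\ref{z-VI.1}) and moving $-2\pi_{\vec{n}}(\D_z\vec{H})$ to the right-hand side produces (\ref{z-VI.2}). Hence the whole substance of the lemma lies in proving (\ref{z-VI.1}).

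To prove (\ref{z-VI.1}) I would compute the two terms on the left separately in complex conformal coordinates. For the tangential projection, differentiating $<\vec{H},\vec{e}_i>=0$ gives $<\D_z\vec{H},\vec{e}_i> = -e^\la<\vec{H},\vec{\mathbb I}(\vec{e}_z,\vec{e}_i)>$; then using the identities $\vec{\mathbb I}(\vec{e}_z,\vec{e}_z) = \frac{1}{2}\vec{H}_0$ and $\vec{\mathbb I}(\vec{e}_z,\vec{e}_{\ov{z}}) = \frac{1}{2}\vec{H}$, together with $\vec{e}_1 = \vec{e}_z+\vec{e}_{\ov{z}}$ and $\vec{e}_2 = i(\vec{e}_z-\vec{e}_{\ov{z}})$, a short complex computation yields
\[
\pi_T(\D_z\vec{H}) = -<\vec{H},\vec{H}_0>\,\p_{\ov{z}}\vec{\Phi} - |\vec{H}|^2\,\p_z\vec{\Phi}.
\]
For the Hodge-star term, since $\vec{n} = \star_h(\vec{e}_1\wedge\vec{e}_2)$ is the full normal $(m-2)$-multivector and $\vec{H}$ is normal, one has $\vec{n}\wedge\vec{H} = 0$; differentiating gives $\D_z\vec{n}\wedge\vec{H} = -\vec{n}\wedge\D_z\vec{H}$, and similarly $\vec{n}\wedge\pi_{\vec{n}}(\D_z\vec{H})=0$, so only the tangential part contributes: $\D_z\vec{n}\wedge\vec{H} = -\vec{n}\wedge\pi_T(\D_z\vec{H})$. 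A direct Hodge-star calculation in an adapted oriented orthonormal frame $(\vec{e}_1,\vec{e}_2,\vec{E}_3,\ldots,\vec{E}_m)$ shows that, for any tangent vector $\vec{X}$, $\star_h(\vec{n}\wedge\vec{X}) = \vec{X}^\perp$, the $\pi/2$ rotation in $\vec{\Phi}_*(TD^2)$ from $\vec{e}_1$ toward $\vec{e}_2$. In complex notation this rotation reads $\vec{e}_z^\perp = i\vec{e}_z$ and $\vec{e}_{\ov{z}}^\perp = -i\vec{e}_{\ov{z}}$, hence
\[
-i\,\star_h(\D_z\vec{n}\wedge\vec{H}) = i\,[\pi_T(\D_z\vec{H})]^\perp = -<\vec{H},\vec{H}_0>\,\p_{\ov{z}}\vec{\Phi} + |\vec{H}|^2\,\p_z\vec{\Phi}.
\]
Adding the two contributions, the $|\vec{H}|^2\p_z\vec{\Phi}$ terms cancel and the desired right-hand side $-2<\vec{H},\vec{H}_0>\p_{\ov{z}}\vec{\Phi}$ emerges.

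The main obstacle is the higher-codimensional Hodge-star identity $\star_h(\vec{n}\wedge\vec{X})=\vec{X}^\perp$ for tangent $\vec{X}$: in codimension one this is elementary and implicit in Lemma~\ref{lm-VI.10}, but for $m\geq 4$ one must carefully track the signs arising when permuting the tangent factor past the $m-2$ normal factors inside $\vec{n}$, as well as handle the fact that $<\vec{H},\vec{H}_0>$ is complex-valued while $|\vec{H}|^2$ is real. Once this rotation identity is available, the cancellation of the $|\vec{H}|^2\p_z\vec{\Phi}$ terms is automatic and (\ref{z-VI.1}) follows.
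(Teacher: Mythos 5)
Your proof is correct, and it follows essentially the same line as the paper's: compute the tangential projection of $\D_z\vec{H}$ in terms of $\vec{H}_0$ and $|\vec{H}|^2$, express the Hodge-star term as a rotation of a tangent vector, and observe that the $|\vec{H}|^2\p_z\vec{\Phi}$ contributions cancel. The paper works in real coordinates, computing $\star_h(\D^\perp\bn\wedge\vec{H})$ directly from the component formula for $\D\bn_\al$ (see (\ref{VI.77})--(\ref{z-VI.3})) and then translating the combined identity (\ref{z-VI.5}) to complex notation; your version works entirely in complex notation and replaces the direct computation of $\star_h(\D_z\vec{n}\wedge\vec{H})$ with the slightly slicker observation that $\vec{n}\wedge\vec{H}=0$ differentiates to $\D_z\vec{n}\wedge\vec{H}=-\vec{n}\wedge\pi_T(\D_z\vec{H})$, after which the rotation identity $\star_h(\vec{n}\wedge\vec{X})=\vec{X}^\perp$ — which is exactly (\ref{VI.76}) of the paper, restated intrinsically — does the rest. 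This reorganization is harmless and perhaps a touch more transparent, but it is not a genuinely different route; the content of (\ref{VI.76}), the tangential-projection formula, and the cancellation are identical in both arguments.
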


{\bf Proof of lemma~\ref{lm-VI.11}.}
We denote by $(\bbe_1,\bbe_2)$ the orthonormal basis of $\vec{\Phi}_\ast(TD^2)$ given by
\[
\bbe_i=e^{-\la}\ \frac{\p \bP}{\p x_i}\quad .
\]
With these notations the second fundamental form $\bh$ which is a symmetric 2-form on $TD^2$  into  $(\vec{\Phi}_\ast TD^2)^\perp$
is given by
\be
\label{VI.74}
\begin{array}{l}
\bh=\sum_{\al,i,j}h^\al_{ij}\ \bn_\al\otimes(\bbe_i)^\ast\otimes(\bbe_j)^\ast\\[5mm]
\mbox{ with }\quad h^\al_{ij}=-e^{-\la}\,\lf(\D_{\p_{x_i} \vec{\Phi}} \bn_\al,\bbe_j\rg)
\end{array}
\ee
We shall also denote
\[
\vec{h}_{ij}:=\vec{\mathbb I}(\bbe_i,\bbe_j)=\sum_{\al=1}^{m-2}h^\al_{ij}\ \vec{n}_\al
\]
In particular the mean curvature vector $\bH$ is given by
\be
\label{VI.75}
\bH=\sum_{\al=1}^{m-2} H^\al\,\bn_\al=\frac{1}{2}\sum_{\al=1}^{m-2}(h^\al_{11}+h^\al_{22})\, \bn_\al=\frac{1}{2}(\vec{h}_{11}+\vec{h}_{22})
\ee
Let $\bn$ be the $m-2$ vector of $T_{\vec{\Phi}(x)}M$ given by $\bn=\bn_1\wedge\cdots\wedge\bn_2$. We identify vectors and $m-1$-vectors in $T_{\vec{\Phi}(x)}M$ using the Hodge operator $\star_h$ for the metric $h$; for the Hodge  operator we use the standard notation (see for example \cite{Nak} Chap. 7.9.2)
$$<\alpha, \beta> \star_h 1=(\alpha \wedge \star_h \beta)$$
for any couple of $p$-vectors $\alpha$ and $\beta$, where we set $\star_h 1:= \bbe_1 \wedge \bbe_2 \wedge \bn$; then we have for instance
\be
\label{VI.76}
\star_h(\bn\wedge \bbe_1)=\bbe_2\quad\mbox{ and }\quad\star_h(\bn\wedge \bbe_2)=- \bbe_1
\ee
Since $\bbe_1,\bbe_2,\bn_1\cdots\bn_{m-2}$ is a basis of $T_{\bP(x)} M$, we can write
for every $\al=1\cdots m-2$
\[
\D \bn_\al=\sum_{\beta=1}^{m-2}<\D \bn_\al,\bn_\beta>\, \bn_\beta+\sum_{i=1}^2<\D\bn_\al,\bbe_i>\,\bbe_i
\]
and consequently
\be
\label{VI.77}
\star_h(\bn\wedge\D^\perp\bn_\al)=<\D^\perp \bn_\al,\bbe_1>\ \bbe_2
-<\D^\perp \bn_\al,\bbe_2>\ \bbe_1
\ee
Hence
\[
\star_h(\D^\perp\bn\wedge\vec{H})=-<\D^\perp \vec{H},\bbe_1>\ \bbe_2
+<\D^\perp \vec{H},\bbe_2>\ \bbe_1 =<\vec{H},\pi_{\vec{n}}(\D^\perp\bbe_1)>\ \bbe_2
-<\vec{H},\pi_{\vec{n}}(\D^\perp\bbe_2)>\ \bbe_1
\]
Using (\ref{VI.74}), we then have proved
\be
\label{z-VI.3}
\star_h(\D^\perp\bn\wedge\vec{H})=
\lf(
\begin{array}{c}
\ds-<\vec{H},\vec{h}_{12}>\ \p_{x_2}\vec{\Phi}\, +\,<\vec{H},\vec{h}_{22}>\ \p_{x_1}\vec{\Phi}\\[5mm]
\ds<\vec{H},\vec{h}_{11}>\ \p_{x_2}\vec{\Phi} \,-\,<\vec{H},\vec{h}_{12}>\ \p_{x_1}\vec{\Phi}
\end{array}
\rg)
\ee
The tangential projection of $\D\vec{H}$ is given by
\[
\begin{array}{l}
\pi_T(\D\vec{H})=<\D\vec{H},\vec{e}_1>\ \bbe_1+<\D\vec{H},\vec{e}_2>\ \bbe_2\\[5mm]
\quad=-<\vec{H},\pi_{\vec{n}}(\D \vec{e}_1)>\ \bbe_1-<\vec{H},\pi_{\vec{n}}(\D \vec{e}_2)>\ \bbe_2\quad.
\end{array}
\]
Hence
\be
\label{z-VI.4}
\pi_T(\D\vec{H})=\lf(
\begin{array}{c}
-<\vec{H},\vec{h}_{11}>\ \p_{x_1}\vec{\Phi}-<\vec{H},\vec{h}_{12}>\ \p_{x_2}\vec{\Phi}\\[5mm]
-<\vec{H},\vec{h}_{12}>\ \p_{x_1}\vec{\Phi}-<\vec{H},\vec{h}_{22}>\ \p_{x_2}\vec{\Phi}
\end{array}
\rg)
\ee
Combining (\ref{z-VI.3}) and (\ref{z-VI.4}) gives
\be
\label{z-VI.5}
-\pi_T(\D\vec{H})-\star_h(\D^\perp\bn\wedge\vec{H})=
\lf(
\begin{array}{c}
<\vec{H},\vec{h}_{11}-\vec{h}_{22}>\ \p_{x_1}\vec{\Phi}+2<\vec{H},\vec{h}_{12}>\ \p_{x_2}\vec{\Phi}\\[5mm]
2<\vec{H},\vec{h}_{12}>\ \p_{x_1}\vec{\Phi}+<\vec{H},\vec{h}_{22}-\vec{h}_{11}>\ \p_{x_2}\vec{\Phi}
\end{array}
\rg)
\ee
This last identity written with the complex coordinate $z$ is exactly (\ref{z-VI.1}) and lemma~\ref{lm-VI.11} is proved. \hfill $\Box$

\medskip

Before to move to the proof of Theorem~\ref{th-VI.8} we shall need two more lemmas. First we have
\begin{Lm}
\label{z-lm-VI.11}
Let $\vec{\Phi}$ be a conformal immersion of the disc $D^2$ into $M^m$, called $z:=x_1+ix_2$, $e^\la:=|\p_{x_1}\vec{\Phi}|=|\p_{x_2}\vec{\Phi}|$
denote
\be
\label{z-VI.200}
\vec{e}_i:=e^{-\la}\,\p_{x_i}\vec{\Phi}\quad,
\ee
and let $\vec{H}_0$ be the Weingarten Operator of the immersion expressed in the conformal coordinates $(x_1,x_2)$:
\[
\vec{H}_0:=\frac{1}{2}\lf[{\mathbb I}(\vec{e}_1,\vec{e}_1)-{\mathbb I}(\vec{e}_2,\vec{e}_2)-2\,i\, {\mathbb I}(\vec{e}_1,\vec{e}_2)\rg].
\]
Then the following identities hold
\be
\label{VI.200}
\D_{\ov{z}}\lf[e^\la\, \vec{e}_{z}\rg]=\frac{e^{2\la}}{2}\vec{H}\quad,
\ee
and
\be
\label{VI.204}
\D_{z}\lf[e^{-\la}\vec{e}_z\rg]=\frac{1}{2}\, \vec{H}_0\quad.
\ee
\hfill$\Box$
\end{Lm}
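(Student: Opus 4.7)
\noindent\textbf{Proof proposal for Lemma~\ref{z-lm-VI.11}.}

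The plan is to unpack the complex derivatives into real coordinates, then use the torsion-free identity $\D_{\p_{x_i}\vec{\Phi}}\p_{x_j}\vec{\Phi}=\D_{\p_{x_j}\vec{\Phi}}\p_{x_i}\vec{\Phi}$ (which holds because, in local coordinates on $M$, $(\D_{\p_{x_i}\vec{\Phi}}\p_{x_j}\vec{\Phi})^\mu = \p_{x_i}\p_{x_j}\vec{\Phi}^\mu + \Gamma^\mu_{\alpha\beta}(\vec{\Phi})\p_{x_i}\vec{\Phi}^\alpha\p_{x_j}\vec{\Phi}^\beta$ is manifestly symmetric in $i,j$). Writing $e^\la \vec{e}_z=\p_z\vec{\Phi}$ and $e^{-\la}\vec{e}_z=e^{-2\la}\p_z\vec{\Phi}$, everything then reduces to computing $\D_{\p_{\ov z}\vec{\Phi}}\p_z\vec{\Phi}$ and $\D_{\p_z\vec{\Phi}}\p_z\vec{\Phi}$, and splitting them into tangential and normal parts.

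For \eqref{VI.200}, expanding $\p_z=\tfrac12(\p_{x_1}-i\p_{x_2})$ and $\p_{\ov z}=\tfrac12(\p_{x_1}+i\p_{x_2})$, the symmetry above makes the mixed terms cancel and gives
\[
\D_{\p_{\ov z}\vec{\Phi}}\p_z\vec{\Phi}=\tfrac14\bigl(\D_{\p_{x_1}\vec{\Phi}}\p_{x_1}\vec{\Phi}+\D_{\p_{x_2}\vec{\Phi}}\p_{x_2}\vec{\Phi}\bigr).
\]
I would then show this vector is purely normal. Differentiating $|\p_{x_i}\vec{\Phi}|^2=e^{2\la}$ and $\langle\p_{x_1}\vec{\Phi},\p_{x_2}\vec{\Phi}\rangle=0$ yields $\langle\D_{\p_{x_1}\vec{\Phi}}\p_{x_1}\vec{\Phi},\p_{x_1}\vec{\Phi}\rangle=e^{2\la}\p_{x_1}\la$ and $\langle\D_{\p_{x_2}\vec{\Phi}}\p_{x_2}\vec{\Phi},\p_{x_1}\vec{\Phi}\rangle=-e^{2\la}\p_{x_1}\la$ (using once more the symmetry to write $\D_{\p_{x_2}\vec{\Phi}}\p_{x_1}\vec{\Phi}=\D_{\p_{x_1}\vec{\Phi}}\p_{x_2}\vec{\Phi}$), so the sum has vanishing inner product with $\p_{x_1}\vec{\Phi}$ and, analogously, with $\p_{x_2}\vec{\Phi}$. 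Hence it equals its normal projection, which by the identity $\vec{H}=\tfrac12\,e^{-2\la}\pi_{\vec n}\bigl(\D_{\p_{x_1}\vec{\Phi}}\p_{x_1}\vec{\Phi}+\D_{\p_{x_2}\vec{\Phi}}\p_{x_2}\vec{\Phi}\bigr)$ (derived in the proof of Lemma~\ref{lm-VI.11} from \eqref{VI.74}--\eqref{VI.75}) is exactly $2e^{2\la}\vec{H}$, giving \eqref{VI.200}.

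For \eqref{VI.204}, I expand
\[
\D_z[e^{-\la}\vec{e}_z]=\D_z\bigl(e^{-2\la}\p_z\vec{\Phi}\bigr)=-2(\p_z\la)e^{-2\la}\p_z\vec{\Phi}+e^{-2\la}\D_{\p_z\vec{\Phi}}\p_z\vec{\Phi},
\]
and compute $\D_{\p_z\vec{\Phi}}\p_z\vec{\Phi}=\tfrac14\bigl[\D_{\p_{x_1}\vec{\Phi}}\p_{x_1}\vec{\Phi}-\D_{\p_{x_2}\vec{\Phi}}\p_{x_2}\vec{\Phi}-2i\D_{\p_{x_1}\vec{\Phi}}\p_{x_2}\vec{\Phi}\bigr]$. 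Its normal projection, by the definition of $\vec H_0$, is exactly $\tfrac{e^{2\la}}{2}\vec{H}_0$. For the tangential part, the complex bilinear conformality relations $\langle\p_z\vec{\Phi},\p_z\vec{\Phi}\rangle=0$ and $\langle\p_z\vec{\Phi},\p_{\ov z}\vec{\Phi}\rangle=e^{2\la}/2$ give, upon $\p_z$-differentiation, $\langle\D_{\p_z\vec{\Phi}}\p_z\vec{\Phi},\p_z\vec{\Phi}\rangle=0$ and $\langle\D_{\p_z\vec{\Phi}}\p_z\vec{\Phi},\p_{\ov z}\vec{\Phi}\rangle=(\p_z\la)e^{2\la}$ (using the symmetry identity to convert $\D_{\p_z\vec{\Phi}}\p_{\ov z}\vec{\Phi}=\D_{\p_{\ov z}\vec{\Phi}}\p_z\vec{\Phi}$ and the vanishing $\p_{\ov z}\langle\p_z\vec{\Phi},\p_z\vec{\Phi}\rangle=0$). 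Using the dual basis of $\{\p_z\vec{\Phi},\p_{\ov z}\vec{\Phi}\}$ with respect to this (off-diagonal) pairing, the tangential part equals $2(\p_z\la)\p_z\vec{\Phi}$. Altogether $\D_{\p_z\vec{\Phi}}\p_z\vec{\Phi}=2(\p_z\la)\p_z\vec{\Phi}+\tfrac{e^{2\la}}{2}\vec{H}_0$, and plugging back, the $(\p_z\la)$-term cancels and leaves $\tfrac12\vec{H}_0$.

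The only mildly delicate step is the bookkeeping of the tangential piece in the second identity, since $\langle\p_z\vec{\Phi},\cdot\rangle$ is a complex bilinear pairing with an off-diagonal Gram matrix; once one notes that the correct ``dual'' of $\p_z\vec{\Phi}$ is $2e^{-2\la}\p_{\ov z}\vec{\Phi}$, the cancellation with $-2(\p_z\la)e^{-2\la}\p_z\vec{\Phi}$ is immediate and the proof is complete.
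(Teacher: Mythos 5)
Your proof is correct and follows essentially the same route as the paper's: expand $\D_{\ov z}$ and $\D_z$ into real coordinates, use the symmetry $\D_{\p_{x_i}\vec{\Phi}}\p_{x_j}\vec{\Phi}=\D_{\p_{x_j}\vec{\Phi}}\p_{x_i}\vec{\Phi}$ of the Levi-Civita connection, and split $\D_z\p_z\vec{\Phi}$ into normal part (yielding $\vec H_0$) and tangential part (yielding $2(\p_z\la)\p_z\vec{\Phi}$, which cancels the $-2(\p_z\la)e^{-2\la}\p_z\vec{\Phi}$ term). The only cosmetic differences are that you verify explicitly that $\Delta\vec{\Phi}$ is normal for a conformal immersion where the paper simply cites $\vec H=\tfrac{e^{-2\la}}{2}\Delta\vec{\Phi}$, and that you compute the tangential projection for \eqref{VI.204} via the complex pairing with $\p_z\vec{\Phi},\p_{\ov z}\vec{\Phi}$ (noting the dual of $\p_z\vec{\Phi}$ is $2e^{-2\la}\p_{\ov z}\vec{\Phi}$) rather than via the real inner products with $\p_{x_1}\vec{\Phi},\p_{x_2}\vec{\Phi}$; both give the same answer.
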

\noindent{\bf Proof of lemma~\ref{z-lm-VI.11}.}
The first identity (\ref{VI.200}) comes simply from the fact that $\D_{\ov{z}}\p_z\vec{\Phi}=\frac{1}{4}\Delta\vec{\Phi}$, from  (\ref{z-VI.200}) and
the expression of the mean curvature vector in conformal coordinates 
\[
\vec{H}=\frac{e^{-2\la}}{2}\,\Delta\vec{\Phi}\quad.
\]
It remains to prove the identity (\ref{VI.204}).
One has moreover
\be
\label{VI.201}
\D_{z}\lf[e^\la\vec{e}_z\rg]=\D_{z}\p_{z}\vec{\Phi}=\frac{1}{4}\lf[\D_{\p_{x_1}\vec{\Phi}} \p_{x_1}\vec{\Phi}-\D_{\p_{x_2}\vec{\Phi}} \p_{x_2}\vec{\Phi}-2\, i\ \D_{\p_{x_1}\vec{\Phi}} \p_{x_2}\vec{\Phi}\rg]\quad.
\ee
In one hand the projection into the normal direction gives
\be
\label{VI.202}
\pi_{\vec{n}}\lf[\D_{\p_{x_1}\vec{\Phi}} \p_{x_1}\vec{\Phi}-\D_{\p_{x_2}\vec{\Phi}} \p_{x_2}\vec{\Phi}-2\, i\ \D_{\p_{x_1}\vec{\Phi}} \p_{x_2}\vec{\Phi}\rg]=2\,{e^{2\la}}\, \vec{H}_0\quad.
\ee
In the other hand the projection into the tangent plane gives
\[
\begin{array}{l}
\ds\pi_{T}\lf[\D_{\p_{x_1}\vec{\Phi}} \p_{x_1}\vec{\Phi}-\D_{\p_{x_2}\vec{\Phi}} \p_{x_2}\vec{\Phi}-2\, i\ \D_{\p_{x_1}\vec{\Phi}} \p_{x_2}\vec{\Phi}\rg]\\[5mm]
\ds =e^{-\la}\ \lf<\p_{x_1}\vec{\Phi},\lf[\D_{\p_{x_1}\vec{\Phi}} \p_{x_1}\vec{\Phi}-\D_{\p_{x_2}\vec{\Phi}} \p_{x_2}\vec{\Phi}-2\, i\ \D_{\p_{x_1}\vec{\Phi}} \p_{x_2}\vec{\Phi}\rg]\rg>\ \vec{e}_1\\[5mm]
\ds +\,e^{-\la}\ \lf<\p_{x_2}\vec{\Phi},\lf[\D_{\p_{x_1}\vec{\Phi}} \p_{x_1}\vec{\Phi}-\D_{\p_{x_2}\vec{\Phi}} \p_{x_2}\vec{\Phi}-2\, i\ \D_{\p_{x_1}\vec{\Phi}} \p_{x_2}\vec{\Phi}\rg]\rg>\ \vec{e}_2\quad.
\end{array}
\]
This implies after some computation
\be
\label{VI.203}
\begin{array}{l}
\ds\pi_{T}\lf[\D_{\p_{x_1}\vec{\Phi}} \p_{x_1}\vec{\Phi}-\D_{\p_{x_2}\vec{\Phi}} \p_{x_2}\vec{\Phi}-2\, i\ \D_{\p_{x_1}\vec{\Phi}} \p_{x_2}\vec{\Phi}\rg]\\[5mm]
\ds =2\,e^\la\ \lf[\p_{x_1}\la-i\p_{x_2}\la\rg]\ \vec{e}_1-2\,e^\la\ \lf[\p_{x_2}\la+i\p_{x_1}\la\rg]\ \vec{e}_2\\[5mm]
\ds=8\ \p_{z}e^\la\ \vec{e}_z\quad.
\end{array}
\ee
The combination of (\ref{VI.201}), (\ref{VI.202}) and (\ref{VI.203}) gives
\[
\D_{z}\lf[e^\la\vec{e}_z\rg]=\frac{e^{2\la}}{2}\, \vec{H}_0+2\,\p_{z}e^\la\ \vec{e}_z\quad,
\]
which implies (\ref{VI.204}).\hfill$\Box$

\medskip

The last lemma we shall need in order to prove Theorem~\ref{th-VI.8} is the  Codazzi-Mainardi identity that we recall
and prove below.

\begin{Lm}
\label{z-lm-VI.12}{\bf[Codazzi-Mainardi Identity.]}
Let $\vec{\Phi}$ be a conformal immersion of the disc $D^2$ into $(M^m,h)$, called $z:=x_1+ix_2$, $e^\la:=|\p_{x_1}\vec{\Phi}|=|\p_{x_2}\vec{\Phi}|$
denote
\be
\label{z-VI.200}
\vec{e}_i:=e^{-\la}\,\p_{x_i}\vec{\Phi}\quad,
\ee
and denote $\vec{H}_0$  the Weingarten Operator of the immersion expressed in the conformal coordinates $(x_1,x_2)$:
\[
\vec{H}_0:=\frac{1}{2}\lf[{\mathbb I}(\vec{e}_1,\vec{e}_1)-{\mathbb I}(\vec{e}_2,\vec{e}_2)-2\,i\, {\mathbb I}(\vec{e}_1,\vec{e}_2)\rg].
\]
Then the following identity holds
\be
\label{z-VI.203}
e^{-2\la}\,\p_{\ov{z}}\lf(e^{2\la}\,<\vec{H},\vec{H}_0>\rg)=<\vec{H},\D_{z}\vec{H}>+<\vec{H}_0,\D_{\ov{z}}\vec{H}>+2<\Riem^h(\vec{e}_{\ov{z}},\vec{e}_z)\p_z\vec{\Phi}, \vec{H}>\quad.
\ee
\hfill $\Box$
\end{Lm}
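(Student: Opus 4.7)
The plan is to prove (\ref{z-VI.203}) by computing the mixed second covariant derivative of $\p_z\vec{\Phi}$ in the pulled-back bundle $\vec{\Phi}^{-1}TM$ in both orders and comparing them through the curvature commutator. The starting data are the two identities of Lemma~\ref{z-lm-VI.11}: directly $\D_{\ov z}\p_z\vec{\Phi}=\tfrac{e^{2\la}}{2}\vec{H}$, and, after unfolding $e^{-\la}\vec{e}_z=e^{-2\la}\p_z\vec{\Phi}$ inside $\D_z(e^{-\la}\vec{e}_z)=\tfrac12\vec{H}_0$ by the Leibniz rule, the tangential-normal decomposition
\[
\D_z\p_z\vec{\Phi}=\frac{e^{2\la}}{2}\vec{H}_0+2\,\p_z\la\;\p_z\vec{\Phi}.
\]

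Next I would apply $\D_{\ov z}$ to this decomposition and $\D_z$ to $\D_{\ov z}\p_z\vec{\Phi}=\tfrac{e^{2\la}}{2}\vec{H}$, reinvoking the latter each time $\D_{\ov z}\p_z\vec{\Phi}$ reappears, to obtain
\[
\D_{\ov z}\D_z\p_z\vec{\Phi}=e^{2\la}\p_{\ov z}\la\,\vec{H}_0+\frac{e^{2\la}}{2}\D_{\ov z}\vec{H}_0+2\,\p_{\ov z}\p_z\la\;\p_z\vec{\Phi}+e^{2\la}\p_z\la\,\vec{H},
\]
\[
\D_z\D_{\ov z}\p_z\vec{\Phi}=e^{2\la}\p_z\la\,\vec{H}+\frac{e^{2\la}}{2}\D_z\vec{H}.
\]
Because $[\p_z,\p_{\ov z}]=0$ on $D^2$, with the authors' sign convention the difference of the two left-hand sides equals $\Riem^h(\p_{\ov z}\vec{\Phi},\p_z\vec{\Phi})\p_z\vec{\Phi}$. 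Taking the $h$-inner product with $\vec{H}$ kills the purely tangential summand $2\p_{\ov z}\p_z\la\;\p_z\vec{\Phi}$, and after dividing by $\tfrac12 e^{2\la}$ and using $\p_z\vec{\Phi}=e^\la\vec{e}_z$, $\p_{\ov z}\vec{\Phi}=e^\la\vec{e}_{\ov z}$ together with the trilinearity of $\Riem^h$, one arrives at
\[
2\p_{\ov z}\la\,\langle\vec{H},\vec{H}_0\rangle+\langle\D_{\ov z}\vec{H}_0,\vec{H}\rangle-\langle\D_z\vec{H},\vec{H}\rangle=2\,\langle\Riem^h(\vec{e}_{\ov z},\vec{e}_z)\p_z\vec{\Phi},\vec{H}\rangle.
\]

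To close the argument I would expand the left-hand side of (\ref{z-VI.203}) by the Leibniz rule and metric compatibility,
\[
e^{-2\la}\p_{\ov z}\bigl(e^{2\la}\langle\vec{H},\vec{H}_0\rangle\bigr)=2\p_{\ov z}\la\,\langle\vec{H},\vec{H}_0\rangle+\langle\D_{\ov z}\vec{H},\vec{H}_0\rangle+\langle\vec{H},\D_{\ov z}\vec{H}_0\rangle,
\]
and substitute the previous commutator identity to eliminate the sum $2\p_{\ov z}\la\,\langle\vec{H},\vec{H}_0\rangle+\langle\vec{H},\D_{\ov z}\vec{H}_0\rangle$; symmetry of the scalar product then reproduces exactly $\langle\vec{H},\D_z\vec{H}\rangle+\langle\vec{H}_0,\D_{\ov z}\vec{H}\rangle+2\langle\Riem^h(\vec{e}_{\ov z},\vec{e}_z)\p_z\vec{\Phi},\vec{H}\rangle$. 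The single slightly delicate point is the clean isolation of the tangential piece $2\p_z\la\,\p_z\vec{\Phi}$ of $\D_z\p_z\vec{\Phi}$: once this is correctly pinned down, the extra $\p_{\ov z}\p_z\la$ term produced by the commutation is again tangential and is annihilated by $\vec{H}$, so no explicit Christoffel expansion is required; in the Euclidean limit $\Riem^h\equiv 0$ one recovers the classical Codazzi--Mainardi identity, which also serves as a sign check for the curvature term.
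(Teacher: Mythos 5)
Your proof is correct and follows essentially the same route as the paper's: both compute $\D_{\ov z}\D_z$ and $\D_z\D_{\ov z}$ applied to a (suitably rescaled) $\p_z\vec{\Phi}$, invoke the curvature commutator, and pair with $\vec{H}$ to kill the tangential contributions. The only difference is cosmetic — you pull the conformal factor out and track the $\p_z\la\,\p_z\vec{\Phi}$ term explicitly, whereas the paper keeps $e^{-2\la}\p_z\vec{\Phi}$ inside so that $\D_z(e^{-2\la}\p_z\vec{\Phi})=\tfrac12\vec{H}_0$ appears directly.
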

{\bf Proof of lemma~\ref{z-lm-VI.12}.}
Using (\ref{VI.204}) we obtain
\[
<\D_{\ov{z}}\vec{H}_0, \vec{H}>=2\, \lf<\D_{\ov{z}}\lf[\D_z\lf(e^{-2\la}\,\p_z\vec{\Phi}\rg)\rg],\vec{H}\rg>=2\, \lf<\D_{{z}}\lf[\D_{\ov{z}}\lf(e^{-2\la}\,\p_z\vec{\Phi}\rg)\rg],\vec{H}\rg>+2<\Riem^h(\vec{e}_{\ov{z}},\vec{e}_z)\p_z\vec{\Phi}, \vec{H}>\quad.
\]
Thus
\begin{eqnarray}
<\D_{\ov{z}}\vec{H}_0, \vec{H}>&=&-4\,\lf<\D_{{z}}\lf[\p_{\ov{z}}\la\ e^{-2\la}\ \p_z\vec{\Phi}\rg],\vec{H}\rg>+\lf<\D_z\lf[\frac{e^{-2\la}}{2}\ \Delta\vec{\Phi}\rg],\vec{H}\rg>+2<\Riem^h(\vec{e}_{\ov{z}},\vec{e}_z)\p_z\vec{\Phi}, \vec{H}>\nonumber \\
&=&-2\p_{\ov{z}}\la\ \lf<\vec{H}_0,\vec{H}\rg>+\lf<\D_z\vec{H},\vec{H}\rg>+2<\Riem^h(\vec{e}_{\ov{z}},\vec{e}_z)\p_z\vec{\Phi}, \vec{H}>\quad.
\end{eqnarray}
This last identity implies the Codazzi-Mainardi identity (\ref{z-VI.203}) and Lemma~\ref{z-lm-VI.12} is proved.\hfill $\Box$

\medskip

\noindent{\bf Proof of theorem~\ref{th-VI.8}.}
Due to Lemma~\ref{lm-VI.11}, as explained in remark~\ref{rm-VI.11}, it suffices to prove in conformal parametrization the identity
(\ref{z-VI.98}).
First of all we observe that
\be
\label{z-VI.205a}
4\,e^{-2\la}\,\Re\lf(\pi_{\vec{n}}\lf(\D_{\ov{z}}\lf[\pi_{\vec{n}}(\D_z\vec{H})\rg]\rg)\rg)=e^{-2\la}\ \pi_{\vec{n}}\lf(\D^*\lf[\pi_{\vec{n}}(\D\vec{H})\rg]\rg)=\Delta_\perp\vec{H}
\ee
The tangential projection gives
\be
\label{z-VI.205}
4\,e^{-2\la}\,\pi_{T}\lf(\D_{\ov{z}}\lf[\pi_{\vec{n}}(\D_z\vec{H})\rg]\rg) =8\, e^{-2\la}\ \lf<\D_{\ov{z}}(\pi_{\vec{n}}(\D_z\vec{H})),\vec{e}_z\rg>\ \vec{e}_{\ov{z}}+\,8\, e^{-2\la}\ \lf<\D_{\ov{z}}(\pi_{\vec{n}}(\D_z\vec{H})),\vec{e}_{\ov{z}}\rg>\ \vec{e}_{{z}}
\ee
Using the fact that $\vec{e}_z$ and $\vec{e}_{\ov{z}}$ are orthogonal to the normal plane we have in one
hand using (\ref{VI.200})
\be
\label{z-VI.206}
\ds\lf<\D_{\ov{z}}(\pi_{\vec{n}}(\D_z\vec{H})),\vec{e}_z\rg>=-e^{-\la}\,\lf<\pi_{\vec{n}}(\D_z\vec{H}),\D_{\ov{z}}\lf[ e^\la\,\vec{e}_z\rg]\rg>=-\frac{e^{\la}}{2}\,\lf<\D_z\vec{H},\vec{H}\rg>
\ee
and on the other hand using (\ref{VI.204})
\be
\label{z-VI.207}
\lf<\D_{\ov{z}}(\pi_{\vec{n}}(\D_z\vec{H})),\vec{e}_{\ov{z}}\rg>=-e^{\la}\,\lf<\pi_{\vec{n}}(\D_z\vec{H}),\D_{\ov{z}}\lf[ e^{-\la}\,\vec{e}_{\ov{z}}\rg]\rg>=-\frac{e^{\la}}{2}\,\lf<\D_z\vec{H},\ov{\vec{H}_0}\rg>.
\ee
Combining (\ref{z-VI.205}), (\ref{z-VI.206}) and (\ref{z-VI.207}) we obtain
\be
\label{z-VI.208}
4\,e^{-2\la}\,\pi_{T}\lf(\D_{\ov{z}}\lf[\pi_{\vec{n}}(\D_z\vec{H})\rg]\rg)=-4\ e^{-2\la}\lf[\lf<\D_z\vec{H},\vec{H}\rg>\,\p_{\ov{z}}\vec{\Phi}+\lf<\D_z\vec{H},\ov{\vec{H}_0}\rg>\,\p_z\vec{\Phi}\rg].
\ee
Putting (\ref{z-VI.205a}) and (\ref{z-VI.208}) together we obtain
\be
\label{z-VI.209a}
4\,e^{-2\la}\,\Re\lf(\D_{\ov{z}}\lf[\pi_{\vec{n}}(\D_z\vec{H})\rg]\rg)
=\Delta_\perp\vec{H}-4\,e^{-2\la}\Re\lf[\lf[\lf<\D_{{z}}\vec{H},\vec{H}\rg>+\lf<\D_{\ov{z}}\vec{H},{\vec{H}_0}\rg>\rg]\,\p_{\ov{z}}\vec{\Phi}\rg]
\ee
Using Codazzi-Mainardi identity (\ref{z-VI.203}) and using also again identity (\ref{VI.204}), (\ref{z-VI.209a}) becomes
\be
\label{z-VI.209}
\begin{array}{l}
\ds 4\,e^{-2\la}\,\Re\lf(\D_{\ov{z}}\lf[\pi_{\vec{n}}(\D_z\vec{H})+<\vec{H},\vec{H}_0>\ \p_{\ov{z}}\vec{\Phi}\rg]\rg)\\[5mm]
\ds=\Delta_\perp\vec{H}+2\,\Re\lf(\lf<\vec{H},\vec{H}_0\rg>\ \ov{\vec{H}_0}+4 <\Riem^h(\vec{e}_{\ov{z}},\vec{e}_z) \vec{e}_z, \vec{H}> \vec{e}_{\ov{z}} \rg)\quad.
\end{array}
\ee
The definition (\ref{VI.44a}) of $\ti{A}$ gives
\[
\ti{A}(\vec{H})=\sum_{i,j=1}^2<\vec{H},\vec{h}_{ij}>\ \vec{h}_{ij}\quad,
\]
hence a short elementary computation gives
\[
\ti{A}(\vec{H})-2|\vec{H}|^2\ \vec{H}
=2^{-1}\,\lf<\vec{H},{\vec{h}_{11}-\vec{h}_{22}}\rg>\ (\vec{h}_{11}-\vec{h}_{22})+2<\vec{H},\vec{h}_{12}>\ \vec{h}_{12} \quad .
\]
Using $\vec{H}_0$ this expression becomes
\be
\label{z-VI.210}
\ti{A}(\vec{H})-2|\vec{H}|^2\ \vec{H}=2\Re\lf(\lf<\vec{H},\vec{H}_0\rg>\ \ov{\vec{H}_0}\rg)
\ee
Combining (\ref{z-VI.209}) and (\ref{z-VI.210}) gives 
\be
\label{z-VI.210b}
\begin{array}{l}
4\,e^{-2\la}\,\Re\lf(\D_{\ov{z}}\lf[\pi_{\vec{n}}(\D_z\vec{H})+<\vec{H},\vec{H}_0>\ \p_{\ov{z}}\vec{\Phi}\rg]\rg)\\[5mm]
=\Delta_\perp\vec{H}+\ti{A}(\vec{H})-2|\vec{H}|^2\ \vec{H}+8 \Re \lf(<\Riem^h(\vec{e}_{\ov{z}},\vec{e}_z) \vec{e}_z, \vec{H}> \vec{e}_{\ov{z}}\rg)\quad.
\end{array}
\ee
which is the desired equality and Theorem~\ref{th-VI.8} is proved.\hfill$\Box$

\section{Parallel mean curvature Vs constraint-conformal conformal Willmore surfaces}\label{Sec:PMC-CCCW}
\reset
As an application of the Conservative form of the Willmore equation, in  this section we  prove the link between parallel mean curvature surfaces and constraint-conformal conformal Willmore surfaces mentioned in the introduction; notice that Proposition ~\ref{Prop:PMC<CCCW} gives a lot of examples of constraint-conformal conformal Willmore surfaces.
\medskip

\noindent{\bf Proof of Proposition~\ref{Prop:PMC<CCCW}.}
Observe that the proof in the Euclidean case was given by the second author in \cite{Riv4}, here we adapt the computations to the Riemannian setting. Up to a change of coordinates, we can assume that $\bP$ is a smooth conformal immersion. Since  $(M^m,h)$ has constant sectional curvature $\bar{K}$, then writing the equation \eqref{eq:ConsWcnfGen} using the conformal parametrization gives that $\bP$ is constraint-conformal conformal Willmore if and only if there exists a holomorphic function $f(z)$ such that (see also \eqref{eq:ConsWconf})
\be\label{eq:CCCWeqConf}
4\,e^{-2\la}\,\Re\lf(\D_{\ov{z}}\lf[\pi_{\vec{n}}(\D_z\vec{H})+<\vec{H},\vec{H}_0>\ \p_{\ov{z}}\vec{\Phi}\rg]\rg)=e^{-2\la}\,\Im (f(z) \overline{\vec{H_0}}).
\ee
Now assume that $\bH$ is parallel, that is $\pi_{\bn}(D_z \bH)=\pi_{\bn}(D_{\bar z} \bH)=0$. From Codazzi-Mainardi identity \eqref{z-VI.203}, observing that the curvature term vanish as showed in the proof of Corollary \ref{co:ConsWcnfGen} (it is nothing but $R^{\perp}_{\bP}(T\bP)$) we obtain
$$e^{-2\la}\,\p_{\ov{z}}\lf(e^{2\la}\,<\vec{H},\vec{H}_0>\rg)=0,$$
therefore $f(z):=e^{2\la}\,<\vec{H},\vec{H}_0>$ is holomorphic. Since by assumption $\pi_{\vec{n}}(\D_z\vec{H})=0$, we can write the left hand side of \eqref{eq:CCCWeqConf} as
$$
4\,e^{-2\la}\,\Re\lf[\D_{\ov{z}}\lf(e^{2\lambda}<\vec{H},\vec{H}_0>\ \p_{\ov{z}}\vec{\Phi}\,{e^{-2\lambda}}\rg)\rg]=4\,e^{-2\la}\,\Re\lf[ f(z) \D_{\ov{z}}(e^{-\lambda} \bbe_{\ov{z}} )\rg];
$$
Now using \eqref{VI.204} we write the right hand side of the last equation as  $2 e^{-2\lambda} \Re\lf( f(z) \ov{\bH}_0\rg)=e^{-2 \lambda} \Im\lf( 2i f(z) \ov{\bH}_0 \rg)$. We have just shown that $\bP$ satisfies the constraint-conformal conformal Willmore equation \eqref{eq:CCCWeqConf} with holomorphic function $2i e^{2\la}\,<\vec{H},\vec{H}_0>$.
\hfill$\Box$

\medskip

\noindent{\bf Proof of Theorem ~\ref{Th:rigidity}.}
One implication follows directly from Proposition~\ref{Prop:PMC<CCCW} observing that the constraint on the conformal class is trivial on smooth immersions of spheres by the Uniformization Theorem.

Let us prove the opposite implication by contradiction: we assume that the compact Riemannian 3-manifold $(M^3,h)$ has constant scalar curvature $Scal_0$ but it is not a space form and we exhibit an embedded sphere which has constant mean curvature but is not  conformal Willmore. 
\\ First of all let us denote  $S_{\mu\nu}:=Ric_{\mu\nu}-\frac{1}{3} \,Scal \, h_{\mu \nu} $ the trace-free Ricci tensor of $(M,h)$ and observe that under our assumptions 
\be\label{S>0} 
{\frak m}:=\max_{x\in M} \|S_x\|^2>0.
\ee 
Indeed if $\|S\|^2\equiv 0$ then the manifold is Einstein, but the 3-dimensional Einstein manifolds are just the space forms (for example see \cite{Pet} pages 38-41).
\\Now consider the function $r:M \to \R$ defined as 
\be\label{def:r}
r(x):= -\frac{11}{378} \|S_x\|^2+\frac{55}{1134} Scal^2(x)-\frac{1}{21} \triangle Scal (x)=-\frac{11}{378} \|S_x\|^2+\frac{55}{1134} Scal_0^2
\ee 
where in the last equality we used that $Scal\equiv Scal_0$. The function $r$ we just defined is exactly the function $r$ defined at page 276 in \cite{PX}; this can be seen using the irreducible decomposition of the Riemann curvature tensor which implies (notice that we are assuming $M$ to be 3-d, so the Weyl tensor vanishes)
$$\|Riem^h\|^2(x)= \frac{1}{3} Scal^2(x)+ 4 \|S_x\|^2; $$
plugging this expression in the formula in \cite{PX} and taking $m=3$, after some straighforward computations we end up with \eqref{def:r}. Let us recall  Theorem 1.1 of \cite{PX}:
\\There exists $\rho_0 > 0$ and a smooth function $\phi:M\times(0,\rho_0)\to \R$ such that
\\(i) For all $\rho \in (0, \rho_0)$, if $\bar x$ is a critical point of the function $\phi(.,\rho)$ then, there
exists an embedded hyper-surface $S^{\sharp}_{\bar{x},\rho}$whose mean curvature is constant equal to $\frac{1}{\rho}$ 
 and that is a normal graph over the geodesic sphere $S_{\bar{x},\rho}$ for some function which is bounded
by a constant times $\rho^3$  in  $C^{2,\alpha}$ topology.
\\(ii) For all $k>0$, there exists $c_k > 0$ which does not depend on $\rho \in (0,\rho_0)$ such
that
\be\label{eq:estphi}
\|\phi(.,\rho)-Scal+\rho^2 r\|_{C^k(M)}\leq C_k \rho^3. 
\ee
Now, for $\rho_0$ small enough, we claim that at all  points of global minimum of $\phi$ we have $\|S\|^2>0$. If it is not the case let $x^{\phi}_\rho$ be a point of global minimum of $\phi(.,\rho)$ and observe that\eqref{eq:estphi} and \eqref{def:r} yelds
\be\label{eq:xphi}
\phi(x^{\phi}_\rho,\rho)\geq Scal_0+  \frac{55}{1134} Scal_0^2 \, \rho^2 - C_0 \rho^3;
\ee
on the other hand, at a maximum point $x^{S}$ for $\|S\|^2$,  we have analogously
\be\label{eq:xS}
\phi(x^{S},\rho)\leq Scal_0+  \frac{55}{1134} Scal_0^2 \, \rho^2-\frac{11}{378} {\frak m} \rho^2  + C_0 \rho^3;
\ee
now \eqref{eq:xphi} and  \eqref{eq:xS} together with the  crucial fact that $\frak m >0$ (ensured by the fact that $(M,h)$ is not space form) imply that for $\rho$ small enough $\phi(x^{S},\rho)<\phi(x^{\phi}_\rho,\rho)$ contradicting the minimality of $x^{\phi}_\rho$.
Collecting Theorem 1.1 of \cite{PX} and what we have just proved we conclude the following: for $\rho_0$ small enough, for every $\rho\leq \rho_0$ consider a minimum $x_\rho$ a point for $\phi(.,\rho)$, then
\\a) $\|S_{x_\rho}\|^2>0$
\\b) there exists an embedded CMC sphere $S^{\sharp}_{x_\rho,\rho}$ whose mean curvature given by a normal graph over the geodesic sphere $S_{\bar{x},\rho}$ for some function which is bounded
by a constant times $\rho^3$  in  $C^{2,\alpha}$ topology. Observe that, since the graph function satisfies the mean curvature equation, bootstrapping the $C^{2,\alpha}$ bound using Schauder estimates, one gets  the graph function is bounded in $C^{4,\alpha}$ norm by a constant times $\rho^3$.

But now, since a) holds,   Theorem 1.4 in \cite{Mon2} implies that for small $\rho$ the CMC perturbed geodesic spheres constructed in b) cannot be  conformal Willmore immersions. The proof is now complete.  
\hfill$\Box$

\section{Conformal constrained Willmore surfaces in manifold in arbitrary codimension via a system of conservation laws}\label{Sec:SystY}
\reset
Let us start with a general lemma for surfaces.

\begin{Lm}\label{lm:SysX}
Let $\vec{\Phi}$ be a conformal immersion of the disc $\D^2$ into a Riemannian manifold $(M,h)$ and let $\vec{X}$ be the following $L^1+H^{-1}$ vector field
\be
\label{eq:defX}
\vec{X}:=-2i\lf<\bH, \bH_0 \rg> \p_{\ov{z}}\bP -2i \, \pi_{\bn}(\D_z \bH);
\ee
then the following system of equations holds
\be
\label{eq:SystX}
\lf\{
\begin{array}{l}
\ds \Im\left[\lf<\vec{e}_{\ov{z}},\vec{X}\rg>\right]=0 \quad{(SysX-1)}\\[5mm]
\ds \Im\left[\vec{e}_{\ov{z}}\wedge(\vec{X}+2i D_z\vec{H}) \rg]=0 \quad{(SysX-2)} 
\end{array}
\rg.
\ee
where, given two complex vectors fields $\vec{X}, \vec{Y} \in \Gamma(TM\otimes \C):\vec{X}=\vec{X}_1+i\vec{X}_2, \vec{Y}=\vec{Y}_1+i\vec{Y}_2$ with $\vec{X}_1,\vec{X}_2,\vec{Y}_1,\vec{Y}_2 \in \Gamma(TM)$ we use the notation $<\vec{X},\vec{Y}>$ to denote the quantity
$$<\vec{X},\vec{Y}>:=h(\vec{X}_1,\vec{Y}_1)-h(\vec{X}_2,\vec{Y}_2) + \,i\, h(\vec{X}_1, \vec{Y}_2)+ \,i\, h(\vec{X}_2,\vec{Y}_1)$$
where, of course, $h(.\,,\,.)$ denotes the standard scalar product of tangent vectors in the Riemannian manifold $(M,h)$. 
\hfill $\Box$
\end{Lm}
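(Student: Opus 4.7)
\noindent\textbf{Proof plan for Lemma~\ref{lm:SysX}.}
The strategy is to split $\vec{X}$ (and $\vec{X}+2iD_z\vec{H}$) into a tangential and a normal piece and exploit the complex-isotropy relations $\langle\vec{e}_z,\vec{e}_z\rangle=\langle\vec{e}_{\bar z},\vec{e}_{\bar z}\rangle=0$ and $\vec{e}_z\wedge\vec{e}_{\bar z}=\frac{i}{2}\vec{e}_1\wedge\vec{e}_2$ recalled in \eqref{VI.199a}, together with Lemma~\ref{lm-VI.11}. The identity $\p_{\bar z}\vec{\Phi}=e^\la\vec{e}_{\bar z}$ will be used repeatedly.

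For (SysX-1), I would observe that $\vec{X}$ has tangential part $-2i\langle\vec{H},\vec{H}_0\rangle\p_{\bar z}\vec{\Phi}=-2ie^\la\langle\vec{H},\vec{H}_0\rangle\vec{e}_{\bar z}$ and normal part $-2i\pi_{\vec{n}}(D_z\vec{H})$. Pairing with $\vec{e}_{\bar z}$ via the $\mathbb{C}$-bilinear extension $\langle\cdot,\cdot\rangle$ of $h$, the normal piece contributes $0$ by tangent/normal orthogonality (the imaginary part of a real inner-product of tangent against normal), while the tangential piece contributes $-2ie^\la\langle\vec{H},\vec{H}_0\rangle\langle\vec{e}_{\bar z},\vec{e}_{\bar z}\rangle=0$ by \eqref{VI.199a}. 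In fact the whole pairing $\langle\vec{e}_{\bar z},\vec{X}\rangle$ vanishes, and (SysX-1) follows a fortiori.

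For (SysX-2) the normal part of $\vec{X}$ is exactly $-2i\pi_{\vec{n}}(D_z\vec{H})$, which cancels the normal part of $2iD_z\vec{H}$; therefore
\[
\vec{X}+2iD_z\vec{H}=-2i\langle\vec{H},\vec{H}_0\rangle\p_{\bar z}\vec{\Phi}+2i\,\pi_T(D_z\vec{H}),
\]
a purely tangential field. Since $\p_{\bar z}\vec{\Phi}\parallel\vec{e}_{\bar z}$, the wedge $\vec{e}_{\bar z}\wedge\p_{\bar z}\vec{\Phi}=0$, and we are reduced to analysing $2i\,\vec{e}_{\bar z}\wedge\pi_T(D_z\vec{H})$. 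Decomposing $\pi_T(D_z\vec{H})=\alpha\vec{e}_z+\beta\vec{e}_{\bar z}$ in the complex tangent frame, only the $\alpha\vec{e}_z$ component survives and, using $\vec{e}_z\wedge\vec{e}_{\bar z}=\frac{i}{2}\vec{e}_1\wedge\vec{e}_2$, we obtain $2i\,\vec{e}_{\bar z}\wedge\pi_T(D_z\vec{H})=\alpha\,\vec{e}_1\wedge\vec{e}_2$. Taking imaginary parts leaves $\Im(\alpha)\,\vec{e}_1\wedge\vec{e}_2$, so it only remains to prove $\alpha\in\R$.

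This is the main point, and the obstacle is the bookkeeping between the complex derivative $D_z=\tfrac12(D_{\p_{x_1}\vec{\Phi}}-iD_{\p_{x_2}\vec{\Phi}})$ and the real orthonormal frame. I would compute $\alpha$ from $\vec{H}\perp\vec{e}_j$, which gives $h(D_{\p_{x_i}\vec{\Phi}}\vec{H},\vec{e}_j)=-e^\la\langle\vec{H},\vec{h}_{ij}\rangle$, and then read off the $\vec{e}_z$-coefficient by substituting $\vec{e}_1=\vec{e}_z+\vec{e}_{\bar z}$, $\vec{e}_2=i(\vec{e}_z-\vec{e}_{\bar z})$. A brief calculation shows $\Re(\alpha)=-e^\la|\vec{H}|^2$ and $\Im(\alpha)=-e^\la\bigl(\langle\vec{H},\vec{h}_{12}\rangle-\langle\vec{H},\vec{h}_{21}\rangle\bigr)$, which vanishes by the symmetry $\vec{h}_{12}=\vec{h}_{21}$ of the second fundamental form. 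An equivalent path is to use Lemma~\ref{lm-VI.11} identity \eqref{z-VI.1} to rewrite $\pi_T(D_z\vec{H})$; the $\langle\vec{H},\vec{H}_0\rangle\p_{\bar z}\vec{\Phi}$ summand is killed by the wedge with $\vec{e}_{\bar z}$, so (SysX-2) is in the end equivalent to the reality of $\vec{e}_{\bar z}\wedge\star_h(D_z\vec{n}\wedge\vec{H})$, which is again delivered by $\vec{\mathbb I}$'s symmetry after splitting $D_z$ into its real and imaginary parts. Either way the imaginary part vanishes, completing the proof.
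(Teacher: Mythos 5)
Your proof is correct and relies on essentially the same mechanism as the paper's: the tangent/normal split of $\vec{X}$, the isotropy relation $\lf<\vec{e}_{\ov z},\vec{e}_{\ov z}\rg>=0$ for (SysX-1), and, for (SysX-2), the reduction (after stripping the normal part and the $\p_{\ov z}\vec{\Phi}$-component) to the reality of the $\vec{e}_z$-coefficient of $\pi_T(D_z\vec{H})$, which follows from $\vec{H}\perp\vec{e}_j$ and the symmetry $\vec{h}_{12}=\vec{h}_{21}$ of the second fundamental form. The only (favorable) deviation is in (SysX-1), where you argue directly from the defining form \eqref{eq:defX} and observe that the full pairing $\lf<\vec{e}_{\ov z},\vec{X}\rg>$ vanishes identically, whereas the paper first rewrites $\vec{X}$ via Lemma~\ref{lm-VI.11} as $i D_z\vec{H}-3i\pi_{\vec{n}}(D_z\vec{H})+\star_h(D_z\vec{n}\wedge\vec{H})$ and then shows only the imaginary parts cancel by invoking \eqref{z-VI.1} again.
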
 

\noindent{\bf Proof of Lemma~\ref{lm:SysX}.}
First of all by Lemma \ref{lm-VI.11} we can write $\vec{X}$ as
\be
\label{eq:X}
\vec{X}:=-2i\lf<\bH, \bH_0 \rg> \p_{\ov{z}}\bP -2i \, \pi_{\bn}(\D_z \bH)=i\D_z\vec{H}-3i\pi_{\vec{n}}(\D_z\vec{H})+\,\star_h(\D_z\vec{n}\wedge\vec{H}).
\ee
Let us start by the first equation (SysX-1). Since $\vec{e}_{\ov{z}}$ is tangent and $\pi_{\bn}(\D_z\vec{H})$ is normal to $(\bP)_*(T\D^2)$, the scalar product symplifies as
\be \label{eq:ScalX1}
\Im\left[\lf<\vec{e}_{\ov{z}},\vec{X}\rg>\right]=\Im\left[\lf<\vec{e}_{\ov{z}},i\pi_{T}(\D_z\vec{H})\rg>\rg] + \Im\left[\lf<\vec{e}_{\ov{z}},\,\star_h(\D_z\vec{n}\wedge\vec{H})  \rg>\right]. 
\ee
Identity (\ref{z-VI.1}) together with  (\ref{VI.199a}) gives
\be\label{eq:ScalX2}
\Im\left[\lf<\vec{e}_{\ov{z}},i\pi_{T}(\D_z\vec{H})\rg>\rg]=-\Im\left[\lf<\vec{e}_{\ov{z}},\,\star_h(\D_z\vec{n}\wedge\vec{H})  \rg>\right].
\ee
Putting together (\ref{eq:ScalX1}) and (\ref{eq:ScalX2}) we obtain (SysX-1).
\\Now let us prove (SysX-2). Since $\vec{e}_{\bar{z}} \wedge \vec{e}_{\bar{z}}=0$ we have
\be\label{eq:WedgeX1}
\Im\left[\vec{e}_{\ov{z}}\wedge \vec{X}\rg]=\Im\left[\vec{e}_{\ov{z}}\wedge \lf(-2i \, \pi_{\bn}(\D_z \bH)\rg)\rg]=\Im\left[\vec{e}_{\ov{z}}\wedge \lf(-2i\, \D_z \bH \rg) \rg]-\Im\left[\vec{e}_{\ov{z}}\wedge \lf(-2i \, \pi_{T}(\D_z \bH)\rg)\rg].
\ee
In order to have (SysX-2) it's enough to prove that $\Im\left[\vec{e}_{\ov{z}}\wedge \lf(-2i \, \pi_{T}(\D_z \bH)\rg)\rg]=0$. Using (\ref{VI.199a}) we write
\be
\label{eq:WedgeX2}
\pi_{T}(\D_z \bH)=2<\D_z \bH,\vec{e}_z> \vec{e}_{\bar{z}}+2<\D_z \bH,\vec{e}_{\bar{z}}> \vec{e}_{z},
\ee
hence, again $\vec{e}_{\bar{z}} \wedge \vec{e}_{\bar{z}}=0$ implies that
\be\label{eq:WedgeX3}
\Im\left[\vec{e}_{\ov{z}}\wedge \lf(-2i \, \pi_{T}(\D_z \bH)\rg)\rg]=\Im\left[\vec{e}_{\ov{z}}\wedge \lf(-4i \, <\D_z \bH,\vec{e}_{\bar{z}}> \vec{e}_{z}\rg)\rg]=4 \Re \left[ \lf( \, <\D_z \bH,\vec{e}_{\bar{z}}> \rg) \vec{e}_{z} \wedge \vec{e}_{\ov{z}}\rg].
\ee
Now use that $\vec{H}$ is orthogonal to $\vec{e}_1 , \vec{e}_2$ and that $ \pi_{\bn}(\D_{\vec{e}_1} \vec{e}_2)={\mathbb I}_{12}= {\mathbb I}_{21}= \pi_{\bn}(\D_{\vec{e}_2} \vec{e}_1) $ to conclude that 
\begin{eqnarray}
4 \Re \left[ \lf( \, <\D_z \bH,\vec{e}_{\bar{z}}> \rg) \vec{e}_{z} \wedge \vec{e}_{\ov{z}}\rg]&=&\frac 1 2 \lf[ \lf( \lf<\D_{\vec{e}_2} \vec{H},\vec{e}_1 \rg> -\lf<\D_{\vec{e}_1} \vec{H},\vec{e}_2 \rg>\rg) \vec{e}_1 \wedge \vec{e}_2\rg]\nonumber\\
&=&-\frac 1 2 \lf[ \lf( \lf< \vec{H},\D_{\vec{e}_2}\vec{e}_1 \rg> -\lf< \vec{H},\D_{\vec{e}_1}\vec{e}_2 \rg>\rg) \vec{e}_1  \wedge \vec{e}_2\rg] \nonumber \\
&=&0.\nonumber
\end{eqnarray}
\hfill$\Box$

\begin{Th}\label{th:ConfWill}

Let $\vec{\Phi}$ be a conformal immersion of the disc $\D^2$ into a Riemannian manifold $(M,h)$, then $\vec{\Phi}$ is a conformal constrained Willmore immersion if and only if there exists an $H^{-1}+L^1$ vector field $\vec{Y}$  such that 
\be
\label{eq:SystConfWill}
\lf\{
\begin{array}{l}
\ds \Im\left[\lf<\vec{e}_{\ov{z}},\vec{Y}\rg>\right]=0 \quad{(Sys-1)}\\[5mm]
\ds \Im\left[\vec{e}_{\ov{z}}\wedge(\vec{Y}+2i D_z\vec{H}) \rg]=0 \quad{(Sys-2)}\\[5mm]
\ds \Im \left[\D_{\bar{z}} \vec{Y} \right]=- e^{2 \lambda} \lf(\frac 1 2 \tilde{R}(\vec{H})+4 \Re\left[<\Riem^h(\vec{e}_{\ov{z}},\vec{e}_z) \vec{e}_z, \vec{H}> \vec{e}_{\ov{z}}\right] \rg)\quad{(Sys-3)} \quad.
\end{array}
\rg.
\ee
\hfill $\Box$
\end{Th}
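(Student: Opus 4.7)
The strategy is to correct the vector field $\vec X$ from Lemma \ref{lm:SysX} by a term of the form $f(z)\,e^{-\lambda}\vec e_{\bar z}$ with $f$ holomorphic, and to exploit the geometric identity \eqref{eq:Cons} from Theorem \ref{th-VI.8} to convert (Sys-3) into the constrained conformal Willmore equation. For the direction ($\Rightarrow$), I would assume $\vec\Phi$ is constrained-conformal Willmore, so Corollary \ref{co-VI.9HC} gives a holomorphic $f(z)$ with $\Delta_\perp\vec H+\tilde A(\vec H)-2|\vec H|^2\vec H-\tilde R(\vec H)=e^{-2\lambda}\Im(f(z)\overline{\vec H_0})$. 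Then set $\vec Y:=\vec X+f(z)\,e^{-\lambda}\vec e_{\bar z}$: the extra term is harmless in (Sys-1) and (Sys-2) because $\langle\vec e_{\bar z},\vec e_{\bar z}\rangle=0$ (from \eqref{VI.199a}) and $\vec e_{\bar z}\wedge\vec e_{\bar z}=0$, so (Sys-1)--(Sys-2) reduce to (SysX-1)--(SysX-2). Writing $\vec X=-2i[\pi_{\vec n}(\D_z\vec H)+\langle\vec H,\vec H_0\rangle\p_{\bar z}\vec\Phi]$ one computes $\Im[\D_{\bar z}\vec X]=-2\Re(\D_{\bar z}[\ldots])$, which the identity \eqref{eq:Cons} rewrites in terms of $\Delta_\perp\vec H+\tilde A(\vec H)-2|\vec H|^2\vec H$ and the curvature term. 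Meanwhile \eqref{VI.204} (conjugated) together with $\p_{\bar z}f=0$ yields $\D_{\bar z}[f e^{-\lambda}\vec e_{\bar z}]=\tfrac{f}{2}\,\overline{\vec H_0}$; summing the two and substituting the constrained Willmore equation reproduces exactly the right-hand side of (Sys-3).

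For the direction ($\Leftarrow$), let $\vec Z:=\vec Y-\vec X$. Subtracting (SysX-1)--(SysX-2) from (Sys-1)--(Sys-2) gives $\Im[\langle\vec e_{\bar z},\vec Z\rangle]=0$ and $\Im[\vec e_{\bar z}\wedge\vec Z]=0$. The key structural step is to decompose $\vec Z=z_1\vec e_1+z_2\vec e_2+\vec Z^\perp$ and split the second identity into a tangent-tangent bivector piece (proportional to $\vec e_1\wedge\vec e_2$) and a tangent-normal piece. The tangent-normal piece is $\tfrac{1}{2}(\vec e_1\wedge\vec Z_I^\perp+\vec e_2\wedge\vec Z_R^\perp)$, and the linear independence of the family $\{\vec e_i\wedge\vec n_\alpha\}$ forces $\vec Z^\perp=0$. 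Combined with (Sys-1) and the tangent-tangent piece of (Sys-2) one gets the real linear conditions $\Im(z_1)+\Re(z_2)=0$ and $\Re(z_1)=\Im(z_2)$, which are precisely equivalent to $\vec Z=g(z,\bar z)\,\vec e_{\bar z}$ for some complex scalar function $g$. Now (Sys-3) together with the computation of $\Im[\D_{\bar z}\vec X]$ from the previous paragraph gives
\begin{equation*}
\Im\bigl[\D_{\bar z}(g\vec e_{\bar z})\bigr]=\tfrac{e^{2\lambda}}{2}\bigl[\Delta_\perp\vec H+\tilde A(\vec H)-2|\vec H|^2\vec H-\tilde R(\vec H)\bigr].
\end{equation*}
Expanding $\D_{\bar z}(g\vec e_{\bar z})=e^{-\lambda}\p_{\bar z}(ge^\lambda)\,\vec e_{\bar z}+\tfrac{ge^\lambda}{2}\,\overline{\vec H_0}$ via \eqref{VI.204}, the right-hand side is normal-valued, so the tangent piece $\Im[e^{-\lambda}\p_{\bar z}(ge^\lambda)\vec e_{\bar z}]$ must vanish, which forces $ge^\lambda=:f(z)$ to be holomorphic; reading off the normal projection then yields \eqref{eq:W'confHC}, so $\vec\Phi$ is constrained-conformal Willmore by Corollary \ref{co-VI.9HC}.

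The main obstacle is the bookkeeping in the backward direction: extracting the parametrization $\vec Z=g(z,\bar z)\vec e_{\bar z}$ from the two algebraic constraints requires careful use of the complex-bilinear extension of $\langle\cdot,\cdot\rangle$ (which is not Hermitian) and of the fact that the imaginary part of $\beta\vec e_{\bar z}$ vanishes only when $\beta=0$. After that, the separation of $\Im[\D_{\bar z}(g\vec e_{\bar z})]$ into its tangent part (which forces holomorphicity of $ge^\lambda$) and its normal part (which encodes the Willmore equation) is the conceptual crux; every other step is direct computation using Theorem \ref{th-VI.8}, Lemma \ref{lm:SysX}, and \eqref{VI.204}.
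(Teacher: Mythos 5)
Your proposal is correct and follows essentially the same route as the paper: the forward direction (defining $\vec Y=\vec X+f(z)e^{-\lambda}\vec e_{\bar z}$ and using Lemma \ref{lm:SysX}, the identity \eqref{eq:Cons} from Theorem \ref{th-VI.8}, the conjugate of \eqref{VI.204}, and holomorphicity of $f$) is verbatim the paper's argument, and the backward direction reaches the same key structural conclusion that $\vec Y-\vec X$ is a scalar multiple of $\vec e_{\bar z}$ with $f=g e^\lambda$ forced holomorphic by the vanishing of the tangent projection. The only cosmetic difference is that you subtract the two systems (SysX) and (Sys) and analyze $\vec Z=\vec Y-\vec X$ directly, whereas the paper decomposes $\vec Y$ itself (also re-deriving along the way that the $\vec e_z$-coefficient of $\D_z\vec H$ is real); your version makes the cancellation of the curvature terms in (Sys-3) slightly more transparent, but the lemmas invoked and the algebraic content are the same.
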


%\begin{empheq}[[left=\empheqlbrace]{align}
% x_1 - 3x_3 & \esleq 0 \\
% -x_1 + 3x_3 & \esleq 4 \\
% -x_1 - x_2 & < & 0
% \end{empheq}

%$$\lf\{
%\begin{eqnarray}
%\Im\left[\vec{e}_{\ov{z}},\vec{Y}\right]&=&0 \nonumber \\
%\Im\left[\vec{e}_{\ov{z}}\wedge(\vec{Y}+2i D_z\vec{H}) \rg]&=&0 \nonumber \\
%\Im \left[\D_{\bar{z}} \vec{Y} \right]&=&-\frac 1 2 \tilde{R}(\vec{H})-4 \Re\left[<\Riem^h(\vec{e}_{\ov{z}},\vec{e}_z) \vec{e}_z, \vec{H}> \vec{e}_{\ov{z}}\right]
%\end{eqnarray}
%\rg.$$

\noindent{\bf Proof of Theorem~\ref{th:ConfWill}.}
Let us first prove the ''only if'' part: we assume that $\vec{\Phi}$ is constrained conformal Willmore and prove that there exists an $H^{-1}+L^1$ vector field $\vec{Y}$ satisfying the system  of equations (\ref{eq:SystConfWill}). 
\\Recall that $\vec{\Phi}$ is a conformal constrained Willmore immersion if and only if there exists an holomorphic function $f(z)$ such that the equation (\ref{eq:W'confHC}) is satisfied, namely
\be \nonumber
\Delta_\perp\bH+\ti{A}(\vec{H})-2 |\bH|^2\ \bH-\ti{R}(\bH)= e^{-2\lambda} \Im (f(z) \overline{\vec{H_0}})= <q,\vec{h}_0>_{WP}.
\ee
where $\vec{h}_0=\vec{H}_0\, dz\otimes dz$.
We claim that the vector 
\be \label{eq:defY}
\vec{Y}:=e^{-\lambda} f \vec{e}_{\ov{z}}-2i\lf<\bH, \bH_0 \rg> \p_{\ov{z}}\bP -2i \, \pi_{\bn}(\D_z \bH)
\ee
satisfies the system. Recalled the definition of the vector $\vec{X}$ given in \eqref{eq:defX}, observe that $\vec{Y}=e^{-\lambda} f \vec{e}_{\ov{z}}+\vec{X}$. Since $\vec{X}$ satisfies the system \eqref{eq:SystX} and since $<\vec{e}_{\bar{z}},\vec{e}_{\bar{z}}>=0=\vec{e}_{\bar{z}}\wedge\vec{e}_{\bar{z}}$ we conclude that $\vec{Y}$ satisfies the first two equations of system \eqref{eq:SystConfWill}. Now let us prove the third equation (Sys-3), we have
\begin{eqnarray}
\Im \left[\D_{\bar{z}} \vec{Y} \right] &=& -2 \Im \lf[i \D_{\bar{z}}\lf(\lf<\bH, \bH_0 \rg> \p_{\ov{z}}\bP + \, \pi_{\bn}(\D_z \bH) \rg)\rg]+ \Im \left[D_{\bar{z}} (e^{-\lambda} f \vec{e}_{\ov{z}} )\right] \nonumber \\
&=& -2 \Re \lf[\D_{\bar{z}}\lf(\lf<\bH, \bH_0 \rg> \p_{\ov{z}}\bP + \, \pi_{\bn}(\D_z \bH) \rg)\rg] + \Im \left[f \D_{\bar{z}}(e^{-\lambda}  \vec{e}_{\ov{z}} )\right]+ \Im \left[(\p_{\bar{z}} f)\, e^{-\lambda}  \vec{e}_{\ov{z}} \right] \nonumber.
\end{eqnarray}  
Recall the identity (\ref{VI.204}), sum and substract $e^{2 \lambda} \lf(\frac 1 2 \tilde{R}(\vec{H})+4 \Re\left[<\Riem^h(\vec{e}_{\ov{z}},\vec{e}_z) \vec{e}_z, \vec{H}> \vec{e}_{\ov{z}}\right] \rg)$  and get

\begin{eqnarray}
\Im \left[\D_{\bar{z}} \vec{Y} \right] &=& -e^{2 \lambda} \lf(\frac 1 2 \tilde{R}(\vec{H})+4 \Re\left[<\Riem^h(\vec{e}_{\ov{z}},\vec{e}_z) \vec{e}_z, \vec{H}> \vec{e}_{\ov{z}}\right] \rg)+ \Im \left[(\p_{\bar{z}} f)\, e^{-\lambda}  \vec{e}_{\ov{z}} \right] \label{eq:DzY} \\
&& -2 \Re \lf[\D_{\bar{z}}\lf(\lf<\bH, \bH_0 \rg> \p_{\ov{z}}\bP + \, \pi_{\bn}(\D_z \bH) \rg)\rg] + \frac{1}{2} \Im \left[f \ov{\bH_0}\right] \nonumber \\
&& +e^{2 \lambda} \lf(\frac 1 2 \tilde{R}(\vec{H})+4 \Re\left[<\Riem^h(\vec{e}_{\ov{z}},\vec{e}_z) \vec{e}_z, \vec{H}> \vec{e}_{\ov{z}}\right] \rg). \nonumber
\end{eqnarray}  
Now recall that $\vec{\Phi}$ is conformal constrained Willmore if and only if the identity (\ref{eq:ConsWconf}) holds, moreover $f$ is holomorphic so $\p_{\ov{z}}f=0$, therefore we can conclude that
$$\Im \left[\D_{\bar{z}} \vec{Y} \right] = -e^{2 \lambda} \lf(\frac 1 2 \tilde{R}(\vec{H})+4 \Re\left[<\Riem^h(\vec{e}_{\ov{z}},\vec{e}_z) \vec{e}_z, \vec{H}> \vec{e}_{\ov{z}}\right] \rg)$$
as desired.
\medskip

For the other implication assume there exists a vector $\vec{Y}$ satisfying the system \eqref{eq:SystConfWill} and write $\vec{Y}$ as
$$\vec{Y}=A \vec{e}_z+B\vec{e}_{\ov{z}}+\vec{V}$$
where where $A$ and $B$ are complex number and $\vec{V}:=\pi_{\vec{n}}(\vec{Y})$
is a complex valued normal vector to the immersed surface. The first equation of (\ref{eq:SystConfWill}), using (\ref{VI.199a}), is equivalent to
\be
\label{VI.207}
\Im\, A=0
\ee
Observe that if we write 
$$\D_z\vec{H}=C\ \vec{e}_z+D\ \vec{e}_{\ov{z}}+\vec{W}$$ 
where $\vec{W}=\pi_{\vec{n}}(\D_z\vec{H})$, one has, using (\ref{VI.200}) and the fact
that $\vec{H}$ is orthogonal to $\vec{e}_{\ov{z}}$
\be
\label{VI.208aa}
C=2\lf<\vec{e}_{\ov{z}},\D_z\vec{H}\rg>=-2\lf<\D_z(e^\la\ \vec{e}_{\ov{z}}),\vec{H}\rg>\ e^{-\la}=-e^\la\ |\vec{H}|^2\quad.
\ee
Hence we deduce in particular
\be
\label{VI.208}
\Im\, C=0\quad.
\ee
We have moreover using (\ref{VI.204})
\be
\label{VI.208ab}
D=2\lf<\vec{e}_{{z}},\D_z\vec{H}\rg>=-2\lf<\D_z(e^{-\la}\ \vec{e}_{{z}}),\vec{H}\rg>\ e^{\la}=-e^\la\ <\vec{H}_0,\vec{H}>\quad.
\ee
Thus combining (\ref{VI.208aa}) and (\ref{VI.208ab}) we obtain
\be
\label{VI.208ac}
\D_z\vec{H}=-|\vec{H}|^2\ \p_z\vec{\Phi}-<\vec{H}_0,\vec{H}>\ \p_{\ov{z}}\vec{\Phi}+\pi_{\vec{n}}(\D_z\vec{H}).
\ee
Using (\ref{VI.199a}), the second line in the conservation law (\ref{eq:SystConfWill}) is equivalent to
\be
\label{VI.209}
\lf\{
\begin{array}{l}
\ds\Im(i\,A-2C)=0\\[5mm]
\ds\Im\lf(\vec{e}_{\ov{z}}\wedge\lf[\vec{V}+2i\vec{W}\rg]\rg)=0 \quad.
\end{array}
\rg.
\ee
We observe that $\vec{e}_1\wedge\lf[\vec{V}+2i\vec{W}\rg]$ and $\vec{e}_2\wedge\lf[\vec{V}+2i\vec{W}\rg]$ are linearly independent since $\lf[\vec{V}+2i\vec{W}\rg]$ is orthogonal to the tangent plane, moreover we combine (\ref{VI.208}) and (\ref{VI.209}) and we obtain that (\ref{VI.209}) is equivalent to
\be
\label{VI.210}
\lf\{
\begin{array}{l}
\ds\Im(i\,A)=0\\[5mm]
\ds\vec{e}_1\wedge\Im\lf(\vec{V}+2i\vec{W}\rg)=0\\[5mm]
\ds\vec{e}_2\wedge\Im\lf(i\ \lf[\vec{V}+2i\vec{W}\rg]\rg)=0 \quad.
\end{array}
\rg.
\ee
Combining (\ref{VI.207}) and (\ref{VI.210}) we obtain that  the first two conservation laws of  \eqref{eq:SystConfWill} are equivalent to
\be
\label{VI.211}
\lf\{
\begin{array}{l}
A=0\\[5mm]
\vec{V}=-2i\,\vec{W}=-2i\,\pi_{\vec{n}}(\D_z\vec{H})
\end{array}
\rg.
\ee
Or in other words, for a conformal immersion $\vec{\Phi}$ of the disc into ${\R}^m$, there exists a vector field $\vec{Y}$ satisfying the first two equations of the system  \eqref{eq:SystConfWill}  if and only if there exists
a complex valued function $B$ and a vector field $\vec{Y}$  such that 
\be
\label{VI.212}
\vec{Y}=B\,\vec{e}_{\ov{z}}-2i\,\pi_{\vec{n}}(\D_z\vec{H})\quad.
\ee
We shall now exploit the third equation of \eqref{eq:SystConfWill}  by taking $\D_{\ov{z}}$ of (\ref{VI.212}). 
Let 
\be
\label{VI.212zz}
f:=\,e^{\la} B+2i\, e^{2 \lambda}\,\lf<\vec{H},\vec{H}_0\rg>\quad.
\ee
With this notation (\ref{VI.212}) becomes
\be
\label{VI.212az}
\vec{Y}=e^{-\la}\, f\,\vec{e}_{\ov{z}}-2i\lf<\vec{H},\vec{H}_0\rg>\ \p_{\ov{z}}\vec{\Phi}-2i\,\pi_{\vec{n}}(\D_z\vec{H})
\ee

which is exactly equation (\ref{eq:defY}) (recall  we defined a vector $\vec{Y}$ in that way starting from a conformal immersion $\vec{\Phi}$ satisfying the conformal constraint Willmore equation). Then repeating the computations above (i.e. the ones for the ''only if'' implication) we get that the  equation (\ref{eq:DzY}) is still valid, but since  $\vec{Y}$ satisfies (Sys-3) we get

\begin{eqnarray}
0 &=& -2 \Re \lf[\D_{\bar{z}}\lf(\lf<\bH, \bH_0 \rg> \p_{\ov{z}}\bP + \, \pi_{\bn}(\D_z \bH) \rg)\rg] + \frac{1}{2} \Im \left[f \ov{\bH_0}\right]  \label{eq:PrWConf}\\
&& +e^{2 \lambda} \lf(\frac 1 2 \tilde{R}(\vec{H})+4 \Re\left[<\Riem^h(\vec{e}_{\ov{z}},\vec{e}_z) \vec{e}_z, \vec{H}> \vec{e}_{\ov{z}}\right] \rg)+\Im \left[(\p_{\bar{z}} f)\, e^{-\lambda}  \vec{e}_{\ov{z}} \right]\quad.\nonumber
\end{eqnarray}
Consider the normal and the tangential projections of (\ref{eq:PrWConf}). The tangential projection of identity (\ref{eq:Cons}) gives
\be \label{eq:PT}
\pi_T \lf(-2 \Re \lf[\D_{\bar{z}}\lf(\lf<\bH, \bH_0 \rg> \p_{\ov{z}}\bP + \, \pi_{\bn}(\D_z \bH) \rg)\rg]\rg) = - 4\, e^{2 \lambda} \Re\left[<\Riem^h(\vec{e}_{\ov{z}},\vec{e}_z) \vec{e}_z, \vec{H}> \vec{e}_{\ov{z}}\right]; 
\ee 
on the other hand, the tangential projection of (\ref{eq:PrWConf}) gives

\be \label{eq:PT1}
\begin{array}{l}
\ds 0=\pi_T \lf(-2 \Re \lf[\D_{\bar{z}}\lf(\lf<\bH, \bH_0 \rg> \p_{\ov{z}}\bP + \, \pi_{\bn}(\D_z \bH) \rg)\rg]\rg)\\[5mm]
\quad\ds + 4\, e^{2 \lambda} \Re\left[<\Riem^h(\vec{e}_{\ov{z}},\vec{e}_z) \vec{e}_z, \vec{H}> \vec{e}_{\ov{z}}\right]+\Im \left[(\p_{\bar{z}} f)\, e^{-\lambda}  \vec{e}_{\ov{z}} \right], 
\end{array}
\ee
so, combining (\ref{eq:PT}) and (\ref{eq:PT1}) we obtain
\be \label{eq:PT2}
0=\Im \left[(\p_{\bar{z}} f)\, e^{-\lambda}  \vec{e}_{\ov{z}} \right]. 
\ee
The normal projection of (\ref{eq:PrWConf}) gives
\be\label{eq:PN}
0=\pi_{\vec{n}} \lf(-2 \Re \lf[\D_{\bar{z}}\lf(\lf<\bH, \bH_0 \rg> \p_{\ov{z}}\bP + \, \pi_{\bn}(\D_z \bH) \rg)\rg] 
\quad\ds+ \frac{1}{2} \Im \left[f \ov{\bH_0}\right] + \frac{e^{2 \lambda}}{2} \tilde{R}(\vec{H}) \rg)\quad.
\ee
Therefore, putting togheter (\ref{eq:PT}), (\ref{eq:PT2}) and (\ref{eq:PN}) we conclude that (\ref{eq:PrWConf}) implies the following system
\be
\label{VI.215}
\lf\{
\begin{array}{l}
\ds4\,e^{-2\la}\,\Re\lf(\D_{\ov{z}}\lf[\pi_{\vec{n}}(\D_z\vec{H})+<\vec{H},\vec{H}_0>\ \p_{\ov{z}}\vec{\Phi}\rg]\rg)\\[5mm]
\quad\quad\ds=e^{-2\la}\, \Im (f(z) \overline{\vec{H_0}})+\ti{R}(\bH)+8 \Re \lf(<\Riem^h(\vec{e}_{\ov{z}},\vec{e}_z) \vec{e}_z, \vec{H}> \vec{e}_{\ov{z}}\rg)\\[5mm]
\ds\Im\lf(\p_{\ov{z}}f\ \,\vec{e}_{\ov{z}}\rg)=0\quad.
\end{array}
\rg.
\ee
The second line is equivalent to
\[
\p_{\ov{z}}f\ \,\vec{e}_{\ov{z}}-\p_z\ov{f}\,\vec{e}_z=0.
\] 
Taking the scalar product with $\vec{e}_z$ and using (\ref{VI.199a}), observe that (\ref{VI.215})
is equivalent to
\be
\label{VI.216}
\lf\{
\begin{array}{l}
\ds4\,e^{-2\la}\,\Re\lf(\D_{\ov{z}}\lf[\pi_{\vec{n}}(\D_z\vec{H})+<\vec{H},\vec{H}_0>\ \p_{\ov{z}}\vec{\Phi}\rg]\rg)\\[5mm]
\quad\quad\ds=e^{-2\la}\, \Im (f(z) \overline{\vec{H_0}})+\ti{R}(\bH)+8 \Re \lf(<\Riem^h(\vec{e}_{\ov{z}},\vec{e}_z) \vec{e}_z, \vec{H}> \vec{e}_{\ov{z}}\rg)\\[5mm]
\ds \p_{\ov{z}}f=0\quad.
\end{array}
\rg.
\ee
The second line gives  that $f=f(z)$ is holomorphic and the equation in the first line is exactly the equation of the conformal constrained Willmore surfaces (\ref{eq:ConsWconf}); therefore we proved that the existence of a vector field $\vec{Y}$ satisfying the system of conservation laws \eqref{eq:SystConfWill} implies that the immersion $\vec{\Phi}$ is conformal constrained Willmore.\hfill$\Box$

\section{Regularity for  Willmore immersions}\label{Sec:Regularity}
\reset
We start by using the divergence structure of the constrained-conformal Willmore equation in order to construct potentials which will play a crucial role in the regularity theory.

\begin{Lm}\label{Lm:RS}
Let $\vec{\Phi}$ be a $W^{1,\infty}$ conformal immersion of the disc $\D^2$ taking values into a sufficiently small open subset of the Riemannian manifold $(M,h)$, with second fundamental form in $L^2(D^2)$ and conformal factor $\lambda\in L^\infty(D^2)$. Assume $\vec{\Phi}$ is a  constrained-conformal Willmore immersion; then there exist the following potential vector fields:
\begin{itemize}
\item[{\rm (i)}] there exists a complex vector field (i.e. a vector field with values in the complexified tangent bundle of $M$) $\bL \in L^{2,\infty}(D^2)$ with $\nabla \Im \bL \in L^{2,\infty}(D^2)$ satisfying
\be \label{eq:DzL=Y}
\lf\{
\begin{array}{l}
\D _z \bL= \bY \quad 	\text{on } D^2\\[5mm]
\Im \bL=0 \quad \text{on } \p D^2\quad,
\end{array}
\rg.
\ee
where $\bY$ is the vector given in \eqref{eq:defY} in the proof of Theorem \ref{th:ConfWill};
\item[{\rm (ii)}] there exists a complex valued  function $S \in W^{1,(2,\infty)}(D^2)$ with $\nabla^2 \Im S \in L^q(D^2)$ for every $1<q<2$
satisfying
\be \label{eq:DzS}
\lf\{
\begin{array}{l}
\p _z S= <\p_z\bP, \overline{\bL}>  \quad 	\text{on } D^2\\[5mm]
\Im S=0 \quad \text{on } \p D^2\quad;
\end{array}
\rg.
\ee
\item[{\rm (iii)}] there exists a complex valued 2-vector field $\bR \in W^{1,(2,\infty)}(D^2)$ with $\nabla^2 \Im \bR \in L^q(D^2)$ for every $1<q<2$ satisfying
\be \label{eq:DzR}
\lf\{
\begin{array}{l}
\D _z \bR= \p_z\bP \wedge \overline{\bL}-2 i \p_z\bP \wedge \bH  \quad 	\text{on } D^2\\[5mm]
\Im \bR=0 \quad \text{on } \p D^2\quad.
\end{array}
\rg.
\ee
\end{itemize}
\hfill $\Box$
\end{Lm}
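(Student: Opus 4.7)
The plan is to construct the three potentials $\vec{L}$, $S$, $\vec{R}$ successively, each by inverting a covariant Cauchy--Riemann-type equation on the disc. The three conservation laws of Theorem \ref{th:ConfWill} play a central role: they provide the integrability conditions that, beyond merely ensuring existence, upgrade the imaginary part of each potential to the stronger regularity class claimed.

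\textbf{Construction of $\vec{L}$.} I would trivialize $TM$ over a small chart containing $\vec{\Phi}(D^2)$, so that $D_z$ becomes $\partial_z$ plus a bounded Christoffel correction. A particular solution of $D_z\vec{L}=\vec{Y}$ is obtained by convolving $\vec{Y}\in H^{-1}+L^1$ with the Cauchy kernel $1/(\pi\bar z)$; this produces an $L^{2,\infty}$ field since $L^1\ast 1/(\pi\bar z)\subset L^{2,\infty}$ in dimension two (the sharp $L^1$--endpoint of the Riesz potential), and $H^{-1}\ast 1/(\pi\bar z)\subset L^2\subset L^{2,\infty}$. The Christoffel perturbation is absorbed by a contraction argument, and the anti-holomorphic kernel of $\partial_z$ is then used to correct to the Riemann--Hilbert boundary condition $\Im\vec{L}\big|_{\partial D^2}=0$. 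To obtain $\nabla\Im\vec{L}\in L^{2,\infty}$ I apply $D_{\bar z}$ to $D_z\vec{L}=\vec{Y}$: the commutator $[D_{\bar z},D_z]\vec{L}$ contributes only a zero-order curvature term, and (Sys-3) identifies $\Im D_{\bar z}\vec{Y}$ with the curvature expression $-e^{2\lambda}\bigl(\tfrac12\tilde R(\vec{H})+4\Re[\langle \Riem^h(\vec{e}_{\bar z},\vec{e}_z)\vec{e}_z,\vec{H}\rangle\,\vec{e}_{\bar z}]\bigr)$, which lies in $L^2(D^2)$ because ${\mathbb I}\in L^2$ and the sectional curvature of $M$ is bounded. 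Hence $\Im\vec{L}$ solves a Poisson problem with zero Dirichlet data and $L^2$ source, giving $\Im\vec{L}\in W^{2,2}\hookrightarrow W^{1,(2,\infty)}$.

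\textbf{Construction of $S$ and $\vec{R}$.} With $\vec{L}$ in hand, $\partial_z\vec{\Phi}\in L^\infty$ and $\overline{\vec{L}}\in L^{2,\infty}$ make the right-hand sides of \eqref{eq:DzS} and \eqref{eq:DzR} lie in $L^{2,\infty}$, so the same Cauchy inversion plus Riemann--Hilbert adjustment yields $S,\vec{R}\in W^{1,(2,\infty)}$ with $\Im S$ and $\Im\vec{R}$ vanishing on $\partial D^2$. The extra regularity of the imaginary parts comes from applying $\partial_{\bar z}$ (respectively $D_{\bar z}$) to the two defining equations: using metric compatibility of $D$ and the identity $D_{\bar z}\partial_z\vec{\Phi}=\tfrac{e^{2\lambda}}{2}\vec{H}$ one finds
\[
\partial_{\bar z}\langle \partial_z\vec{\Phi},\overline{\vec{L}}\rangle=\langle \tfrac{e^{2\lambda}}{2}\vec{H},\overline{\vec{L}}\rangle+\langle \partial_z\vec{\Phi},\overline{\vec{Y}}\rangle,
\]
\[
D_{\bar z}\bigl(\partial_z\vec{\Phi}\wedge\overline{\vec{L}}-2i\,\partial_z\vec{\Phi}\wedge\vec{H}\bigr)=\tfrac{e^{2\lambda}}{2}\vec{H}\wedge\overline{\vec{L}}+\partial_z\vec{\Phi}\wedge\overline{\vec{Y}}-2i\,\partial_z\vec{\Phi}\wedge D_{\bar z}\vec{H},
\]
where I used $\vec{H}\wedge\vec{H}=0$. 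Taking imaginary parts and invoking (Sys-1) (which, via $\overline{\langle\partial_{\bar z}\vec{\Phi},\vec{Y}\rangle}=\langle\partial_z\vec{\Phi},\overline{\vec{Y}}\rangle$, makes $\Im\langle\partial_z\vec{\Phi},\overline{\vec{Y}}\rangle=0$) and (Sys-2) (which cancels $\Im[\partial_z\vec{\Phi}\wedge\overline{\vec{Y}}]$ against $\Im[-2i\,\partial_z\vec{\Phi}\wedge D_{\bar z}\vec{H}]$), both expressions reduce to a constant multiple of $e^{2\lambda}(\vec{H}\text{ paired with }\Im\vec{L})$. Since $\vec{H}\in L^2$ and $\Im\vec{L}\in W^{1,(2,\infty)}\hookrightarrow L^p$ for every $p<\infty$, these terms lie in $L^q$ for every $q<2$, and Calder\'on--Zygmund theory applied to the Poisson equations $\tfrac14\Delta\Im S=(\cdots)$ and $\tfrac14\Delta\Im\vec{R}=(\cdots)$ with vanishing boundary data delivers $\nabla^2\Im S,\nabla^2\Im\vec{R}\in L^q$.

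\textbf{Main obstacle.} The principal technical difficulty is the boundary-value problem: $D_z$ is a first-order elliptic operator and the natural boundary condition $\Im(\cdot)|_{\partial D^2}=0$ is of Riemann--Hilbert type rather than Dirichlet, so one cannot apply a classical scalar Dirichlet--Poisson theory directly. I would handle this by decoupling each construction into two scalar subproblems --- a standard Dirichlet problem for the imaginary part (whose source is controlled by the relevant conservation law) and an algebraic recovery of the real part from the first-order equation --- and by controlling everything in the Lorentz scale through the Wente / Coifman--Lions--Meyer--Semmes estimates developed in the Appendix. Once this scaffolding is set up, the remainder of the argument is a bookkeeping of the cross-term cancellations provided by (Sys-1), (Sys-2) and (Sys-3).
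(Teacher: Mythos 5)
Your overall strategy matches the paper's: invert the covariant Cauchy--Riemann operator via the Cauchy kernel and a contraction to absorb the Christoffel terms (this is the content of Lemmas~\ref{lemmaA2} and~\ref{lemmaA2bis} in the Appendix), then use (Sys-1)--(Sys-3) to show that the imaginary parts of the sources are genuine functions, which upgrades $\Im\bL$, $\Im S$, $\Im\bR$. Your identities for $\p_{\bar z}\langle\p_z\bP,\ov\bL\rangle$ and for $\D_{\bar z}(\p_z\bP\wedge\ov\bL-2i\,\p_z\bP\wedge\bH)$, together with the way (Sys-1) and (Sys-2) cancel the $\overline{\bY}$-terms, are exactly the paper's computations leading to \eqref{eq:ImSLqpre} and \eqref{eq:ImRLq}.

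One step you assert for $\bL$ is, however, stronger than what actually holds and stronger than what the lemma claims. You write that applying $D_{\bar z}$ to $D_z\bL=\bY$ gives a Poisson equation for $\Im\bL$ ``with $L^2$ source,'' hence $\Im\bL\in W^{2,2}$. This ignores the lower-order contributions coming from the Christoffel correction: writing $D_z=\p_z+\gamma\,\cdot$, the Laplacian of $\Im\bL$ picks up $\Im\big[\p_{\bar z}(\gamma\bL)\big]$ and $\Im(\gamma\bY)$, which with $\bL\in L^{2,\infty}$ and $\bY\in \mathring H^{-1}+L^1$ only live in $W^{-1,(2,\infty)}+\mathring H^{-1}+L^1$, not in $L^2$. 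So the Poisson equation for $\Im\bL$ has mixed-regularity right-hand side and one obtains $\nabla\Im\bL\in L^{2,\infty}$ (i.e.\ $\Im\bL\in W^{1,(2,\infty)}$), which is precisely what item (i) asserts and what the subsequent steps use (via $\Im\bL\in L^p$ for all $p<\infty$), but not $W^{2,2}$. A related caveat: the ``algebraic recovery of the real part from the first-order equation'' in your discussion of the Riemann--Hilbert boundary condition is not algebraic once the Christoffel terms couple the two components; the paper handles the homogeneous boundary problem by a second fixed-point argument in $W^{1,(2,\infty)}(D^2)$ (end of the proof of Lemma~\ref{lemmaA2}). Neither issue changes the structure of your argument, but the $W^{2,2}$ claim should be weakened to $W^{1,(2,\infty)}$.
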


\begin{proof}
 (i): Let $\bY$ be the vector field given by \eqref{eq:defY} in the proof of Theorem \ref{th:ConfWill} and observe that, by our assumption of the immersion $\bP$, we have $\bY\in H^{-1}+L^1(D^2)$. Moreover, since $\bY$ satisfyes equation (Sys-3) of \eqref{eq:SystConfWill}, then
\be \label{eq:ImDzYL2}
\|\Im(D_{\bar z} \bY)\|_{L^2(D^2)}\leq C \|H\|_{L^2(D^2)}\quad.
\ee 
Since $\bP$ is taking values into a small open subset $V\subset M$, by choosing Riemann normal coordinates on $V$ centred in $\bP(0)$,  we can assume that the functions
\be\label{eq:defgamma}
\gamma^j_k := \Gamma^j_{kl} \p_z \Phi^l, \quad \gamma^j_k\in C^0\cap W^{1,2}(D^2)
\ee
are smaller than the $\epsilon$ given in the statement of Lemmas \ref{lemmaA2} and \ref{lemmaA2bis}; extend $\gamma^j_k$ to the whole $\C$  and multiply them by a smooth cutoff function in order to obtain
\be\label{eq:boundgamma}
\gamma^j_k \in C^0\cap W^{1,2}(\C),\quad \supp \gamma^j_k \subset B_2(0),\quad \|\gamma^j_k\|_{L^\infty(\C)}\leq \epsilon\quad. 
\ee
Using $\gamma^j_k$ we can extend the operator $D_z$ to complex vector fields $ \bU\in L^1_{loc}(\C)$ in the following way
$$D_z U^j:= \p_z U^j+ \sum_{k=1}^m \gamma^j_k U^k \quad \text {in distributional sense}. $$
Analogously extend $Y^j\in H^{-1}+L^1(D^2)$ to functions $\tilde Y^j \in \mathring{H}^{-1}+L^1(\C)$, where $\mathring{H}^{-1} (\C)$ is the dual of homogeneous Sobolev space $\mathring{H}^{1}(\C)$ (this is just a technical point for applying Lemma \ref{lemmaA2}) such that
 $$\|\tilde Y^j \|_{\mathring{H}^{-1}+L^1(\C)}\leq C \|Y^j \|_{H^{-1}+L^1(D^2)}< \infty\quad,$$
 $$\|\Im (D_z \tilde{Y}^j) \|_{L^1(\C)} \leq C \|\Im(D_z Y^j)\|_{L^1(D^2)}\leq C \|\Im(D_z Y^j)\|_{L^2(D^2)}<\infty\quad.$$
For convenience, in the following we identify $Y^j$ and its extension. Now we apply Lemma \ref{lemmaA2} and  define $\bL \in L^{2,\infty} (D^2) $ to be the unique solution to the problem
\be \nonumber
\lf\{
\begin{array}{l}
\D _z \bL= \bY \quad 	\text{on } D^2\\[5mm]
\Im \bL=0 \quad \text{on } \p D^2\quad.
\end{array}
\rg.
\ee
Observe that moreover the same lemma gives that $\nabla (\Im \bL) \in {L^{2,\infty}(D^2)}$ which implies that $\Im \bL \in L^p(D^2)$ for every $1<p<\infty$.
\\

\emph{Proof of } (ii). Let us start with a computation; from (Sys-1) of \eqref{eq:SystConfWill}, since by (i) we have $\D _z \bL= \bY$ on $D^2$, then 
$$0=\Im\left(<\p_{\bar{z}}\bP, D_z \bL> \right)=\Im\left(\p_z<\p_{\bar{z}}\bP,  \bL>-<D_z \p_{\bar{z}}\bP,  \bL > \right).$$
Using identity \eqref{VI.200}, by complex conjugation we obtain
\be \label{eq:ImSLqpre}
\Im(\p_{\bar{z}}<\p_{z}\bP,  \overline{\bL}>)=-\frac{e^{2\lambda}}{2} <\bH,\Im \bL>\quad \in L^q(D^2) \text{ for every } 1<q<2,
\ee
where the $L^q$ bound follows by H\"older inequality observing that by (i) we have $\nabla \Im \bL \in L^{2,\infty}(D^2)$ then $\Im \bL \in L^p(D^2)$ for every $1<p<\infty$; on the other hand, by assumption, $\bH \in L^2(D^2)$.
\\By (i), we have  $<\p_{z}\bP,  \overline{\bL}>\in L^{2,\infty}(D^2)$ and as before we extend it to the whole $\C$  keeping controled the norms: $<\p_{z}\bP,  \overline{\bL}> \in L^1\cap L^{2,\infty}(\C)$ and $\Im(\p_{\bar{z}}<\p_{z}\bP,  \overline{\bL}>) \in L^q(\C)$ for every $1<q<2$.
\\Now we apply Lemma \ref{lemmaA2bis}, with $m=1$ and $\gamma^j_k=0$, and define $S \in W^{1,(2,\infty)}(D^2)$ to be the unique solution to
\be \nonumber
\lf\{
\begin{array}{l}
\p _z S= <\p_z\bP, \overline{\bL}>  \quad 	\text{on } D^2\\[5mm]
\Im S=0 \quad \text{on } \p D^2\quad;
\end{array}
\rg.
\ee
moreover $\nabla^2 \Im S \in L^{q}(D^2)$ for every $1<q<2$ which implies, by Sobolev Embedding Theorem, $\nabla \Im S \in L^p(D^2)$ for all $1<p<\infty$.
\\

\emph{Proof of } (iii). Since $\bY=D_z \bL$, equation (Sys-2) in \eqref{eq:SystConfWill} gives
$$0=\Im\left[\p_{\bar{z}} \bP \wedge D_z\bL+2i \p_{\bar{z}} \bP \wedge D_z\bH\right]=-\Im\left[D_{\bar{z}} \left(\p_{z} \bP \wedge \overline{\bL} -2i \p_z \bP \wedge \bH \right) - (D_{\bar{z}} \p_z \bP) \wedge \overline{\bL}+2i (D_{\bar{z}} \p_z \bP) \wedge \bH \right], $$
using  \eqref{VI.200} we obtain
\be \label{eq:ImRLq}
\Im\left[D_{\bar{z}} \left(\p_{z} \bP \wedge \overline{\bL} -2i \p_z \bP \wedge \bH \right)\right]=-\frac{e^{2 \lambda}}{2} \bH \wedge \Im\bL \quad \in L^{q}(D^2) \text{ for every } 1<q<2,
\ee
where the $L^q(D^2)$ estimate comes from H\"older inequality since $\bH \in L^2(D^2)$ and $\Im \bL \in L^p(D^2)$ for every $1<p<\infty$. As in (ii) extend the complex valued vector field $\p_{z} \bP \wedge \overline{\bL} -2i \p_z \bP \wedge \bH \in L^{2,\infty}(D^2)$ to  a complex valued vector field on $\C$ keeping the norms controled:  $\p_{z} \bP \wedge \overline{\bL} -2i \p_z \bP \wedge \bH \in L^1\cap L^{2,\infty}(\C)$ and $\Im\left[D_{\bar{z}} \left(\p_{z} \bP \wedge \overline{\bL} -2i \p_z \bP \wedge \bH \right)\right] \in L^{q}(D^2)$ for every $1<q<2$.
\\As in (ii), we apply Lemma \ref{lemmaA2bis} in order to define $\bR \in W^{1,(2,\infty)}(D^2)$ as the unique solution to 
\be \nonumber
\lf\{
\begin{array}{l}
\D _z \bR= \p_z\bP \wedge \overline{\bL}-2 i \p_z\bP \wedge \bH  \quad 	\text{on } D^2\\[5mm]
\Im \bR=0 \quad \text{on } \p D^2\quad;
\end{array}
\rg.
\ee
 moreover $\nabla^2 \Im \bR \in L^{q}(D^2)$ for every $1<q<2$ which implies, by Sobolev Embedding Theorem, $\nabla \Im \bR \in L^p(D^2)$ for all $1<p<\infty$.  
\end{proof}

Next we play with the introduced $\bR$ and $S$ in order to produce, in the following lemma, an elliptic system of Wente type involving $\bP$, $\bR$ and $S$.

\begin{Lm}\label{Lm:eqPRS}
Let $\vec{\Phi}$ be a $W^{1,\infty}$ conformal immersion of the disc $\D^2$ taking values into a sufficiently small open subset of the Riemannian manifold $(M,h)$, with second fundamental form in $L^2(D^2)$ and conformal factor $\lambda\in L^\infty(D^2)$. Assume $\vec{\Phi}$ is a  constrained-conformal Willmore immersion and let $\bR \in W^{1,(2,\infty)}(D^2)$ and $S \in W^{1,(2,\infty)}(D^2)$ be given by Lemma \ref{Lm:RS}; then $\bR$ and $S$ satisfy the following coupled system on $D^2$:
\be \label{eq:SystRS}
\lf\{
\begin{array}{l}
\D _z \bR= (-1)^{m+1} \star_h \left[\bn \bul i\D _z \bR \right] + (i \p_z S) \star_h \bn  \\[5mm]
\p_z S = <-i \D_z \bR, \star_h \bn> \quad.
\end{array}
\rg.
\ee
\hfill $\Box$
\end{Lm}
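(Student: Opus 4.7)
Both identities in \eqref{eq:SystRS} are pointwise algebraic, so I would verify them in an adapted orthonormal frame $(\bbe_1,\bbe_2,\bn_1,\ldots,\bn_{m-2})$ of $T_{\bP(x)}M$, using $\p_z\bP=\tfrac{e^\la}{2}(\bbe_1-i\bbe_2)$, $\star_h\bn=\bbe_1\wedge\bbe_2$, and the explicit form of $D_z\bR$ and $\p_zS$ supplied by Lemma~\ref{Lm:RS}. Splitting $\overline{\bL}=\overline{\bL}^T+\overline{\bL}^N$, the bivector
$D_z\bR=\p_z\bP\wedge\overline{\bL}-2i\,\p_z\bP\wedge\bH$
decomposes canonically into a tangent-bivector piece $\p_z\bP\wedge\overline{\bL}^T$ and a tangent-normal ``mixed'' piece $\p_z\bP\wedge\overline{\bL}^N-2i\,\p_z\bP\wedge\bH$.

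For the tangential piece the bilinear identity $\langle u\wedge v,\bbe_1\wedge\bbe_2\rangle=\langle u,\bbe_1\rangle\langle v,\bbe_2\rangle-\langle u,\bbe_2\rangle\langle v,\bbe_1\rangle$ applied to $u=\p_z\bP$ gives $\langle\p_z\bP\wedge\overline{\bL},\bbe_1\wedge\bbe_2\rangle=i\langle\p_z\bP,\overline{\bL}\rangle=i\,\p_zS$, hence $\p_z\bP\wedge\overline{\bL}^T=(i\,\p_zS)\star_h\bn$. This simultaneously produces the second summand of the first identity in \eqref{eq:SystRS} and, by pairing $-iD_z\bR$ with $\bbe_1\wedge\bbe_2$ (the mixed pieces drop since they are orthogonal to $\bbe_1\wedge\bbe_2$), yields at once the second identity of \eqref{eq:SystRS}.

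The core of the proof is then the purely algebraic claim
\[
\p_z\bP\wedge\vec{V}=(-1)^{m+1}\star_h\!\big[\bn\bullet\, i(\p_z\bP\wedge\vec{V})\big]\qquad\text{for every normal }\vec{V},
\]
which by linearity reduces to $\vec{V}=\bn_\alpha$. Using the derivation rule \eqref{def:bul} together with $\bn\res\bbe_i=0$ and $\bn\res\bn_\alpha=(-1)^{\alpha-1}\wedge_{\beta\neq\alpha}\bn_\beta$, I would compute $\bn\bullet(\p_z\bP\wedge\bn_\alpha)=(-1)^{\alpha}(\wedge_{\beta\neq\alpha}\bn_\beta)\wedge\p_z\bP$; applying $\star_h$ via the defining relation $\beta\wedge\star_h\alpha=\langle\beta,\alpha\rangle\,\mathrm{vol}$ to each of the two simple $(m-2)$-vectors that arise and collecting signs produces a prefactor that combines with $i(-1)^{m+1}$ to give exactly $\p_z\bP\wedge\bn_\alpha$. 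Since $\bn\bullet$ annihilates every tangent bivector (again by $\bn\res\bbe_i=0$ and \eqref{def:bul}), applying this identity to the mixed part of $D_z\bR$ reconstructs $(-1)^{m+1}\star_h[\bn\bullet iD_z\bR]$ with the tangential summand contributing nothing inside $\bn\bullet$; adding it to the tangential contribution $(i\,\p_zS)\star_h\bn$ completes the first identity. The only delicate point is sign bookkeeping in the final computation: the two Hodge stars, the factor $(-1)^{qs}$ in \eqref{def:bul}, the sign $(-1)^{\alpha-1}$ from $\bn\res\bn_\alpha$, and the factor $i$ coming from $\p_z\bP=\tfrac{e^\la}{2}(\bbe_1-i\bbe_2)$ must conspire to a prefactor independent of the codimension $m-2$.
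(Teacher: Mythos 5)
Your proposal is correct and follows essentially the same route as the paper: both apply the contraction $\bn\bul$ to $D_z\bR$ (which annihilates the tangent bivector piece), invert the resulting "mixed" term via $\star_h$ using exactly the frame identities $\star_h[(\bn\llcorner\bN)\wedge\bbe_1]=(-1)^m\bN\wedge\bbe_2$, $\star_h[(\bn\llcorner\bN)\wedge\bbe_2]=(-1)^{m+1}\bN\wedge\bbe_1$, and recover the tangent part from the defining equation for $S$. The only presentational difference is that you package the inversion step as the standalone identity $\p_z\bP\wedge\vec{V}=(-1)^{m+1}\star_h[\bn\bul i(\p_z\bP\wedge\vec{V})]$ for normal $\vec{V}$ and apply linearity, while the paper computes $\star_h[\bn\bul D_z\bR]$ directly and rearranges; the sign bookkeeping you defer does close correctly and produces a prefactor independent of the codimension, as anticipated.
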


\begin{proof}
By definition, $\bR$ satisfies the  equation \eqref{eq:DzR} on $D^2$, i.e:
\be\label{DzR}
\D _z \bR= \p_z\bP \wedge \overline{\bL}-2 i \p_z\bP \wedge \bH\quad.
\ee
Taking the $\bul$ contraction defined in \eqref{def:bul} between $\bn$ and $\D_z \bR$ we obtain

\begin{eqnarray} 
\bn \bul \D _z \bR &=&-(\bn \llcorner \overline{\bL}) \wedge \p_z \bP +2i (\bn \llcorner \bH) \wedge \p_z \Phi  \nonumber  \\
                   &=&-[\bn \llcorner \pi_{\bn} (\overline{\bL})]\p_z \bP+2i (\bn \llcorner \bH) \wedge \p_z \Phi \quad, \label{eq:nbulDzR}
\end{eqnarray}
where $\llcorner$ is the usual contraction defined in \eqref{eq:defllcorner}.
\\For a normal vector $\bN$, a short computation using just the definitions of $\star_h$ and $\llcorner$ gives
\begin{eqnarray}
\star_h[(\bn\llcorner \bN) \wedge \bbe_1]&=&(-1)^{m} \;\bN \wedge \bbe_2 \nonumber \\
\star_h[(\bn\llcorner \bN) \wedge \bbe_2]&=&(-1)^{m+1} \;\bN \wedge \bbe_1, \nonumber 
\end{eqnarray}
where, as usual, $\bbe_1$ and $\bbe_2$ is the ortonormal basis of $T\bP(D^2)$ given by the vectors $\p_{1}\bP$, $\p_2 \bP$ normalized. Since $\p_z \bP=\frac{1}{2} \left[ \p_1 \bP-i \p_2 \bP \right]$, we get
\be\label{eq:nNP}
\star_h[(\bn\llcorner \bN) \wedge \p_z \bP]=(-1)^{m} \;\bN \wedge (i\p_z \bP).
\ee
Combining \eqref{eq:nbulDzR} and \eqref{eq:nNP} we have
\be\label{eq:starnDzR}
\star_h[\bn \bul \D _z \bR]=(-1)^{m+1} \pi_{\bn} (\overline{\bL}) \wedge (i\p_z \bP)+2 i (-1)^{m} \bH \wedge (i\p_z \bP)\quad,
\ee
multiplying both sides with $i (-1)^m$ gives
\be \label{eq:quasi}
(-1)^{m+1} \star_h[\bn \bul (i\D _z \bR)]= \p_z \bP \wedge \pi_{\bn} (\overline{\bL}) -2i  \p_z \bP \wedge \bH\quad.
\ee
Combining \eqref{DzR} and \eqref{eq:quasi} we obtain
\be\label{eq:quasi'}
(-1)^{m+1} \star_h[\bn \bul (i\D _z \bR)]= \D_z \bR - \p_z \bP \wedge \pi_{T} (\overline{\bL})\quad. 
\ee 
Observing that, by \eqref{VI.199a}
$$\pi_T(\overline{\bL})=2<\overline{\bL},\bbe_{\bar{z}}> \bbe_{z}+2<\overline{\bL},\bbe_{{z}}> \bbe_{\bar{z}}\quad, $$
then
$$\p_z \bP \wedge \pi_{T} (\overline{\bL})= \p_z \bP \wedge (2<\overline{\bL},\bbe_{{z}}>) \bbe_{\bar{z}}= (2<\overline{\bL},\p_z \bP>) \bbe_{{z}} \wedge  \bbe_{\bar{z}}\quad,$$
using again \eqref{VI.199a}, and the definition of $S$ \eqref{eq:DzS} gives
\be\label{eq:PLS}
\p_z \bP \wedge \pi_{T} (\overline{\bL})= (i \p_z S) \star_h \bn\quad.
\ee
The combination of \eqref{eq:PLS} and \eqref{eq:quasi'} gives the  first equation of \eqref{eq:SystRS}. The second equation is obtained by taking the scalar product between the first equation and $\star_h \bn$ once one have observed 
that
$$<\star_h \bn, \star_h (\bn \bul \D _z \bR)>=0\quad. $$
This fact comes from \eqref{eq:starnDzR} which implies that $\star_h (\bn \bul \D _z \bR)$ is a linear combination of wedges of tangent and normal vectors to $T\bP(D^2)$. This concludes the proof.
\end{proof}

\begin{Prop}
Let $\vec{\Phi}$ be a $W^{1,\infty}$ conformal immersion of the disc $\D^2$ taking values into a sufficiently small open subset of the Riemannian manifold $(M,h)$, with second fundamental form in $L^2(D^2)$ and conformal factor $\lambda\in L^\infty(D^2)$. Assume $\vec{\Phi}$ is a  constrained-conformal Willmore immersion and let $\bR \in W^{1,(2,\infty)}(D^2)$ and $S \in W^{1,(2,\infty)}(D^2)$ be given by Lemma \ref{Lm:RS}; then the couple $(\Re\bR, \Re S )$ satisfies the following system on $D^2$
\be \label{eq:SystPRS}
\lf\{
\begin{array}{l}
\triangle (\Re \bR) = (-1)^m \star_h [ \D \bn \bul \D^{\perp} (\Re \bR) ] - \star_h [ \D \bn \;\, \nabla^{\perp} (\Re S)] + \tilde{F}   \\[5mm]
\triangle (\Re S) = <\D (\star_h \bn), \D^{\perp} (\Re \bR)   > + \tilde{G} \quad;
\end{array}
\rg.
\ee
where $\tilde{F}$ and $\tilde{G}$ are some functions ($\tilde{F}$ is 2-vector valued) in $L^q(D^2)$ for every $1<q<2$.
Moreover we denoted $\triangle (\Re \bR):= \D_{\p_{x_1} \bP} \D_{\p_{x_1} \bP} (\Re \bR)+\D_{\p_{x_2} \bP} \D_{\p_{x_2} \bP} (\Re \bR)$, observe this differ from the intrinsic Laplace-Beltrami operator by a factor $e^{2\lambda}$. For a more explicit shape of the equations  see \eqref{eq:LapReR} and \eqref{eq:LapReS} in the end of the proof.
\hfill $\Box$
\end{Prop}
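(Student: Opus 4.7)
The strategy is to differentiate the system of Lemma~\ref{Lm:eqPRS} once more to produce a Laplacian on the left and a $\nabla\bn\cdot\nabla\,\cdot\,$ Wente-type structure on the right. Concretely, I would apply $\D_{\bar{z}}$ to the first equation of \eqref{eq:SystRS} and $\p_{\bar{z}}$ to the second. Since the Levi-Civita connection is compatible with $h$, the Hodge star $\star_h$, the hermitian inner product and the contraction $\bul$ are parallel, so these derivatives pass through $\star_h$ and $\bul$ via the Leibniz rule.

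The key algebraic identity on sections of $\vec\Phi^{-1}TM^m$ is
\[
4\,\D_{\bar{z}}\D_z \;=\; \triangle \;-\; i\,\Riem^h(\p_{x_1}\vec\Phi,\p_{x_2}\vec\Phi),\qquad 4\,\p_{\bar{z}}\p_z=\triangle\ (\text{on scalars}),
\]
with $\triangle$ as defined in the statement. Multiplying each differentiated equation by $4$ and taking real parts then puts $\triangle(\Re\bR)$ (resp.\ $\triangle(\Re S)$) on the left, up to a term $\Im(\Riem^h\bR)$ which, by $\bR\in W^{1,(2,\infty)}\hookrightarrow L^p$ for every $p<\infty$ together with the boundedness of $\Riem^h\circ\vec\Phi$, belongs to every $L^p$ and is harmlessly absorbed into $\tilde F$.

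On the right-hand side, after expanding the Leibniz rule four families of terms appear, which I would treat as follows. First, the contributions $\star_h[\bn\bul i\D_{\bar{z}}\D_z\bR]$ and $(i\p_{\bar z}\p_z S)\star_h\bn$, once hit by $4\Re$ and using the identity above, produce $\triangle(\Im\bR)$ and $\triangle(\Im S)$ pieces; by Lemma~\ref{Lm:RS}(ii)--(iii), $\nabla^2\Im\bR,\nabla^2\Im S\in L^q$ for every $1<q<2$, so these are in $L^q$ and go into $\tilde F,\tilde G$. Second, the genuinely leading terms $4(-1)^{m+1}\Re\bigl(\star_h[\D_{\bar z}\bn\bul i\D_z\bR]\bigr)$, $4\Re\bigl((i\p_z S)\star_h\D_{\bar z}\bn\bigr)$ and $4\Re\bigl<-i\D_z\bR,\star_h\D_{\bar z}\bn\bigr>$ need to be unfolded. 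Writing $\D_z=\tfrac12(\D_{\p_{x_1}\vec\Phi}-i\D_{\p_{x_2}\vec\Phi})$, $\D_{\bar z}=\tfrac12(\D_{\p_{x_1}\vec\Phi}+i\D_{\p_{x_2}\vec\Phi})$ and decomposing $\bR=\Re\bR+i\,\Im\bR$, $S=\Re S+i\,\Im S$, each real part splits into a diagonal piece involving $\D\bn$ paired with one derivative of $\Re\bR$ (resp.\ $\Re S$) in the rotated ordering $\D^{\perp}=(-\D_{\p_{x_2}\vec\Phi},\D_{\p_{x_1}\vec\Phi})$ (resp.\ $\nabla^{\perp}=(-\p_{x_2},\p_{x_1})$), and an off-diagonal piece involving $\nabla\Im\bR$ or $\nabla\Im S$. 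The off-diagonal piece paired against $\D\bn\in L^2$ yields, by H\"older and the fact that $\nabla\Im\bR,\nabla\Im S\in L^p$ for all $p<\infty$, an $L^q$ quantity for some $q<2$, hence absorbed. The diagonal piece is, after matching the sign conventions, precisely
\[
(-1)^{m}\star_h[\D\bn\bul\D^{\perp}(\Re\bR)],\qquad -\star_h[\D\bn\,\nabla^{\perp}(\Re S)],\qquad <\D(\star_h\bn),\D^{\perp}(\Re\bR)>,
\]
which is the stated RHS; the flip $(-1)^{m+1}\to (-1)^m$ precisely comes from the sign in $\Re\bigl[(\D_{\p_1}\bn+i\D_{\p_2}\bn)\bul\bigl((\D_{\p_2}\Re\bR-\D_{\p_1}\Im\bR)+i(\cdots)\bigr)\bigr]$.

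The main obstacle is the bookkeeping: one must confirm that every term involving $\Im\bR$, $\Im S$, $\Riem^h$ or the commutator of $\D_z,\D_{\bar z}$ lands in some $L^q$, $1<q<2$, by combining the enhanced regularity of Lemma~\ref{Lm:RS} with the $L^2$ bound on $\D\bn$ (derived from $|\mathbb I|\in L^2$) and the $W^{1,\infty}$ bound on $\vec\Phi$; and that the signs and $\star_h,\bul$ operations match the target equations after the four complex real/imaginary decompositions. Once this bookkeeping is carried out, the two desired identities \eqref{eq:SystPRS} are established, with $\tilde F$ a $2$-vector-valued $L^q$ source and $\tilde G$ a scalar $L^q$ source for every $1<q<2$.
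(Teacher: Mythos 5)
Your proposal is correct and follows essentially the same route as the paper's proof: apply $\D_{\bar z}$ (respectively $\p_{\bar z}$) to the system of Lemma~\ref{Lm:eqPRS}, use the commutation identity $4\D_{\bar z}\D_z=\triangle - i[\D_{\p_{x^1}\bP},\D_{\p_{x^2}\bP}]$ together with the parallelism of $\star_h$ and $\bul$, and split the Leibniz expansion into the Jacobian terms in $\Re\bR$, $\Re S$ (kept as the critical right-hand side) and the terms in $\Im\bR$, $\Im S$, $\Riem^h$ (absorbed into $\tilde F,\tilde G\in L^q$, $1<q<2$, by Lemma~\ref{Lm:RS} and H\"older). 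The one minor point to spell out is that $[\D_{\p_{x^1}\bP},\D_{\p_{x^2}\bP}]$ acting on the $2$-vector $\bR$ is the curvature operator extended by derivation to $\wedge^2 T_{\vec\Phi}M$ (as in the paper's identity~\eqref{eq:Bracket}), not the scalar action of $\Riem^h$; this does not affect the argument since these terms are absorbed into $\tilde F$ in any case.
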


\begin{proof}
Let us start by proving the first equation.  Applying the $\D_{\bar{z}}$ operator to the first equation of \eqref{eq:SystRS} we have 
$$ \D_{\bar{z}}\D _z \bR= (-1)^{m+1} i \star_h \D_{\bar{z}}\left[\bn \bul \D _z \bR \right] + i \D_{\bar{z}}\left[ \p_z S \star_h \bn\right],$$
whose real part is
\be\label{eq:DbzDz}
\Re(\D_{\bar{z}}\D _z \bR )= (-1)^{m}  \star_h \Im\left(\D_{\bar{z}}\left[\bn \bul \D _z \bR \right]\right)-\Im \left( \D_{\bar{z}}\left[ \p_z S \star_h \bn\right] \right).
\ee
Observe that 
\be\label{eq:triangle}
\D_{\bar{z}}\D _z \bR := \frac{1}{4} \left[(\D_{\p_{x_1}\bP} + i \D_{\p_{x_2}\bP})(\D_{\p_{x_1}\bP}  - i  \D_{\p_{x_2}\bP})\right] \bR = \frac{1}{4} \triangle \bR - \frac{i}{4} \left[\D_{\p_{x_1}\bP}, \D_{\p_{x_2}\bP} \right] \bR,
\ee
where $\left[\D_{\p_{x_1}\bP}, \D_{\p_{x_2}\bP} \right] :=(\D_{\p_{x_1}\bP} \D_{\p_{x_2}\bP} - \D_{\p_{x_2}\bP} \D_{\p_{x_1}\bP}) $ is the usual bracket notation. An easy computation in local coordinates shows that all the derivatives appearing in $[\D_{\p_{x_1}\bP}, \D_{\p_{x_2}\bP}] (\bR)$ cancel out together with all the mixed terms, giving
\be\label{eq:Bracket}
\begin{array}{l}
\ds\left[\D_{\p_{x_1}\bP}, \D_{\p_{x_2}\bP} \right] \left(\sum_{i,j=1}^m R^{ij} \bE_i \wedge \bE_j \right)\\[5mm]
\ds\quad\quad= \sum_{i,j=1}^m R^{ij} \left[\left( \Riem(\p_{x^1}\bP, \p_{x^2}\bP) \bE_i \right) \wedge \bE_j +  \bE_i \wedge  \left( \Riem(\p_{x^1}\bP, \p_{x^2}\bP)  \bE_j \right) \right]\quad,
\end{array}
\ee 
where as before $\{\bE_i\}_{i=1,\ldots,m}$ is an orthormal frame of $T_{\bP(x)}M$.
Putting together \eqref{eq:DbzDz} and \eqref{eq:triangle} we obtain
\be\label{eq:LapR}
\triangle (\Re \bR)=4 (-1)^{m}  \star_h \Im\left(\D_{\bar{z}}\left[\bn \bul \D _z \bR \right]\right)- 4\Im \left( \D_{\bar{z}}\left[ \p_z S \star_h \bn\right] \right) -\left[\D_{\p_{x_1}\bP}, \D_{\p_{x_2}\bP} \right] (\Im \bR)
\ee
Using that the $\bul$ contraction commutes with the covariant derivative (this fact follows by the definitions and by the identity $\D h=0$, i.e. the connection is metric) we compute
\be\label{eq:4.2}
\begin{array}{l}
\Im\left[\D_{\bar{z}}(\bn \bul \D_z \bR)\right]= \Im\left [\bn \bul (\D_{\bar{z}} \D_z \bR)+ \D_{\bar{z}} \bn \bul \D_z \bR\right]\\[5mm]
\ds\quad=\frac{1}{4} \bn \bul [\triangle  (\Im \bR)- [\D_{\p_{x_1}\bP}, \D_{\p_{x_2}\bP}] (\Re \bR) ]+\Im \left[ \D_{\bar{z}} \bn \bul \D_z \bR \right]\quad.
\end{array}
\ee
A short computation gives
\begin{eqnarray}
\Im\left[ \D_{\bar{z}} \bn \bul \D_z \bR \right]&=&\frac{1}{4} \left[\D_{\p_{x^1}\bP} \bn \bul \D_{\p_{x^1}\bP} (\Im \bR)+ \D_{\p_{x^2}\bP} \bn \bul \D_{\p_{x^2}\bP} (\Im \bR)   \right] \nonumber \\
       && + \frac{1}{4} \left[\D_{\p_{x^2}\bP} \bn \bul \D_{\p_{x^1}\bP} (\Re \bR)- \D_{\p_{x^1}\bP} \bn \bul \D_{\p_{x^2}\bP} (\Re \bR)   \right]\quad.\label{eq:5.1}
\end{eqnarray}
Analogously, using that $\star_h$ commutes with the covariant derivative, another short computation gives
\begin{eqnarray}
\Im\left[ \D_{\bar{z}}( \p_z S \star_h \bn )\right]&=&\frac{1}{4} \triangle(\Im S) \star_h \bn+ \frac{1}{4} \left[   \p_{x^1}(\Im S) \; \D_{\p_{x^1}\bP}(\star_h  \bn)+\p_{x^2}(\Im S) \; \D_{\p_{x^2}\bP}(\star_h  \bn)\right]  \nonumber \\
       && + \frac{1}{4} \left[   \p_{x^1}(\Re S) \; \D_{\p_{x^2}\bP}(\star_h  \bn)-\p_{x^2}(\Re S) \; \D_{\p_{x^1}\bP}(\star_h  \bn)\right]. \label{eq:6.1}
\end{eqnarray}
Combining \eqref{eq:LapR},\eqref{eq:Bracket}, \eqref{eq:4.2}, \eqref{eq:5.1} and \eqref{eq:6.1} we conclude that
\begin{eqnarray}
\triangle (\Re \bR)&=& (-1)^{m}  \star_h \left[\D_{\p_{x^2}\bP} \bn \bul \D_{\p_{x^1}\bP} (\Re \bR)- \D_{\p_{x^1}\bP} \bn \bul \D_{\p_{x^2}\bP} (\Re \bR)   \right] \nonumber \\
&& + \left[   \p_{x^2}(\Re S) \; \D_{\p_{x^1}\bP}(\star_h  \bn)- \p_{x^1}(\Re S) \; \D_{\p_{x^2}\bP}(\star_h  \bn)\right] + \tilde{F} \label{eq:LapReR}
\end{eqnarray}
where $\tilde{F} \in L^q(D^2)$ for every $1<q<2$, and we used that $\D \bn \in L^{2}(D^2), \bR \in W^{1,(2,\infty)}(D^2), S\in  W^{1,(2,\infty)}(D^2), \Im \bR \in W^{2,q}(D^2), \Im S \in W^{2,q}(D^2)$ for every $1<q<2$. This is exactly the first equation of \eqref{eq:SystPRS}.
\\

The second equation of \eqref{eq:SystPRS} is obtained in analogous way: applying the $\p_{\bar {z}}$ operator  to the second equation of \eqref{eq:SystRS} we obtain
$$\triangle S= 4 \p_{\bar{z}} \p_{z} S=-4 i \p_{\bar{z}} <D_z\bR, \star_h \bn>.$$
A short computation gives

\begin{eqnarray}
\triangle (\Re S)&=& 4 \Im\left[\p_{\bar{z}} <D_z \bR, \star_h \bn> \right] \nonumber \\
                 &=& \p_{x_1} <D_{\p_{x^1}\bP} \Im \bR, \star_h \bn> + \p_{x^2} <D_{\p_{x^2}\bP} \Im \bR, \star_h \bn> \nonumber\\
                 &&- <[D_{\p_{x^1}\bP}, D_{\p_{x^2}\bP}] \Re \bR, \star_h \bn> \nonumber \\
                 &&+ <D_{\p_{x^1}\bP} \Re \bR, D_{\p_{x^2}\bP} (\star_h \bn)> -  <D_{\p_{x^2}\bP} \Re \bR, D_{\p_{x^1}\bP} (\star_h \bn)>\quad .
\end{eqnarray}
Recalling that $\Im \bR \in W^{2,q}(D^2)$ and $\bR \in W^{1,(2,\infty)}(D^2)$, and using \eqref{eq:Bracket},  we conclude that
\be \label{eq:LapReS}
\triangle (\Re S)= <D_{\p_{x^1}\bP} (\Re \bR), D_{\p_{x^2}\bP} (\star_h \bn)>-  <D_{\p_{x^2}\bP} (\Re \bR), D_{\p_{x^1}\bP} (\star_h \bn)>+ \tilde{G}
\ee
where $\tilde{G} \in L^q(D^2)$ for every $1<q<2$. This is exactly the second equation of \eqref{eq:SystPRS}.
\end{proof}

Now we are in position to prove the $C^\infty$ regularity of constraint-conformal Willmore immersions.

\begin{Th}\label{Th:Regularity1}
Let $\vec{\Phi}$ be a $W^{1,\infty}$ conformal immersion of the disc $\D^2$ taking values into a sufficiently small open subset of the Riemannian manifold $(M,h)$, with second fundamental form in $L^2(D^2)$ and conformal factor $\lambda\in L^\infty(D^2)$. If $\vec{\Phi}$ is a  constrained-conformal Willmore immersion then $\bP$ is  $C^\infty$.\hfill $\Box$
\end{Th}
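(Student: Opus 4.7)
The plan is to bootstrap the regularity of $\vec{\Phi}$ starting from the Wente-type coupled elliptic system \eqref{eq:SystPRS} satisfied by $(\Re\bR,\Re S)$. By Lemma~\ref{Lm:RS} one already has $\nabla\Re\bR,\nabla\Re S\in L^{2,\infty}(D^2)$, while $\D\bn\in L^2(D^2)$ by the hypothesis on the second fundamental form, and the remainders $\tilde F,\tilde G$ lie in $L^q(D^2)$ for every $1<q<2$. The decisive observation is that each nonlinear term on the right hand side of \eqref{eq:SystPRS}, namely $\D\bn\bul\D^\perp(\Re\bR)$, $\D\bn\,\nabla^\perp(\Re S)$ and $\langle\D(\star_h\bn),\D^\perp(\Re\bR)\rangle$, is of Jacobian/compensated compactness type, being (in local coordinates) a sum of $2\times 2$ determinants of matrices whose rows are gradients.

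First, I would apply the improved Wente inequality in Lorentz spaces (M\"uller--Tartar--Bethuel--H\'elein), which uses the duality $L^{2,1}$--$L^{2,\infty}$, to the scalar Laplace equations obtained from \eqref{eq:SystPRS} in a trivialisation of $\wedge^2 TM$ along $\vec{\Phi}$. Since one factor in each Jacobian lies in $W^{1,2}$ and the other in $W^{1,(2,\infty)}$, the bilinear terms on the right hand side inherit a gain over the naive $L^1$ bound, and combined with the subcritical remainders $\tilde F,\tilde G\in L^q_{loc}$ this promotes $\Re\bR,\Re S$ to $L^\infty_{loc}\cap W^{1,2}_{loc}$. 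In particular $\nabla\Re\bR,\nabla\Re S$ are genuinely in $L^2_{loc}$.

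With both factors of each Jacobian in $L^2_{loc}$, the Coifman--Lions--Meyer--Semmes theorem now places the right hand side in the local Hardy space $\mathcal{H}^1_{loc}$, so that $\nabla^2(\Re\bR,\Re S)\in\mathcal{H}^1_{loc}$ and consequently $\nabla(\Re\bR,\Re S)\in L^p_{loc}$ for every $p<\infty$. Unwrapping the construction of the potentials \eqref{eq:DzL=Y}--\eqref{eq:DzR}, this gain propagates first to $\bL\in L^p_{loc}$, then through the definition \eqref{eq:defY} of $\vec{Y}$ to $\pi_{\bn}(\D_z\bH)\in L^p_{loc}$, and finally via the conformal identity $\triangle\vec{\Phi}=2e^{2\lambda}\bH$ to $\vec{\Phi}\in W^{2,p}_{loc}$ for every $p<\infty$, hence $\vec{\Phi}\in C^{1,\alpha}_{loc}$ and $\lambda\in C^{0,\alpha}_{loc}$.

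At this subcritical H\"older threshold the Willmore equation \eqref{eq:W'confHC} (with the holomorphic Lagrange multiplier $f(z)$, which is automatically smooth) becomes a classical quasilinear fourth order elliptic system for $\vec{\Phi}$ whose coefficients depend smoothly on $h$ and H\"older-continuously on $\vec{\Phi}$ and its first derivatives. A standard Schauder/$W^{k,p}$ bootstrap, exactly as in the Euclidean case of \cite{Riv1}, then produces $\vec{\Phi}\in C^\infty(D^2)$. The main obstacle is the first step above: breaking the critical Marcinkiewicz threshold for $\nabla\bR,\nabla S$, since the initial control is exactly at the scale-invariant exponent and no elementary elliptic estimate suffices; one is forced to exploit the compensated structure together with the Lorentz duality $L^{2,1}$--$L^{2,\infty}$, and the lower order curvature contributions in $\tilde F,\tilde G$ must be shown not to destroy this gain.
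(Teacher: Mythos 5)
Your first step (the Bethuel refinement of the Wente inequality using the $L^{2,1}$--$L^{2,\infty}$ Lorentz duality to push $\nabla(\Re\bR,\Re S)$ from $L^{2,\infty}_{loc}$ to $L^2_{loc}$) is correct and is exactly what the paper does in Step 1 of its proof, via the decomposition $A^i=\varphi^i+V^i+W^i$ and Bethuel's estimate for the Jacobian part $\varphi^i$.

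The gap is in your second paragraph. Once both factors of each Jacobian are in $W^{1,2}$, Coifman--Lions--Meyer--Semmes does indeed place the right hand side of \eqref{eq:SystPRS} in the local Hardy space $\mathcal{H}^1_{loc}$, but this does \emph{not} imply $\nabla(\Re\bR,\Re S)\in L^p_{loc}$ for every $p<\infty$. From $\triangle u\in\mathcal{H}^1$ one only obtains $\nabla^2 u\in\mathcal{H}^1\subset L^1$, and Sobolev embedding (even with the Hardy improvement) upgrades $\nabla u$ to $L^{2,1}$, which is still at the scale-invariant threshold $q=2$; it does not break through to $q>2$. Your conclusion would require $\triangle u\in L^q$ for some $q>1$, which is a strictly stronger statement than $\triangle u\in\mathcal{H}^1$. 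This is precisely the well-known obstruction in the regularity theory for Willmore and harmonic-map type equations: the Hardy-space gain stays critical and does not self-improve.

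What the paper does instead, and what your argument is missing, is a Morrey decay argument (Step 2 of its proof). One redoes the decomposition $A^i=\varphi^i+V^i+W^i$ on small balls $B_\rho(x_0)$ and exploits three pieces of smallness: (a) since $\nabla B\in L^2(D^2)$, absolute continuity gives $\|\nabla B\|_{L^2(B_\rho)}<\epsilon$ for $\rho$ small, so the Wente estimate yields $\|\nabla\varphi\|_{L^2(B_\rho)}\le C\epsilon\|\nabla A\|_{L^2(B_\rho)}$; (b) the subcritical term $V$ obeys $\|\nabla V\|_{L^2(B_\rho)}\le C\rho^\alpha$ by the $W^{2,q}$ estimates and scaling; (c) the harmonic part $W$ satisfies the standard decay $\int_{B_{\delta\rho}}|\nabla W|^2\le C\delta^2\int_{B_\rho}|\nabla W|^2$. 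Iterating these gives a power decay $\int_{B_\rho}|\nabla A|^2\le C\rho^\alpha$ and consequently $\int_{B_\rho}|\triangle A|\le C\rho^\beta$, and only then does one invoke Adams' theorem on Riesz potentials in Morrey spaces to conclude $\nabla A\in L^p_{loc}$ for \emph{some} $p>2$ (not all $p$). The rest of the bootstrap is along the lines you sketch, except that the paper recovers $\bH\in L^p_{loc}$ directly from the explicit identity \eqref{eq:HLploc} expressing $e^{2\lambda}\bH$ in terms of $D_z\bR$, $\p_z S$, and $\Im\bL$, rather than by unwinding $\pi_{\bn}(\D_z\bH)$ through $\vec{Y}$. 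You should replace your second paragraph by this Morrey-decay argument; without it the proof does not close.
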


\begin{proof}
Let us call $\bA:=(\Re \bR, \Re S)=(\Re R^{ij}, \Re S)$ the vector of the components (in local coordinates in the small neighboorod $V\subset M$) of the real parts of the  potentials $\bR$ and $S$. Using coordinates also in the domain $D^2$, one easily checks that the system \eqref{eq:SystPRS} has the form

\be\label{eq:SystAB}
\triangle A^i= \sum_{k} \left[\p_{x^1}B^i_k\; \p_{x^2} A^k\;-\;\p_{x^2}B^i_k \;\p_{x^1} A^k \right]+ F^i,
\ee 
where $F^i \in L^{q}(D^2)$ for every $1<q<2$, $\nabla A^i \in L^{2,\infty}(D^2)$, $\nabla B^i_k \in L^2(D^2)$.
\\

\emph{Step 1}: $\nabla A^i \in L^2_{loc} (D^2)$.  Let us write $A^i$ as 
\be\label{eq:Ai}
A^i=\varphi^i+V^i+W^i \quad \text{on } D^2,
\ee 
where $\varphi^i, V^i, W^i$ solve the following problems
\be \label{eq:varphi}
\lf\{
\begin{array}{l}
\triangle \varphi^i  = \sum_{k} \left[\p_{x^1}B^i_k\; \p_{x^2} A^k\;-\;\p_{x^2}B^i_k \;\p_{x^1} A^k \right] \quad \text{on } D^2   \\[5mm]
\varphi^i = 0  \quad \text{on } \p D^2 \quad;
\end{array}
\rg.
\ee

\be \label{eq:V}
\lf\{
\begin{array}{l}
\triangle V^i  = F^i \quad \text{on } D^2   \\[5mm]
V^i = 0  \quad \text{on } \p D^2 \quad;
\end{array}
\rg.
\ee

\be \label{eq:W}
\lf\{
\begin{array}{l}
\triangle W^i  = 0 \quad \text{on } D^2   \\[5mm]
W^i = A^i  \quad \text{on } \p D^2 \quad.
\end{array}
\rg.
\ee

Since the right hand side of \eqref{eq:varphi} is sum  of $L^{2,\infty}-L^2-$Jacobians, by a refinement of the Wente inequality obtained by Bethuel \cite{Bet} as a consequence of a result by  Coifman Lions Meyer and Semmes, we have $\nabla \varphi^i \in L^2(D^2)$.
\\On the other hand, since $F^i \in L^{q}(D^2)$, for every $1<q<2$, it follows that $V^i \in W^{2,q}(D^2)$, which implies by Sobolev embedding that $\nabla V^i \in L^2(D^2)$.
\\Finally, $W^i$ is an harmonic $W^{1,{2,\infty}}(D^2)$ function, therefore the gradient $\nabla W^i \in L^2_{loc} (D^2)$.

We conclude that $\nabla A^i=\nabla \varphi^i+\nabla V^i+\nabla W^i \in L^2_{loc}(D^2)$.
\\

\emph{Step 2}: $\nabla A^i \in L^p_{loc} (D^2)$ for some $p>2$.
\\We first claim that there exists $\alpha >0$ such that
\be \label{eq:decayNA}
\sup_{x_0\in B_{\frac{1}{2}}(0), \rho<\frac{1}{4}} \quad \frac{1}{\rho^{\alpha}} \int_{B_\rho(x_0)} |\nabla A|^2 < \infty\quad.
\ee 
Since $\nabla B \in L^2(D^2)$, by absolute continuity of the integral, for every $\epsilon>0$ there exists a $\rho_0>0$ such that
\be\label{eq:Beps}
\sup_{x_0\in B_{\frac{1}{2}}(0)} \quad \int_{B_{\rho_0}(x_0)} |\nabla B|^2 < \epsilon^2\quad.
\ee
Consider $\rho<\rho_0$ ($\epsilon>0$ will be chosen later depending on universal constants) and $x_0 \in B_{\frac{1}{2}(0)}$.
Analogously to Step 1 let us write 
\be\label{eq:Ai'}
A^i=\varphi^i+V^i+W^i \quad \text{on } B_{\rho}(x_0)\quad,
\ee 
where $\varphi^i, V^i, W^i$ solve the following problems
\be \label{eq:varphi'}
\lf\{
\begin{array}{l}
\triangle \varphi^i  = \sum_{k} \left[\p_{x^1}B^i_k\; \p_{x^2} A^k\;-\;\p_{x^2}B^i_k \;\p_{x^1} A^k \right] \quad \text{on }  B_{\rho}(x_0)   \\[5mm]
\varphi^i = 0  \quad \text{on } \p  B_{\rho}(x_0) \quad;
\end{array}
\rg.
\ee

\be \label{eq:V'}
\lf\{
\begin{array}{l}
\triangle V^i  = F^i \quad \text{on }  B_{\rho}(x_0)   \\[5mm]
V^i = 0  \quad \text{on } \p  B_{\rho}(x_0) \quad;
\end{array}
\rg.
\ee

\be \label{eq:W'}
\lf\{
\begin{array}{l}
\triangle W^i  = 0 \quad \text{on } B_{\rho}(x_0)   \\[5mm]
W^i = A^i  \quad \text{on } \p  B_{\rho}(x_0) \quad.
\end{array}
\rg.
\ee 
Notice that $\varphi^i, V^i, W^i$ are different from the ones in Step 1 since they solve different problems, in any case for convenience of notation we call them in the same way. 

Let us start analyzing $\varphi^i$ solution to \eqref{eq:varphi'}. Observe that the right hand side of the equation is a sum of jacobians which,  by Step 1, now are in $L^{2}_{loc}(D^2)$. By Wente estimate \cite{We} (see also \cite{RiCours} Theorem III.1 ) we have
\be \label{eq:decayp}
\|\nabla \varphi^i \|_{L^2(B_{\rho}(x_0))}\leq C \|\nabla B\|_{L^2(B_{\rho}(x_0))} \|\nabla A\|_{L^2(B_{\rho}(x_0))}\leq C \epsilon \|\nabla A\|_{L^2(B_{\rho}(x_0))}\quad,
\ee
where, in the last inequality, we used \eqref{eq:Beps}.

Now we pass to consider \eqref{eq:V'}. Call
\be\label{Vtilde}
\tilde{V}^i(x):=V^i(\rho x+ x_0) \qquad \tilde{F}^i(x):=\rho^2 F^i(\rho x + x_0)
\ee
and observe that, since $V^i$ satisfyes \eqref{eq:V'}, then $\tilde{V}^i$ solves
\be \label{eq:Vtilde}
\lf\{
\begin{array}{l}
\triangle \tilde{V}^i  = \tilde{F}^i \quad \text{on }  D^2   \\[5mm]
\tilde{V}^i = 0  \quad \text{on } \p  D^2 \quad,
\end{array}
\rg.
\ee
which implies, by $W^{2,q}$ estimates on $\tilde{V}^i$ and Sobolev embedding, that
\be\label{eq:EstVtilde}
\left(\int_{D^2} |\nabla \tilde{V}^i|^2\right) ^{\frac{1}{2}} \leq C \left( \int_{D^2} |\tilde{F} ^i|^q \right)^{\frac{1}{q}}.
\ee
Now, using that the left hand side is invariant under rescaling while the right hand side has a scaling factor given by the area and the definition of $\tilde F^i$, we obtain
\be\label{eq:decayV}
\left(\int_{B_{\rho}(x_0)} |\nabla V^i|^2\right) ^{\frac{1}{2}} \leq C \rho^{2-\frac{2}{q}} \left( \int_{B_{\rho(x_0)}} |F^i|^q \right)^{\frac{1}{q}} \leq C \rho^\alpha \quad \text {for some } \alpha>0\quad,
\ee
where in the last inequality we used that $F^i \in L^q(D^2)$ and $1<q<2$.

At last we study the decay of the $L^2$ norm of the gradient of the harmonic function $W^i$ solving \eqref{eq:W'}. Notice that, since $W^i$ is harmonic, then $\triangle |\nabla W^i|^2= 2 |\nabla^2 W^i|^2\geq 0$. An elementary calculation shows that for any non negative subharmonic function $f$ in $\R^n$ one has  $d/dr (r^{-n} \int_{B_r} f)\geq 0$ (see also \cite{RiCours} Lemma III.1). It follows that
\be\label{eq:decayW}
\int_{B_{\delta \rho}(x_0)}|\nabla W^i|^2 \leq \delta^2 \int_{B_\rho(x_0)}|\nabla W^i|^2\leq C \delta^2 \int_{B_\rho(x_0)}|\nabla A|^2\quad,
\ee
where, in the last inequality, we used that $W^i$ solves \eqref{eq:W'}.

Collecting \eqref{eq:decayp}, \eqref{eq:decayV} and \eqref{eq:decayW} gives
$$\int_{B_{\delta \rho}(x_0)}|\nabla A|^2 \leq C \delta^2 \int_{B_\rho(x_0)}|\nabla A|^2+  C \epsilon^2  \int_{B_\rho(x_0)}|\nabla A|^2 + C \rho^\alpha $$
where the stricly positive constants $\alpha$ and $C$ are independent of $\epsilon$, $\delta$, $x_0$ and $\rho$. Now, in the beginning of Step 2, choose $\epsilon$ and $\rho_0$ such that $C \epsilon^2<\frac{1}{4}$, moreover choose $\delta$ in \eqref{eq:decayW} such that $C \delta^2< \frac{1}{4}$; it follows that for every $x_0 \in B_{\frac{1}{2}}(0)$ and every $\rho<\rho_0$ we have
$$\int_{B_{\delta \rho}(x_0)} |\nabla A|^2 < \frac{1}{2} \int_{B_{ \rho}(x_0)} |\nabla A|^2 +C \rho^\alpha \quad \text{for some } \alpha>0. $$
It is a standard fact which follows by iterating the inequality (see for instance Lemma 5.3 in \cite{KMS}) that there exist $C,\alpha>0$ such that for every $x_0 \in B_{\frac{1}{2}}(0)$ and every $\rho<\rho_0$
\be\label{eq:decayA}
\int_{B_{\rho}(x_0)} |\nabla A|^2 \leq C \rho^\alpha\quad,
\ee
which implies our initial claim \eqref{eq:decayNA}.

Now we easily get that there exists $\beta>0$ such that
\be\label{eq:decayLapA}
\sup_{x_0\in B_{\frac{1}{2}}(0), \rho<\frac{1}{4}} \quad \frac{1}{\rho^{\beta}} \int_{B_\rho(x_0)} |\triangle A| < \infty\quad.
\ee
Indeed, by \eqref{eq:decayNA} and \eqref{eq:SystAB}, for every $x_0 \in B_{\frac{1}{2}}(0)$ and $\rho<\frac 1 4$ we obtain
$$
\begin{array}{l}
\ds\int_{B_\rho(x_0)}|\triangle A|\leq \int_{B_\rho(x_0)} |\nabla B| |\nabla A| + \int_{B_{\rho}(x_0)} |F| \\[5mm]
\ds\quad\quad\leq \|\nabla B\|_{L^2(D^2)}\left[ \int_{B_\rho(x_0)}|\nabla A|^2 \right]^{\frac{1}{2}}+ |B_{\rho}(x_0)|^{\frac{1}{q'}} \|F\|_{L^q(D^2)} \leq C \rho^\beta\quad.
\end{array}
$$
By a classical result of Adams \cite{Ad}, \eqref{eq:decayLapA} implies  that $\nabla A \in L^p_{loc} (B_{\frac{1}{2}}(0))$ for some $p>2$. With analogous arguments one gets that  $\nabla A \in L^p_{loc} (D^2)$ for some $p>2$.
\\

\emph{Step 3}: $\bH \in L^p_{loc} (D^2)$ for some $p>2$.
\\From Step 2 we obtain that $\nabla (\Re \bR)$ and $\nabla (\Re S)$ are in $L^{p}_{loc}(D^2)$ for some $p>2$; recalling that, by Lemma \ref{Lm:RS}, $\nabla^2 (\Im \bR)$ and $\nabla^2 (\Im S)$ are in $L^{q}(D^2)$ for every  $1<q<2$ then, by Sobolev embedding,  $\nabla  \bR$ and $\nabla S$ are in $L^{p}_{loc}(D^2)$ for some $p>2$.
\\Using equation \eqref{eq:DzR} and observing that $<\p_z \bP, \p_{\bar{z}} \bP>=\frac{1}{2} e^{2\lambda}$, a simple computation gives
\be \label{eq:DzRpzP}
D_z\bR \llcorner \p_{\bar z}\bP= \frac{e^{2\lambda}}{2} \overline{\bL}- <\overline{\bL},\p_{\bar z} \bP> \p_z\bP-i e^{2\lambda} \bH\quad.
\ee
Using the definition of $\p_z$ and $\p_{\bar{z}}$ we write
\begin{eqnarray}
\Im \left[ <\overline{\bL},\p_{\bar z} \bP> \p_z \bP \right]&=&\frac {1} {4} \Big[ -<\p_{x^1} \bP, \Re \bL> \p_{x^2} \bP- <\p_{x^1} \bP, \Im \bL> \p_{x^1} \bP \nonumber \\
&& \quad + <\p_{x^2} \bP, \Re\bL> \p_{x^1} \bP - <\p_{x^2} \bP, \Im\bL> \p_{x^2} \bP\Big] \quad.  \label{eq:pzPL}
\end{eqnarray}
On the other hand, \eqref{eq:DzS} gives
\begin{eqnarray}
<\p_{x^1} \bP, \Re \bL>&=& 2 \Re(\p_z S)+<\p_{x^2} \bP, \Im \bL> \label{pxPL1} \\
<\p_{x^2} \bP, \Re \bL>&=&-2 \Im(\p_z S)-<\p_{x^1} \bP, \Im \bL> \quad .\label{pxPL2}
\end{eqnarray}
Insering \eqref{pxPL1} and \eqref{pxPL2} in \eqref{eq:pzPL} we obtain after some elementary computations

\be\label{eq:ImdzPL}
\Im\left( <\p_{\bar{z}} \bP, \overline{\bL}> \p_z \bP \right)= \Re\left[\p_z S (i\p_{\bar{z}} \bP) \right]-2 \Re \left[<\p_z\bP, \Im \bL> \p_{\bar{z}}\bP \right].
\ee
Therefore, combining \eqref{eq:DzRpzP} and \eqref{eq:ImdzPL} we get that
\be \label{eq:HLploc}
e^{2 \lambda} \bH=-\Im\left[D_z\bR \llcorner \p_{\bar z}\bP\right]- \frac{e^{2\lambda}}{2} \Im \bL-\Re\left[\p_z S (i\p_{\bar{z}} \bP) \right]+ \Re \left[<\p_z\bP, \Im \bL> \p_{\bar{z}}\bP \right]; 
\ee
since by Step 2  $D_z\bR$ and $D_z S$ are in $L^p_{loc}(D^2)$ for some $p>2$ and by Lemma \ref{Lm:RS} $\nabla (\Im L) \in L^{(2,\infty)}(D^2)$, we  conclude that $\bH \in L^p_{loc}(D^2)$ for some $p>2$.
\\

\emph{Step 4}: smoothness of $\bP$ by a bootstrap argument.
\\Since $\bP$ is a conformal parametrization, then $\triangle \bP= e^{2\lambda} \bH$ and by Step 3 we infer that $\bP \in W^{2,p}_{loc}(D^2)$ for some $p>2$. Now the Willmore equation in divergence form (see \eqref{eq:ConsW} for the free problem and \eqref{eq:ConsWconf} for the conformal-constraint problem) becomes subcritical in $\bH$: written in local coordinates it has the form 
$$\triangle \bH= \tilde H \quad \text{with } \tilde H \in W^{-1,\frac{p}{2}}_{loc}(D^2)  $$
then $\bH \in W^{1,\frac{p}{2}}_{loc}(D^2)$ and by Sobolev embedding $\bH \in L^{\frac{2p}{4-p}}$, notice that ${\frac{2p}{4-p}}>p$ since $p>2$; reinserting this information in the same equation iteratively we get $\bH \in W^{1,p}_{loc}(D^2)$ for every $p < \infty$, therefore $\bP \in W^{3,p}_{loc}(D^2)$ for every $p < \infty$. Inserting this information into the same equation gives that $\bH \in W^{2,p}_{loc} (D^2)$
for every $p < \infty$, therefore $\bP \in  W^{4,p}_{
loc} (D^2)$ for every $p < \infty \ldots$  continuing this bootstrap argument gives
that $\bP \in W^{k,p}_{loc} (D^2)$ for every $k > 0$, $1 < p < \infty$ which implies that $\bP \in C^ {\infty}_{loc}(D^2)$. 
\end{proof}

\section{A priori geometric estimates under curvature conditions}\label{Sec:APrioriGeomEst}
\reset
\subsection{Diameter bound from below on a minimizing sequence}
We start by computing the Willmore functional and the Energy functional on small geodesic 2-spheres in a Riemannian manifold $(M^m,h)$ of arbitrary codimension; the corresponding expansions in codimension 1 were obtained by the first author in \cite{Mon1}, \cite{Mon2}, \cite{KMS}. First we introduce some notation. 

Let $(M^m,h)$ be an $m$-dimensional Riemannian manifold. Fix a point $\bar p$ and a $3$-dimensional subspace ${\frak S}< T_{\bar p} M$ of the tangent space to $M$ at $\bar p$. Denote with $S_{\bar p,\rho}^{\frak S} \subset M$ the geodesic sphere obtained exponentiating the sphere in ${\frak S}$ of center $0$ and radius $\rho$.  An equivalent way to define  is the following: consider normal coordinates $(x^1,\ldots,x^m)$ in $M$ centred at $\bar p$ such that $(\frac{\partial} {\partial x^1}|_0, \frac{\partial} {\partial x^2}|_0, \frac{\partial} {\partial x^3}|_0)$ are an orthonormal basis of ${\frak S}$, then $S_{\bar p,\rho}^{\frak S}:=\{(x^1)^2+(x^2)^2+(x^3)^2=\rho^2\}\cap \{x^4=\ldots=x^{m}=0\}$.
Let us denote
\be \label{eq:defRT}
R_{\bar p}({\frak S}):= \sum_{i\neq j, i,j=1,2,3} \bar K_{\bar p} \left(\frac{\partial} {\partial x^i}|_0, \frac{\partial} {\partial x^j}|_0\right) 
\ee 
where $\bar K_{\bar p} (\frac{\partial} {\partial x^i}|_0, \frac{\partial} {\partial x^j}|_0)$ denotes the sectional curvature of $(M,h)$ computed on the plane spanned by $(\frac{\partial} {\partial x^i}|_0, \frac{\partial} {\partial x^j}|_0))$ contained in $T_{\bar p} M$.  
\begin{Lm}\label{Lm:ExpWE}
We have the following expansions for the Willmore functional,the Energy functional and the area for small spheres $S_{\bar p,\rho}^{\frak S}$ defined above:
\be
W(S_{\bar p,\rho}^{\frak S}):=\int_{S_{\bar p,\rho}^{\frak S}} |H|^2 d\mu_g= 4 \pi-\frac{2 \pi}{3} R_{\bar p}({\frak S}) \rho^2 +o(\rho^2)
\ee

\be
F(S_{\bar p,\rho}^{\frak S}):=\frac{1}{2}\int_{S_{\bar p,\rho}^{\frak S}} |\mathbb I|^2 d\mu_g= 4 \pi-\frac{2 \pi}{3} R_{\bar p}({\frak S}) \rho^2 +o(\rho^2).
\ee

\be
A(S_{\bar p,\rho}^{\frak S})= 4 \pi \rho^2+ o(\rho^2)\quad.
\ee
In particular, if at some point $\bar p \in M$ there exists a  3-dimensional subspace ${\frak S}<T_{\bar p}M$ such that $R_{\bar p}({\frak S})>6$ then $\inf_{\bP \in {\cal F}_{\Sp ^2}} (W+A)(\bP)< 4 \pi$ and  $\inf_{\bP \in {\cal F}_{\Sp ^2}} (F+A)(\bP)< 4 \pi$.
\hfill $\Box$
\end{Lm}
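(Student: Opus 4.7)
Write $\Sigma_\rho:=S_{\bar p,\rho}^{\mathfrak S}$ and work in Riemann normal coordinates $(x^1,\dots,x^m)$ on $M$ centred at $\bar p$, chosen so that $(\partial_{x^1}|_{\bar p},\partial_{x^2}|_{\bar p},\partial_{x^3}|_{\bar p})$ is an orthonormal basis of $\mathfrak{S}$; then $\Sigma_\rho$ is parametrised by $\omega\in S^2\subset\mathfrak{S}$ via $\omega\mapsto\rho\omega$ (with $x^4=\cdots=x^m=0$). Since $g_{ij}(x)=\delta_{ij}+O(|x|^2)$ in normal coordinates, a direct computation shows that the induced metric on $\Sigma_\rho$ pulled back to $S^2$ equals $\rho^2$ times the round metric plus $O(\rho^4)$, which immediately gives $A(\Sigma_\rho)=4\pi\rho^2+O(\rho^4)$.

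The core computation is the expansion of $W$, which I reduce to the codimension one case inside the $3$-dimensional submanifold $N:=\exp_{\bar p}(\mathfrak S)$. Because $N$ is a union of geodesics of $M$ issuing from $\bar p$ in directions of $\mathfrak S$, its second fundamental form $\mathbb I^{N\subset M}$ vanishes at $\bar p$, hence $|\mathbb I^{N\subset M}|=O(\rho)$ on $N\cap B_\rho(\bar p)$. Decompose $\mathbb I^{\Sigma_\rho\subset M}$ orthogonally into a component tangent to $N$ and a component valued in the normal bundle of $N$ in $M$: the first piece is precisely $\mathbb I^{\Sigma_\rho\subset N}$, while the second piece is pointwise controlled by $|\mathbb I^{N\subset M}|=O(\rho)$. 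Consequently $|\vec H_{\Sigma_\rho\subset M}|^2=|\vec H_{\Sigma_\rho\subset N}|^2+O(\rho^2)$ pointwise on $\Sigma_\rho$, and since $\Sigma_\rho$ has area of order $\rho^2$ one obtains
\[
W(\Sigma_\rho)=\int_{\Sigma_\rho}|\vec H_{\Sigma_\rho\subset N}|^2\,d\mu_g+O(\rho^4).
\]
The right-hand integral is a codimension one quantity in the $3$-manifold $N$, so the classical expansion of the Willmore energy of small geodesic spheres proved by the first author in \cite{Mon1}, \cite{Mon2}, \cite{KMS} yields $4\pi-\tfrac{2\pi}{3}\mathrm{Scal}_N(\bar p)\,\rho^2+o(\rho^2)$. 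Since $N$ is totally geodesic at $\bar p$, the sectional curvatures of $2$-planes contained in $\mathfrak S$ agree in $N$ and in $M$ there, whence $\mathrm{Scal}_N(\bar p)=R_{\bar p}(\mathfrak S)$, giving the desired expansion of $W$.

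For $F$, combining the Gauss equation $\tfrac12|\mathbb I|^2=2|\vec H|^2-K_{\Sigma_\rho}+\bar K(T\Sigma_\rho)$ with Gauss--Bonnet on the topological sphere $\Sigma_\rho$ gives $F(\Sigma_\rho)=2W(\Sigma_\rho)-4\pi+\int_{\Sigma_\rho}\bar K(T\Sigma_\rho)\,d\mu_g$. At $\exp_{\bar p}(\rho\omega)$ the tangent plane to $\Sigma_\rho$ is, to leading order, the $2$-plane $\omega^\perp\cap\mathfrak S\subset T_{\bar p}M$, so the last integral equals $\rho^2\int_{S^2\subset\mathfrak S}\bar K_{\bar p}(\omega^\perp\cap\mathfrak S)\,d\omega+o(\rho^2)$. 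This remaining average is evaluated via the $3$-dimensional identity $K(\omega^\perp)=\tfrac12\mathrm{Scal}-\mathrm{Ric}(\omega,\omega)$ (applied to $\mathfrak S$ with the restricted curvature tensor), together with the elementary identity $\tfrac{1}{4\pi}\int_{S^2}\mathrm{Ric}(\omega,\omega)\,d\omega=\mathrm{Scal}/3$, giving $\int_{S^2}\bar K_{\bar p}(\omega^\perp\cap\mathfrak S)\,d\omega=\tfrac{2\pi}{3}R_{\bar p}(\mathfrak S)$. Substituting back yields the claimed expansion of $F$, and the inequalities $\inf(W+A)<4\pi$ and $\inf(F+A)<4\pi$ under the hypothesis $R_{\bar p}(\mathfrak S)>6$ follow at once by choosing $\rho$ small enough. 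The main technical point is isolating the higher-codimension contribution to $|\vec H|^2$, which is where the totally-geodesic-at-$\bar p$ property of $N$ plays the decisive role.
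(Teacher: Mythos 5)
Your argument for $W$ and $A$ is, in essence, the paper's: both reduce to the $3$-dimensional submanifold $N=\exp_{\bar p}(\mathfrak S)$ (called $\tau$ in the paper), exploit the vanishing of $\mathbb I^{N\subset M}$ at $\bar p$ to control the higher-codimension contribution, and then invoke the known $3$-dimensional expansion of $\int|\vec H|^2$ over geodesic spheres from \cite{Mon1}, \cite{Mon2}, \cite{KMS}. Your $O(\rho^4)$ error bound is in fact slightly sharper than the paper's $o(\rho^2)$, but both suffice. Where you genuinely diverge is $F$. The paper simply cites the corresponding $3$-dimensional expansion of $\tfrac12\int|\mathbb I|^2$ from the same references and transfers it to arbitrary codimension by the same decomposition of $\mathbb I$ (so $W$ and $F$ are handled in parallel, each being pinched between the intrinsic $3$-d quantity and that plus the $o(\rho^2)$ correction). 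You instead derive $F$ from $W$: via the Gauss equation and Gauss--Bonnet you get $F=2W-4\pi+\int_{\Sigma_\rho}\bar K(T\Sigma_\rho)$, and then evaluate the last integral to leading order as a spherical mean, using the $3$-dimensional identity $K(\omega^\perp)=\tfrac12\mathrm{Scal}-\mathrm{Ric}(\omega,\omega)$ together with $\tfrac{1}{4\pi}\int_{S^2}\mathrm{Ric}(\omega,\omega)\,d\omega=\tfrac13\mathrm{Scal}$ to obtain $\int_{S^2}\bar K_{\bar p}(\omega^\perp\cap\mathfrak S)\,d\omega=\tfrac{2\pi}{3}R_{\bar p}(\mathfrak S)$. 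This is correct and reproduces the same coefficient; it has the merit of being self-contained (no extra citation for $F$) and of exhibiting the $\tfrac{2\pi}{3}$ as a transparent spherical average of sectional curvatures. The mild cost is that it relies on the $W$ expansion as input and on an additional leading-order identification of $T\Sigma_\rho$ at $\exp_{\bar p}(\rho\omega)$ with $\omega^\perp\cap\mathfrak S$, whereas the paper treats $W$ and $F$ symmetrically through a single decomposition and lets the references carry both. The core idea — the totally-geodesic-at-$\bar p$ reduction — is the same in both.
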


\begin{proof}
Let $r<Inj_{M,h}(\bar p)$ be less than the injectivity radius of $(M,h)$ at $\bar p$, then the exponential map $Exp_{\bar p}:B_r(0)\subset  T_{\bar p}M\to M$ is a diffeomorphism on the image. Call 
$$\tau:=Exp_{\bar p}({\frak S}\cap B_r(0))\quad,$$  
the image under the exponential map of the subspace ${\frak S}$. Observe that $\tau$ is a 3-dimensional submanifold which is geodesic at $\bar p$ (i.e. every geodesic in $\tau$ starting at $\bar p$ is a geodesic of $M$ at $\bar p$) so the second fundamental form ${\mathbb I}_{{\cal S \hookrightarrow M}}$ of $\tau$ as submanifold of $M$ vanishes at $\bar p$ (for the easy proof see for example \cite{DoC} Proposition 2.9 page 132). Endow $\tau$ with the metric induced by the immersion and observe that by the Gauss equations applied to $\tau\hookrightarrow M$ we get that the sectional curvatures of $\tau$ at $\bar p$ coincide with the corresponding sectional curvatures of $M$ at $\bar p$. Therefore the scalar curvature $R^\tau(\bar p)$ of $\tau$ at $\bar p$ coincide with $R_{\bar p}({\frak S})$ (see for example \cite{Chav} page 50 for the definition of scalar curvature via sectional curvature):
\be\label{eq:Scal}
R^\tau(\bar p)=R_{\bar p}({\frak S})\quad.
\ee
Now consider the geodesic sphere $S_{\bar p,\rho} \hookrightarrow \tau$ in the Riemannian manifold $\tau$ and observe that the composition of the immersions $S_{\bar p,\rho} \hookrightarrow \tau \hookrightarrow M$ coincides with $S_{\bar p,\rho}^{\frak S}$; call $\pi_{\bn_{S_\hookrightarrow M}}, \pi_{\bn_{S\hookrightarrow \tau}} $ and $\pi_{\bn_{\tau\hookrightarrow M}}$ the normal projections onto the normal bundles respectively of $S_{\bar p,\rho}$ relative to $M$, of $S_{\bar p,\rho}$ relative to $\tau$ and of $\tau$ relative to $M$ (i.e. for example in the second case we mean the intersection of the normal bundle of $S_{\bar p,\rho}$ as immersed in $M$ with the tangent bundle of $\tau$) then we have the orthogonal decomposition
\be\label{eq:pin}
\pi_{\bn_{S_\hookrightarrow M}}= \pi_{\bn_{S\hookrightarrow \tau}} + \pi_{\bn_{\tau\hookrightarrow M}}\quad.
\ee
By definition of second fundamental form we get for all $X,Y$ tangent vectors to $S_{\bar p,\rho}$
\be\label{eq:Dec2ff}
{\mathbb I}_{S_\hookrightarrow M}(X,Y):=\pi_{\bn_{S_\hookrightarrow M}}(\D_X Y)=\pi_{\bn_{S\hookrightarrow \tau}}(\D_X Y) + \pi_{\bn_{\tau\hookrightarrow M}} (\D_X Y)=:{\mathbb I}_{S_\hookrightarrow \tau}(X,Y)+{\mathbb I}_{\tau_\hookrightarrow M}(X,Y).
\ee
Therefore we obtain
\be\label{eq:est2ff}
|{\mathbb I}_{S_\hookrightarrow \tau} |^2\leq |{\mathbb I}_{S_\hookrightarrow M}|^2\leq |{\mathbb I}_{S_\hookrightarrow \tau} |^2+|{\mathbb I}_{\tau_\hookrightarrow M} |^2\quad,
\ee
and recalled that $\bH_{S_\hookrightarrow M}:= \frac{1}{2} \sum_{i=1}^2 \left[{\mathbb I}_{S_\hookrightarrow M}(\vec{e} _i, \vec{e}_i)\right]$ where $\{\vec{e} _1,\vec{e} _2\} $ is an orthonormal frame of $T_xS_{\bar p,\rho}$,
\be\label{eq:estH}
|\bH_{S_\hookrightarrow \tau} |^2\leq | \bH _{S_\hookrightarrow M}|^2\leq |\bH _{S_\hookrightarrow \tau} |^2+\frac{1}{2}|{\mathbb I}_{\tau_\hookrightarrow M} |^2\quad.
\ee
Since $S_{\bar p,\rho} \hookrightarrow \tau$ is a geodesic sphere in the $3$-dimensional manifold $\tau$,  we can use the expansions of \cite{Mon1}, \cite{Mon2}, \cite{KMS} for geodesic spheres in $3$-manifolds (more precisely see Proposition 3.1 in \cite{Mon1} and Lemma 2.3 in \cite{KMS}) and obtain that as $\rho \to 0$
\begin{eqnarray}
\frac{1}{2}\int_{S_{\bar p,\rho}} |{\mathbb I}_{S_\hookrightarrow \tau} |^2 d\mu_g&=& 4 \pi- \frac{2\pi}{3} R^\tau(\bar p) \rho^2+o(\rho^2) \label{eq:2ffSt}\\ 
\int_{S_{\bar p,\rho}} |\bH_{S_\hookrightarrow \tau}|^2 d\mu_g&=& 4 \pi- \frac{2\pi}{3} R^\tau(\bar p) \rho^2+o(\rho^2). \label{eq:HSt}
\end{eqnarray}
Observe that $\int_{S_{\bar p,\rho}} d\mu_g=O(\rho^2)$ and since ${\mathbb I}_{\tau_\hookrightarrow M}(\bar p)=0$ we have that $|{\mathbb I}_{\tau_\hookrightarrow M} |^2|_{S_{\bar p,\rho}}\to 0$ as $\rho\to 0$. Therefore $\int_{S_{\bar p,\rho}} |{\mathbb I}_{\tau_\hookrightarrow M}|^2 d\mu_g=o(\rho^2)$ and integrating the estimates \eqref{eq:est2ff},\eqref{eq:estH} on $S_{\bar p, \rho}$, using \eqref{eq:2ffSt},\eqref{eq:HSt}, we conclude that
\begin{eqnarray}
\frac{1}{2}\int_{S_{\bar p,\rho}} |{\mathbb I}_{S_\hookrightarrow M} |^2 d\mu_g&=& 4 \pi- \frac{2\pi}{3} R^\tau(\bar p) \rho^2+o(\rho^2) \label{eq:2ffSt}\\ 
\int_{S_{\bar p,\rho}} |\bH_{S_\hookrightarrow M}|^2 d\mu_g&=& 4 \pi- \frac{2\pi}{3} R^\tau(\bar p) \rho^2+o(\rho^2). \label{eq:HSt}
\end{eqnarray}
The expansion of the area is straightforward.
\end{proof}

The following Lemma is a variant for weak branched immersions of a Lemma proved by Simon \cite{SiL}, notice a similar statement is also present in \cite{KMS} in case of smooth immersions. We include it here for completeness.
  
\begin{Lm}\label{lem:MonFor}
Let $\bP\in {\mathcal F}_{\Sp^2}$ be a weak branched immersion with finite total curvature of $\Sp^2$ into the Riemannian manifold $(M^m,h)$. Assume  $W(\bP)+A(\bP) \leq \Lambda$.
Then there exists a constant $C=C(\Lambda,M)$ such that
\be\label{eq:Areadiam}
A(\bP)\leq C \left[\diam_M (\bP(\Sp^2))\right]^2\quad.
\ee
\hfill $\Box$
\end{Lm}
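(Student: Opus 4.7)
\noindent\textbf{Plan for Lemma \ref{lem:MonFor}.} My plan is to reduce the statement to Simon's classical monotonicity formula in $\R^n$, absorbing all ambient-curvature corrections into the final constant via the compactness of $M$. First I invoke Nash's theorem to isometrically embed $(M^m,h) \hookrightarrow \R^n$, so that $\bP$ becomes a weak branched immersion of $\Sp^2$ into $\R^n$ and $V := (\bP)_\ast[\Sp^2]$ is an integer rectifiable $2$-varifold in $\R^n$. Writing $\mathbb{A}_M$ for the (bounded, since $M$ is compact) second fundamental form of the isometric embedding $M \hookrightarrow \R^n$, the classical decomposition of the extrinsic mean curvatures reads $\vec H_{\R^n} = \vec H + \mathrm{tr}_g\,\mathbb{A}_M(T\bP,T\bP)$, and hence
\[
W_{\R^n}(V) \;=\; \int_{\Sp^2} |\vec H_{\R^n}|^2\, dvol_g \;\leq\; 2\,W(\bP) + C(M)\,A(\bP) \;\leq\; C(M)\,\Lambda.
\]

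Next I apply Simon's monotonicity formula to $V$: for every $x_0 \in \R^n$ and $0 < \sigma < \rho$,
\[
\frac{\mu_V(B_\sigma(x_0))}{\sigma^2} \;\leq\; \frac{\mu_V(B_\rho(x_0))}{\rho^2} \;+\; C \int_{B_\rho(x_0)} |\vec H_{\R^n}|^2\, d\mu_V.
\]
Letting $\rho \to \infty$ kills the first right-hand-side term (because $\mu_V(\R^n)=A(\bP)<\infty$) and gives the scale-invariant bound
\[
\mu_V(B_\sigma(x_0)) \;\leq\; C\, W_{\R^n}(V)\, \sigma^2 \;\leq\; C(\Lambda,M)\, \sigma^2, \qquad \forall\, x_0 \in \R^n,\ \sigma > 0.
\]

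To conclude, I choose $x_0 \in \bP(\Sp^2)$ and $\sigma := \diam_{\R^n}(\bP(\Sp^2))$, so that $\bP(\Sp^2) \subset \bar B_\sigma(x_0)$ and the left-hand side above equals $A(\bP)$; since the Euclidean distance in $\R^n$ is dominated by the Riemannian distance on the isometrically embedded $M$, $\diam_{\R^n}(\bP(\Sp^2)) \leq \diam_M(\bP(\Sp^2))$, and the estimate becomes the desired $A(\bP) \leq C(\Lambda,M)\,[\diam_M(\bP(\Sp^2))]^2$. The one nontrivial point is justifying Simon's monotonicity in the weak branched setting: this amounts to checking that $V$ is integer rectifiable with $L^2$ weak mean curvature, which follows from Definition~\ref{df-I.1}, Proposition~\ref{pr-I.1}, and the fact that the branch set $\{a_1,\ldots,a_N\}$ is finite and thus $\mathcal{H}^2$-negligible, so the monotonicity formula for rectifiable varifolds with $L^2$ generalized mean curvature applies verbatim.
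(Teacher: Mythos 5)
Your overall route coincides with the paper's: embed isometrically via Nash, bound the Euclidean Willmore energy by $W(\bP)+C(M)A(\bP)$ (the paper gets this with no factor of $2$ since $\bH_\bP$ is tangent to $M$ while $\mathbb{I}_I$ is normal to $M$, so they are orthogonal; but your version with the Cauchy--Schwarz factor $2$ works just as well), and then run a monotonicity-type argument on the image in $\R^n$ with $\diam_{\R^n}\leq\diam_M$. What you call ``Simon's monotonicity formula'' is, once one unwinds the proof, exactly the tangential divergence theorem applied to $\bX(x)=x-x_0$ followed by Cauchy--Schwarz, which is precisely what the paper does in one step to get $A(I\circ\bP)\leq \diam_{\R^s}\cdot W(I\circ\bP)^{1/2}\cdot A(I\circ\bP)^{1/2}$. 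So the two derivations are the same argument at different levels of packaging.

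The genuine gap in your write-up is the last sentence. You claim that $V=(\bP)_*[\Sp^2]$ has $L^2$ generalized mean curvature ``verbatim'' because the branch set $\{a_1,\dots,a_N\}$ is finite and hence $\mathcal{H}^2$-negligible. That is not sufficient on its own: the first variation $\delta V$ is a distribution, and a finite set, while $\mathcal{H}^2$-negligible, can still carry a singular part of $\delta V$ (a Dirac-type contribution at a branch point). To apply Simon's monotonicity you must actually verify that $\delta V(\bX)=-\int \langle \vec H_{\R^n},\bX\rangle\,d\mu_V$ holds with no boundary-type residue at the $a_i$. This is exactly the point the paper addresses: it excises small discs $B_\ep(b^i)$, invokes the tangential divergence theorem for weak (unbranched) immersions of a surface with boundary from \cite{RivDegen} on the complement $K_\ep$, and then shows that the boundary integrals $\int_{I\circ\bP(\p B_\ep(b^i))}\langle\bX,\vec\nu\rangle\,dl$ tend to zero as $\ep\to0$ because $\bP$ is Lipschitz and $\bX,\vec\nu$ are bounded, so the boundary length is $O(\ep)$. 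Only after that is the first variation formula \eqref{eq:DivThm} established on all of $\Sp^2$, which is the hypothesis you need for monotonicity. Your proof would be complete if you replaced the appeal to ``$\mathcal{H}^2$-negligibility'' with this excision-and-vanishing-boundary-term argument (or an equivalent verification that $\bP_*[\Sp^2]$ has no singular first variation at the branch points).
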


\begin{proof}
By Nash's theorem, there is an isometric embedding $I:M \hookrightarrow \R^s$ for some $s \in \N$.
The second fundamental forms of $\bP$, $I \circ \bP$ and $I$ are related by the formula holding $vol_g$-a.e. on $\Sp^2$
$$
{\mathbb I}_{I \circ \bP}(\cdot,\cdot) = 
dI|_{\bP} \circ  {\mathbb I}_{\bP}(\cdot,\cdot) \oplus ({\mathbb I}_{I} \circ \bP) (d\bP,d\bP)\quad.
$$
Taking the trace and squaring yields for an orthonormal basis $\bbe_i$ of $\bP_*(T\Sp^2)$ that $vol_g$-a.e. on $\Sp^2$
$$
|\bH_{I \circ \bP}|^2 = 
|H_{\bP}|^2 + \Big|\sum_{i = 1}^2 {\frac{1}{2}\mathbb I}_I \circ \bP (\bbe_i,\bbe_i)\Big|^2 
\leq |\bH_{\bP}|^2 + \frac{1}{2} |{\mathbb I}_I|^2 \circ \bP\quad.
$$
Analogously, taking the squared norms, one gets
$$
|{\mathbb I}_{I \circ \bP}|^2 = 
|{\mathbb I}_{\bP}|^2 + \Big|\sum_{i,j = 1}^2 {\mathbb I}_I \circ \bP (\bbe_i,\bbe_j)\Big|^2 
\leq |{\mathbb I}_{\bP}|^2 +  |{\mathbb I}_I|^2 \circ \bP.
$$ 
Integrating we obtain that $\bP$ is a weak branched immersion with finite total curvature and 
\be\label{eq:WIPhi}
W(I \circ \bP) \leq W(\bP) + C A(\bP)\leq C_{\Lambda,M},
\ee
where $C = \frac{1}{2} \max |{\mathbb I}_I|^2$.
\\Let $\{b^1,\ldots,b^N\}$ be the branch points of $\bP$ and for small $\ep>0$ let $K_\ep:=\Sp^2 \setminus \cup_{i=1}^{N} B_{\ep}(b^i)$. Then $\bP|_{K_\ep}$ is a weak immersion \emph{without branch points} of  the surface with smooth boundary $K_\ep$. Recall that for a smooth vector field $\bX$ on $\R^s$, the tangential divergence of $\bX$ on $(I\circ \bP) (\Sp^2)$ is defined by $$div_{I\circ \bP} \bX:= \sum_{i=1}^2 <d \bX \cdot \bbf_i, \bbf_i>,$$
where $\bbf_i$ is an orthormal frame on $(I\circ \bP)_* (T\Sp^2)$. Now, from the first part of the proof of Lemma A.3 of \cite{RivDegen}, the tangential divergence theorem ((A.18) of the mentioned paper) holds for a weak immersion of a surface with boundary in $\R^s$ withouth branch points and
\be\label{eq:DivThmBoundary}
\int_{(I\circ \bP) (K_\ep)} div_{I\circ \bP} \bX \, dvol_g = \int_{\cup_{i=1}^N [I\circ \bP (\p B_{\ep}(b^i))]} <\bX, \vec{\nu}> dl   -2 \int_{(I\circ \bP) (K_\ep)} <\bH_{I \circ \bP}, \bX> \, dvol_g,
\ee
where $\vec{\nu}$ is the unit limiting tangent vector to $(I \circ \bP) (K_\ep)$ on $I\circ \bP) (\p K_\ep)$ orthogonal to it and oriented in the outward direction. Since $\bP$ is Lipschitz  by assumption and since $\bX$ and $\vec{\nu}$ are trivially bounded, it follows that
$$\int_{\cup_{i=1}^N [I\circ \bP (\p B_{\ep}(b^i))]} <\bX, \vec{\nu}> dl \to 0 \quad \text{ as } \ep \to 0;$$
therefore the tangential divergence theorem still holds on a weak branched immersion:
\be\label{eq:DivThm}
\int_{(I\circ \bP) (Sp^2)} div_{I\circ \bP} \bX \, dvol_g =  - 2\int_{(I\circ \bP) (\Sp^2)} <\bH_{I \circ \bP}, \bX> \, dvol_g.
\ee
Now, as in \cite{SiL}, we choose $\bX(\vec{x}):=\vec{x}-\vec{x}_0$ where $\vec{x}_0\in (I\circ \bP) (\Sp^2)$. Then, observing that $div_{I\circ \bP} \bX=2$, by Schwartz inequality we get
$$ A(I\circ \bP)\leq \diam_{\R^s} [(I\circ \bP) (\Sp^2)]\; W(I\circ \bP)^{\frac{1}{2}} \; A(I\circ \bP)^{\frac{1}{2}}.$$
The last inequality, together with \eqref{eq:WIPhi}, the fact that $A(I\circ \bP)=A(\bP)$ and $\diam_{\R^s} [(I\circ \bP) (\Sp^2)]\leq \diam_M (\bP(\Sp^2))$ ensured by the isometry $I$, gives that
$$A(\bP) \leq C_{\Lambda,M} \left[\diam_M (\bP(\Sp^2))\right]^2\quad. $$

\end{proof} 
 
In the following lemma we collect some inequalities linking the geometric quantities  of two close metrics. This will be useful for working locally in normal coordinates (the analogous lemma in codimension one and for smooth immersions appears in \cite{KMS}). 

\begin{Lm} \label{lem:localcomparison}
Let $h_{1,2}$ be Riemannian metrics on a manifold $M^m$, with norms satisfying
$$
(1+\epsilon)^{-1} \|\cdot \|_1 \leq \|\,\cdot \,\|_2 \leq (1+\epsilon) \|\,\cdot \,\|_1
\quad \mbox{ for some } \epsilon \in (0,1].
$$
For any weak branched  immersion with finite total curvature $\bP\in {\mathcal F}_{\Sp^2}$, the following 
inequalities hold almost everywhere on $\Sigma$ for a universal $C < \infty$: 
\begin{itemize}
\item $vol_{g_1} \leq (1+C \epsilon) vol_{g_2}$, where $g_{1,2} = \bP^\ast (h_{1,2})$ and $vol_{g_{1,2}}$ are the associated area forms; 
\item $|{\mathbb I}_1|_{1}^2 \leq \big(1+ C(\epsilon + \delta)\big) |{\mathbb I}_2|_{2}^2 + C \delta^{-1} |\Gamma|^2_{h_1} \circ \bP$        for any $\delta \in (0,1]$, where $\Gamma := D^{h_1}-D^{h_2}$ and $D^{h_i}$ is the covariant derivative with respect to the metric $h_i$.
\item $|H_1|_1^2\leq \big(1+ C(\epsilon + \delta)\big) |H_2|_2^2 + C \delta^{-1} |\Gamma|_{h_1}^2 \circ \bP$ for any $\delta \in (0,1]$ and $\Gamma$ defined above.
\end{itemize}
\hfill$\Box$
\end{Lm}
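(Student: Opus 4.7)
\medskip
\noindent\textbf{Proof proposal for Lemma \ref{lem:localcomparison}.}
The plan is to work almost everywhere at a point $x \in \Sp^2$ where $\bP$ is truly immersed and to reduce everything to pointwise linear algebra on the $2$-plane $\bP_\ast(T_x\Sp^2) \subset T_{\bP(x)}M$, exploiting the crucial (and easy) fact that this tangent plane is the same subspace of $T_{\bP(x)}M$ for both metrics, even though the orthonormal frames and orthogonal complements differ. First, for the volume form, I write $g_i = \bP^\ast h_i$ in local coordinates $(x^1,x^2)$ on $\Sp^2$, so that $(g_i)_{ab} = h_i(\partial_a \bP,\partial_b\bP)$. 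The norm comparison $(1+\epsilon)^{-1}\|\cdot\|_1 \le \|\cdot\|_2 \le (1+\epsilon)\|\cdot\|_1$ forces the symmetric matrices $(g_1)_{ab}$ and $(g_2)_{ab}$ to be pointwise comparable as quadratic forms with constant $(1+\epsilon)^2$, hence their determinants differ by at most $(1+\epsilon)^4$ and their square roots by $(1+\epsilon)^2 \le 1+C\epsilon$, which gives (1).

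\medskip
For the second fundamental form estimate (2), the key point is the following algebraic identity. Pick an $h_1$-orthonormal basis $(\vec e_1, \vec e_2)$ of $\bP_\ast(T_x\Sp^2)$; write
\[
D^{h_1}_{\vec e_i}\vec e_j \;=\; D^{h_2}_{\vec e_i}\vec e_j + \Gamma(\vec e_i,\vec e_j)
\;=\; \pi^{h_2}_T\bigl(D^{h_2}_{\vec e_i}\vec e_j\bigr) + {\mathbb I}_2(\vec e_i,\vec e_j) + \Gamma(\vec e_i,\vec e_j),
\]
where $\pi^{h_2}_T$ is the $h_2$-orthogonal projection onto $\bP_\ast(T_x\Sp^2)$. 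Applying $\pi^{h_1}_{\vec n}$ and using that $\pi^{h_1}_{\vec n}$ annihilates $\bP_\ast(T_x\Sp^2)$ (which is metric-independent), the tangent contribution from $\pi^{h_2}_T$ vanishes, yielding
\[
{\mathbb I}_1(\vec e_i,\vec e_j) \;=\; \pi^{h_1}_{\vec n}\!\bigl({\mathbb I}_2(\vec e_i,\vec e_j)\bigr) \;+\; \pi^{h_1}_{\vec n}\!\bigl(\Gamma(\vec e_i,\vec e_j)\bigr).
\]
Since orthogonal projection does not increase $h_1$-norm and $\|\cdot\|_{h_1} \le (1+\epsilon)\|\cdot\|_{h_2}$, the Young inequality $|a+b|^2\le (1+\delta)|a|^2+(1+\delta^{-1})|b|^2$ delivers
\[
|{\mathbb I}_1(\vec e_i,\vec e_j)|_{h_1}^2 \le (1+\delta)(1+\epsilon)^2\,|{\mathbb I}_2(\vec e_i,\vec e_j)|_{h_2}^2 + (1+\delta^{-1})\,|\Gamma(\vec e_i,\vec e_j)|_{h_1}^2.
\]

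\medskip
It remains to sum over $i,j$ and convert $\sum_{i,j}|{\mathbb I}_2(\vec e_i,\vec e_j)|_{h_2}^2$ into $|{\mathbb I}_2|_2^2$, which by definition uses an $h_2$-orthonormal frame $(\vec f_1,\vec f_2)$ of $\bP_\ast(T_x\Sp^2)$. Writing $\vec e_i = A_i^k \vec f_k$ on this $2$-plane, the hypothesis forces the $2{\times}2$ matrix $A$ to satisfy $\|A - \mathrm{Id}\|\le C\epsilon$, whence
\[
\sum_{i,j}|{\mathbb I}_2(\vec e_i,\vec e_j)|_{h_2}^2 \;\le\; (1+C\epsilon)\,|{\mathbb I}_2|_2^2,
\]
and since $|\Gamma(\vec e_i,\vec e_j)|_{h_1}\le |\Gamma|_{h_1}$ (the $\vec e_i$ are $h_1$-unit), combining the three factors $(1+\delta)(1+\epsilon)^2(1+C\epsilon)\le 1+C(\epsilon+\delta)$ and absorbing the additive constant in $(1+\delta^{-1})\le C\delta^{-1}$ yields (2). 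Estimate (3) is proved in exactly the same way: starting from $\vec H_1 = \tfrac12\sum_i \pi^{h_1}_{\vec n}(D^{h_1}_{\vec e_i}\vec e_i)$ and repeating the decomposition, one obtains $\vec H_1 = \tfrac12\sum_i\pi^{h_1}_{\vec n}({\mathbb I}_2(\vec e_i,\vec e_i)) + \tfrac12\sum_i \pi^{h_1}_{\vec n}(\Gamma(\vec e_i,\vec e_i))$, after which the same Young--and--frame-change argument closes the estimate. The only subtlety worth watching is that the projection $\pi^{h_1}_{\vec n}$ is applied to vectors that are only $h_2$-normal; but as noted this costs at most a $(1+\epsilon)$ factor on the norm, which is harmless after absorbing it into the $(1+C\epsilon)$ prefactor.
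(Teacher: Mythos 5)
Your proof is correct in its overall structure and runs essentially parallel to the paper's, with two notable differences and one localized flaw.

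For the volume comparison you compare $\sqrt{\det g_1}$ and $\sqrt{\det g_2}$ directly after observing that the pulled-back metrics are $(1+\epsilon)^2$-equivalent as quadratic forms; the paper instead expands $|v\wedge w|^2_{g_1}=|v|^2_{g_1}|w|^2_{g_1}-g_1(v,w)^2$, drops the cross term, and evaluates on a $g_2$-orthogonal pair. Both are fine. For the second fundamental form and mean curvature, you and the paper use the same key fact, namely that $\bP_\ast(T_x\Sp^2)$ is metric-independent, so the $h_1$-normal projection annihilates the $h_2$-tangential part of $D^{h_2}\nabla\bP$. You phrase it by applying $\pi^{h_1}_{\bn}$ directly to the decomposition; the paper phrases the same thing as $|\pi_{\bn_1}(X)|_1=|\pi_{\bn_1}(\pi_{\bn_2}(X))|_1\le|\pi_{\bn_2}(X)|_1$ and then substitutes $D^{h_1}=D^{h_2}+\Gamma$ inside $\pi_{\bn_2}$. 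These routes are interchangeable. Both then use Young's inequality, and both need the Hilbert--Schmidt-norm comparison for bilinear maps, which is the one place your justification slips.

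Your claim that the change-of-basis matrix $A$ (relating the $h_1$-orthonormal frame $\vec e_i$ to the $h_2$-orthonormal frame $\vec f_k$ of the same $2$-plane) satisfies $\|A-\mathrm{Id}\|\le C\epsilon$ is false as stated: already for $h_1=h_2$, hence $\epsilon=0$, the two frames can differ by an arbitrary rotation, so $A$ need not be near the identity at all. What the hypothesis actually forces, and what your inequality $\sum_{i,j}|{\mathbb I}_2(\vec e_i,\vec e_j)|^2_{h_2}\le(1+C\epsilon)|{\mathbb I}_2|^2_2$ really requires, is that $A^TA=\big(g_2(\vec e_i,\vec e_j)\big)_{ij}$ is $(1+\epsilon)^2$-close to $\mathrm{Id}$, i.e.\ the singular values of $A$ lie in $[(1+\epsilon)^{-1},1+\epsilon]$. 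That does hold, and it is enough because the quantity $\sum_{i,j}|B(\vec e_i,\vec e_j)|^2$ equals $\langle (A^TA\otimes A^TA)B,B\rangle$ in the $\vec f$-frame, whose operator norm factor is at most $(1+\epsilon)^4$. The paper avoids the issue entirely by picking the basis $v_\alpha$ to simultaneously diagonalize $g_1$ and $g_2$, so that the rescaled $w_\alpha=v_\alpha/\lambda_\alpha$ is a $g_2$-orthonormal basis and the change-of-basis matrix is genuinely diagonal with entries in $[(1+\epsilon)^{-1},1+\epsilon]$. Either fix closes the gap; the rest of your argument, including the estimate for $\vec H$, goes through unchanged.
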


\begin{proof} To compare the 
Jacobians of $\bP$ with respect to $h_{1,2}$, we use $|\,\cdot \,|_{g_1} \leq (1+\varepsilon) |\,\cdot \,|_{g_2}$
and compute for $v,w \in T_p \Sigma$ with $g_2(v,w) = 0$ 
$$
|v \wedge w|_{g_1}^2 = |v|_{g_1}^2 |w|_{g_1}^2 - g_1(v,w)^2 
\leq (1+\epsilon)^4  |v|_{g_2}^2 |w|_{g_2}^2 
= (1+\epsilon)^4  |v \wedge w|_{g_2}^2. 
$$
This proves the first inequality. Next we compare the norms for a bilinear map 
$B:T_p\Sigma \times T_p \Sigma \to T_{\bP(p)} M$ for $p$ not a  branch point. Choose a basis 
$v_\alpha$ of $T_p\Sigma$ such that $g_1(v_\alpha,v_\beta) = \delta_{\alpha \beta}$
and $g_2(v_\alpha,v_\beta) = \lambda_\alpha \delta_{\alpha \beta}$. 
Then
$$
\lambda_\alpha = |v_\alpha|_{g_2} \leq (1+ \epsilon) |v_\alpha|_{g_1} = 1 + \epsilon,
$$
and putting $w_\alpha = v_\alpha/\lambda_\alpha$ we obtain
$$
|B|_1^2 = \sum_{\alpha,\beta = 1}^2 
\lambda_\alpha^2 \lambda_\beta^2 |B(w_\alpha,w_\beta)|_{h_1}^2
\leq (1+C\epsilon) \sum_{\alpha,\beta = 1}^2 |B(w_\alpha,w_\beta)|_{h_2}^2
= (1+C\epsilon)|B|_2^2.
$$
Now denote by $\pi_{\bn_{1,2}}:T_{\bP(p)}M \to (\bP_*(T_p \Sigma))^{\perp_{h_{1,2}}}$ the orthogonal 
projections onto the normal spaces  with respect to $h_{1,2}$. Then  for 
any $\delta \in (0,1]$ and almost every $p \in \Sigma$ we have the following estimate (by approximation with smooth immersions locally away the branch points) 
\begin{eqnarray*}
\big|{\mathbb I}_1\big|_1^2 & = & \big|\pi_{\bn_{1}} (D^{h_1}(\nabla\bP))\big|_1^2\\
& \leq & \big|\pi_{\bn_{2}} (D^{h_1}(\nabla \bP))\big|_1^2\\
& \leq & \big| \pi_{\bn_{2}} \big( D^{h_2}(\nabla \bP) + \Gamma \circ \bP (\nabla \bP,\nabla \bP) \big)\big|_1^2\\
& \leq & (1+\delta) \big|\pi_{\bn_{2}} D^{h_2}(\nabla \bP)\big|_1^2 + C \delta^{-1} |\Gamma|_{h_1}^2 \circ \bP\\
& \leq & (1+\delta)(1+C\varepsilon) |{\mathbb I}_2|_2^2 + C \delta^{-1} |\Gamma|_{h_1}^2 \circ \bP\quad.
\end{eqnarray*}
This proves the second inequality.
The proof of the third inequality is analogous:
\begin{eqnarray*}
\big|H_1\big|_1^2 & = & \frac{1}{2}\big| {\mathbb I}_1(v_1,v_1)+ \mathbb{I}_1(v_2,v_2)\big|_1^2= \frac{1}{2} \big|\pi_{\bn_{1}} (D_{v_1}^{h_1}(\partial_{v_1}\bP)+D_{v_2}^{h_1}(\partial_{v_2}\bP))\big|_1^2\\
& \leq & \frac{1}{2} \big|\pi_{\bn_{2}} (D_{v_1}^{h_1}(\partial_{v_1}\bP)+D_{v_2}^{h_1}(\partial_{v_2}\bP))\big|_1^2\\
& \leq & \frac{1}{2} \big|\pi_{\bn_{2}} \left(D_{v_1}^{h_2}(\partial_{v_1}\bP)+D_{v_2}^{h_2}(\partial_{v_2}\bP)+ \Gamma \circ \bP (\partial_{v_1} \bP,\partial_{v_1} \bP)+ \Gamma \circ \bP (\partial_{v_2} \bP,\partial_{v_2} \bP)\right)\big|_1^2\\
& \leq & \frac{1}{2}(1+\delta)   \big|\pi_{\bn_{2}} (D_{v_1}^{h_2}(\partial_{v_1}\bP)+D_{v_2}^{h_2}(\partial_{v_2}\bP))\big|_1^2 + C \delta^{-1} |\Gamma|_{h_1}^2 \circ \bP\\
& \leq & \frac{1}{2}(1+\delta)(1+C\epsilon)   \big|\pi_{\bn_{2}} (D_{w_1}^{h_2}(\partial_{w_1}\bP)+D_{w_2}^{h_2}(\partial_{w_2}\bP))\big|_1^2 + C \delta^{-1} |\Gamma|_{h_1}^2 \circ \bP\\
& \leq & (1+\delta)(1+C\epsilon) |H_2|_2^2 + C \delta^{-1} |\Gamma|_{h_1}^2 \circ \bP\quad.
\end{eqnarray*}

\end{proof}

Since we are assuming an upper area bound, the lower diameter bound will follow combining  
Lemma \ref{Lm:ExpWE} and the fact below (which generalizes to arbitrary codimension and non smooth immersions Proposition 2.5 in \cite{KMS}, the proof is similar but we include it here for completeness). 
    
\begin{Prop}\label{prop:LBdiamEa}
Let $M^m$ be a compact Riemannian $m$-manifold and consider a sequence $\bP_k \in {\mathcal F}_{\Sp^2}$ such that $\sup_k (W+A)(\bP_k) \leq \Lambda$. If $\diam \bP_k(\Sp^2) \to 0$, then
$$
\lim_{k\to \infty} A(\bP_k)\to 0,\quad  \limsup_k F(\bP_k) \geq 4\pi \quad \text{ and } \quad \limsup_k W(\bP_k) \geq 4\pi \quad  \quad .
$$
\hfill $\Box$
\end{Prop}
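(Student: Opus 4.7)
The first conclusion is immediate from Lemma~\ref{lem:MonFor}: the hypothesis $(W+A)(\bP_k)\leq \Lambda$ gives $A(\bP_k)\leq C_{\Lambda,M}\,[\diam_M \bP_k(\Sp^2)]^2 \to 0$. For the two remaining inequalities the strategy is to freeze the ambient metric in a small normal coordinate chart and reduce to the classical Euclidean Li--Yau inequality.

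Since $M$ is compact and $\diam \bP_k(\Sp^2)\to 0$, for $k$ large I choose $p_k\in \bP_k(\Sp^2)$ and work in normal coordinates centered at $p_k$, in which $\bP_k(\Sp^2)$ lies inside a coordinate ball of Euclidean radius $r_k\le 2\diam \bP_k(\Sp^2)\to 0$. In such a chart the ambient metric satisfies $(1-Cr_k^2)\,h_{\eucl}\leq h\leq (1+Cr_k^2)\,h_{\eucl}$, while the Christoffel symbol difference $\Gamma:=D^{h_{\eucl}}-D^{h}$ is bounded pointwise by $|\Gamma|_{h_{\eucl}}\leq C r_k$ on the chart. Applying Lemma~\ref{lem:localcomparison} with $(h_1,h_2)=(h_{\eucl},h)$, $\ep=Cr_k^2$ and a fixed $\delta\in(0,1]$ I get the pointwise bound
\[
|\bH_{\eucl}|_{\eucl}^2 \leq \big(1+C(r_k^2+\delta)\big)|\bH_h|_h^2 + C\delta^{-1}|\Gamma|^2\circ \bP_k,
\]
together with the analogous inequality for $\tfrac12|{\mathbb I}|^2$. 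Integrating against $vol_{g_k^{\eucl}}\leq (1+Cr_k^2)\,vol_{g_k}$ and using the pointwise bound on $|\Gamma|$ yields
\[
W_{\eucl}(\bP_k)\leq \big(1+C(r_k^2+\delta)\big)W(\bP_k) + C\delta^{-1}r_k^2 A(\bP_k),
\]
and the same inequality with $W$ replaced by $F$; by the already-established area decay the error term is $O(\delta^{-1}r_k^4)\to 0$.

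Read through the chart, $\bP_k$ is a weak branched immersion of $\Sp^2$ into $\R^m$ with finite total curvature; its image is a $2$-rectifiable integer varifold of square-integrable generalised mean curvature, so Simon's monotonicity formula (applied in Euclidean target exactly as in Lemma~\ref{lem:MonFor}) gives the Li--Yau bound $W_{\eucl}(\bP_k)\geq 4\pi$. For $F$ one combines the Euclidean Gauss equation $\tfrac12|{\mathbb I}|^2 = 2|\bH|^2 - K_{\bP_k}$ with the branched Gauss--Bonnet identity $\int_{\Sp^2}K_{\bP_k}\,vol_{g_k} = 4\pi - 2\pi\sum_i(m_i-1)\leq 4\pi$ (each branch point of multiplicity $m_i\geq 2$ lowering the integral by $2\pi(m_i-1)$), obtaining $F_{\eucl}(\bP_k) = 2W_{\eucl}(\bP_k)-\int K_{\bP_k}\geq 4\pi$. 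Sending first $k\to\infty$ and then $\delta\to 0^+$ in the previous display gives $\liminf_k W(\bP_k)\geq 4\pi$ and $\liminf_k F(\bP_k)\geq 4\pi$, hence the required $\limsup$ bounds.

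The main delicate point is the legitimacy of Li--Yau and of the branched Gauss--Bonnet formula for elements of ${\mathcal F}_{\Sp^2}$ at the finitely many branch points $\{a_1,\ldots,a_N\}$: one must verify that the image current $(\bP_k)_{\ast}[\Sp^2]$ is genuinely an integer-multiplicity rectifiable $2$-varifold with generalised mean curvature in $L^2$, and that the defect at each branch point contributes exactly $-2\pi(m_i-1)$ to the intrinsic Gauss--Bonnet identity. Both facts are standard consequences of the removable-singularity analysis for weak branched conformal immersions with finite total curvature (cf.\ \cite{KS}, \cite{Riv2}, \cite{BRRem}) and the Proposition~\ref{pr-I.1} conformal reparametrization, so no new input is required.
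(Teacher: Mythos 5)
Your proof is correct and takes essentially the same route as the paper's: both use Lemma~\ref{lem:localcomparison} in Riemann normal coordinates to reduce the Riemannian estimate to the Euclidean Li--Yau bound $W_{\eucl}\geq 4\pi$ (together with Gauss--Bonnet for $F$) and then send the small comparison errors to zero using the uniform area bound. The only differences are presentational --- you center coordinates at the moving base points $p_k$ and build the decay directly into $r_k$, whereas the paper first extracts a subsequence accumulating at a single point $\bar p$ and iterates the limits $k\to\infty$, $\epsilon\to 0$, $\delta\to 0$ --- and you spell out more explicitly the branched Gauss--Bonnet identity underlying the paper's appeal to ``Willmore's inequality'' for $F$.
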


\begin{proof} The first statement follows directly from Lemma \ref{lem:MonFor}. Let us prove the second one.  After passing to a subsequence, we may assume that the
$\bP_k(\Sp^2)$ converge to a point $\bar p \in M$. For given $\epsilon \in (0,1]$
we choose $\rho > 0$, such that in Riemann normal coordinates $x \in B_\rho(0) \subset \R^m$
$$
\frac{1}{1+\epsilon} |\,\cdot \,|_{\eucl} \leq |\,\cdot \,|_h \leq (1+\epsilon) |\,\cdot \,|_{\eucl} \quad \mbox{ and } \quad |\Gamma_{ij}^k(x)| \leq \varepsilon\quad, 
$$
where, of course, $|\,\cdot \,|_{\eucl}$ is the norm associated to the euclidean metric given by the coordinates and $|\,\cdot\,|_h$ is  the norm in metric $h$.
We have $\bP_k(\Sp^2) \subset B_\rho(x_0)$ for large $k$. Denoting by ${\mathbb I}^e,g^e_k$ the 
quantities with respect to the coordinate metric, we get from Willmore's inequality
and Lemma \ref{lem:localcomparison}
$$
4\pi \leq \frac{1}{2} \int_{\Sp^2} |{\mathbb I}^e_{\bP_k}|_e^2\,d\mu_{g^e_k}
\leq (1+C\epsilon) (1+\delta) \frac{1}{2} \int_{\Sp^2} |{\mathbb I}_{\bP_k}|^2\,d\mu_{g_k}
+ C(\delta) \epsilon^2\, \Area_{g_k}(\Sp^2)\quad.
$$
Since $\Area_{g_k}(\Sp^2) \leq C$ by assumption, we may let first $k \to \infty$, 
then $\epsilon \searrow 0$ and finally $\delta \searrow 0$ to obtain 
$$
\liminf_{k \to \infty} F(\bP _k) \geq 4\pi\quad.
$$
The proof for $W$ is analogous.  
\end{proof}

\section{Proof of the existence theorems}\label{Sec:Existence}
\reset
\noindent {\bf Proof of theorem~\ref{TeoExWillCurv}}
Let $\bP_k \subset {\mathcal F}_{\Sp^2}$ be a minimizing sequence of $F_1=F+A$, as before we can assume that $\bP_k$ are conformal; clearly there is a uniform upper bound on the areas and on the $L^2$ norms of the second fundamental forms $\bP_k$:
\begin{eqnarray}
\sup_k \int_{\Sp^2} |{\mathbb I}_k|^2 dvol_{g_{\bP_k}} &\leq& C <\infty, \label{boundA2} \\
\sup_k \Area_{g_{\bP_k}} (\Sp^2)  &\leq& C <\infty \quad.\label{boundArea}
\end{eqnarray}
 Since we are assuming that $R_{\bar p}({\frak S})>6$ for some point $\bar p$ and some 3-dimensional subspace ${\frak S}$, by Lemma \ref{Lm:ExpWE} we have 
\be\label{inf:L}
\inf_{\bP\in {\mathcal F}_{\Sp^2}} F_1(\bP) < 4\pi\quad.
\ee  
Therefore, Proposition \ref{prop:LBdiamEa} yelds
\be\label{LDB}
\liminf_k \diam (\bP_k)(\Sp^2) \geq \frac{1}{C}>0\quad.
\ee 
Now, thanks to \eqref{boundA2}, \eqref{boundArea} and \eqref{LDB}, we can apply the 'Good Gauge Extraction Lemma' IV.1 in \cite{MoRi1} and obtain that up to subsequences and up to repametrization of $\bP_k$ via positive Moebius transformations of $\Sp^2$ the following holds: there exists a finite set of points $\{a^1,\ldots,a^N\}\subset \Sp^2$ such that for every compact subset $K \subset \subset \Sp^2 \setminus \{a^1,\ldots,a^N\}$ (it is enough for our purpouses to take $K$ with smooth boundary ) there exists a constant $C_K$ such that
\be\label{BoundCF}
|\log |\nabla \bP_k|\,| \leq C_K \quad \text{on } K \text{ for every } k\quad.
\ee
Since the parametrization is conformal, then $|\nabla^2 \bP_k|^2= e^{4 \lambda_k} |{\mathbb I}_{\bP_k}|^2$ (where, as usual, $e^{\lambda_k}=|\p_{x^1}\bP_k|=|\p_{x^2}\bP_k|$), and the two estimates \eqref{boundA2}-\eqref{BoundCF} give that $\bP_k|_K$ are equibounded in $W^{2,2}(K)$, therefore by Banach-Alaoglu Theorem together with reflexivity and separability of $W^{2,2}(K)$ imply the existence of a map $\bP_\infty \in W^{2,2}(K)$ such that, up to subsequences,
\be
\bP_k \rightharpoonup \bP_\infty \quad \text{weakly in } W^{2,2}(K)\quad.
\ee
Now by Rellich-Kondrachov Theorem $\p_{x^i} \bP_k \to \p_{x^i} \bP_\infty$ as $k\to \infty$ strongly in $L^p (K)$ for every $1<p<\infty$ and a.e. on $K$. It follows that $\bP_\infty$ is a $W^{1,\infty}\cap W^{2,2}$ conformal immersion of $K$. 
Moreover by the lower semicontinuity under $W^{2,2}$-weak convergence proved in Lemma \ref{lem:LSC} we have
\be\label{eq:lscF}
\int_K |{\mathbb I}_{\bP_\infty}|^2 dvol_{g_{\bP_\infty}} \leq \liminf_{k} \int_K  |{\mathbb I}_{\bP_k}|^2 dvol_{g_{\bP_k}}\quad, 
\ee
%\begin{eqnarray}
%\int_K |\bH_{\bP_\infty}|^2 dvol_{g_{\bP_\infty}} &\leq& \liminf_{k} \int_K  |\bH_{\bP_k}|^2 dvol_{g_{\bP_k}}, \label{eq:lscW} \\
%\int_K |{\mathbb I}_{\bP_\infty}|^2 dvol_{g_{\bP_\infty}} &\leq& \liminf_{k} \int_K  |{\mathbb I}_{\bP_k}|^2 dvol_{g_{\bP_k}}, \label{eq:lscF}
%\end{eqnarray}
and the strong $L^p(K)$ convergence of the gradients implies
\be\label{eq:ConvArea}
\Area_{g_{\bP_k}}(K) \to \Area_{g_{\bP_\infty}}(K)\quad. 
\ee 
Iterating the procedure on a countable increasing family of compact subsets with smooth boundary invading $\Sp^2\setminus\{a^1,\ldots,a^N\}$, via a diagonal argument we get the existence of a $W^{1,\infty}_{loc}\cap W^{2,2}_{loc}$ conformal immersion $\bP_\infty$ of $\Sp^2\setminus\{a^1,\ldots,a^N\}$ into $M$ such that, up to subsequences, 
\be\label{eq:lscF}
\int_{\Sp^2\setminus\{a^1,\ldots,a^N\}} \left(\frac{1}{2}|{\mathbb I}_{\bP_\infty}|^2+1\right) dvolg_{\bP_\infty} \leq \liminf_k  \int_{\Sp^2\setminus\{a^1,\ldots,a^N\}} \left(\frac{1}{2}|{\mathbb I}_{\bP_k}|^2 +1\right) dvolg_{\bP_k} \leq C\quad.
\ee
Now, thanks to the conformality of $\bP_\infty$ on $\Sp^2\setminus\{a^1,\ldots,a^N\}$ and the estimate \eqref{eq:lscF}, we can apply Lemma A.5 of \cite{Riv2} and extend $\bP_{\infty}$ to a weak conformal immersion in $\mathcal F_{\Sp^2}$ possibly branched in a subset of $\{a^1,\ldots,a^N\}$. Since ${\mathbb I}_{\bP_\infty} \in L^2(\Sp^2, vol_{g_{\bP_\infty}})$, inequality \eqref{eq:lscF} implies 
\be\label{InfRealized}
F_1(\bP_{\infty})\leq \liminf_k F_1(\bP_k)=\inf_{\bP \in {\mathcal F}_{\Sp^2}} F_1(\bP)\quad,
\ee
therefore $\bP_{\infty}$ is a minimizer of $F_1$ in ${\mathcal F}_{\Sp^2}$. By  Lemma \ref{lem:dW}, the  functional $F_1$ is Frech\'et differentiable at $\bP_\infty$ with respect to variations $\bw \in W^{1,\infty}\cap W^{2,2} (D^2,T_{\bP_{\infty}} M)$  with compact support in $\Sp^2\setminus \{b^1,\ldots,b^{N_\infty}\}$, where $\{b^1,\ldots,b^{N_{\infty}}\}$ are the branched points of $\bP_\infty$. From the expression of the differentials given in Lemma \ref{lem:dW} we deduce that $\bP_{\infty}$ satisfies the following  area constraint Willmore like equation in conservative form away the branch points, and since $\bP_\infty$ is conformal, the equation writes  
\begin{eqnarray}
8\,e^{-2\la}\,\Re\lf(\D_{\ov{z}}\lf[\pi_{\vec{n}}(\D_z\vec{H})+<\vec{H},\vec{H}_0>\ \p_{\ov{z}}\vec{\Phi}\rg]\rg)=2\ti{R}(\bH)+16 \Re \lf(<\Riem^h(\vec{e}_{\ov{z}},\vec{e}_z) \vec{e}_z, \vec{H}> \vec{e}_{\ov{z}}\rg) \nonumber \\[5mm]
\quad\quad+2\bH+ (D\, R)(T\bP)+2{\frak R}_{\bP}(T\bP)+2 \bar{K}(T\bP) \bH \quad . \label{eq:ConsF1}
\end{eqnarray}
Observe that the difference between this last equation and the Willmore equation \eqref{eq:ConsW} are just terms of the second line which are completely analogous to the curvature terms of the right hand side already appearing in \eqref{eq:ConsW} (see the definitions \eqref{def:frakR} and \eqref{def:DR}). Therefore all the arguments of Sections \ref{Sec:SystY} and \ref{Sec:Regularity} can be repeated including these new terms   and we conclude with the smoothness of $\bP_\infty$ away the branched points. 
\hfill $\Box$
\medskip

\noindent {\bf Proof of theorem~\ref{TeoExWill}}
The proof is completely analogous to the proof of Theorem \ref{TeoExWillCurv} once we observe that the lower bound on the areas $A(\bP_k)\geq\frac{1}{C}>0$ together with Lemma \ref{lem:MonFor} yelds a lower bound on the diameters:
\be\label{LoBDi}
\diam_M  \bP_k(\Sp^2) \geq \frac{1}{C}.
\ee 
Indeed we still have \eqref{boundA2}, \eqref{boundArea} and \eqref{LDB}. Thereofore, as above, we obtain the existence of a minimizer $\bP_\infty \in {\cal F}_{\Sp^2}$,
$$F(\bP_\infty)=\inf_{\bP \in {\cal F}_{\Sp^2}} F(\bP),$$
satisfing the equation  in conservative form 
\be
\label{eq:ConsFConf}
\begin{array}{l}
\ds 8\,e^{-2\la}\,\Re\lf(\D_{\ov{z}}\lf[\pi_{\vec{n}}(\D_z\vec{H})+<\vec{H},\vec{H}_0>\ \p_{\ov{z}}\vec{\Phi}\rg]\rg)=2\ti{R}(\bH)\\[5mm]
\ds \quad\quad+16 \Re \lf(<\Riem^h(\vec{e}_{\ov{z}},\vec{e}_z) \vec{e}_z, \vec{H}> \vec{e}_{\ov{z}}\rg)+ (D\, R)(T\bP)+2{\frak R}_{\bP}(T\bP)+2 \bar{K}(T\bP)\, \bH 
\end{array}
\ee
(now without the Lagrange multiplier $2\bH$) outside the finitely many branched points. The smoothness of $\bP_\infty$ outside the branched points follows as before.
\hfill $\Box$
\medskip

\noindent {\bf Proof of Theorem~\ref{Th:ExWillHom}}
Recall the discussion after the statement of the Theorem, here we just formalize that idea. First of all recall the precise Definition VII.2 in \cite{MoRi1} of a  {\it bubble tree of weak immersions}; for the proof of the present Theorem we just need to recall (actually the rigorous definition is more precise and intricate) that a bubble tree of weak immersions is  an $N+1$-tuple $\vec{T}:=(\vec{f},\vec{\Phi}^1\cdots\vec{\Phi}^N)$, where $N$ is an arbitrary integer, $\vec{f}\in W^{1,\infty}(\Sp^2,M^m)$
and $\vec{\Phi}^i\in{\mathcal F}_{\Sp^2}$ for $i=1\cdots N$  such that $\vec{f}(\Sp^2)=\cup_{i=1}^N \bP^i(\Sp^2)$ and $\vec{f}_*[\Sp^2]=\sum_{i=1}^N \bP^i_*[\Sp^2]$, where for a lipschitz map $\vec{a}\in W^{1,\infty}(\Sp^2,M)$ we denote $\vec{a}_*[\Sp^2]$ the push forward of the current of integration over $\Sp^2$. The set of bubble trees is denoted by ${\cal T}$ and, considered a nontrivial homotopy class $0\neq\gamma \in \pi_2(M^m)$ , the set  of bubble trees such that the map $\vec{f}$ belongs to the homotopy group $\gamma$ is denoted by ${\cal T}_{\gamma}$. 

Consider the lagrangian $\LW$ defined in \eqref{def:LW} and \eqref{def:LWTree}; up to rescaling the metric $h$ by a positive constant we can assume that $\bar{K}\leq 1$ (or analogously instead of 1, in the definition of $\LW$, take a constant $C>max_{M} \bar{K}$). Consider a minimizing sequence $\vec{T}_k \in {\cal T}_{\gamma}$, of bubble trees realizing the homotopy class $\gamma$, for the functional $\LW$. Observe that by Proposition \ref{pr-I.1} we can assume the $\bP_k$ are conformal. By the expression of $\LW$, there is a uniform bound on the $F_1$ functional
\be\label{eq:F1bound}
\limsup_{k\to \infty} F_1(\vec{T}_k)= \limsup_k \int_{\Sp^2} \lf(1+\frac{|{\mathbb I}|^2}{2} \rg) dvol_{g_k} <+\infty\quad,
\ee
moreover, since $\vec{f}_k \in \gamma \neq 0$, we also have
\be\label{eq:diamBound}
\liminf_{k\to \infty} \sum_{i=1}^{N_k} \diam_{M}\lf(\bP_k^i(\Sp^2)\rg)>0\quad,
\ee
therefore we perfectly fit in the assumptions of the compactness theorem for bubble trees (Theorem VII.1 in \cite{MoRi1}). It follows, recalling also Lemma \ref{lem:LSC},  that there exists a limit bubble tree $\vec{T}_{\infty}=(\vec{f}_\infty, \bP^1_{\infty},\ldots, \bP^{N_{\infty}}_{\infty})$ minimizing the Lagrangian $\LW$ in ${\cal T}_{\gamma}$. By the minimality, using Lemma \ref{lem:dW}, we have that each $\bP^i_{\infty}$ satisfies the Euler Lagrange equation of $\LW$ outside the branch points. As remarked in the introduction, the Euler Lagrange equation of $\LW$ coincides with the area-constrained Willmore equation. By the Regularity Theorem \ref{Th:Regularity}, we conclude that each $\bP^i_{\infty}$ is a branched conformal immersion of $\Sp^2$ which is smooth and satisfies the area-constraned Willmore equation outside the finitely many branched points.  
\hfill $\Box$

\medskip

\noindent {\bf Proof of Theorem~\ref{Th:AreaConstraintWill}.}
The arguments are analogous to the proof of Theorem ~\ref{Th:ExWillHom}. Indeed observe that, fixed any ${\cal A}>0$, for a minimizing sequence $\vec{T}_k \in {\cal T}$ of the functional $W_K$, defined in \eqref{def:WK}, under the $\cal{A}$-area constraint
\be\label{ATk}
A(\vec{T}_k):= Area(\vec{f}_k(\Sp^2))={\cal A}\quad,
\ee 
the bound \eqref{eq:F1bound} still holds (by the constrained on the total area and by the boundness of $\bar{K}$ ensured by the compactness of $M$). Moreover, by the monotonicity formula given in Lemma \ref{lem:MonFor}, the area constraint \eqref{ATk} also implies \eqref{eq:diamBound}. Then, as before, we apply the compactness theorem for bubble trees and the thesis follows as above by recalling that the area constraint is preserved in the limit: $$A(\vec{T}_{\infty}):=Area(\vec{f}_\infty(\Sp^2))=\lim_{k\to \infty} Area(\vec{f}_k(\Sp^2))={\cal A}\quad.$$ 
\hfill $\Box$

\section{Appendix}
\reset
\subsection{Useful lemmas for proving the regularity}
In the appendix we prove some technical lemmas used in the paper. In the following we denote $\mathring{H}^{-1}(\C)$ the dual of the homogeneous Sobolev space $\mathring{H}^1(\C)$ (for the standard definition see for instance \cite{Gra2} Definition 6.2.5 )

\begin{Lm}\label{lemmaA2}
For $j,l\in\{1,\ldots,m\}$ let $\gamma^j_l\in (C^0\cap W^{1,2})(\C)$ be such that $\supp \gamma^j_l \in B_2(0)$ and $\|\gamma^j_l\|_{L^\infty(\C)}\leq \epsilon$. For every $\bU \in (L^1_{loc})(\C)$ denote, in distributional sense, 
\be\label{eq:defDz}
(\D_z U)^j:= \p _z U^j+\sum_{k=1}^m \gamma^j_k U^k.
\ee
Then for every $\bY \in (\mathring{H}^{-1}+L^1)(\C)$ with $\Im(D_{\bar z} \bY) \in (\mathring{H}^{-1}+L^1)(\C)$
there exists a unique $\bU \in L^{2,\infty}(D^2)$ with $\Im(\bU) \in W^{1,(2,\infty)}(D^2)$ satisfying

\be \label{eq:DzU=Y}
\lf\{
\begin{array}{l}
\D _z \bU= \bY \quad 	\text{in } {\cal D}'(D^2)\\[5mm]
\Im \bU=0 \quad \text{on } \p D^2\quad.
\end{array}
\rg.
\ee
Moreover the following estimate holds:
$$\|\bU\|_{L^{2,\infty}(D^2)}+ \|\nabla \Im(\bU)\|_{L^{2,\infty}(D^2)}\leq C \left(\|\bY\|_{H^{-1}+L^1(\C)}+\|\Im(\D _{\bar z} \bY)\|_{H^{-1}+L^1(\C)} \right)\quad. $$
\hfill $\Box$
\end{Lm}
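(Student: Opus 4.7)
The strategy is to first solve the unperturbed problem (with $\gamma\equiv 0$) by constructing a bounded inverse $T$ for $\p_z$ subject to the boundary condition $\Im U = 0$ on $\p D^2$, and then to treat the full operator $\D_z = \p_z + \gamma\cdot$ as a small perturbation via Neumann series, exploiting the smallness hypothesis $\|\gamma^j_k\|_{L^\infty}\le \e$. Both existence and uniqueness will follow simultaneously from a Banach fixed-point argument in the space $X:=\{\bU\in L^{2,\infty}(D^2)\ :\ \Im \bU\in W^{1,(2,\infty)}(D^2),\ \Im \bU|_{\p D^2}=0\}$.

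\textbf{Step 1: the unperturbed problem.} Write $\bU=\bV+i\bW$ with $\bV,\bW$ real vector-valued. Separating real and imaginary parts in $\p_z\bU=\bY$ produces the pair
\begin{equation}\nonumber
\p_{x_1}\bV+\p_{x_2}\bW=2\Re \bY,\qquad \p_{x_1}\bW-\p_{x_2}\bV=2\Im \bY.
\end{equation}
The compatibility (curl-free) condition for the system determining $\nabla \bV$ yields the scalar Dirichlet problem
\begin{equation}\nonumber
\lf\{
\begin{array}{l}
\Delta \bW = 4\,\Im(\p_{\ov z}\bY)\quad\text{in }D^2,\\[3mm]
\bW=0\quad\text{on }\p D^2.
\end{array}
\rg.
\end{equation}
Since $\Im(\p_{\ov z}\bY)\in \mathring H^{-1}+L^1(\C)$, standard elliptic theory together with the endpoint estimate $\Delta^{-1}:L^1\to W^{1,(2,\infty)}$ (Calder\'on-Zygmund on Lorentz spaces) gives a unique solution $\bW\in W^{1,(2,\infty)}(D^2)$ with the quantitative bound $\|\nabla \bW\|_{L^{2,\infty}}\lesssim \|\Im(\p_{\ov z}\bY)\|_{\mathring H^{-1}+L^1}$. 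Having $\bW$, the vector $\bV$ is recovered (uniquely up to an additive real constant, which we normalize by $\int_{D^2}\bV=0$) from the gradient equations above; these have right-hand sides in $\mathring H^{-1}+L^1+L^{2,\infty}$, so by representing $\bV$ through the Cauchy kernel (using $\p_{\ov z}\bV=\ov{\p_z\bV}$ with $\bV$ real) and invoking boundedness of the Cauchy transform from $L^1$ to $L^{2,\infty}$, we obtain $\bV\in L^{2,\infty}(D^2)$ with
\begin{equation}\nonumber
\|\bV\|_{L^{2,\infty}}\lesssim \|\bY\|_{\mathring H^{-1}+L^1}+\|\nabla \bW\|_{L^{2,\infty}}.
\end{equation}
This defines the bounded linear solution operator $T:\bY\mapsto \bU$, sending $\mathcal{Y}:=\{\bY\in \mathring H^{-1}+L^1 \ :\ \Im(\p_{\ov z}\bY)\in \mathring H^{-1}+L^1\}$ into $X$.

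\textbf{Step 2: Neumann-type inversion.} The full equation rewrites as
\begin{equation}\nonumber
\bU = T(\bY)-T(\gamma\cdot \bU).
\end{equation}
For the fixed-point argument to close we must check that the map $\bU\mapsto T(\gamma\cdot\bU)$ is a contraction on $X$. This reduces to two estimates: first, that $\gamma \cdot \bU\in \mathring H^{-1}+L^1$ with norm controlled by $\e\,\|\bU\|_{L^{2,\infty}}$ (since $\gamma\in L^\infty\cap W^{1,2}$ has compact support and $\bU\in L^{2,\infty}\subset L^1_{loc}$); second, that $\Im\p_{\ov z}(\gamma\bU)$ lies in $\mathring H^{-1}+L^1$ with norm bounded by $\e$ times $\|\bU\|_{L^{2,\infty}}+\|\nabla \Im \bU\|_{L^{2,\infty}}$, which uses $\nabla \gamma\in L^2$ (for the $L^1$ part from $(\nabla\gamma)\bU$) and the observation that $\gamma\,\p_{\ov z}\bU$ is written as $\p_{\ov z}(\gamma \bU)-(\p_{\ov z}\gamma)\bU$, putting one factor into the dual Sobolev space. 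Combining with the bound $\|T\|\le C$ from Step 1, we obtain a constant $C_0$, independent of $\e$, such that the operator $\bU\mapsto T(\gamma\bU)$ has norm at most $C_0\e$ on $X$. Choosing $\e$ small enough (which is the standing hypothesis) so that $C_0\e<1$, the equation $(I+T\gamma)\bU=T\bY$ admits a unique solution in $X$ by Banach's fixed-point theorem, and the quantitative bound of the lemma follows by summing the Neumann series.

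\textbf{Main obstacle.} The delicate point is the endpoint nature of the estimates: working in the non-reflexive spaces $L^1$ and $L^{2,\infty}$ forces one to rely on Calder\'on-Zygmund theory on Lorentz spaces and on the fact that the Cauchy/Beltrami kernel $1/(\pi z)$ maps $L^1(\C)$ into $L^{2,\infty}(\C)$ rather than into $L^2$. A second subtlety is that the boundary condition is imposed only on $\Im \bU$, so the real part $\Re\bU$ cannot be obtained by a Dirichlet problem: one must integrate the gradient system and handle the nonlocal (Cauchy-type) reconstruction of $\Re\bU$ from data in the negative Sobolev scale, while keeping careful track of the invariance under real constants so that uniqueness still holds after normalization. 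Extending the data to all of $\C$ (harmlessly, since $\supp \gamma\subset B_2(0)$) and working with $\mathring H^{-1}$ rather than $H^{-1}$ is precisely what allows one to apply the global Cauchy/Riesz estimates on $\C$ in a scale-invariant way.
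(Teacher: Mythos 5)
Your overall plan (solve an unperturbed operator, then Neumann‐iterate the $\gamma$ term using $\|\gamma\|_\infty\le\e$) has a genuine gap in Step~2, and it is worth understanding where it fails because it explains why the paper's proof is organized as it is.

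The issue is that your solution operator $T$ for the unperturbed \emph{boundary-value} problem on $D^2$ is only bounded from $\mathcal Y$ to $X$ if the $\mathcal Y$-norm includes $\|\Im(\p_{\bar z}\,\cdot\,)\|_{\mathring H^{-1}+L^1}$: this is forced on you because the reconstruction of $\Re\bU$ from the gradient system involves $\nabla \bW$, and $\bW$ is obtained from the Dirichlet problem $\triangle\bW=4\Im(\p_{\bar z}\bY)$; moreover the boundary condition only makes sense once one knows $\bW\in W^{1,(2,\infty)}$, which again comes from that Dirichlet problem. Consequently, to close the Neumann series you must estimate
$$\bigl\|\Im\,\p_{\bar z}(\gamma\bU)\bigr\|_{\mathring H^{-1}+L^1}\le \theta\bigl(\|\bU\|_{L^{2,\infty}}+\|\nabla\Im\bU\|_{L^{2,\infty}}\bigr)$$
with $\theta$ small. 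But $\p_{\bar z}(\gamma\bU)=(\p_{\bar z}\gamma)\bU+\gamma\,\p_{\bar z}\bU$, and the first term is controlled by $\|\nabla\gamma\|_{L^2}\|\bU\|_{L^{2,\infty}}$. The hypotheses give you only $\|\gamma\|_{L^\infty}\le\e$; $\|\nabla\gamma\|_{L^{2}}$ is merely finite, not small. So the contraction factor is \emph{not} small, and the fixed-point argument does not close. (There is a second, lesser difficulty: $\gamma\,\p_{\bar z}\bU$ is not controlled by the $X$-norm, since $X$ only records $\nabla\Im\bU$, not $\nabla\Re\bU$; one can trade $\p_z$-derivatives of $\Re\bU$ back via the equation, but this only reinforces the circularity.)

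The paper's proof avoids exactly this trap by reordering the steps. First it solves $D_z\tilde\bU=\bY$ on all of $\C$ by the Cauchy transform, and the contraction is performed in $L^{2,\infty}(\C)$ \emph{only}. There the relevant estimate is $\|\tfrac{1}{\pi\bar z}*(\gamma\tilde\bU)\|_{L^{2,\infty}}\lesssim\|\gamma\tilde\bU\|_{L^1}\lesssim\e\|\tilde\bU\|_{L^{2,\infty}}$, so only $\|\gamma\|_{L^\infty}$ enters and the factor is genuinely $\e$. Then $\nabla\Im\tilde\bU\in L^{2,\infty}$ is obtained \emph{a posteriori} from the identity $\tfrac14\triangle\Im\tilde U^j=\Im(\p_{\bar z}Y^j)-\Im\bigl(\p_{\bar z}(\sum_k\gamma^j_k\tilde U^k)\bigr)$; here the problematic term $(\p_{\bar z}\gamma)\tilde\bU$ does appear with the non-small constant $\|\nabla\gamma\|_{L^{2}}$, but that is harmless because $\tilde\bU$ is already known — this step is a linear estimate, not part of a fixed point. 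Finally the boundary condition is corrected by solving $D_zV=0$, $\Im V=\Im\tilde U$ on $\p D^2$; in that problem the linearization has right-hand side $-\gamma\bW\in L^{2,\infty}$ (not in $\mathring H^{-1}+L^1$), so the Cauchy transform gains a full derivative and the contraction closes in $W^{1,(2,\infty)}(D^2)$, again with factor $\sim\e=\|\gamma\|_\infty$. This decoupling — rough data handled on $\C$ in $L^{2,\infty}$, boundary correction handled on $D^2$ with $L^{2,\infty}$ right-hand side — is precisely what your scheme misses. If you want to keep your Dirichlet-first structure, you would have to strengthen the hypothesis to a smallness of $\|\gamma\|_{L^\infty\cap W^{1,2}}$, which the lemma does not (and, for the application, cannot) assume.
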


\begin{proof}
Let us first construct  $\tilde U^j \in L^{2,\infty}(\C)$ satifying $\D _z \tilde U^j=Y^j $ on $\C$; observe this is equivalent to solve the fixed point problem in $L^{2,\infty}(\C)$
\be\label{eq:Utilde}
\tilde U^j=-\frac{1}{\pi \bar z} * \left(Y^j-\sum_{k=1}^m \gamma^j_k \tilde{U}^k\right):=T(\tilde \bU)\quad.
\ee
We prove that the problem has unique solution by the contraction mapping principle in $L^{2,\infty}$. By the  Young and H\"older inequalities for weak type spaces (see Theorem 1.2.13 and Exercise 1.4.19 in \cite{Gra}) we have
\be \label{eq:estU}
\left\|-\frac{1}{\pi \bar z} * (\sum_{k=1}^m \gamma^j_k \tilde{U}^k )\right\|_{L^{2,\infty}(\C)}\leq C \left\|-\frac{1}{\pi \bar z}\right\|_{L^{2,\infty}} \left\|\sum_{k=1}^m \gamma^j_k \tilde{U}^k \right\|_{L^1(\C)} \leq C \epsilon \|\tilde \bU\|_{L^{2,\infty}(\C)}\quad.
\ee
We Choose $\epsilon>0$ such that $C\epsilon \leq \frac 1 2$. Now we claim that
\be\label{eq:estY}
\left\|-\frac{1}{\pi \bar z} * Y^j\right\|_{L^{2,\infty}(\C)}\leq C \|Y^j\|_{L^1+\mathring{H}^{-1}(\C)}\quad.
\ee
 Recall that $Y^j \in L^1+\mathring{H}^{-1}(\C)$ and $\|Y^j\|_{L^1+\mathring{H}^{-1}(\C)}:=\inf\{\|Y^j_1\|_{L^1(\C)}+\|Y^j_2\|_{\mathring{H}^{-1}(\C)}: Y^j=Y^j_1+Y^j_2 \}$; since we can assume $Y^j\neq 0$ otherwise trivially $-\frac{1}{\pi \bar z} * Y^j=0$, we can find $Y^j_1 \in L^1(\C)$ and $Y^j_2\in \mathring{H}^{-1}(\C)$ such that 
\be \label{eq:estYY1Y2}
\|Y^j_1\|_{L^1(\C)}+\|Y^j_2\|_{\mathring{H}^{-1}(\C)}\leq \frac 3 2 \|Y^j\|_{L^1+\mathring{H}^{-1}(\C)}\quad.
\ee  
As before, by the Young inequality, we have 
\be\label{eq:estY1}
\left\|-\frac{1}{\pi \bar z} * Y^j_1\right\|_{L^{2,\infty}(\C)}\leq C \|Y^j_1\|_{L^1(\C)}\quad.
\ee
On the other hand called $\hat{Y} ^j_2$ the Fourier transform of $Y^j_2$,and observed that the Fourier transform of $-\frac{1}{\pi \bar z}$ is (up to a multiplicative constant) $\frac{1}{\xi}$  we have by the convolution theorem
$$\int_\C \left|-\frac{1}{\pi \bar z} * Y^j_2 \right|^2= C \int_\C\left|\hat{Y} ^j_2(\xi) \frac{1}{\xi}\right|^2\quad.$$
Moreover recalling that $\|h\|_{\mathring{H}^1}=\int_{\C}|\xi \hat{h}(\xi)|^2$ and that
$$\|{Y}^j_2\|_{\mathring{H}^{-1}(\C)}=\sup_{\|h\|_{\mathring{H}^1}\leq 1} \int_\C  \hat{Y} ^j_2(\xi) \bar{\hat{h}}(\xi)=\sup_{\|h\|_{\mathring{H}^1}\leq 1} \int_\C  \frac{\hat{Y} ^j_2(\xi)}{\xi} \; \bar{\hat{h}}(\xi)\xi=\int_{\C} \left|\frac{\hat{Y} ^j_2(\xi)}{\xi} \right|^2 $$
we get 
\be\label{eq:estY2}
\left\|-\frac{1}{\pi \bar z} * Y^j_2\right\|_{L^{2,\infty}(\C)}\leq \left\|-\frac{1}{\pi \bar z} * Y^j_2\right\|_{L^{2}(\C)} \leq C \|Y^j_2\|_{\mathring{H}^{-1}(\C)}.
\ee
Combining \eqref{eq:estYY1Y2}, \eqref{eq:estY1} and \eqref{eq:estY2} we get \eqref{eq:estY} which was our claim.
Now the estimates \eqref{eq:estU} and \eqref{eq:estY} implies that $T:L^{2,\infty}(\C)\to L^{2,\infty}(\C)$ is well defined and is a contraction; the existence of a unique $\tilde{\bU}$ satisfying \eqref{eq:Utilde} follows by the contraction mapping principle. Notice moreover we have the estimate
\be\label{eq:EstTildeU}
\|\tilde \bU\|_{L^{2,\infty}(\C)}\leq C \|\bY \|_{L^1+\mathring{H}^{-1}(\C)}.
\ee
Now let us consider $\Im (\tilde{\bU})$. From the equation satisfyied by $\tilde \bU$ we obtain
$$\frac 1 4 \triangle \tilde U^j=\p_ {\bar z} \p_ z \tilde{U}^j= \p_{\bar z} Y^j-\p_{\bar z} \left(\sum_{k=1}^m \gamma^j_k \tilde U^k \right)\quad,$$
whose imaginary part gives
\be\label{eq:triangleUtilde}
\frac 1 4 \triangle \Im(\tilde U^j)=\Im(\p_{\bar z} Y^j)-\Im\left(\p_{\bar z} \left(\sum_{k=1}^m \gamma^j_k \tilde U^k \right)\right)\quad. 
\ee
Since by assumption $\|\gamma^j_k\|_{L^\infty\cap W^{1,2}(\C)}\leq C$, then $\|\sum_{k=1}^{m}\gamma^j_k Y^k\|_{L^1+\mathring{H}^{-1}(\C)}\leq C \|\bY\|_{L^1+\mathring{H}^{-1}(\C)}$  and we have
$$\|\Im(\p_{\bar{z}}Y^j)\|_{L^1+\mathring{H}^{-1}(\C)}= \|\Im(D_{\bar z} Y^j)-\Im(\sum_{k=1}^{m}\gamma^j_k Y^k)\|_{L^1+\mathring{H}^{-1}(\C)}\leq \|\Im(D_{\bar z} Y^j)\|_{L^1+\mathring{H}^{-1}(\C)}+ C \|\bY\|_{L^1+\mathring{H}^{-1}(\C)}\quad.$$
Equation \eqref{eq:triangleUtilde} together with \eqref{eq:EstTildeU} and the last estimate gives

$$\|\triangle \Im (\tilde U^j)\|_{\mathring{H}^{-1}+L^1+W^{-1,(2,\infty)}(\C)}\leq  \|\Im(D_{\bar z} Y^j)\|_{L^1+\mathring{H}^{-1}(\C)}+ C \;\|\bY\|_{L^1+\mathring{H}^{-1}(\C)}$$ 

which implies, since $\|\Im (U)^j\|_{L^{2,\infty}(\C)}\leq C \|\bY\|_{L^1+\mathring{H}^{-1}(\C)}$, that
\be \label{eq:estNablaImU}
\|\nabla \Im (\tilde U^j)\|_{L^{2,\infty}(\C)}\leq C \left(\|\Im(D_{\bar z} Y^j)\|_{L^1+\mathring{H}^{-1}(\C)}+ \|\bY\|_{L^1+\mathring{H}^{-1}(\C)}\right). 
\ee
Now, since $\nabla \Im (\tilde U^j) \in L^{2,\infty}(\C)$, the function $ \Im (\tilde U^j)$ leaves a trace in $H^{\frac{1}{2},(2,\infty)}(\p D^2)$ and we can consider the homogeneous Dirichelet problem
\be\label{eq:DiricheletV}
\lf\{
\begin{array}{l}
\p _z V^j+ \sum_{k=1}^m \gamma^j_k V^k= 0 \quad \text{on } D^2\\[5mm]
\Im V^j= \Im \tilde U^j \quad \text{on } \p D^2\quad.
\end{array}
\rg.
\ee
We solve it again by contraction mapping principle; given $\bW \in W^{1,(2,\infty)}(D^2)$ with $\Im W^j= \Im \tilde U^j$ consider $\bV =:S(\bW)$ solving
\be\nonumber
\lf\{
\begin{array}{l}
\p _z V^j=- \sum_{k=1}^m \gamma^j_k W^k \quad \text{on } D^2\\[5mm]
\Im V^j= \Im \tilde U^j \quad \text{on } \p D^2\quad.
\end{array}
\rg.
\ee
Then the following estimate holds (see hand notes: bring right hand side to the left using convolution with $\frac{1}{\pi \bar z}$, so get homogeneous equation with different boundary data but still controlled in $H^{\frac{1}{2},(2,\infty)}$ both real and imaginary part using Hilbert tranform, the estimates then follow from the estimates for the laplace equation, using Calderon Zygmund theory for estimating the gradients)
$$\|\bV\|_{W^{1,(2,\infty)}(D^2)}\leq C\left(\epsilon \|\bW\|_{L^{2,\infty}(D^2)}+\|\Im \tilde{\bU}\|_{W^{1,(2,\infty)}(\C)} \right)$$
and 
$$\|S(\bW_1)-S(\bW_2) \|_{W^{1,(2,\infty)}(D^2)}\leq C\epsilon \|\bW_1-\bW_2 \|_{L^{2,\infty}(D^2)}.  $$
Therefore, for $\epsilon>0$ small, $S:W^{1,(2,\infty)}(D^2)\to W^{1,(2,\infty)}(D^2)$ is a contraction and there exists a unique solution of problem \eqref{eq:DiricheletV} satisfying the estimate
\be\label{estV}
\|\bV\|_{W^{1,(2,\infty)}(D^2)}\leq C \|\Im(\tilde{\bU})\|_{W^{1,(2,\infty)}(\C)}.
\ee
Now we conclude observing that $U^j:=\tilde U^j-V^j \in L^{2,\infty}(D^2)$ is the unique solution to the problem \eqref{eq:DzU=Y} and, combining \eqref{eq:EstTildeU}, \eqref{eq:estNablaImU} and \eqref{estV}, it satisfyes the estimates
$$\|\bU\|_{L^{2,\infty}(D^2)}+ \|\nabla \Im(\bU) \|_{L^{2,\infty}}\leq C  \left(\|\Im(D_{\bar z} Y^j)\|_{L^1+\mathring{H}^{-1}(\C)}+ \|\bY\|_{L^1+\mathring{H}^{-1}(\C)}\right) $$
as desired. 
\end{proof}

\begin{Lm}\label{lemmaA2bis}
For $j,l\in\{1,\ldots,m\}$ let $\gamma^j_l\in (C^0\cap W^{1,2})(\C)$ be such that $\supp \gamma^j_l \in B_2(0)$ and $\|\gamma^j_l\|_{L^\infty(\C)}\leq \epsilon$. For every $\bU \in (L^1_{loc})(\C)$ denote, in distributional sense, 
$$ (\D_z U)^j:= \p _z U^j+\sum_{k=1}^m \gamma^j_k U^k. $$
Let $\bY \in (L^1\cap L^{2,\infty})(\C)$ with $\Im(D_{\bar z} \bY) \in L^q(\C)$ for some $1<q<2$. Then
there exists a unique $\bU \in W^{1,(2,\infty)}(D^2)$ with $\Im(\bU) \in W^{2,q}(D^2)$ satisfying

\be\label{eq:DzU=Ybis}
\lf\{
\begin{array}{l}
\D _z \bU= \bY \quad 	\text{in } {\cal D}'(D^2)\\[5mm]
\Im \bU=0 \quad \text{on } \p D^2\quad.
\end{array}
\rg.
\ee
Moreover the following estimate holds:
$$\|\bU\|_{L^{2,\infty}(D^2)}+ \|\nabla \bU \|_{L^{2,\infty}(D^2)}+ \|\nabla^2 \Im(U) \|_{L^q(D^2)} \leq C \left(\|\bY\|_{L^1\cap L^{2,\infty}(\C)}+\|\Im(\D _{\bar z} \bY)\|_{L^q(\C)} \right)\quad. $$
\hfill $\Box$
\end{Lm}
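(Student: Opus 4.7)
The structure of the argument is parallel to that of Lemma \ref{lemmaA2}, but I will use the stronger integrability of $\bY \in L^1\cap L^{2,\infty}(\C)$ (in place of $L^1+\mathring{H}^{-1}$) to upgrade the weak $L^{2,\infty}$ estimate to a full $W^{1,(2,\infty)}$ estimate for $\bU$, and the $L^q$ control on $\Im D_{\bar z}\bY$ to promote the $L^{2,\infty}$ control of $\nabla\Im\bU$ in Lemma \ref{lemmaA2} to $L^q$ control of $\nabla^2\Im\bU$. I will write $\bU=\tilde\bU-\bV$, with $\tilde\bU$ a global solution on $\C$ and $\bV$ a correction inside $D^2$ that restores the boundary condition.

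\textbf{Step 1 (global solution on $\C$).} I solve by contraction mapping the fixed-point equation
$$\tilde U^j=-\tfrac{1}{\pi\bar z}*\Big(Y^j-\sum_k\gamma^j_k\tilde U^k\Big)=:T(\tilde\bU)^j$$
on $L^{2,\infty}(\C)$. Since $\gamma^j_k$ has compact support in $B_2$, H\"older in Lorentz spaces gives $\|\gamma\tilde\bU\|_{L^1(\C)}\leq C\|\gamma\|_{L^{2,1}}\|\tilde\bU\|_{L^{2,\infty}}\leq C\epsilon\,\|\tilde\bU\|_{L^{2,\infty}}$, so weak Young ($L^{2,\infty}*L^1\subset L^{2,\infty}$) yields a contraction for $\epsilon$ small and produces a unique $\tilde\bU\in L^{2,\infty}(\C)$ with $\|\tilde\bU\|_{L^{2,\infty}}\leq C\|\bY\|_{L^1(\C)}$.

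\textbf{Step 2 ($W^{1,(2,\infty)}$ regularity of $\tilde\bU$).} The equation $\p_z\tilde U^j=Y^j-\sum_k\gamma^j_k\tilde U^k$ directly gives $\p_z\tilde\bU\in L^{2,\infty}(\C)$ since $\bY\in L^{2,\infty}$ and $\gamma\in L^\infty$. Writing $\tilde\bU=-\tfrac{1}{\pi\bar z}*f$ with $f=\bY-\gamma\tilde\bU\in L^{2,\infty}(\C)$, we have $\p_{\bar z}\tilde\bU=\mathcal B f$, where $\mathcal B$ is the Beurling--Ahlfors transform; as a standard Calder\'on--Zygmund operator, $\mathcal B$ is bounded on $L^{p,q}$ for $1<p<\infty,\ 1\leq q\leq\infty$, hence $\p_{\bar z}\tilde\bU\in L^{2,\infty}(\C)$.

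\textbf{Step 3 ($W^{2,q}$ regularity of $\Im\tilde\bU$).} Applying $\p_{\bar z}$ to the equation for $\p_z\tilde U^j$ I obtain
$$\tfrac14\triangle\tilde U^j=\p_{\bar z}Y^j-\sum_k(\p_{\bar z}\gamma^j_k)\tilde U^k-\sum_k\gamma^j_k\,\p_{\bar z}\tilde U^k,$$
and using the decomposition $\p_{\bar z}Y^j=D_{\bar z}Y^j-\sum_k\gamma^j_k Y^k$ (exactly as in the proof of Lemma \ref{lemmaA2}) the imaginary part reads
$$\tfrac14\triangle\Im\tilde U^j=\Im D_{\bar z}Y^j-\Im\sum_k\gamma^j_k Y^k-\Im\sum_k(\p_{\bar z}\gamma^j_k)\tilde U^k-\Im\sum_k\gamma^j_k\,\p_{\bar z}\tilde U^k.$$
Each term on the right lies in $L^q(\C)$: the first by hypothesis; $\gamma Y$ and $\gamma\p_{\bar z}\tilde\bU$ are $L^{2,\infty}$ functions compactly supported in $B_2$, hence in $L^q$ for every $1<q<2$; for $(\p_{\bar z}\gamma)\tilde\bU$ I combine $\p_{\bar z}\gamma\in L^2$ with the Sobolev embedding $W^{1,(2,\infty)}_{loc}\hookrightarrow L^s_{loc}$ for every $s<\infty$ via Lorentz--H\"older. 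Newtonian potential theory on $\C$ then gives $\nabla^2\Im\tilde\bU\in L^q_{loc}(\C)$ with the correct estimate on a neighbourhood of $\overline{D^2}$.

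\textbf{Step 4 (boundary correction and uniqueness).} The trace $\Im\tilde\bU|_{\p D^2}$ belongs to $W^{2-1/q,q}(\p D^2)$, controlled by $\|\nabla\Im\tilde\bU\|_{L^{2,\infty}}+\|\nabla^2\Im\tilde\bU\|_{L^q}$. I then solve, by a contraction mapping completely analogous to the one used at the end of the proof of Lemma \ref{lemmaA2},
$$\begin{cases}D_z\bV=0 & \text{in }D^2,\\ \Im\bV=\Im\tilde\bU & \text{on }\p D^2,\end{cases}$$
obtaining $\bV\in W^{1,(2,\infty)}(D^2)$. From $\p_z\bV=-\gamma\bV$ one gets $\triangle\Im\bV=-4\Im[(\p_{\bar z}\gamma)\bV]-4\Im[\gamma\,\p_{\bar z}\bV]\in L^q(D^2)$ by the same Lorentz--H\"older arguments of Step 3, and standard $W^{2,q}$ Dirichlet regularity with $W^{2-1/q,q}$ boundary data yields $\Im\bV\in W^{2,q}(D^2)$. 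Setting $\bU:=\tilde\bU-\bV$ provides the desired solution with the claimed estimate. Uniqueness follows at once from Lemma \ref{lemmaA2}: two solutions differ by $\bW$ with $D_z\bW=0$ and $\Im\bW=0$ on $\p D^2$, hence $\bW\equiv 0$.

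\textbf{Main obstacle.} The technical heart is Step 3: verifying that each bilinear term on the right-hand side of the Poisson equation for $\Im\tilde U^j$ belongs to the \emph{same} $L^q$ as the datum $\Im D_{\bar z}\bY$. This requires the careful interplay between the weak space $L^{2,\infty}$, the endpoint Sobolev embedding $W^{1,(2,\infty)}\hookrightarrow L^s_{loc}$ ($s<\infty$), and Lorentz--H\"older inequalities adapted to the compact support of $\gamma$ and $\p_{\bar z}\gamma$. Once Step 3 is in place, Steps 1, 2, 4 are structurally identical to the corresponding steps in Lemma \ref{lemmaA2}.
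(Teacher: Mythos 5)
Your proof is correct and follows essentially the same route as the paper: fixed-point construction of a global solution $\tilde\bU$ on $\C$, a Calder\'on--Zygmund argument for the $W^{1,(2,\infty)}$ bound (the paper bounds $\nabla T(\tilde\bU)$ via the kernel $\nabla\tfrac1{\pi\bar z}$, while you obtain $\partial_z\tilde\bU$ directly from the equation and $\partial_{\bar z}\tilde\bU$ via the Beurling--Ahlfors transform, but these are the same estimate in disguise), passage to the Poisson equation for $\Im\tilde\bU$ with $L^q$ right-hand side, and finally a boundary correction $\bV$ via a contraction in $D^2$. Your Step 3 spells out the Lorentz--H\"older bookkeeping that the paper compresses into the phrase ``H\"older inequality and standard elliptic estimates,'' so the two arguments coincide in substance.
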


\begin{proof}
As in the proof of Lemma \ref{lemmaA2} we first solve the equation $\D _z \tilde{\bU}= \bY$ in $\C$ proving  existence and uniqueness of solutions to the fixed point problem  in $W^{1,(2,\infty)}(\C)$
\be\label{eq:Utilde1}
\tilde U^j=-\frac{1}{\pi \bar z} * \left(Y^j-\sum_{k=1}^m \gamma^j_k \tilde{U}^k\right)=:T(\tilde \bU)\quad.
\ee
Analogously to the proof of Lemma \ref{lemmaA2}, for $\epsilon>0$ small but depending just on universal constants, the $L^{2,\infty}(\C)$ norm of $T(\tilde \bU)$ can be bounded as
\be\label{eq:TuL2inf}
\|T(\tilde \bU) \|_{L^{2,\infty}(\C)}\leq C \|\bY\|_{L^1(\C)}\quad,
\ee 
and for $\tilde{\bU}_1,\tilde{\bU}_2 \in L^{2,\infty}(\C)$ it holds
\be\label{eq:ContrL}
\|T(\tilde{\bU}_1)-T(\tilde{\bU}_2)\|_{L^{2,\infty}(\C)}\leq C\epsilon \|\tilde{\bU}_1-\tilde{\bU}_2\|_{L^{2,\infty}(\C)}.
\ee
$L^{2,\infty}$-\emph{Gradient estimate}: we have
\be\label{eq:nablaTildeU}
\|\nabla T(\tilde U^j)\|_{L^{2,\infty}(\C)}=\left\|\left( \nabla \frac{1}{\pi \bar z}\right)* \left(Y^j-\sum_{k=1}^m \gamma^j_k \tilde U^k \right)\right\|_{L^{2,\infty}(\C)}.
\ee
Observe that the Fourier trasform of the convolution kernel $\nabla \frac{1}{\bar z}=\nabla(\p_{\bar z} \log |z|)$ satisfies the assumptions of Theorem 3 pag. 96 in \cite{Stein}, therefore
$$\left\|\left( \nabla \frac{1}{\pi \bar z}\right)* \left(Y^j-\sum_{k=1}^m \gamma^j_k \tilde U^k \right)\right\|_{L^s(\C)}\leq C_s \left\|\left(Y^j-\sum_{k=1}^m \gamma^j_k \tilde U^k \right) \right\|_{L^s(\C)} \quad \forall 1<s<\infty $$
and by interpolation (see for instance Theorem 3.15 in \cite{Stein-Weiss} of Theorem 3.3.3 in \cite{Hel})
\be \label{eq:EstCZ}
\begin{array}{l}
\ds\left\|\left( \nabla \frac{1}{\pi \bar z}\right)* \left(Y^j-\sum_{k=1}^m \gamma^j_k \tilde U^k \right)\right\|_{L^{2,\infty}(\C)}\leq C \left\|\left(Y^j-\sum_{k=1}^m \gamma^j_k \tilde U^k \right) \right\|_{L^{2,\infty}(\C)}
\\[5mm]
\ds\quad\quad \leq C \|Y\|_{L^{2,\infty}(\C)}+ C\epsilon \|\tilde \bU \|_{L^{2,\infty}(\C)}\quad.
\end{array}
\ee
Combining \eqref{eq:EstCZ}, \eqref{eq:nablaTildeU} and \eqref{eq:TuL2inf} we get, for small $\epsilon>0$,
\be\label{eq:estTW12}
\|T(\tilde \bU) \|_{W^{1,(2,\infty)}(\C)}\leq C \left( \|\bY\|_{L^1(\C)}+ \|\bY\|_{L^{2,\infty}(\C)} \right).
\ee
So $T:W^{1,(2,\infty)}(\C)\to W^{1,(2,\infty)}(\C)$  is a well defined linear operator and the same arguments imply that $T$ is a contraction. Therefore there exists a unique $\tilde U^j \in W^{1,(2,\infty)}(\C)$ satisfying \eqref{eq:Utilde1} and
\be\label{eq:estTW12}
\|\tilde \bU \|_{W^{1,(2,\infty)}(\C)}\leq C \left( \|\bY\|_{L^1(\C)}+ \|\bY\|_{L^{2,\infty}(\C)} \right).
\ee
Noticing that $\Im(\tilde U^j)$ satisfies also
$$
\begin{array}{l}
\triangle(\Im(\tilde U^j))=4 \Im(\p_{\bar z}Y^j)-4\Im\left[\p_{\bar z}\left(\sum_{k=1}^m \gamma^j_k \tilde U^k\right)\right]\\[5mm]
\quad\quad\ds=4 \Im(D_{\bar z}Y^j)-4\Im\left(\sum_{k=1}^m \gamma^j_k \tilde Y^k\right)-4\Im\left[\p_{\bar z}\left(\sum_{k=1}^m \gamma^j_k \tilde U^k\right)\right]\quad,
\end{array}
 $$
estimate \eqref{eq:estTW12}, the assumptions on $\gamma^j_k$, H\"older inequality and standand elliptic estimates imply
\be \label{eq:estHessImU}
\|\nabla^2 \Im(\tilde{U}^j)\|_{L^q(D^2)}\leq C\left(\|\Im(D_{\bar z}Y^j) \|_{L^q(D^2)}+ \|\bY\|_{L^1(\C)}+ \|\bY\|_{L^{2,\infty}(\C)}\right). 
\ee
Now exactly as in the previous lemma it is possible to solve the corresponding homogeneous problem on $D^2$
\be\label{eq:Homogen}
\lf\{
\begin{array}{l}
\p _z V^j=- \sum_{k=1}^m \gamma^j_k V^k \quad \text{on } D^2\\[5mm]
\Im V^j= \Im \tilde U^j \quad \text{on } \p D^2\quad.
\end{array}
\rg.
\ee
and the solution $V^j$ satisfies the estimates
\be\label{estVbis}
\|\bV\|_{W^{1,(2,\infty)}(D^2)}\leq C \|\Im(\tilde{\bU})\|_{W^{1,(2,\infty)}(\C)}.
\ee
Moreover the imaginary part $\Im(V^j)$ solves the following problem
\be\label{eq:HomogenImV}
\lf\{
\begin{array}{l}
\triangle \Im(V^j) + 4 \Im\left[\p_{\bar{z}} \left(\sum_{k=1}^m \gamma^j_k V^k  \right)\right] =0\quad \text{on } D^2\\[5mm]
\Im V^j= \Im \tilde U^j \quad \text{on } \p D^2\quad;
\end{array}
\rg.
\ee
then, estimate \eqref{eq:estHessImU}  and elliptic regularity imply 
\be \label{eq:estHessImV}
\|\nabla^2 \Im(V^j)\|_{L^q(D^2)}\leq C\left(\|\Im(D_{\bar z}Y^j) \|_{L^q(D^2)}+ \|\bY\|_{L^1(\C)}+ \|\bY\|_{L^{2,\infty}(\C)}\right). 
\ee 
Now, as in the previous lemma, the function $U^j=\tilde U^j-V^j$ is a solution to the original problem \eqref{eq:DzU=Ybis}; moreover collecting \eqref{eq:estTW12}, \eqref{eq:estHessImU},\eqref{estVbis} and \eqref{eq:estHessImV} we obtain the desired estimate
$$
\|\bU\|_{W^{1,(2,\infty)}(D^2)}+ \|\nabla^2 \Im(\bU)\|_{L^q(D^2)} \leq C \left(  \|\Im(D_{\bar z}Y^j) \|_{L^q(D^2)}+ \|\bY\|_{L^1(\C)}+ \|\bY\|_{L^{2,\infty}(\C)}\right). 
 $$
\end{proof}

%\section{Some variational properties of the Willmore functional in manifolds: differentiability and lower semicontinuity}
\subsection{Differentiability of the Willmore functional in ${\mathcal F}_{\Sp^2}$, identification of the first differential and lower semicontinuity under $W_{loc}^{2,2}$ weak convergence}

Let us start with two computational lemmas whose utility will be clear later in the subsection.

\begin{Lm}\label{lem:PreInt}
Let $\bP$ be a smooth immersion of the disc $D^2$ into the Riemannian manifold $(M^n,h)$. Let $\bX\otimes \bv \in \Gamma_{D^2}(T_{\bP}M \otimes TD^2)$ and $\bw \in \Gamma_{D^2}(T_{\bP}M)$, recall the notation introduced in \eqref{def:Dg}, \eqref{def:*g}, \eqref{def:D*} and \eqref{def:ScalProd}. Then
\be\label{eq:PreInt}
<D_g^{*_g}(\bX \otimes \bv), \bw>=<\bX \otimes \bv,D_g \bw>-div_g \bu+<\bX,\bw> div_g \bv,  
\ee
where $\bu\in \Gamma_{D^2}(TD^2)$ is the vector field defined below. Let $\bbf_1,\bbf_2$ be a positve orthonormal frame of $TD^2$, write $\bv=v^1 \bbf_1+v^2 \bbf_2$, then define
$$\bu:=<v_1 \bX, \bw>_h \bbf_1+ <v_2 \bX, \bw>_h \bbf_2.$$
Notice that $\bu$ is independent of the choice of the frame $f_i$, i.e. it is a well defined vector field on $D^2$.
\hfill $\Box$
\end{Lm}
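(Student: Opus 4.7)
The identity is the pointwise unfolding of the formal adjointness between $D_g$ and $D_g^{*_g}$: upon integration over $D^2$ the divergence $div_g\bu$ would contribute only a boundary term, and the remaining pieces are the explicit bookkeeping of the correction coming from $\bv$ not being divergence-free. The plan is to verify the identity pointwise at an arbitrary $p\in D^2$ by choosing a positive orthonormal frame $(\bbf_1,\bbf_2)$ of $(TD^2,g)$ that is parallel at $p$ -- always achievable by taking $g$-normal coordinates at $p$ -- and then to conclude by tensoriality, since both sides are well-defined independently of the choice of frame.

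Setting $\bbe_i:=\bP_*(\bbf_i)$, at $p$ every Christoffel symbol of $g$ in the $\bbf_i$-frame vanishes, so $div_g\bv(p)=\sum_i\bbf_i(v^i)$ and every frame-derivative of $\bbf_i$ drops out of the computations below. For the left-hand side, I would use $*_g\bbf_1=\bbf_2$, $*_g\bbf_2=-\bbf_1$ to rewrite $*_g(\bX\otimes\bv)=(v^1\bX)\otimes\bbf_2-(v^2\bX)\otimes\bbf_1$, then apply $D_g$ as in \eqref{def:Dg}: the terms involving $\bbf_i\wedge\bbf_i=0$ drop out and the survivors combine to $[D_{\bbe_1}(v^1\bX)+D_{\bbe_2}(v^2\bX)]\otimes(\bbf_1\wedge\bbf_2)$ at $p$. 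A final $*_g$ collapsing $\bbf_1\wedge\bbf_2$ to $1$ together with the sign in $D_g^{*_g}=-*_gD_g*_g$ will yield
\[
D_g^{*_g}(\bX\otimes\bv)(p)=-\sum_i D_{\bbe_i}(v^i\bX)=-\sum_i\bbf_i(v^i)\,\bX-\sum_i v^i\,D_{\bbe_i}\bX
\]
after one application of Leibniz, and pairing with $\bw$ produces the explicit form of the left-hand side.

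For the right-hand side, \eqref{def:Dg} with $p=0$ gives $D_g\bw=\sum_j D_{\bbe_j}\bw\otimes\bbf_j$, whence $<\bX\otimes\bv,D_g\bw>=\sum_j v^j<\bX,D_{\bbe_j}\bw>_h$ by \eqref{def:ScalProd}. Since the pullback connection is metric, the identity $\bbf_i<\bX,\bw>_h=<D_{\bbe_i}\bX,\bw>_h+<\bX,D_{\bbe_i}\bw>_h$ will let me expand, at $p$,
\[
div_g\bu=\sum_i\bbf_i(v^i)<\bX,\bw>_h+\sum_i v^i\bigl(<D_{\bbe_i}\bX,\bw>_h+<\bX,D_{\bbe_i}\bw>_h\bigr).
\]
Substituting these formulas into $<\bX\otimes\bv,D_g\bw>-div_g\bu+<\bX,\bw>div_g\bv$, the terms $v^j<\bX,D_{\bbe_j}\bw>_h$ cancel against the corresponding pieces of $-div_g\bu$, and the scalar coefficients of $<\bX,\bw>_h$ collapse cleanly against $<\bX,\bw>div_g\bv$, leaving precisely the expression computed for the left-hand side.

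The main (essentially only) obstacle will be the careful bookkeeping of wedge-product signs and Leibniz derivatives; the choice of a frame parallel at $p$ annihilates every Christoffel contribution and reduces the verification at $p$ to straightforward linear algebra. Since $p\in D^2$ is arbitrary and both sides of the identity are tensorial, the pointwise identity will then hold on all of $D^2$.
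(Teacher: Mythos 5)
Your strategy---verify the identity at a point $p$ in a $g$-normal frame so that the Christoffel contributions of $D^2$ vanish, then invoke frame-independence of both sides---is sound and somewhat slicker than the paper's, which works with a general orthonormal frame and explicitly checks that the frame-derivative terms $<\bv,D_{\bbf_1}\bbf_1 + D_{\bbf_2}\bbf_2>$ in \eqref{eq:divbv} and \eqref{eq:divbu} cancel against one another. The right-hand side computation you give ($<\bX\otimes\bv,D_g\bw>$, $div_g\bu$, $div_g\bv$ at $p$) is correct and matches the paper's.

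However, there is a genuine error in your left-hand side. You rewrite $*_g(\bX\otimes\bv) = (v^1\bX)\otimes\bbf_2 - (v^2\bX)\otimes\bbf_1$ and then apply \eqref{def:Dg} to each summand with $\vec\alpha=\bbf_j$, which produces the Leibniz expansion $D_{\bbe_i}(v^j\bX)$, hence $D_g^{*_g}(\bX\otimes\bv)=-\sum_i D_{\bbe_i}(v^i\bX)$. But the operator $D_g$ of \eqref{def:Dg} differentiates only the $T_{\bP}M$ factor and is $C^\infty(D^2)$-linear in the $\wedge^p TD^2$ slot, so it must be applied to $\bX\otimes(v^1\bbf_2-v^2\bbf_1)$ with the scalar left in $\vec\alpha$: this gives $\sum_i D_{\bbe_i}\bX\otimes(\bbf_i\wedge(v^1\bbf_2-v^2\bbf_1))=(v^1D_{\bbe_1}\bX+v^2D_{\bbe_2}\bX)\otimes(\bbf_1\wedge\bbf_2)$, whence
\[
D_g^{*_g}(\bX\otimes\bv)=-\bigl(v^1D_{\bbe_1}\bX+v^2D_{\bbe_2}\bX\bigr),
\]
which is exactly the paper's \eqref{eq:Dg*X}, \emph{without} the extra $-\sum_i\bbf_i(v^i)\,\bX$ you carry.

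This discrepancy is not benign. Your right-hand side computation reduces at $p$ to $-\sum_i v^i<D_{\bbe_i}\bX,\bw>$: the $<\bX,\bw>\sum_i\bbf_i(v^i)$ pieces of $div_g\bu$ and $<\bX,\bw>div_g\bv$ cancel against each other, so no $<\bX,\bw>$-coefficient survives on that side. Your left-hand side, on the other hand, retains $-(div_g\bv)<\bX,\bw>$, so the two sides do not match, contrary to your concluding claim. Once you compute $D_g^{*_g}(\bX\otimes\bv)=-\sum_i v^iD_{\bbe_i}\bX$ as above, your argument closes exactly.
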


\begin{proof}
Call $\bbe_i:=\bP_*(\bbf_i)$ the positive orthonormal frame of $\bP_* (TD^2)$ associated to $\bbf_1,\bbf_2$; a straightforward computation using just the definitions  \eqref{def:Dg}, \eqref{def:*g} and \eqref{def:D*} gives
\be \label{eq:Dg*X}
<D_g^{*_g}[\bX \otimes ( v^1 \bbf_1+ v^2 \bbf_2)], \bw> = -<v_1 D_{\bbe_1} \bX+ v^2 D_{\bbe_2} \bX, \bw>.
\ee
Writing the right hand side as $-< D_{\bbe_1} (v^1 \bX)+  D_{\bbe_2} (v^2 \bX), \bw> + <\bX,\bw> \left(\bbe_1[v^1]+\bbe_2[v^2]  \right) $, where $\bbe_i[v_i]$ denotes the derivative of the function $v^i$ with respect to $\bbe_i$,  we can express \eqref{eq:Dg*X} as
\begin{eqnarray} 
<D_g^{*_g}[\bX \otimes ( v^1 \bbf_1+ v^2 \bbf_2)], \bw> &=& < v^1 \bX, D_{\bbe_1}\bw> + < v^2 \bX, D_{\bbe_2}\bw>\nonumber \\
&&- \bbe_1[< v^1 \bX, \bw>]-  \bbe_2[< v^2 \bX, \bw>] \nonumber \\
&&+ <\bX,\bw> \left(\bbe_1[v^1]+\bbe_2[v^2]  \right). \label{eq:Dg*X1}
\end{eqnarray}
Notice that the first line of the right hand side is exactly $<\bX\otimes \bv, D_g \bw >.  $
Observe that, through the parametrization $\bP$, we can identify $TD^2$ and $\bP_*(TD^2)$, moreover noticing that for fixed $i=1,2$  we have $<D_{\bbe_i} \bbe_i, \bbe_i>= \frac{1}{2}\bbe_i [<\bbe_i, \bbe_i >]=0 $, after some easy computations we get
that 
\be \label{eq:divbv}
\bbe_1[v^1]+\bbe_2[v^2]=\bbf_1[v^1]+\bbf_2[v^2]= div_g (\bv)+ <\bv, D_{\bbf_1} \bbf_1+D_{\bbf_2} \bbf_2>,
\ee
where, by definition, $div_g (\bv):=\sum_{i=1,2} <D_{\bbf_i} \bv, \bbf_i>$ and $D$ is intented as the covariant derivative on $TD^2$ endowed with the metric $g:=\bP^*h$ (notice that the covariant derivative in $M$ along $\bP(D^2)$ projected on $\bP_*(TD^2)$ correspond to the covariant derivative on $(D^2,g)$ via the identification given by the immersion $\bP$). 
\\Recall we defined $\bu:=<v_1 \bX, \bw>_h \bbf_1+ <v_2 \bX, \bw>_h \bbf_2=u^1 \bbf_1+u^2 \bbf_2 \in TD^2$; an easy computation gives
\be\label{eq:divbu}
\bbf_1[< v^1 \bX, \bw>]+  \bbf_2[< v^2 \bX, \bw>]= div_g \bu + <\bX, \bw> <\bv, D_{\bbf_1} \bbf_1+D_{\bbf_2} \bbf_2>.
\ee
Now combining \eqref{eq:Dg*X1}, \eqref{eq:divbv} and \eqref{eq:divbu} we get the thesis.
\end{proof}

\begin{Lm}{\bf [Integration by parts in Willmore equation]} \label{Lm:IntPart}
Let $\bP$ be a smooth immersion of the disc $D^2$ into the Riemannian manifold $(M^n,h)$ and let $\bw \in \Gamma_{D^2}(T_{\bP}M)$ smooth with compact support in $D^2$. Then
\be
\int_{D^2} \left[<\frac{1}{2} D^{*_g}_g \left[D_g \bH -3 \pi_{\bn} (D_g \bH) + \star_h\left((*_g D_g \bn) \wedge_M \bH \right)  \right] -\ti{R}(\bH)+R^\perp_{\vec{\Phi}}(T\vec{\Phi}), \; \bw> \right] dvol_g=
\ee
$$
\begin{array}{l}
\ds\int_{D^2} \left(< \bH, \frac{1}{2} D^{*_g}_g \left[D_g \bw -3 \pi_{\bn} (D_g \bw)\right]> +  <\star_h\left((*_g D_g \bn) \wedge_M \bH \right), D_g \bw>\right)\ dvol_g\\[5mm]
+\ds\int_{D^2}\ < -\ti{R}(\bH)+R^\perp_{\vec{\Phi}}(T\vec{\Phi}), \bw> \ dvol_g\quad.
\end{array}
 $$
 \hfill $\Box$
\end{Lm}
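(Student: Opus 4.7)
My plan is to prove the identity as two successive integrations by parts, leveraging the fact that $D^{*_g}_g$ is, by construction, the formal $L^2$-adjoint of $D_g$ with respect to the natural inner products \eqref{def:ScalProd} and the volume form $dvol_g$. First I would note that the algebraic zero-order terms $-\ti{R}(\bH)+R^\perp_{\vec{\Phi}}(T\vec{\Phi})$ appear identically on both sides of the stated identity and so cancel; it is therefore enough to establish the corresponding identity for the differential part
\[
\int_{D^2} \Big\langle \tfrac{1}{2}\, D^{*_g}_g \big[D_g \bH - 3\pi_{\bn}(D_g \bH) + \star_h\big((*_g D_g \bn)\wedge_M \bH\big)\big],\, \bw \Big\rangle\, dvol_g.
\]

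For the first integration by parts, I would invoke Lemma~\ref{lem:PreInt} applied to each of the three summands inside the bracket, written in a local frame $\bbf_1,\bbf_2$ of $TD^2$. Since $\bw$ (and hence the auxiliary vector field $\bu$ produced by the lemma) has compact support in $D^2$, the correction $\int_{D^2} \mathrm{div}_g \bu\, dvol_g$ vanishes by the classical divergence theorem; working in coordinate frames (so that $\bv=\partial/\partial x^j$) and glueing via a partition of unity handles the remaining $\langle\bX,\bw\rangle\,\mathrm{div}_g \bv$ correction, so that one recovers the clean formal-adjoint identity $\int \langle D^{*_g}_g \bA,\bw\rangle\,dvol_g = \int \langle \bA, D_g\bw\rangle\,dvol_g$ for every smooth section $\bA \in \Gamma(T_{\bP}M \otimes TD^2)$. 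Applied to our $\bA$, this couples $D_g\bw$ with each of the three summands of the bracket.

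For the second integration by parts, performed only on the first two summands, I would use the pointwise self-adjointness of $\pi_{\bn}$ (an orthogonal projection on $T_{\bP}M$, extended trivially to $T_{\bP}M\otimes TD^2$), which gives $\langle\pi_{\bn}(D_g \bH), D_g\bw\rangle = \langle D_g\bH, \pi_{\bn}(D_g\bw)\rangle$ pointwise, and then applying the formal-adjoint identity once more to transfer $D_g$ from $\bw$ back to $\bH$. This produces the first term $\int \langle \bH,\tfrac{1}{2} D^{*_g}_g [D_g\bw - 3\pi_{\bn}(D_g\bw)]\rangle\,dvol_g$ of the right-hand side, while the third summand $\star_h((*_g D_g \bn)\wedge_M \bH)$ is deliberately left coupled to $D_g\bw$: integrating it by parts once more would produce derivatives of $\bn$ that are not controlled in the $W^{1,\infty}\cap W^{2,2}$ setting needed later, so arresting the procedure at this stage is exactly what preserves the right object for the weak theory. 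The main subtlety is the careful handling of the non-tensorial correction $\langle \bX,\bw\rangle\,\mathrm{div}_g \bv$ in Lemma~\ref{lem:PreInt}; once one verifies via coordinate frames that $D^{*_g}_g$ really is the $L^2$-adjoint of $D_g$ on compactly supported sections, the rest of the argument is routine.
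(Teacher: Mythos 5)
Your overall scheme is the paper's scheme: reduce to the formal $L^2$-adjointness of $D^{*_g}_g$ and $D_g$, integrate by parts once to couple $D_g\bw$ with all three summands, then use the pointwise self-adjointness of $\pi_{\bn}$ and integrate by parts a second time on the first two summands, leaving $\star_h\left((*_g D_g \bn) \wedge_M \bH \right)$ coupled to $D_g\bw$. That part, as well as the observation that the zero-order curvature terms cancel on both sides, is correct and matches the paper exactly.

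The gap is in how you dispose of the non-tensorial correction $\langle\bX,\bw\rangle\,\mathrm{div}_g\bv$ coming from Lemma~\ref{lem:PreInt}. You say that ``working in coordinate frames (so that $\bv=\partial/\partial x^j$) and glueing via a partition of unity handles'' it, but this is not an argument: for a coordinate frame one has $\mathrm{div}_g(\partial/\partial x^j)=\partial_{x^j}\log\sqrt{\det g}\not\equiv 0$, so the correction does not vanish, and a partition of unity cannot make it vanish. What the paper actually does is evaluate the identity pointwise: fix an arbitrary $x_0\in D^2$, choose for $\bbf_1,\bbf_2$ the coordinate frame of $g$-normal coordinates centred at $x_0$, so that $D\bbf_i(x_0)=0$ and hence $\mathrm{div}_g\bbf_i(x_0)=0$; at that one point the formula of Lemma~\ref{lem:PreInt} reduces to $\langle D^{*_g}_g[D_g\bH],\bw\rangle=\langle D_g\bH,D_g\bw\rangle-\mathrm{div}_g\bu$, and since every term in this last identity (including $\bu$, as remarked in Lemma~\ref{lem:PreInt}) is defined intrinsically, the identity holds at every point. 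This pointwise-normal-coordinates-plus-intrinsicness step is the heart of establishing the adjoint identity, and it is precisely the step you flag as ``the main subtlety'' without actually carrying it out. Once that is made explicit, the divergence theorem kills $\int_{D^2}\mathrm{div}_g\bu\,dvol_g$ and the rest of your outline goes through as written.
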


\begin{proof}
Let us start considering $\int_{D^2}< D^{*_g}_g [D_g \bH],\bw> dvol_g$. Fix a point $x_0\in D^2$, take normal coordinates $x^i$ centred in $x_0$ with respect to the metric $g=\bP^* h$, and call $f_i:=\frac{\p}{\p x^i}$ the coordinate frame; observe it is orthonormal at $x_0$ and $D f_i=0$ at $x_0$. By Lemma \ref{lem:PreInt}, we have
$$< D^{*_g}_g [D_g \bH], \bw>= < D_g \bH, D_g \bw>-div_g \bu+<D_{\bbf_1} \bH,\bw> div_g \bbf_1+ <D_{\bbf_2} \bH,\bw> div_g \bbf_2$$
for some vector field $\bu$ compactly supported in $D^2$. Observe that, at $x_0$, the condition $D\bbf_i=0$ implies 
$$div_g \bbf_i=<D_{\bbf_1} \bbf_i, \bbf_1>+<D_{\bbf_2} \bbf_i, \bbf_2>=0.$$
Therefore taking $\bbf_i$ to coincide at $x_0$ with the frame associated to normal coordinates centred at $x_0$ we obtain 
\be\label{eq:IntPartsP1}
< D^{*_g}_g [D_g \bH], \bw>= < D_g \bH, D_g \bw>-div_g \bu;
\ee
since all the terms are defined intrinsecally, the identity is true intrinsecally at every point $x_0 \in D^2$. Now integrate \eqref{eq:IntPartsP1} on $D^2$ and, observing that $\bu$ is compactly supported, use the divergence theorem to infer
\be\nonumber 
\int_{D^2}< D^{*_g}_g [D_g \bH], \bw> dvol_g=\int_{D^2} < D_g \bH, D_g \bw> dvol_g.
\ee
Repenting the same argument we have also $\int_{D^2}< D^{*_g}_g [D_g \bw], \bH> dvol_g=\int_{D^2} < D_g \bw, D_g \bH> dvol_g$, so
\be\label{eq:IntParts1}
\int_{D^2}< D^{*_g}_g [D_g \bH], \bw> dvol_g=\int_{D^2} < \bH,D^{*_g}_g[ D_g \bw]> dvol_g.
\ee
Using analogous arguments one checks that also
\begin{eqnarray}
\int_{D^2}< D^{*_g}_g [\pi_{\bn}(D_g \bH)], \bw> dvol_g&=&\int_{D^2} < \pi_{\bn}(D_g \bH), D_g \bw> dvol_g=\int_{D^2} < D_g \bH,\pi_{\bn}( D_g \bw)> dvol_g \nonumber \\
&=&\int_{D^2} <\bH, D^{*_g}_g[\pi_{\bn}( D_g \bw)]> dvol_g. \label{eq:IntParts2}
\end{eqnarray}
Finally, along the same lines, one has
\be\label{eq:IntParts3}
\int_{D^2} <D^{*_g}_g\left[\star_h\left((*_g D_g \bn) \wedge_M \bH \right)\right], \bw> dvol_g= \int_{D^2} <\star_h\left((*_g D_g \bn) \wedge_M \bH \right), D_g \bw> dvol_g.
\ee
The thesis follows collecting \eqref{eq:IntParts1}, \eqref{eq:IntParts2} and \eqref{eq:IntParts3}.
\end{proof}

\begin{Lm}{\bf [Differentiability of $W$ and identification of $dW$]} \label{lem:dW}
Let $\bP \in \mathcal{F}_{\Sp^2}$ be a weak branched immersion of $\Sp^2$ into the $m$-dimensional Riemannian manifold $(M^m,h)$ with branched points $\{b^1,\dots,b^N\}$ and let $W(\bP):=\int_{\Sp^2}|H_{\bP}|^2 dvol_{g_{\bP}}$ be the Willmore functional. Then $W$ is Frech\'et differentiable with respect to   variations $\bw \in W^{1,\infty}\cap W^{2,2} (D^2,T_{\bP} M)$  with compact support in $\Sp^2\setminus \{b^1,\ldots,b^N\}$ in the sense that
\be\label{eq:WFrechet}
W( Exp_{\bP}[t\bw])=W (\bP)+t \;d_{\bP}W[\bw]+R^{\bP}_{\bw}[t],
\ee
where $Exp_{\bP}[t\bw](x_0)$  denotes the exponential map in $M$ centered in $\bP(x_0) \in M$ applied to the tangent vector $t\bw \in T_{\bP(x_0)}M$ and where the remainder $R^{\bP}_{\bw}[t]$ satisfyes
$$\sup \; \left\{ \left|R^{\bP}_{\bw}[t]\right| \;: \; \|\bw\|_{W^{2,2}}+\|\bw\|_{W^{1,\infty}}\leq 1 \text{ and } \supp \bw \subset K \subset \subset \Sp^2 \setminus \{b^1,\ldots,b^{N_{\bP}}\} \right\} \leq C_{\bP,K} t^2. $$
Moreover the differential $d_{\bP}W$ coincides with the Willmore equation in conservative form: for every  $\bw \in W^{1,\infty}\cap W^{2,2} (D^2,T_{\bP} M)$   with compact support in  $\Sp^2\setminus \{b^1,\ldots,b^N\}$,
\begin{eqnarray}
d_{\bP}W[\bw]&=&\int_{\Sp^2}\Big(< \bH, \frac{1}{2} D^{*_g}_g \left[D_g \bw -3 \pi_{\bn} (D_g \bw)\right]> +  <\star_h\left((*_g D_g \bn) \wedge_M \bH \right), D_g \bw>\nonumber \\
&&\quad \quad +< -\ti{R}(\bH)+R^\perp_{\vec{\Phi}}(T\vec{\Phi}), \bw> \Big) dvol_g \quad . \label{eq:dWCons}
\end{eqnarray}
Also the functional $F(\bP)=\int_{\Sp^2}|{\mathbb I}_{\bP}|^2 dvol_g$ is Frech\'et differentiable  with respect to  variations $\bw \in W^{1,\infty}\cap W^{2,2} (\Sp^2,T_{\bP} M)$  with compact support in $\Sp^2\setminus \{b^1,\ldots,b^N\}$ in the sense above, and 
\begin{eqnarray}
d_{\bP}F[\bw]&=&\int_{\Sp^2}\Big(< \bH,  D^{*_g}_g \left[D_g \bw -3 \pi_{\bn} (D_g \bw)\right]> + 2 <\star_h\left((*_g D_g \bn) \wedge_M \bH \right), D_g \bw>\nonumber \\
&& +2< -\ti{R}(\bH)+R^\perp_{\vec{\Phi}}(T\vec{\Phi})-(D\,R)(T\bP)-2{\frak R}_{\bP}(T\bP)-2\bar{K}(T\bP)\bH, \bw> \Big) dvol_g \quad . \label{eq:dF}
\end{eqnarray}
Finally also the area functional $A(\bP)=\Area_{g_{\bP}}(\Sp^2)$ is Frech\'et differentiable  with respect to   variations $\bw \in W^{1,\infty}\cap W^{2,2} (D^2,T_{\bP} M)$  with compact support in $\Sp^2\setminus \{b^1,\ldots,b^N\}$ in the sense above, and 
\be\label{eq:dA}
d_{\bP}A[\bw]=-\int_{\Sp^2} <2\bH,\bw> dvol_g.
\ee
\hfill $\Box$
\end{Lm}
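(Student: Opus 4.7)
\textbf{Proof plan for Lemma \ref{lem:dW}.}

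Since $\bw$ is compactly supported in $\Sp^2\setminus\{b^1,\dots,b^N\}$, we localize to an open neighbourhood $U$ of $\supp\bw$ on which $\bP$ is a genuine $W^{2,2}\cap W^{1,\infty}$ immersion with $\text{ess\,inf}|d\bP|>0$, so the pull-back metric $g=\bP^\ast h$ is uniformly non-degenerate on $U$. For $|t|$ sufficiently small the map $\bP_t:=\text{Exp}_{\bP}[t\bw]$ is then a $W^{2,2}\cap W^{1,\infty}$ immersion on $U$ which agrees with $\bP$ outside $\supp\bw$, and the metric $g_t=\bP_t^\ast h$, the unit normal $m-2$-vector $\vec n_{\bP_t}$, the volume form $dvol_{g_t}$ and the mean curvature vector $\bH_{\bP_t}$ all depend on $t$ in a $C^2$ fashion with estimates controlled by $\|\bw\|_{W^{2,2}\cap W^{1,\infty}}$ and by geometric quantities of $\bP$ on $\supp\bw$. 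This gives at once the Fr\'echet expansion \eqref{eq:WFrechet} with a remainder $R^\bP_\bw[t]=O(t^2)$ uniformly on the unit ball; the whole content of the lemma is the identification of the linear term $d_\bP W[\bw]$.

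To identify $d_\bP W[\bw]$, I first smooth $\bP$ by convolution on $U$ away from the branch points, so that the classical first variation formula of Weiner \cite{Wei} applies and gives
\[
d_\bP W[\bw]=\int_{\Sp^2}\big\langle \Delta_\perp\bH+\tilde A(\bH)-2|\bH|^2\bH-\tilde R(\bH),\bw\big\rangle\,dvol_g.
\]
Applying Theorem~\ref{thm:ConsGen} rewrites the integrand in the conservative form
\[
\tfrac{1}{2}D^{*_g}_g\!\big[D_g\bH-3\pi_{\bn}(D_g\bH)+\star_h((*_gD_g\bn)\wedge_M\bH)\big]+R^\perp_{\bP}(T\bP)-\tilde R(\bH),
\]
and then Lemma~\ref{Lm:IntPart} transfers the two derivatives $D^{*_g}_g$ and the one $D_g$ onto $\bw$, yielding exactly \eqref{eq:dWCons}. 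The key point of this step is that both sides of \eqref{eq:dWCons} make sense and are continuous in $\bP$ with respect to the $W^{2,2}\cap W^{1,\infty}$ topology on $U$ (every factor that appears is controlled by $\bH\in L^2$, $D_g\bn\in L^2$ and $D_g\bw\in L^2\cap L^\infty$), so one can pass to the limit in the mollification and remove the auxiliary smoothness assumption.

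The argument for $F$ reduces to the argument for $W$ via the pointwise Gauss identity
\[
\tfrac{1}{2}|{\mathbb I}|^2=2|\bH|^2+\bar K(T\bP)-K_\bP
\]
used in the proof of Corollary~\ref{co:ConsFGen}. Integrating over $\Sp^2$ and observing that $\int_{\Sp^2}K_\bP\,dvol_g$ is locally a null Lagrangian (so its variation with respect to $\bw$ supported away from the branch points vanishes), we obtain $d_\bP F[\bw]=2\,d_\bP W[\bw]+\delta\!\int\bar K(T\bP)\,dvol_g\,[\bw]$, and Lemma~\ref{lem:VarK} computes the last term and produces the curvature contribution $-(D\,R)(T\bP)-2{\frak R}_\bP(T\bP)-2\bar K(T\bP)\bH$ in \eqref{eq:dF}. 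The area formula \eqref{eq:dA} is the classical first variation $\delta A[\bw]=-\int\langle 2\bH,\bw\rangle dvol_g$, which is easily justified at this regularity by the same mollification-plus-passage-to-limit argument.

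The only genuinely delicate point is verifying the uniform $O(t^2)$ bound on $R^\bP_\bw[t]$: one has to control, on $\supp\bw$, the Taylor remainders of the first and second fundamental forms of $\bP_t$ in the $L^2$ norm, which reduces to bounding second-order derivatives of the exponential map composed with $t\bw$ in terms of $\|\bw\|_{W^{2,2}\cap W^{1,\infty}}$ and of the intrinsic data of $\bP$ on the compact set $K\supset\supp\bw$. This is straightforward since $\bP(K)$ lies in a uniformly regular region of $M$ and $\bP|_K$ is a $W^{2,2}\cap W^{1,\infty}$ immersion with uniformly non-degenerate metric; all the constants depend on $\bP$ and $K$ but not on $\bw$ in the unit ball.
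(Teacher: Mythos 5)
Your plan follows the same route as the paper: localize away from the branch points, establish the Taylor expansion of $t\mapsto W(Exp_{\bP}[t\bw])$ with a uniform $O(t^2)$ remainder, then mollify $\bP$ to invoke Weiner's classical first variation, rewrite it via Theorem~\ref{thm:ConsGen}, integrate by parts via Lemma~\ref{Lm:IntPart}, and pass to the limit in the mollification; the reductions of $F$ via Corollary~\ref{co:ConsFGen}/Lemma~\ref{lem:VarK} and of $A$ via the classical area variation are also what the paper does.

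Two remarks on the places where your sketch is thinner than what is actually needed. First, the assertion that $C^2$ dependence of $g_t,\bn_t,\bH_t$ on $t$ ``gives at once'' the expansion \eqref{eq:WFrechet} with uniform remainder sits in tension with your final paragraph, which correctly identifies this estimate as the delicate point. The paper's mechanism is the structural factorization $|\bH_\bP|^2 dvol_{g_\bP}=|\bF|^2(\nabla^2\bP,\nabla\bP,\bP)\,dvol_{g_0}$ together with the observation that $\bF$ is \emph{linear} in the Hessian variable $\vec\xi$; this is what guarantees the Taylor remainder is at worst quadratic in $\nabla^2\bw$ with coefficients controlled by $\nabla\bP,\bP,\nabla\bw,\bw$, whence the bound $\sup\{|R^{\bP}_{\bw}[t]|:\|\bw\|\le1,\ \supp\bw\subset K\}\le C_{\bP,K}t^2$. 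Without this pointwise structure one cannot take the supremum over the unit ball and conclude Fr\'echet (as opposed to merely G\^ateaux) differentiability. Second, to pass to the limit $\ep\to 0$ you need not only that the right-hand side of \eqref{eq:dWCons} is continuous in $\bP$ in the $W^{2,2}\cap W^{1,\infty}$ topology (which the paper checks term by term by dominated convergence, using \eqref{PhietoPhi}--\eqref{eq:dPhibb}), but also that $dW_{\bP_\ep}[\bw]\to d_{\bP}W[\bw]$; this is not just ``continuity of both sides'' but follows from the convergence of the functionals $W(\bP_\ep)\to W(\bP)$, $W(\bP_\ep+t\bw)\to W(\bP+t\bw)$ combined with the remainder estimate being \emph{uniform in $\ep$}, a step you should make explicit. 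With these two points addressed your argument matches the paper's.
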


\begin{proof}
Let $\bP \in \mathcal{F}_{\Sp^2}$ and observe that the mean curvature $\bH_{\bP} \in T_{\bP}M^n$ is a function of $(\nabla^2 \bP, \nabla \bP, \bP)$, where $\nabla^2 \bP$ and $\nabla \bP$ are respectively the hessian and the gradient of $\bP$:
\begin{equation}
\bH_{\bP}= \tilde{\bH} (\nabla^2 \bP, \nabla \bP, \bP),
\end{equation}
where 
\be
\tilde{\bH}:((T \Sp^2)^2 \otimes T_{\bP}M, T \Sp^2 \otimes T_{\bP}M, M ) \to T_{\bP}M, \quad (\vec{\xi},\vec{q},\vec{z}) \mapsto  \tilde{\bH}(\vec{\xi},\vec{q},\vec{z}).
\ee
Observe that $ \tilde{\bH}$ is smooth on the open set given by $|\vec{q} \wedge \vec{q}| >0$; moreover, for every $\vec{q}_0$ and $\vec{z}_0$, the map $\vec{\xi} \mapsto  \tilde{\bH}(\vec{\xi},\vec{q}_0,\vec{z}_0)$ is linear.
Recall also that the area form $dvol_{g_{\bP}}$ associated to the pullback metric $g_{\bP}:=\bP^*h$ is of the form
$$ dvol_{g_{\bP}} = f(\nabla \bP, \bP) \; dvol_{g_0} $$
where $dvol_{g_0}$ is the area form associated to the reference metric $g_0$ on $(\Sp^2,c_0)$, and  
$$
f:(T \Sp^2 \otimes T_{\bP}M, M ) \to \R, \quad (\vec{q},\vec{z}) \mapsto  f(\vec{q},\vec{z})
$$
is smooth on the open subset  $|\vec{q} \wedge \vec{q}| >0$. Therefore the integrand of the Willmore functional can be written as
\be\label{eq:H2F}
|\bH_{\bP}|^2 dvol_{g_{\bP}}= |\bF|^2(\nabla^2 \bP, \nabla \bP, \bP) dvol_{g_0},
\ee
where $\bF(\vec{\xi},\vec{q},\vec{z}):=\tilde{\bH}(\vec{\xi},\vec{q},\vec{z}) \sqrt{f}(\vec{q},\vec{z})$; clearly $\bF$ is smooth on the subset $|\vec{q} \wedge \vec{q}| >0$ and, for every $\vec{q}_0$ and $\vec{z}_0$, the map $\vec{\xi} \mapsto  {\bF}(\vec{\xi},\vec{q}_0,\vec{z}_0)$ is linear.

Let $\bw \in W^{2,2}\cap W^{1,\infty}(\Sp^2,T_{\bP}M)$  be an infinitesimal perturbation  supported in $\Sp^2 \setminus \{b^1,\ldots,b^{N_{\bP}}\}$, where $\{b^1,\ldots,b^{N_{\bP}}\}$ are the branch points of $\bP$;  consider, for small $t>0$, the perturbed weak branched immersion $Exp_{\bP}[t\bw]$, where $Exp_{\bP}[t\bw](x_0)$  denotes the exponential map in $M$ centered in $\bP(x_0) \in M$ applied to the tangent vector $t\bw \in T_{\bP(x_0)}M$. Observe that, by definition,
$$\int_{\Sp^2} |\bH_{Exp_{\bP}[t\bw]}|^2 dvol_{g_{Exp_{\bP}[t\bw]}}=\int_{\Sp^2} |\bF|^2 (\nabla^2 (Exp_{\bP}[t\bw]), \nabla (Exp_{\bP}[t\bw]), Exp_{\bP}[t\bw]) \; dvol_{g_0}.$$
Recall that, using the construction of conformal coordinates with estimates by Chern-Hel\'ein-Rivi\'ere, we can assume that on every compact subset $K\subset \subset \Sp^2 \setminus \{b^1,\ldots,b^{N_{\bP}}\}$, the immersion $\bP$ is  conformal  with  $\|(\log|\nabla \bP|)\|_{L^\infty(K)} \leq C_K$ for some constant $C_K$ depending on $K$. By conformality, it follows that on every compact subset $K\subset \subset \Sp^2\setminus\{b^1,\ldots,b^{N_{\bP}}\}$ there exist a positive constant $c_K$ such that $|d\bP\wedge d \bP |\geq c_K >0$. Since $\bw$ is supported away the branch points it follows that, for $t$ small enough,  $(\nabla^2 (Exp_{\bP}[t\bw]), \nabla (Exp_{\bP}[t\bw]), Exp_{\bP}[t\bw])|_{\supp (\bw)}$ is in the domain of smoothness of $\bF$. By a Taylor expansion in $t$ we get
\begin{eqnarray}
\int_{\Sp^2} |\bH_{Exp_{\bP}[t\bw]}|^2 dvol_{g_{Exp_{\bP}[t\bw]}}&=& \int_{\Sp^2} |\bH_{\bP}|^2 dvol_{g_{\bP}} \nonumber \\
&& +2t \int_{\Sp^2} \bF(\nabla^2 \bP, \nabla \bP, \bP) \cdot \partial_{\xi^k_{ij}} \bF(\nabla \bP, \bP) \partial^2_{x^i x^j} w^k dvol_{g_0} \nonumber \\
&& +2t \int_{\Sp^2} \bF(\nabla^2 \bP, \nabla \bP, \bP) \cdot \partial_{q^k_{i}} \bF(\nabla^2 \bP,\nabla \bP, \bP) \partial_{x^i} w^k dvol_{g_0} \nonumber \\
&&+2t \int_{\Sp^2} \bF(\nabla^2 \bP, \nabla \bP, \bP) \cdot \partial_{z^k} \bF(\nabla^2 \bP,\nabla \bP, \bP) w^k dvol_{g_0} \nonumber \\
&&+ t^2 \int_{\Sp^2} \p^2_{x^i x^j} w^k \p^2_{x^r x^s} w^l P^{kl}_{ijrs} (\nabla \bP, \bP, \nabla \bw, \bw)\nonumber \\
&&+t^2 \int_{\Sp^2} \p^2_{x^i x^j} w^k Q^{k}_{ij} (\nabla^2 \bP,\nabla \bP, \bP, \nabla \bw, \bw) \nonumber\\
&&+t^2 \int_{\Sp^2} S(\nabla^2 \bP,\nabla \bP, \bP, \nabla \bw, \bw) \quad , \label{eq:TaylorW}
\end{eqnarray} 
where, in the second line $\partial_{\xi^k_{ij}} \bF$ depends just on $(\nabla \bP, \bP)$ since $\bF$ is linear in $\vec{\xi}$, in the $5^{th}$ line the function $\vec{P}$ is smooth in its arguments with $\vec{P}(\nabla \bP, \bP, 0, 0)=0$, in the  $6^{th}$ line the function $\vec{Q}$ is smooth in its arguments and linear in $\nabla^2 \bP$ with $\vec{Q}(\nabla^2 \bP,\nabla \bP, \bP, 0, 0)=0$ and in the $7^{th}$ line the function $S$ is smooth in its arguments and quadratic in $\nabla^2 \bP$ with $S(\nabla^2 \bP,\nabla \bP, \bP, 0, 0)=0$.  Therefore, called $R^{\bP}_{\bw}[t]$ the sum of the last three lines  of \eqref{eq:TaylorW}, we have that
$$\sup \; \left\{ \left|R^{\bP}_{\bw}[t]\right| \;: \; \|\bw\|_{W^{2,2}}+\|\bw\|_{W^{1,\infty}}\leq 1 \text{ and } \supp \bw \subset K \subset \subset \Sp^2 \setminus \{b^1,\ldots,b^{N_{\bP}}\} \right\} \leq C_{\bP,K} t^2. $$
It follows that $\int |H_{\bP}|^2 dvol_{g_{\bP}}$ is Frech\'et-differentiable with respect to ${W^{2,2}}\cap W^{1,\infty}$  variations compactly supported away from the branch points, and the first variation $dW_{\bP_\e}[t\bw]$ is given by the sum of lines $2,3,4$ of  \eqref{eq:TaylorW}.

Now we identify the first order term in the expansion  of $\int |H_{\bP}|^2 dvol_{g_{\bP}}$   with the conservative Willmore equation we derived before in the paper. Observe that it is not completely trivial since the conservative Willmore equation has been proved for \emph{smooth}  immersions, while now $\bP$ is a \emph{weak} branched  immersion. First of all, recall that if $\vec{\Psi}$ is a smooth  immersion of the disc $D^2$ taking values in a coordinate chart of $M$, then for a smooth variation $\bw \in C^{\infty}_0 (D^2, \R^m)$ with compact support in $D^2$  we have that
\be\label{expWsmooth}
\int_{D^2} |\bH_{{\vec{\Psi}}+t\bw}|^2 dvol_{g_{{\vec{\Psi}}+t\bw}}= \int_{D^2} |\bH_{\vec{\Psi}}|^2 dvol_{g_{\vec{\Psi}}}+t dW_{\vec{\Psi}}[\bw]+\tilde{R}^{\vec{\Psi}}_{\bw}[t] ;
\ee
where the remainder $\tilde{R}^{\vec{\Psi}}_{\bw}[t]$ has the same form as the sum of the last three line of \eqref{eq:TaylorW}, and where the differential $dW_{\vec{\Psi}}[\bw]$, after the integration by parts procedure carried in Lemma \ref{Lm:IntPart}, can be written as
\be
\label{eq:DiffSmooth}
\begin{array}{l}
\ds dW_{\vec{\Psi}}[\bw]=\int_{D^2} \left(< {\bH_{\vec{\Psi}}}, \frac{1}{2} D^{*_{g_{\vec{\Psi}}}}_{g_{\vec{\Psi}}} \left[D_{g_{\vec{\Psi}}} \bw -3 \pi_{\bn_{\vec{\Psi}}} (D_{g_{\vec{\Psi}}} \bw)\right]>\right) dvol_{g_{\vec{\Psi}}} \nonumber \\[5mm]
\ds\quad + \int_{D^2}  \left( <\star_h\left((*_{g_{\vec{\Psi}}} D_{g_{\vec{\Psi}}} \bn) \wedge_M {\bH_{\vec{\Psi}}} \right), D_{g_{\vec{\Psi}}} \bw>+< -\ti{R}({\bH_{\vec{\Psi}}})+R^\perp_{\vec{\Psi}}(T{\vec{\Psi}}), \bw> \right) dvol_{g_{\vec{\Psi}}}. 
 \end{array}
 \ee

Now let us start considering the case of $\bP \in W^{1,\infty}\cap W^{2,2} (D^2)$ a weak conformal immersion with finite total curvature without branch points taking values in a coordinate chart of $M$, and let $\bw \in C^\infty_0 (D^2, \R^m)$ be  a smooth variation with compact support in $D^2$.
\\ Let $\varphi$ be a non negative compactly supported function of $C^\infty_0({\R})$ such that $\varphi$ is identically equal to 1 in a neighborhood of $0$ and
\[
2\pi \int_{\R}\varphi(t)\,t\ dt=1.
\]
Call $\varphi_\ep(t):=\ep^{-2}\ \varphi(t/\ep)$. Denote for $\ep<1/4$ and for any $x\in D^2_{1/2}$, 
\[
\vec{\Phi}_\ep(x):=\varphi_\ep(|x|)\star\vec{\Phi}:=\int_{D^2}\varphi_{\ep}(|x-y|)\ \vec{\Phi}(y)\ dy\quad.
\]
By Lemma \ref{approx-lemma} there exists $0<\ep_{\vec{\Phi}}<1/4$ such that for any $\ep<\ep_{\vec{\Phi}}$ the  map $\vec{\Phi}_\ep$ realizes a smooth immersion from $D^2_{1/2}$ into the coordinate chart, moreover we have (notice that in order to keep the notation not too heavy, in the following  we replaced $D^2_{1/2}$ by $D^2$)
\begin{eqnarray}
\bP_\e &\to& \bP \quad \text{strong in } W^{2,2}(D^2) \label{PhietoPhi} \\
\bH_\e  &\to &\bH \quad \text{strong in } L^2(D^2) \label{HetoH} \\
\bn_\e &\to& \bn \quad \text{strong in } W^{1,2}(D^2) \label{neton} 
\end{eqnarray}
and 
\begin{eqnarray}
\sup_{0<\e\leq \e_0} \|\bP_\e\|_{W^{1,\infty}} (D^2) &\leq& C < \infty \label{PhiBounded} \\
\inf_{x \in D^2} \inf_{0<\e\leq \e_0} |d\bP_\e \wedge d\bP_\e| &\geq& \frac{1}{C}>0. \label{eq:dPhibb}
\end{eqnarray}
Since $\bP_\e$ is smooth, the Willmore functional computed on $\bP_\e+t \bw$ expands as in \eqref{expWsmooth}; observe that, thanks to \eqref{PhietoPhi}, \eqref{PhiBounded} and \eqref{eq:dPhibb}, the remainder $\tilde{R}^{\vec{\bP_\e}}_{\bw}[t]$ satisfyes
\be\label{eq:EstRemTilde}
\sup_{0<\e\leq \e_0}\; \sup_{\|\bw\|_{W^{1,\infty}\cap W^{2,2}(D^2)}\leq 1} \left|\tilde{R} ^{\bP_\e}_{\bw}[t]\right| \leq C_{\bP} t^2.
\ee
Observe moreover that, by \eqref{PhietoPhi}, \eqref{HetoH} and \eqref{PhiBounded}, we have that $|\bH_\e|^2 dvol_{g_\e}$ is dominated in $L^1(D^2)$ for $\e\leq \e_0$, and converges almost everywhere on $D^2$ to $|\bH|^2 dvol_g$; therefore, by Dominated Convergence Theorem, we have 
\be\label{eq:convWPhie}
\int_{D^2} |\bH_\e|^2 dvol_{g_\e} \to \int_{D^2} |\bH|^2 dvol_g .
\ee 
Moreover, using \eqref{PhietoPhi} and \eqref{eq:dPhibb} we have that, for $\|\bw\|_{W^{1,\infty}\cap W^{2,2}(D^2)}\leq 1$ and $t$ small enough, $\bP_\e+t \bw \to \bP+t \bw$ strongly in  $W^{2,2}(D^2)$ and $\bH_{\bP_\e+t\bw}  \to \bH_{\bP+t \bw}$ strongly in $L^2(D^2)$; of course it  still holds  $\sup_{0<\e\leq \e_0} \|\bP_\e+t \bw\|_{W^{1,\infty}} (D^2) \leq C < \infty$. Therefore, with the same argument above, we get
\be\label{eq:convWPhiew}
\int_{D^2} |\bH_{\bP_{\e}+t\bw}|^2 dvol_{g_{\bP_{\e}+t\bw}} \to \int_{D^2} |\bH_{\bP+t\bw}|^2 dvol_{g_{\bP+t\bw}} .
\ee
Combining \eqref{eq:TaylorW}, \eqref{eq:convWPhie} and \eqref{eq:convWPhiew} gives
$$
dW_{\bP}[\bw]=\lim_{t\to 0} \frac{W(\bP+t\bw)-W(\bP)}{t}=\lim_{t\to 0} \lim_{\e\to 0} \frac{W(\bP_\e+t\bw)-W(\bP_\e)}{t}=\lim_{t\to 0} \lim_{\e\to 0} \left( dW_{\bP_\e}[\bw]+ \frac{\tilde{R} ^{\bP_\e}_{\bw}[t]}{t}\right);
$$ 
recalling \eqref{eq:EstRemTilde}, we obtain
\be\label{dWetodW}
dW_{\bP_\e}[\bw] \to dW_{\bP}[w] \quad \text{as } \e\to 0.
\ee
Therefore in order to prove that, as in the smooth situation, $dW_{\bP}$ is the Willmore equation in conservative form, it is sufficient to show that 
\begin{eqnarray}
&&\int_{D^2} \left(< {\bH_{\e}}, \frac{1}{2} D^{*_{g_{\e}}}_{g_{\e}} \left[D_{g_{\e}} \bw -3 \pi_{\bn_{\e}} (D_{g_{\e}} \bw)\right]>\right) dvol_{g_{\e}} \nonumber \\
&&\qquad + \int_{D^2}  \left( <\star_h\left((*_{g_{\e}} D_{g_{\e}} \bn_\e) \wedge_M {\bH_{\e}} \right), D_{g_{\e}} \bw>+< -\ti{R}({\bH_{\e}})+R^\perp_{\bP_\e}(T{\bP_\e}), \bw> \right) dvol_{g_{\e}}. \nonumber \\
&& \to\int_{D^2} \left(< {\bH_{}}, \frac{1}{2} D^{*_{g_{}}}_{g_{}} \left[D_{g_{}} \bw -3 \pi_{\bn_{}} (D_{g_{}} \bw)\right]>\right) dvol_{g_{}} \nonumber \\
&&\qquad + \int_{D^2}  \left( <\star_h\left((*_{g_{}} D_{g_{}} \bn) \wedge_M {\bH_{}} \right), D_{g_{}} \bw>+< -\ti{R}({\bH_{}})+R^\perp_{\bP}(T{\bP}), \bw> \right) dvol_{g_{}}. \label{eq:dWetodW1}
\end{eqnarray}
We are going to check the convergence term by term. 

Observe that $D^{*_{g_{\e}}}_{g_{\e}}[D_{g_{\e}} \bw]= g^{ij}_\e D_{\p_{x^i}\bP_\e}D_{\p_{x^j}\bP_\e} \bw$ and, using the definitions, one computes
\begin{eqnarray}
\left(g^{ij}_\e D_{\p_{x^i}\bP_\e}D_{\p_{x^j}\bP_\e} \bw\right)^k&=&g^{ij}_\e \Big[ \p^2_{x^i x^j} w^k+ (\Gamma^k_{pq}\circ \bP_\e) \p_{x^j}w^p \p_{x^i}\Phi_\e^q+ (<grad_h \Gamma^k_{pq},\p_{x^i}\bP_\e>_h \circ \bP_\e) w^p \p_{x^j}\Phi^q \nonumber \\
&& \quad + (\Gamma^k_{pq}\circ \bP_\e) \p_{x^i}w^p \p_{x^j}\Phi^q+(\Gamma^k_{pq}\circ \bP_\e) w^p \p^2_{x^i x^j}\Phi^q\nonumber\\
&&\quad +(\Gamma^k_{lm}\circ \bP_\e)(\Gamma^l_{pq}\circ \bP_\e) w^p \p_{x^j}\Phi^q \p_{x^i}\Phi^m \Big] \nonumber\\
&=& f^k_{1,\e}+f^k_{2,\e} \quad \text{with } |f^k_{1,\e}|\leq F^k_1 \in L^{\infty}(D^2) \text{ and } |f^k_{2,\e}|\leq F^k_2 \in L^{2}(D^2);\label{Bounda)}
\end{eqnarray}
where $\Gamma^k_{pq}$ are the Christoffel symbols of $(M,h)$ which are smooth and $C^1$ bounded by the compactness of $M$. Notice that in the last equality we used \eqref{PhietoPhi} and  \eqref{PhiBounded}. Combining  \eqref{PhietoPhi}, \eqref{HetoH} and \eqref{Bounda)} we get therefore that $<\bH_\e,D^{*_{g_{\e}}}_{g_{\e}}[D_{g_{\e}} \bw]> dvol_{g_\e}$ is dominated in $L^1(D^2)$ and converges almost everywhere to $<\bH,D^{*_{g}}_{g}[D_{g} \bw]> dvol_{g}$, then by Dominated Convergence Theorem
\be\label{eq:Conva)}
\int_{D^2}<\bH_\e,D^{*_{g_{\e}}}_{g_{\e}}[D_{g_{\e}} \bw]> dvol_{g_\e} \to \int_{D^2}<\bH,D^{*_{g}}_{g}[D_{g} \bw]> dvol_{g} \quad \text{as } \e\to 0.
\ee 
Now let us consider the second summand in the first line of \eqref{eq:dWetodW1}. Observe that $D^{*_{g_{\e}}}_{g_{\e}}[\pi_{n_\e}(D_{g_{\e}} \bw)]= g^{ij}_\e D_{\p_{x^i}\bP_\e}[\pi_{\bn_\e}(D_{\p_{x^j}\bP_\e} \bw)]$; using \eqref{VI.65ab} we can write
%, analogously to \eqref{VI.65ac}, we can write
%$$\pi_{\bn_\e}(D_{\p_{x^j}\bP_\e} \bw)= D_{\p_{x^j}\bP_\e} \bw-(-1)^{m-1}(D_{\p_{x^j}\bP_\e}\bn_\e) \res(\bn_\e \res \bw)-(-1)^{m-1} \bn_\e \res((D_{\p_{x^j}\bP_\e}\bn_\e) \res \bw)$$
\begin{eqnarray}
D^{*_{g_{\e}}}_{g_{\e}}[\pi_{n_\e}(D_{g_{\e}} \bw)]&=&(-1)^{m-1} g_\e^{ij} \Big[(D_{\p_{x^i}\bP_\e} \bn_\e) \res (\bn_\e \res  D_{\p_{x^j}\bP_\e} \bw)+ \bn_\e \res((D_{\p_{x^i}\bP_\e} \bn_\e)\res D_{\p_{x^j}\bP_\e} \bw) \nonumber \\
&&\quad \quad \quad \quad \quad \quad +\bn_\e \res (\bn_\e \res  (D_{\p_{x^i}\bP_\e} D_{\p_{x^j}\bP_\e}\bw)) \Big].
\end{eqnarray}
Writing explicitely the right hand side as done for \eqref{Bounda)}, one checks that
\be\label{Boundb)}
D^{*_{g_{\e}}}_{g_{\e}}[\pi_{n_\e}(D_{g_{\e}} \bw)]=\vec{f}_{3,\e}+\vec{f}_{4,\e} \quad \text{with } |\vec{f}_{3,\e}|\leq \bF_3 \in L^{\infty}(D^2) \text{ and } |\vec{f}_{4,\e}|\leq \bF_4 \in L^{2}(D^2).  
\ee 
Combining  \eqref{PhietoPhi}, \eqref{HetoH} and \eqref{Boundb)} we get therefore that $<\bH_\e,D^{*_{g_{\e}}}_{g_{\e}}[\pi_{n_\e}(D_{g_{\e}} \bw)]> dvol_{g_\e}$ is dominated in $L^1(D^2)$ and converges almost everywhere to $<\bH,D^{*_{g}}_{g}[\pi_{\bn}(D_{g} \bw)]> dvol_{g}$, then by Dominated Convergence Theorem
\be\label{eq:Convb)}
\int_{D^2}<\bH_\e,D^{*_{g_{\e}}}_{g_{\e}}[\pi_{n_\e}(D_{g_{\e}} \bw)]> dvol_{g_\e} \to \int_{D^2}<\bH,D^{*_{g}}_{g}[\pi_{\bn}(D_{g} \bw)]> dvol_{g} \quad \text{as } \e\to 0.
\ee 
Now let us consider the first summand of the second line of \eqref{eq:dWetodW1}. Observe that 
\be\label{*gcoord}
*_{g_\e} \frac{\p}{\p x^j}= \sqrt{\det g_\e} \; \varepsilon_{jp} \;g_\e^{pq} \,\frac{\p}{\p x^q},
\ee
where $\varepsilon_{jp}$ is null if $j=p$ and equals the signature of the permutation $(1,2)\mapsto (j,p)$ if $j\neq p$;
after some straightforward computations using the definitions \eqref{def:Dg}, \eqref{def:*g},\eqref{def:starh}, \eqref{def:wedgeM}, \eqref{def:ScalProd}, we get
\begin{eqnarray}\label{eq:2line1sum}
<\star_h\left((*_{g_{\e}} D_{g_{\e}} \bn_\e) \wedge_M {\bH_{\e}} \right), D_{g_{\e}} \bw>&=& \sqrt{\det g_\e}\, g_\e^{ij}\, \varepsilon_{jp} \,g_\e^{pq} \,<\star_h\left(D_{\p_{x^i}\bP_\e} \bn_\e) \wedge_M {\bH_{\e}} \right), D_{\p_{x^q}\bP_\e} \bw> \nonumber \\
&=& f_{5,\e} \text{ with } |f_{5,\e}|\leq F_5 \in L^1(D^2).
\end{eqnarray}
Using analogous arguments as before, by dominated convergence theorem we obtain
\be\label{eq:Convc)}
\int_{D^2}<\star_h\left((*_{g_{\e}} D_{g_{\e}} \bn_\e) \wedge_M {\bH_{\e}} \right), D_{g_{\e}} \bw> dvol_{g_\e}\to \int_{D^2}<\star_h\left((*_{g} D_{g} \bn) \wedge_M {\bH} \right), D_{g} \bw> dvol_{g}.
\ee
Finally consider the last two curvature terms in \eqref{eq:dWetodW1}. By the definition  \eqref{eq:defR}, the first one writes as
\be\label{eq:d)}
<\tilde{R} (\bH_\e),\bw>=-<\sum_{i=1}^2 \Riem^h(\bH_\e, \bbe^\ep_i)\bbe^\ep_i, \pi_{\bn_\e}(\bw)>
\ee
for an orthonormal frame $\bbe^\ep_i$ of $\bP_{\ep,*}(TD^2)$; observe it is dominated in $L^1(D^2)$ and converges a. e. to  $<\tilde{R}( \bH),\bw>$ on $D^2$, so as before
\be\label{eq:Convd)}
\int_{D^2}<\tilde{R} (\bH_\e),\bw> dvol_{g_\e}\to \int_{D^2}<\tilde{R} (\bH),\bw> dvol_{g}.
\ee
Finally, by definition \eqref{Rperp} and identity \eqref{*gcoord},  
\be\label{eq:e)}
R^\perp_{\bP_\e}(T\bP_\e):= \left[\pi_T\left(\Riem^h(\bbe_1,\bbe_2) \bH \right) \right]^\perp=\sqrt{\det g_\e}\, g_\e^{ij}\, g_\e^{kl}\, \e_{lp} \, g_\e^{pq} <\Riem^h(\p_{x^i}\bP_\e, \p_{x^j}\bP_\e)\bH_\e, \p_{x^k}\bP_\e> \p_{x^q}\bP_\e.
\ee
From this explicit formula, as before, one checks that $<R^\perp_{\bP_\e}(T\bP_\e), \bw> dvol_{g_\e}$ is dominated in $L^1(D^2)$ and converges   to $<R^\perp_{\bP}(T\bP), \bw> dvol_{g}$ a.e. on $D^2$, then by Dominated Convergence Theorem
\be\label{eq:Conve)}
\int_{D^2}<R^\perp_{\bP_\e}(T\bP_\e), \bw> dvol_{g_\e} \to \int_{D^2}<R^\perp_{\bP}(T\bP), \bw> dvol_{g}.
\ee 
Combining  \eqref{eq:Conva)}, \eqref{eq:Convb)},\eqref{eq:Convc)},\eqref{eq:Convd)} and  \eqref{eq:Conve)} we obtain \eqref{eq:dWetodW1} as desired.
Let us recap what we have just proved: if $\bP$ is a $W^{1,\infty}\cap W^{2,2}$ immersion of the disc $D^2$ into a coordinate neighbourood in $M$ and $\bw\in C^{\infty}_0(D^2,\R^m)$ is a smooth variation with compact support in $D^2$, then the differential of the Willmore functional $d_{\bP}W[\bw]$ coincides with the pairing between the Willmore equation in conservative form and $\bw$. Now by approximation the same is true for variations in $W^{1,\infty}\cap W^{2,2}(D^2,\R^m)$ with compact support in $D^2$. By partition of unity, the same statement holds for $\bP\in {\mathcal F}_{\Sp^2}$ with branched points $\{b^1,\ldots,b^N\}$ and any   variation $\bw \in W^{1,\infty}\cap W^{2,2}(D^2,T_{\bP} M)$  with compact support in $\Sp^2\setminus \{b^1,\ldots,b^N\}$.

The proof regarding the differentiability of $F$ is analogous since ${\mathbb I}_{\bP}$ is a vectorial function of $(\nabla^2 \bP, \nabla \bP, \bP)$ linear in $\nabla^2 \bP$. Moreover for smooth immersions and smooth variations, combining Corollary \ref{co:ConsFGen} and Lemma \ref{Lm:IntPart}, the first variation of $F$ is exactly \eqref{eq:dF}. With the same approximation argument carried for $W$ one checks that the same expression holds for a weak immersion. 

The proof regarding the differentiability and  the expression  of the  differential of the area functional  is easier since $dvol_{g}$ is function just of $(\nabla \bP,\bP)$, and  can be performed along the same lines once recalled that in the smooth case the differential of the area functional is exactly \eqref{eq:dA}. 
\end{proof}

Let us now prove the following approximation Lemma used in the proof of Lemma \ref{lem:dW}.
\begin{Lm}
\label{approx-lemma}
Let $\vec{\Phi}$ be a conformal weak immersion in ${\mathcal F}_{D^2}$ into ${\R}^m$ without branch points. Let $\varphi$ be a non negative compactly supported function 
of $C^\infty_0({\R})$ such that $\varphi$ is identically equal to 1 in a neighborhood of $0$ and
\[
2\pi \int_{\R}\varphi(t)\,t\ dt=1.
\]
Denote $\varphi_\ep(t):=\ep^{-2}\ \varphi(t/\ep)$. Denote for $\ep<1/4$ and for any $x\in D^2_{1/2}$, 
\[
\vec{\Phi}_\ep(x):=\varphi_\ep(|x|)\star\vec{\Phi}:=\int_{D^2}\varphi_{\ep}(|x-y|)\ \vec{\Phi}(y)\ dy\quad.
\]
There exists $0<\ep_{\vec{\Phi}}<1/4$ such that for any $\ep<\ep_{\vec{\Phi}}$ the  map $\vec{\Phi}_\ep$ realizes a smooth
immersion from $D^2_{1/2}$ into ${\R}^m$, moreover we have
\be
\label{A.1}
\lim_{\ep\rightarrow 0}\|g_{\vec{\Phi}_\ep}-g_{\vec{\Phi}}\|_{L^\infty(D^2_{1/2})}=0\quad,
\ee
we have also
\be
\label{A.2}
\lim_{\ep\rightarrow 0}\|\vec{n}_{\vec{\Phi}}-\vec{n}_{\vec{\Phi}_\ep}\|_{W^{1,2}(D^2_{1/2})}=0\quad,
\ee
and
\be
\label{A.3}
\lim_{\ep\rightarrow 0}\|\vec{H}_{\vec{\Phi}}-\vec{H}_{\vec{\Phi}_\ep}\|_{L^2(D^2_{1/2})}=0\quad.
\ee
\hfill $\Box$
\end{Lm}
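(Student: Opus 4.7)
The plan is to view both $\vec{n}_{\vec{\Phi}_\ep}$ and $\vec{H}_{\vec{\Phi}_\ep}$ as nonlinear expressions involving $\nabla\vec{\Phi}_\ep$ (entering through smooth functions on the non-degenerate open set $\{|\vec{a}\wedge\vec{b}|>0\}$) and $\nabla^2\vec{\Phi}_\ep$ (entering linearly), and then to combine uniform convergence of the mollified gradient with strong $L^2$ convergence of the mollified hessian. The crucial prerequisite is continuity of $\nabla\vec{\Phi}$ on $\overline{D^2_{1/2}}$. This is not automatic for a $W^{2,2}\cap W^{1,\infty}$ map in two dimensions, but it does follow from the regularity theory of conformal $W^{2,2}$ immersions whose Gauss map lies in $W^{1,2}$: the Liouville equation $-\Delta\lambda=K_g e^{2\lambda}$ together with Wente-type compensated-compactness estimates yields $\lambda\in C^0\cap W^{1,2}$ (M\"uller--Sverak \cite{MS}), while H\'elein's Coulomb-frame technique \cite{Hel} gives the tangent frame $(\vec{e}_1,\vec{e}_2)$ in $C^0\cap W^{1,2}$ once the disc is small enough, which we may arrange after shrinking.

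Granted this continuity, standard mollification theory gives $\nabla\vec{\Phi}_\ep=\varphi_\ep\star\nabla\vec{\Phi}\to\nabla\vec{\Phi}$ uniformly on $\overline{D^2_{1/2}}$; since the original immersion has no branch points we have $|d\vec{\Phi}\wedge d\vec{\Phi}|\ge c_0>0$, hence $|d\vec{\Phi}_\ep\wedge d\vec{\Phi}_\ep|\ge c_0/2$ for all $\ep<\ep_{\vec{\Phi}}$, and the smooth map $\vec{\Phi}_\ep$ is an immersion. Because the components of $g_{\vec{\Phi}_\ep}$ are just scalar products of $\p_{x_i}\vec{\Phi}_\ep$ with $\p_{x_j}\vec{\Phi}_\ep$, uniform convergence of the gradient immediately yields (A.1).

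For (A.2) I would write $\vec{n}_{\vec{\Phi}_\ep}=N(\p_{x_1}\vec{\Phi}_\ep,\p_{x_2}\vec{\Phi}_\ep)$ with $N(\vec{a},\vec{b}):=\star(\vec{a}\wedge\vec{b})/|\vec{a}\wedge\vec{b}|$ smooth on the non-degenerate set, so that by the chain rule $\nabla\vec{n}_{\vec{\Phi}_\ep}=DN(\nabla\vec{\Phi}_\ep)\cdot\nabla^2\vec{\Phi}_\ep$. The first factor converges uniformly on $\overline{D^2_{1/2}}$ to $DN(\nabla\vec{\Phi})$ by the previous step, while $\nabla^2\vec{\Phi}_\ep=\varphi_\ep\star\nabla^2\vec{\Phi}\to\nabla^2\vec{\Phi}$ in $L^2(D^2_{1/2})$ by the standard mollification of $L^2$ functions; multiplying a uniformly convergent factor by an $L^2$-convergent factor gives $L^2$ convergence of the product, proving (A.2) (the $L^2$ convergence of $\vec{n}_{\vec{\Phi}_\ep}$ itself is even simpler, being a direct consequence of continuity of $N$ and uniform convergence of $\nabla\vec{\Phi}_\ep$). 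The proof of (A.3) is entirely analogous: $\vec{H}_{\vec{\Phi}_\ep}=\tfrac12\,g_{\vec{\Phi}_\ep}^{ij}\,\pi_{\vec{n}_{\vec{\Phi}_\ep}}(\p^2_{x_ix_j}\vec{\Phi}_\ep)$ is a smooth bounded function of $\nabla\vec{\Phi}_\ep$ (which converges uniformly) times a linear expression in $\nabla^2\vec{\Phi}_\ep$ (which converges in $L^2$).

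The main technical obstacle is the preliminary continuity of $\nabla\vec{\Phi}$: without it, the uniform convergence needed for (A.1) fails, and (A.2)--(A.3) would have to be proven through more delicate arguments (e.g. directly exploiting the equation $\Delta\vec{\Phi}=2e^{2\lambda}\vec{H}$ together with Calderon--Zygmund estimates on $\vec{H}_\ep - \vec{H}$). Once the continuity is granted, everything else is bookkeeping with the chain rule and classical mollification estimates.
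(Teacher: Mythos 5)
Your proposal hinges on the preliminary claim that $\nabla\vec{\Phi}$ is continuous on $\overline{D^2_{1/2}}$, and this is where the argument breaks down. You invoke H\'elein's Coulomb-frame technique to conclude that ``the tangent frame $(\vec{e}_1,\vec{e}_2)$ is in $C^0\cap W^{1,2}$'', but H\'elein's theorem produces \emph{some} continuous orthonormal frame of the tangent bundle, namely the Coulomb (harmonic) frame, and that frame is related to the coordinate frame $\vec{e}_i=e^{-\lambda}\p_{x_i}\vec{\Phi}$ by a rotation $e^{i\theta}$ whose angle $\theta$ is only a priori in $W^{1,2}$ (its differential is the difference of the two connection $1$-forms, which are merely $L^2$). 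So continuity of the Coulomb frame does not transfer to continuity of $\vec{e}_i$, hence not to $\nabla\vec{\Phi}$. What M\"uller--Sverak and H\'elein do give is continuity of the conformal factor $\lambda$, equivalently of the \emph{modulus} $|\nabla\vec{\Phi}|=\sqrt{2}\,e^{\lambda}$, not of the direction of $\nabla\vec{\Phi}$. With only $\nabla\vec{\Phi}\in W^{1,2}\cap L^\infty$, mollification does not converge uniformly, and your derivations of (A.1), (A.2) and (A.3) all collapse at the first step.

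The paper's proof is structured precisely so as never to need continuity of $\nabla\vec{\Phi}$. It uses two softer ingredients: (i) continuity of $\lambda$, which gives the uniform modulus-of-continuity estimate (A.10) for $e^{\lambda}$; and (ii) the $\varphi$-Poincar\'e inequality (Lemma~\ref{Poinc-ineg}) applied to $u=\nabla\vec{\Phi}$, which together with absolute continuity of $\int|\nabla^2\vec{\Phi}|^2$ on small balls produces, via the mean-value theorem, a point $y_x\in B_\ep(x)$ where the \emph{unsmoothed} gradient $\nabla\vec{\Phi}(y_x)$ is close to the mollified value $\nabla\vec{\Phi}_\ep(x)$. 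Combined with the positive lower bound $\inf|\nabla\vec{\Phi}|>0$ (no branch points), this yields the ratio estimates (A.15), (A.16), and hence (A.1); the conclusions (A.2)--(A.3) then follow from (A.1), the lower bound (A.13b), and strong $W^{2,2}$ convergence of $\vec{\Phi}_\ep$. If you want to rescue your approach, you would either have to prove the continuity of the direction of $\nabla\vec{\Phi}$ separately (which needs more than you cite), or replace the uniform-convergence step by the paper's Poincar\'e/mean-value device.
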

Before to prove the lemma~\ref{approx-lemma} we establish the following $\varphi-$Poincar\'e inequality.
\begin{Lm}
\label{Poinc-ineg}
Let $u\in W^{1,2}(D^2)$. Let $\varphi$ be a non negative compactly supported function 
of $C^\infty_0({\R})$ such that $\varphi$ is identically equal to 1 in a neighborhood of $0$ and
\[
2\pi\int_{\R}\varphi(t)\,t\ dt=1.
\]
Denote $\varphi_\ep(t):=\ep^{-2}\ \varphi(t/\ep)$. For $\ep<1/4$ and $x\in D^2_{1/2}$ denote
\[
u_\ep(x):=\varphi_\ep(|x|)\star u:=\int_{D^2}\varphi_{\ep}(|x-y|)\ u(y)\ dy\quad.
\]
There exists a constant $C>0$ such that for any $x\in D^2_{1/2}$
\be
\label{A.4}
\frac{1}{|B_\ep(x)|}\ \int_{B_\ep(x)}|u(y)-u_\ep(x)|^2\ dy\le C\ \int_{B_\ep(x)}|\nabla u|^2(y)\ dy\quad.
\ee
\hfill $\Box$
\end {Lm}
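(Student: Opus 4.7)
The plan is to compare the weighted average $u_\ep(x)$ with the uniform mean $\overline u := |B_\ep(x)|^{-1}\int_{B_\ep(x)} u\,dy$ and then invoke the classical Poincar\'e inequality on the disc $B_\ep(x)\subset\R^2$. I will use the natural convention that $\supp\varphi\subset[-1,1]$, under which $\varphi_\ep(|x-\cdot|)$ is supported in $B_\ep(x)$ and, together with the normalization $2\pi\int \varphi(t)\,t\,dt=1$, satisfies $\int_{B_\ep(x)}\varphi_\ep(|x-y|)\,dy=1$. Both $\overline u$ and $u_\ep(x)$ are then probability-weighted averages of $u$ over the same ball $B_\ep(x)$, which is what makes the comparison clean.

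The first step is the triangle inequality
\[
|u(y)-u_\ep(x)|^2\le 2|u(y)-\overline u|^2+2|\overline u-u_\ep(x)|^2,
\]
which after integration over $B_\ep(x)$ reduces matters to (i) the standard Poincar\'e bound
\[
\int_{B_\ep(x)}|u-\overline u|^2\,dy\le C\ep^2\int_{B_\ep(x)}|\nabla u|^2\,dy,
\]
and (ii) a pointwise estimate for $|\overline u-u_\ep(x)|^2$.

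For (ii), introduce the signed kernel $\psi(y):=|B_\ep(x)|^{-1}-\varphi_\ep(|x-y|)$. Since both weights integrate to $1$ on $B_\ep(x)$, one has $\int_{B_\ep(x)}\psi\,dy=0$, so we may subtract $\overline u$ from $u(y)$ to write
\[
\overline u-u_\ep(x)=\int_{B_\ep(x)}\psi(y)\,\bigl(u(y)-\overline u\bigr)\,dy.
\]
Cauchy--Schwarz then yields $|\overline u-u_\ep(x)|^2\le \|\psi\|_{L^2(B_\ep(x))}^2 \int_{B_\ep(x)}|u-\overline u|^2\,dy$. The crude pointwise bound $|\psi|\le C\ep^{-2}$ (with $C$ depending only on $\|\varphi\|_\infty$) combined with $|B_\ep(x)|=\pi\ep^2$ gives $\|\psi\|_{L^2(B_\ep(x))}^2\le C\ep^{-2}$, and invoking the Poincar\'e bound in (i) we conclude $|\overline u-u_\ep(x)|^2\le C\int_{B_\ep(x)}|\nabla u|^2\,dy$.

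Inserting both estimates into the triangle inequality yields
\[
\int_{B_\ep(x)}|u(y)-u_\ep(x)|^2\,dy\le C\ep^2\int_{B_\ep(x)}|\nabla u|^2\,dy+2|B_\ep(x)|\cdot C\int_{B_\ep(x)}|\nabla u|^2\,dy\le C'\ep^2\int_{B_\ep(x)}|\nabla u|^2\,dy,
\]
and dividing by $|B_\ep(x)|=\pi\ep^2$ produces (\ref{A.4}). The only mildly subtle point is the zero-mean trick for $\psi$, which is what ensures the $\ep^{-2}$ blow-up of the kernel is exactly cancelled by the $\ep^2$ gain from Poincar\'e, so that the final constant depends only on $\varphi$ (through $\|\varphi\|_\infty$) and not on $\ep$ or $x$.
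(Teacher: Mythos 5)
Your proof is correct and follows essentially the same route as the paper's: compare $u_\ep(x)$ with the uniform mean over $B_\ep(x)$, use the classical Poincar\'e inequality on that ball, and exploit that the difference kernel $\psi=\chi_\ep-\varphi_\ep$ has zero mean together with the crude $L^\infty$ bound $|\psi|\le C\ep^{-2}$. The only cosmetic difference is that you estimate $|\overline u-u_\ep(x)|$ via Cauchy--Schwarz with $\|\psi\|_{L^2}$ while the paper bounds $\psi$ in $L^\infty$ and then applies Cauchy--Schwarz to $\int|u-\overline u|$; both yield the same $\ep^{-2}$ factor, so the arguments coincide.
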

{\bf Proof of lemma~\ref{Poinc-ineg}.}
For any $x\in D^2_{1/2}$ and $0<\ep<1/4$ we denote 
\[
\ov{u}^{\ep,x}:=\frac{1}{|B_\ep(x)|}\ \int_{B_\ep(x)}u(y)\ dy=\int_{D^2}\chi_\ep(|x-y|)\ u(y)\ dy\quad,
\]
where $\chi_\ep(t)\equiv (\pi\ep^2)^{-1}$  on $[0,\ep]$ and equals to zero otherwize. The classical Poincar\'e inequality
gives the existence of a universal constant such that
\be
\label{A.5}
\frac{1}{|B_\ep(x)|}\ \int_{B_\ep(x)}|u(y)-\ov{u}^{\ep,x}|^2\ dy\le C\ \int_{B_\ep(x)}|\nabla u|^2(y)\ dy\quad.
\ee
We have
\be
\label{A.6}
\frac{1}{|B_\ep(x)|}\ \int_{B_\ep(x)}|u(y)-u_\ep(x)|^2\ dy\le 2\frac{1}{|B_\ep(x)|}\ \int_{B_\ep(x)}|u(y)-\ov{u}^{\ep,x}|^2\ dy +2 |\ov{u}^{\ep,x}-u_\ep(x)|^2\quad.
\ee
We have
\be
\label{A.7}
 \ov{u}^{\ep,x}-u_\ep(x)=\int_{B_\ep(x)}\lf[\chi_\ep(|x-y|)-\varphi_\ep(|x-y|)\rg]\ u(y)\ dy\quad.
 \ee
Since
\[
\int_{B_\ep(x)}\lf[\chi_\ep(|x-y|)-\varphi_\ep(|x-y|)\rg]\ dy=0
\]
The identity (\ref{A.7}) takes the form
\be
\label{A.8}
\ov{u}^{\ep,x}-u_\ep(x)=\int_{B_\ep(x)}\lf[\chi_\ep(|x-y|)-\varphi_\ep(|x-y|)\rg]\ (u(y)-\ov{u}^{\ep,x})\ dy\quad.
\ee
Thus
\be
\label{A.9}
|\ov{u}^{\ep,x}-u_\ep(x)|^2\le C\ \ep^{-4}\lf|\int_{B_\ep(x)}|u(y)-\ov{u}^{\ep,x}|\ dy\rg|^2\le\, C\,\ep^{-2}\ \int_{B_\ep(x)}|u(y)-\ov{u}^{\ep,x}|^2\ dy\quad.
\ee
Combining (\ref{A.5}), (\ref{A.6}) and (\ref{A.9}) gives (\ref{A.4}) and this proves lemma~\ref{Poinc-ineg}.\hfill $\Box$

\medskip

\noindent{\bf Proof of lemma~\ref{approx-lemma}.}
We first establish (\ref{A.1}). Since $\vec{\Phi}$ is a weak conformal immersion, results from \cite{Hel} implies that there exists $\la\in C^0(D^2)$ such that
\[
g_{\vec{\Phi}}=e^{2\,\la}\ [dx_1^2+dx_2^2]\quad,
\]
and $e^\la=|\p_{x_1}\vec{\Phi}|=|\p_{x_2}\vec{\Phi}|$. Then, for any $\delta>0$ there exists $\ep$ such that
\be
\label{A.10}
\forall\,\delta>0\quad\exists\,\ep>0\quad\quad\forall\, x,y\in D^2_{3/4}\quad\quad |x-y|<\ep\quad\Longrightarrow\quad1-\delta<e^{\la(x)-\la(y)}\le 1+\delta\quad.
\ee
Since $\vec{\Phi}\in W^{2,2}(D_2,{\R}^m)$
\be
\label{A.11}
\forall\,\delta>0\quad\exists\,\ep>0\quad\forall\,\ep<\ep_0\quad\sup_{x\in D^2_{1/2}}\int_{B_\ep(x)}|\nabla^2\vec{\Phi}|^2(y)\ dy\le \delta^2
\ee
Applying lemma~\ref{Poinc-ineg} to $u=\nabla\vec{\Phi}$ we deduce then
\be
\label{A.12}
\forall\,\delta>0\quad\exists\,\ep_0>0\quad\forall\,\ep<\ep_0\quad\sup_{x\in D^2_{1/2}}\frac{1}{|B_\ep(x)|}\int_{B_\ep(x)}|\nabla\vec{\Phi}(y)-\nabla\vec{\Phi}_\ep(x)|^2\ dy\le\delta^2\quad.
\ee
Using the mean-value theorem we then deduce that
\be
\label{A.13}
\begin{array}{l}
\forall\,\delta>0\quad\exists\,\ep_0>0\quad\mbox{ s.t. }\quad\forall\,\ep<\ep_0\quad\forall\, x\in D^2_{1/2}\quad\exists\, y_x\in B_\ep(x)\quad\mbox{ s. t. }\\[5mm]
\ds\quad\quad|\nabla\vec{\Phi}(y_x)-\nabla\vec{\Phi}_\ep(x)|\le\sqrt\delta\quad.
\end{array}
\ee
Since 
\be
\label{A.13b}
0<\inf_{y\in D^2_{1/2}}\ |\nabla\vec{\Phi}(y)|^2=\inf_{y\in D^2_{1/2}}\ 2\ e^{2\,\la(y)}\le \sup_{y\in D^2_{1/2}}|\nabla\vec{\Phi}(y)|^2
\ee
then (\ref{A.13}) implies for $i=1,2$
\be
\label{A.14}
\begin{array}{l}
\forall\,\delta>0\quad\exists\,\ep_0>0\quad\forall\,\ep<\ep_0\quad\mbox{ s.t. }\quad\forall\, x\in D^2_{1/2}\quad\exists\, y_x\in B_\ep(x)\\[5mm]
\ds\quad\quad\mbox{ s. t. }\quad 1-\delta\le\frac{|\p_{x_i}\vec{\Phi}_\ep(x)|}{|\p_{x_i}\vec{\Phi}(y_x)|}\le
1+\delta\quad.
\end{array}
\ee
Combining (\ref{A.10}) and (\ref{A.14}) we obtain for $i=1,2$
\be
\label{A.15}
\forall\,\delta>0\quad\exists\,\ep_0>0\quad\forall\,\ep<\ep_0\quad\forall\, x\in D^2_{1/2}\quad\quad1-\delta\le\frac{|\p_{x_i}\vec{\Phi}_\ep(x)|}{|\p_{x_i}\vec{\Phi}(x)|}\le
1+\delta\quad.
\ee
Similarly, using (\ref{A.13}), (\ref{A.13b}) and the fact that $\p_{x_1}\vec{\Phi}(y)\cdot\p_{x_2}\vec{\Phi}(y)\equiv 0$ we have
\be
\label{A.16}
\forall\,\delta>0\quad\exists\,\ep_0>0\quad\forall\,\ep<\ep_0\quad\forall\, x\in D^2_{1/2}\quad\quad \frac{|\p_{x_1}\vec{\Phi}_\ep(x)\cdot\p_{x_2}\vec{\Phi}_\ep(x)|}{|\nabla\vec{\Phi}_\ep(x)|^2}\le
\delta\quad.
\ee
It is clear that (\ref{A.15}) and (\ref{A.16}) imply (\ref{A.1}). Finally (\ref{A.2}) and (\ref{A.3}) are direct consequences of the fact that (\ref{A.1}) and (\ref{A.13b}) hold together with the fact that
$\vec{\Phi}_\ep\rightarrow\vec{\Phi}$ strongly in $W^{2,2}(D^2_{1/2})$. Lemma~\ref{approx-lemma} is then proved.\hfill $\Box$

\begin{Lm}{\bf [Lower semi continuity under $W^{2,2}$-weak convergence]}\label{lem:LSC}
Let $\{\bP_k\}_{k\in \N}\subset {\mathcal F}_{\Sp^2}$ and $\bP_\infty$ be weak branched conformal immersions and assume that there exist $a^1,\ldots, a^N \in \Sp^2$ such that for every compact subset (with smooth boundary) $K\subset \subset \Sp^2$ we have
\begin{eqnarray}
\bP_k &\rightharpoonup& \bP \quad \text{weakly in } W^{2,2}(K) \label{ass:WC} \\
\sup_k  \sup_{x\in K}|\log |\nabla \bP_k|\,|(x) &\leq& C_K <\infty \quad \text{for some constant $c_K$ depending on $K$}  \label{ass:CF}.
\end{eqnarray} 
Then the Willmore and the Energy functional are  lower semicontinuous:
\be\label{eq:LSC}
\int_{K} |H_{\bP_{\infty}}|^2 dvol_{g_{\bP_\infty}} \leq \liminf_k \int_{K} |H_{\bP_{k}}|^2 dvol_{g_{\bP_k}}, \quad \int_{K} |{\mathbb I}_{\bP_{\infty}}|^2 dvol_{g_{\bP_\infty}} \leq \liminf_k \int_{K} |{\mathbb I}_{\bP_{k}}|^2 dvol_{g_{\bP_k}}.
\ee
\hfill $\Box$
\end{Lm}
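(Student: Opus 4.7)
The plan is to reduce the statement to a standard weak lower semicontinuity result for integral functionals that are convex (here actually nonnegative quadratic) in the highest-order derivative, with coefficients depending continuously on the lower-order jet of $\bP$. First I would fix a compact $K\subset\subset \Sp^2\setminus\{a^1,\ldots,a^N\}$ with smooth boundary; since the set $\{a^1,\ldots,a^N\}$ has zero measure it is enough to prove the two inequalities on every such $K$ and then pass to the monotone limit over an exhaustion $K_\ell\nearrow \Sp^2\setminus\{a^1,\ldots,a^N\}$. By assumption (\ref{ass:WC}) together with Rellich--Kondrachov, $\nabla\bP_k\to\nabla\bP_\infty$ strongly in $L^p(K)$ for every $1\leq p<\infty$ and a.e.; combined with the uniform conformal factor bound (\ref{ass:CF}), this yields $e^{\pm 2\lambda_k}\to e^{\pm 2\lambda_\infty}$ strongly in $L^p(K)$ and almost everywhere, and through the formula $\bn_{\bP_k}=\star_h(\p_{x_1}\bP_k\wedge\p_{x_2}\bP_k)/|\p_{x_1}\bP_k\wedge\p_{x_2}\bP_k|$ the Gauss maps $\bn_{\bP_k}$, hence the normal/tangential projections $\pi_{\bn_k}$, converge strongly in $L^p(K)$ and almost everywhere to their $\bP_\infty$-counterparts.

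Using conformality of $\bP_k$ on $K$ and $\D_{\p_{x^i}\bP}\p_{x^j}\bP=\p^2_{x^ix^j}\bP+(\Gamma\circ\bP)(\p_{x^i}\bP,\p_{x^j}\bP)$, I would then write the pointwise identities
\begin{equation*}
|\mathbb I_{\bP_k}|^2\,dvol_{g_k}=e^{-2\lambda_k}\sum_{i,j=1}^{2}\bigl|\pi_{\bn_k}\bigl(\p^2_{x^ix^j}\bP_k+(\Gamma\circ\bP_k)(\p_{x^i}\bP_k,\p_{x^j}\bP_k)\bigr)\bigr|_h^2\,dx,
\end{equation*}
and the analogous formula for $|\bH_{\bP_k}|^2\,dvol_{g_k}$ with $\Delta\bP_k$ in place of $\p^2_{x^ix^j}\bP_k$. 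Both integrands have the schematic form $F_k(x,\nabla^2\bP_k)$, where for each $x$ the map $\xi\mapsto F_k(x,\xi)$ is a nonnegative quadratic form (so in particular convex) whose coefficients are built from $e^{-2\lambda_k}$, $\pi_{\bn_k}$, $\Gamma\circ\bP_k$ and $\nabla\bP_k$, and therefore converge strongly in every $L^p(K)$ and almost everywhere to those of $F_\infty$, while remaining uniformly bounded in $L^\infty(K)$ thanks to (\ref{ass:CF}).

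At this stage the desired lower semicontinuity on $K$ follows from a classical convexity argument. Writing $B_k(x)$ for the symmetric bilinear form associated with $F_k(x,\cdot)$, convexity gives
\begin{equation*}
\int_K F_k(x,\nabla^2\bP_k)\,dx\;\geq\;\int_K F_k(x,\nabla^2\bP_\infty)\,dx+2\int_K B_k(x)\bigl(\nabla^2\bP_\infty,\nabla^2\bP_k-\nabla^2\bP_\infty\bigr)\,dx.
\end{equation*}
The first term on the right converges to $\int_K F_\infty(x,\nabla^2\bP_\infty)\,dx$ by dominated convergence (coefficients of $F_k$ converge a.e. and are $L^\infty$-bounded, while $|\nabla^2\bP_\infty|^2\in L^1(K)$). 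In the second term the factor $B_k(x)(\nabla^2\bP_\infty,\cdot)$ converges strongly in $L^2(K)$ to $B_\infty(x)(\nabla^2\bP_\infty,\cdot)$, paired against the weakly null sequence $\nabla^2\bP_k-\nabla^2\bP_\infty\rightharpoonup 0$ in $L^2(K)$, so it vanishes in the limit. Taking $\liminf_k$ on the left yields the inequalities in (\ref{eq:LSC}) on $K$, and exhausting by $K_\ell$ concludes the proof.

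The main delicate point, to be flagged, is not the convexity step itself (which is entirely standard) but the role of the conformality plus the uniform bound $\|\log|\nabla\bP_k|\|_{L^\infty(K)}\leq C_K$: it is precisely this bound that (i) prevents the quadratic form $F_k$ from degenerating, (ii) provides the $L^\infty$ control on its coefficients needed to apply dominated convergence, and (iii) forces the Gauss maps $\bn_{\bP_k}$ to converge strongly rather than merely weakly. Without hypothesis (\ref{ass:CF}) the argument would fail at exactly these points, which is consistent with the fact that the lemma is stated only on compacta avoiding the concentration points $a^i$.
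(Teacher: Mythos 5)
Your proof is correct and rests on essentially the same mechanism as the paper's: both exploit conformality together with the uniform bound on $\log|\nabla\bP_k|$ to view the integrands as nonnegative quadratic forms in $\nabla^2\bP_k$ whose lower-order coefficients converge strongly and stay $L^\infty$-bounded, and then invoke weak lower semicontinuity of such a convex functional. The paper packages this by observing that $\bH_k\sqrt{vol_{g_k}}=\tfrac{1}{2|\p_{x^1}\bP_k|}\triangle\bP_k$ converges weakly in $L^2(K)$ and applying lower semicontinuity of the $L^2$ norm (and analogously for $F$), which is the streamlined, completed-square version of the convexity inequality you write out in general form.
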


\begin{proof}
Since $\bP_k$ are conformal, then $\bH_k=\frac{1}{2}e^{-2\lambda_k} \triangle \bP_k$ where $\lambda_k=\log|\p_{x^1}\bP_k|=\log|\p_{x^2}\bP_k|$ is the conformal factor. Let us first show that
\be\label{eq:D'Conv}
\bH_k \sqrt{vol_{g_k}}=\frac{1}{2|\p_{x^1} \bP_k|} \triangle \bP_k \to \frac{1}{2|\p_{x^1} \bP_\infty|} \triangle \bP_\infty  = \bH_\infty \sqrt{vol_{g_\infty}}  \quad \text{in } {\mathcal D}'(K).
\ee
From \eqref{ass:WC} and Rellich Kondrachov Theorem we have that $|\p_{x^1} \bP_k| \to |\p_{x^1} \bP_\infty|$ strongly in $L^p(K)$ for every $1<p<\infty$; moreover assumption \eqref{ass:CF} guarantees that $|\p_{x^1} \bP_k|\geq \frac{1}{C}>0$ independently of $k$. It follows that 
\be\nonumber
\frac{1}{|\p_{x^1} \bP_k|} \to \frac{1}{|\p_{x^1} \bP_\infty|} \quad \text{strongly in } L^p(K) \text{ for every } 1<p<\infty. 
\ee
Since, by assumption \eqref{ass:WC}, clearly $\triangle \bP_k \to \triangle \bP_\infty$ weakly in $L^2(K)$, then \eqref{eq:D'Conv} follows. 
In order to conclude observe that \eqref{ass:WC} implies that $\bP_k$ are uniformly bounded in $W^{2,2}(K)$, then assumption \eqref{eq:LSC} and the conformality of $\bP_k$ give that $\bH_k \sqrt{vol_{g_k}}$ are uniformly bounded in $L^2(K)$. This last fact together with \eqref{eq:D'Conv} implies that
$$\bH_k \sqrt{vol_{g_k}} \rightharpoonup \bH_\infty \sqrt{vol_{g_\infty}} \quad \text{weakly in } L^2(K).$$
The thesis then follows just by lower semicontinuity of the $L^2$ norm under weak convergence.
The proof of the lower semicontinuity of $\int |\mathbb I|^2$ is analogous once observed that in conformal coordinates $|{\mathbb I}|^2= e^{-4\lambda} |\nabla^2 \bP|^2$.
\end{proof}

\end{document}